%% file: main.tex
\documentclass[11pt]{article}
\usepackage[a4paper,margin=1in]{geometry}

\usepackage{amsmath,amssymb,bbm,amsthm}
\usepackage{mathtools}
\usepackage{graphicx}
\usepackage{array}
\usepackage{placeins}

\graphicspath{{./}{figures/}{texcheck/}}
\DeclareGraphicsExtensions{.pdf,.png,.jpg,.jpeg}

\usepackage{algorithm}
\usepackage{algpseudocode}
\usepackage{enumerate}
\usepackage{multirow}
\usepackage{soul}
\usepackage{subcaption}
\usepackage[numbers]{natbib}
\usepackage{xcolor}

\usepackage{authblk}

\usepackage{hyperref}
\usepackage[capitalize,noabbrev]{cleveref}
\makeatletter
\providecommand{\theHALG@line}{}
\renewcommand{\theHALG@line}{\thealgorithm.\arabic{ALG@line}}
\makeatother

\numberwithin{equation}{section}

\newcommand{\missingfigure}[1]{%
  \fbox{\parbox[c][0.25\textheight][c]{0.8\linewidth}{%
    \centering Missing figure file: \texttt{\detokenize{#1}}%
  }}%
}

\newcommand{\safeincludegraphics}[2][]{%
  \IfFileExists{#2}{\includegraphics[#1]{#2}}{%
    \IfFileExists{#2.pdf}{\includegraphics[#1]{#2.pdf}}{%
      \IfFileExists{#2.png}{\includegraphics[#1]{#2.png}}{%
        \missingfigure{#2}%
      }%
    }%
  }%
}

\newcommand{\safeinput}[1]{%
  \IfFileExists{#1}{\input{#1}}{%
    \fbox{\parbox[c][3em][c]{0.88\linewidth}{%
      \centering Missing generated file: \texttt{\detokenize{#1}}%
    }}%
  }%
}

\date{}

\DeclareMathOperator*{\argmin}{arg\,min}
\DeclareMathOperator*{\Lip}{\rm{Lip}}
\DeclareMathOperator*{\diam}{\rm{diam}}
\DeclareMathOperator*{\essinf}{ess\,inf}
\DeclareMathOperator*{\esssup}{ess\,sup}
\DeclareMathOperator*{\essosc}{ess\,osc}

\newcommand{\HK}{\mathsf{H\kern-3pt K}}
\newcommand{\LET}{\mathsf{L\kern-2pt E\kern-2pt T}}

\newtheorem{theorem}{Theorem}[section]
\newtheorem{lemma}[theorem]{Lemma}
\newtheorem{corollary}[theorem]{Corollary}
\newtheorem{proposition}[theorem]{Proposition}

\newtheorem{remark}[theorem]{Remark}
\newtheorem{assumption}[theorem]{Assumption}

\usepackage{booktabs}
\newtheorem{introresult}{Theorem}

\crefname{introresult}{Theorem}{Theorems}
\Crefname{introresult}{Theorem}{Theorems}
\input{sections/experiment_setup}
\begin{document}

% Compact display spacing, applied document-wide.
\setlength{\abovedisplayskip}{6pt plus 2pt minus 2pt}
\setlength{\belowdisplayskip}{6pt plus 2pt minus 2pt}
\setlength{\abovedisplayshortskip}{3pt plus 2pt minus 1pt}
\setlength{\belowdisplayshortskip}{3pt plus 2pt minus 1pt}

\title{A Neural JKO Scheme for Hellinger–Kantorovich Gradient Flows via Monge-Growth Pairs}

% with Fully Implicit Interactions}
% \author{Geuntaek Seo, Cheolhyeong Kim, Hwijae Son, Hyung Ju Hwang}

\author{
Geuntaek Seo\textsuperscript{1,*}\quad
Cheolhyeong Kim\textsuperscript{2,*}\quad
Hwijae Son\textsuperscript{3,\(\dagger\)}\quad
Hyung Ju Hwang\textsuperscript{1,\(\dagger\)}
\\
{\small\textsuperscript{1}Department of Mathematics, Pohang University of Science and Technology}
\\
{\small\textsuperscript{2}Samsung Electronics}
\\
{\small\textsuperscript{3}Department of Mathematics, Konkuk University}
\\
{\small\rule{0pt}{3ex}
\texttt{gtseo@postech.ac.kr}\quad
\texttt{hspst440@gmail.com}\quad
\texttt{hwijaeson@konkuk.ac.kr}\quad
\texttt{hjhwang@postech.ac.kr}
}
\\
{\small
\rule{0pt}{3ex}\textsuperscript{*}These authors contributed equally to this work.
\\
\textsuperscript{\(\dagger\)}Corresponding authors.
}
}

\maketitle

\begin{abstract}
We develop a mesh-free neural JKO scheme for advection--reaction--diffusion equations with a gradient-flow structure in the Hellinger--Kantorovich ($\HK$) geometry of unbalanced optimal transport.
Each update is parametrized by a spatial map and a mass-changing factor,
allowing spatial redistribution and local mass creation or loss to be
treated jointly within a single variational step.
Their cone action bounds the squared $\HK$ distance from above, yielding a sufficient condition for discrete energy dissipation through comparison with the identity pair.
Minimizing the pair objective over all admissible pairs recovers the exact JKO minimum when the source and a minimizer have positive densities. We establish existence and mass bounds for JKO minimizers and, under additional assumptions, obtain positivity and regularity together with a discrete Euler--Lagrange equation and a metric--dissipation identity. The self-consistent chemical potential is then nonincreasing along an optimal map. There exist parametric pairs whose endpoint densities and objective values converge to those of an exact JKO minimizer, provided a regular-pair approximation hypothesis holds. Finally, we show that a primal--dual gap controls objective suboptimality and, for Boltzmann entropy, the $L^1$ density error, assuming exact-step regularity, positive-semidefinite interactions, and global dual feasibility.
Numerical experiments examine pointwise agreement with the PDE, energy dissipation, and the roles of transport, reaction, and fully implicit interactions. 
\end{abstract}

\vspace{1em}
\noindent\textbf{Keywords:} Hellinger--Kantorovich distance, unbalanced optimal transport, JKO scheme,\\
\phantom{\textbf{Keywords:}} advection--reaction--diffusion equations, neural networks, minimizing movements\\
\noindent\textbf{2020 Mathematics Subject Classification:} 35K57, 35Q92, 49Q22, 65N75, 68T07

% \tableofcontents

\section{Introduction}\label{sec:introduction}

\subsection{Hellinger--Kantorovich gradient flows and the computational problem}
Variational time discretization provides a natural way to construct gradient-flow evolutions.  In Wasserstein space, the Jordan--Kinderlehrer--Otto (JKO) scheme identifies the Fokker--Planck equation as a gradient flow of free energy \cite{JKO98,AGS08}.  Classical Wasserstein geometry preserves total mass, whereas advection--reaction--diffusion systems generally combine spatial transport with local creation or destruction of mass.

The Hellinger--Kantorovich distance, denoted by $\HK$ and also called
the Wasserstein--Fisher--Rao distance, is an unbalanced optimal transport metric that couples transport with reaction on finite nonnegative measures
\cite{KMV16,CPSV2018,CPSV18,LMS16,LMS18}.  
The distance admits static, cone, dynamic, and dual formulations. This geometry has been used to develop minimizing-movement schemes for advection--reaction--diffusion equations and to study hard-congestion limits \cite{GM17,Fleissner21,DiMarinoChizat20}. Subsequent work has established Monge representations, regularity of dual potentials, and local geometric properties \cite{LMS23,GGV25,LM26}.

We study the $\HK$ gradient flow of a free energy $\mathcal F$ consisting of internal, external, and nonlocal interaction terms. For positive constants $a$ and $b$, the formal flow takes the form
\begin{equation}\label{pde-intro}
\partial_t\rho=a\,\operatorname{div}(\rho\nabla g_\rho)-b\,\rho g_\rho,
\qquad g_\rho=U'(\rho)+V+W[\rho].
\end{equation}
Here $a$ and $b$ weight spatial transport and local mass change, respectively. Throughout this paper, we take $a=1$ and $b=4$, corresponding to the normalization $\HK=\HK_{1,4}$. Both terms are driven by the \emph{same} chemical potential $g_\rho$. The associated variational time discretization is the $\HK$-JKO scheme
\begin{equation*}
\mu_{k+1}^\tau\in\argmin_{\sigma\in\mathcal M_+(\Omega)}
\left\{\mathcal F(\sigma)+\frac1{2\tau}\HK^2(\sigma,\mu_k^\tau)\right\}.
\end{equation*}
Here $\mathcal M_+(\Omega)$ denotes the space of finite nonnegative Borel measures on the spatial domain $\Omega$.
Whenever a minimizer exists and the current energy is finite, it satisfies the one-step dissipation inequality.  Computing a step nevertheless requires minimizing an objective that combines the energy of the next measure with the unbalanced optimal transport cost between the current and next measures.  Grid-based discretizations are effective in low dimensions but scale poorly with dimension, motivating mesh-free parametric methods \cite{Mokrov21,PKSH23}.

A parametric method must both represent the next measure and approximate the metric term. Whether the metric approximation is an upper or a lower bound matters for energy descent. Restricting the maximization in the $\HK$ dual problem to a prescribed class of feasible potentials gives a lower bound on the squared distance, so descent of the resulting objective need not imply descent of the exact JKO objective. We instead use a feasible primal cone action associated with a spatial map and a growth factor. Under the positive-density hypotheses stated below, optimization over all such pairs is equivalent to the exact JKO problem. Every admissible parametric pair still provides an upper bound on the exact JKO objective at its induced measure.

\subsection{The primal formulation}\label{sec:design-choices}
Our formulation combines two choices. Each update acts on the current measure, so the learned displacement describes a single time step. A feasible primal cone action then allows the same pair to define the candidate measure and bound its exact proximal cost from above (Table~\ref{tab:design-choices}).

\emph{Restarted update.} We call $(T,q)$ a \emph{Monge-growth pair}: $T$ transports positions and $q^2$ changes the mass before transport, giving 
\[T_\#(q^2\mu).
\]
Here $T_\#\nu$ denotes the pushforward of a measure $\nu$ under $T$,
defined by $(T_\#\nu)(A)=\nu(T^{-1}(A))$ for every Borel set
$A\subset\Omega$.
The term ``growth'' includes both mass creation ($q>1$) and mass loss ($q<1$). We call updates with $T=\mathrm{id}$ \emph{pure-reaction} updates; they are labeled ``growth-only'' in the experiments. The scheme is \emph{restarted} in the sense that a fresh pair maps the current measure to the next candidate at each step. Density values and the sampling law must therefore be propagated between steps; Section~\ref{sec:neural-realization-algorithm} gives the exact formulas and the density fitting used in recursive runs.

% {
% \emph{Restarted state.}
% An update is represented by a spatial map $T$ and a nonnegative growth
% factor $q$ through $T_\#(q^2\mu)$.  We train a fresh current-to-next pair
% at every step rather than a cumulative map from the initial law.  The price
% is loss of direct density access from a fixed base law: density values
% propagate by change of variables and the normalized sampling law by the
% growth weights.  For the recursive experiments, the resulting density
% values are fitted in a fixed positive density family, adding a per-step
% projection error; Section~\ref{sec:neural-realization-algorithm} gives the
% exact formulas and implementation.
% }

\emph{Primal metric representation.}  A pair $(T,q)$ is \emph{admissible} for $\mu$ if $T:\Omega\to\Omega$ and $q:\Omega\to[0,\infty)$ are Borel and $\int_\Omega q^2\,d\mu<\infty$.  Its induced measure is
$T_\#(q^2\mu),$
and its Monge-growth action is
\begin{equation}\label{eq:incremental-HK-action}
\mathcal A_\mu(T,q)
:=\int_\Omega
\left[
1+q(x)^2-2q(x)\cos\!\left({\sf d}_\Omega(x,T(x))\wedge\frac\pi2\right)
\right]d\mu(x).
\end{equation}
In the pair formulation, \emph{unrestricted} optimization ranges over all admissible Monge-growth pairs, whereas \emph{restricted} optimization ranges only over a prescribed subclass, such as the admissible pairs representable by the chosen neural network class.
The cone formulation gives the one-sided bound
\begin{equation}\label{eq:incremental-HK-upper-bound}
\HK^2\bigl(\mu,T_\#(q^2\mu)\bigr)\le\mathcal A_\mu(T,q),
\end{equation}
which is tight at an optimal unrestricted pair when the two measures have positive densities.  We therefore optimize
\begin{equation}\label{eq:incremental-pair-objective-abstract}
\mathcal I_\tau(T,q\mid\mu)
:=
\mathcal F\bigl(T_\#(q^2\mu)\bigr)
+\frac1{2\tau}\mathcal A_\mu(T,q).
\end{equation}
Every term of this objective can be written as an integral with respect to the current measure; see \eqref{eq:incremental-internal-expectation}--\eqref{eq:incremental-interaction-expectation}.

\paragraph{Equivalence to the exact JKO problem.}
Writing
\[
\mathcal J_\tau(\sigma\mid\mu)
:=
\mathcal F(\sigma)+\frac1{2\tau}\HK^2(\sigma,\mu),
\]
the cone-action upper bound gives
\begin{equation}\label{eq:intro-primal-majorization}
\mathcal J_\tau\bigl(T_\#(q^2\mu)\mid\mu\bigr)
\le
\mathcal I_\tau(T,q\mid\mu)
\end{equation}
for every admissible pair.  Proposition~\ref{prop:incremental-HK-JKO-reparam} shows that, under the positive-density hypotheses used below, optimization over all admissible pairs has the same optimal value as the exact JKO problem:
\begin{equation*}
\min_{\sigma\in\mathcal M_+(\Omega)}
\mathcal J_\tau(\sigma\mid\mu)
=
\inf_{T,\,q\ge0}
\mathcal I_\tau(T,q\mid\mu).
\end{equation*}
A restricted pair class may overestimate the metric term, but the unrestricted formulation is exactly equivalent to the JKO problem. Since the identity pair has zero action, comparison with that pair gives the descent test of Proposition~\ref{prop:one-step-certificate}. Section~\ref{sec:restarted-formulation} proves both statements. Theorem~\ref{thm:a-posteriori} then combines the primal objective with a globally feasible dual lower bound to estimate the density error. Numerical optimization, quadrature, and density fitting introduce additional errors, discussed in Section~\ref{sec:neural-realization-algorithm}.

\begin{table}[htbp]
\centering\small
\renewcommand{\arraystretch}{1.2}
\begin{tabular}{@{}>{\raggedright\arraybackslash}p{0.17\linewidth}>{\raggedright\arraybackslash}p{0.36\linewidth}>{\raggedright\arraybackslash}p{0.36\linewidth}@{}}
\toprule
 & \textbf{Restricted dual value} \quad($\le\HK^2$)
 & \textbf{Primal cone action} \quad($\ge\HK^2$)\\
\midrule
\textbf{Cumulative state}\newline $\widehat\mu_k=(S_k)_\#(r_k^2\mu_0)$
 & Density can be evaluated from a fixed reference law. A dual potential or $c$-transform is required, and the lower bound alone does not certify exact JKO descent.
 & A feasible pair can certify descent, but cumulative displacement and representation complexity may grow.\\
\midrule
\textbf{Restarted state}\newline $\widehat\mu_{k+1}=T_\#(q^2\widehat\mu_k)$
 & Each update starts from the current measure. Dual optimization is still required, and the lower bound alone does not certify exact JKO descent.
 & \emph{This paper:} one pair defines the next measure and an upper bound on its JKO objective. The unrestricted problem is exact; density and sampling information must be propagated between steps.\\
\bottomrule
\end{tabular}
\caption{Here $(S_k,r_k)$ is the cumulative Monge-growth pair representing the approximate state $\widehat\mu_k$, with $r_k\ge0$.
Restarting parametrizes each update from the current state;
it does not reset accumulated errors.
The one-step descent criterion for the pair endpoint relies on the primal upper bound and comparison with the identity pair.}
\label{tab:design-choices}
\end{table}

\subsection{Main results and contributions}\label{sec:main-results}
We first establish the exact JKO step that the neural update is meant to approximate, then examine the effect of parametric restriction and the accuracy of a computed candidate.

\paragraph{Contributions and analytical ingredients.}
First, Theorem~A establishes existence and mass control, with positivity under an additional entropy condition. Theorem~B identifies the structure of this exact step. For fully implicit interactions, it adapts the comparison argument of \cite[Sec.~4]{LM26} by establishing monotonicity of the self-consistent chemical potential $\Phi_1$. This yields oscillation-dependent density bounds. The metric--dissipation identity follows by combining the discrete Euler equation with the $\HK$ potential identity of \cite{LMS23}, using the manifold Monge formula of \cite{GGV25} for the torus case. Appendix~\ref{sec:full-density-estimates} details the modified comparison argument.

Second, we determine when the pair formulation retains this variational target. The Monge representation of \cite{LMS23,GGV25} gives exact equivalence before parametric restriction. A neural restriction may introduce both a density error and a cone-action overestimate. Under an additional regular-pair approximation hypothesis, Theorem~C transfers pair approximation to convergence of the endpoint density and objective values, and shows that the metric overestimate vanishes.

Third, we assess an individual computed pair. Comparison with the identity gives the energy-descent criterion of Proposition~\ref{prop:one-step-certificate}, but descent alone does not quantify accuracy. Under positive-semidefinite interactions and global dual feasibility, Theorem~D combines a primal upper bound with a dual lower bound for the JKO minimum, using $\HK$ duality \cite{CPSV2018,LMS18} and convexity. Their gap controls objective suboptimality and, for Boltzmann entropy, the $L^1$ density error, without a parametric approximation hypothesis.

\begin{introresult}[Theorem~\ref{thm:main-fully-implicit-HK-JKO-step}]\label{intro:existence}
Let $U$ be convex, lower semicontinuous, finite at $0$, and superlinear; let $V$ be continuous; let $W$ be continuous and symmetric; and assume that the scalar function $\Gamma_{\tau,\bar m}$ in \eqref{eq:scalar-full-coercivity} is coercive.  Then the exact step exists and every minimizer satisfies a quantitative mass bound.  If $U'(0+)=-\infty$, its density is strictly positive and has full support.
\end{introresult}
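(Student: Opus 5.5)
We prove existence by the direct method, get the mass bound by comparison with $\sigma=\mu$, and get positivity by a first-variation argument that adds a little mass where the density vanishes. Throughout, $\Omega$ is compact (a bounded closed domain or the torus), so $V$ and $W$ are bounded and narrow convergence of measures of bounded mass is available. We write $\mu=\mu_k^\tau$ with $\mathcal F(\mu)<\infty$. We use two standard properties of $\HK$:
\begin{itemize}
\item the bound $\HK^2(\sigma,\mu)\ge(\sqrt{\sigma(\Omega)}-\sqrt{\mu(\Omega)})^2$, which follows from the cone formulation since the cone distance dominates the difference of radii;
\item narrow lower semicontinuity of $\HK$ on $\mathcal M_+(\Omega)$.
\end{itemize}

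\textbf{Step 1: mass bound.} Write $m=\sigma(\Omega)$ and $\bar m=\mu(\Omega)$. Jensen's inequality on $U$ (with $|\Omega|$ normalized), together with $V\ge-\|V\|_\infty$ and $W\ge-\|W\|_\infty$, gives a scalar lower bound
\[
\mathcal J_\tau(\sigma\mid\mu)\ \ge\ \Gamma_{\tau,\bar m}(m):=|\Omega|\,U\!\left(\tfrac{m}{|\Omega|}\right)-\|V\|_\infty m-\|W\|_\infty m^2+\tfrac1{2\tau}\bigl(\sqrt m-\sqrt{\bar m}\bigr)^2 .
\]
I expect this is essentially what \eqref{eq:scalar-full-coercivity} encodes. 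Testing $\sigma=\mu$ gives $\mathcal J_\tau(\sigma\mid\mu)\le\mathcal F(\mu)$ along any minimizing sequence and for any minimizer. By coercivity of $\Gamma_{\tau,\bar m}$, the sublevel set $\{\Gamma_{\tau,\bar m}\le\mathcal F(\mu)\}$ is bounded, say contained in $[0,M]$. This gives the quantitative mass bound $\sigma(\Omega)\le M$. The same bound shows that $\mathcal J_\tau(\cdot\mid\mu)$ is bounded below.

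\textbf{Step 2: existence.} Take a minimizing sequence $\sigma_n$. Its masses are bounded by Step 1, so by compactness of $\Omega$ we may extract a narrowly convergent subsequence $\sigma_n\to\sigma$. We check lower semicontinuity term by term:
\begin{itemize}
\item The internal energy $\int U(\sigma^{ac})$, with the recession term $U'(\infty)\sigma^s(\Omega)=+\infty$ for singular parts, is narrowly l.s.c. This is the standard Ambrosio--Fusco--Pallara/Buttazzo result for convex superlinear $U$. Superlinearity also forces $\sigma\ll\mathcal L$.
\item The terms $\int V\,d\sigma$ and $\iint W\,d\sigma\,d\sigma$ are continuous under narrow convergence with bounded mass, since $V$ and $W$ are continuous on a compact set and $\sigma_n\otimes\sigma_n\to\sigma\otimes\sigma$.
\item $\HK^2(\cdot,\mu)$ is narrowly l.s.c.
\end{itemize}
Hence $\sigma$ is a minimizer.

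\textbf{Step 3: positivity.} This is the main obstacle. Assume $U'(0+)=-\infty$ and suppose, for contradiction, that $Z=\{\rho=0\}$ has positive Lebesgue measure, where $\rho$ is the density of the minimizer. Perturb to $\sigma_\varepsilon=\sigma+\varepsilon\mathbbm 1_Z\mathcal L$. The internal energy then changes by $|Z|\,(U(\varepsilon)-U(0))$, which equals $\varepsilon|Z|$ times a slope tending to $-\infty$ as $\varepsilon\to0$. The $V$ and $W$ terms change by $O(\varepsilon)$.

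The delicate term is $\HK^2(\sigma_\varepsilon,\mu)-\HK^2(\sigma,\mu)$, which must be shown to be $O(\varepsilon)$. I would use subadditivity of the cone/static formulation: glue an optimal plan for $(\sigma,\mu)$ with pure creation of the extra mass $\varepsilon\mathbbm 1_Z$. Pure creation from nothing costs exactly its mass, so $\HK^2(\sigma+\nu,\mu)\le\HK^2(\sigma,\mu)+\nu(\Omega)$. This follows from the static formulation, in which the entropic marginal penalty is linear in the created mass when nothing is transported. Alternatively, convexity of $\HK^2(\cdot,\mu)$ along linear interpolation also gives a linear bound.

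Combining these estimates, $\mathcal J_\tau(\sigma_\varepsilon\mid\mu)<\mathcal J_\tau(\sigma\mid\mu)$ for small $\varepsilon$, a contradiction. So $\rho>0$ a.e., which gives full support.

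For genuine strict positivity, meaning $\rho\ge c>0$ or $\operatorname{ess\,inf}\rho>0$ if that is what is intended, I would refine the argument by perturbing on the sets $\{\rho<\delta\}$. The first-order optimality condition $U'(\rho)+V+W[\rho]+\tfrac1\tau\,(\text{bounded }\HK\text{ potential})\ge \text{const}$ then forces $U'(\rho)$ to be bounded below, and hence $\rho$ bounded away from zero. Here I would use that the first variation of $\HK^2$ is bounded, with potential values in $(-\infty,1]$ and $1-e^{-\varphi}$ bounded on the support. If this is too delicate, I would settle for a.e. positivity, which is what "full support" requires.
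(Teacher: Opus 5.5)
Your architecture matches the paper's, and your Step~3 is essentially Step~5 of Appendix~\ref{sec:proof-first-theorem}. The shared ingredients are Jensen plus \eqref{eq:HK-mass-difference} to reduce to a scalar function of the mass, the direct method, and positivity by adding $\varepsilon\mathbbm{1}_Z\,dx$ on the vacuum set at creation cost $\varepsilon|Z|$. The gap is in Step~1, on which Step~2 also rests.

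You compare with $\sigma=\mu$, which needs $\mathcal F(\bar\mu)<\infty$. The theorem does not assume this: $\bar\mu$ is an arbitrary element of $\mathcal M_+(\Omega)$. If $\bar\mu$ has a singular part, or an absolutely continuous part of infinite internal energy, then $\mathcal F(\bar\mu)=+\infty$. Your sublevel set is then all of $[0,\infty)$, and you obtain neither the mass bound nor compactness of minimizing sequences. The missing idea is to test with the zero measure, which is admissible at finite cost because the geometry is unbalanced. Since $\HK^2(0,\bar\mu)=\bar m$, one has $\mathcal J_\tau(0\mid\bar\mu)=|\Omega|U(0)+\bar m/(2\tau)$ for every source. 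The sublevel set $\{\Gamma_{\tau,\bar m}\le|\Omega|U(0)+\bar m/(2\tau)\}$ then gives exactly the constant $M_{\tau,\bar\mu}$ of \eqref{eq:full-mass-sublevel-bound}. This is also the constant used later in Theorem~\ref{thm:full-step-regularity} and Corollary~\ref{cor:ap-boltzmann-L1}.

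There is a second problem in Step~1: your scalar function is not the one whose coercivity is assumed. You bound the potential and interaction terms below by $-\|V\|_\infty m$ and $-\|W\|_\infty m^2$. By contrast, \eqref{eq:scalar-full-coercivity} uses $\underline V m+\frac{\underline W}{2}m^2$, which is larger, and coercivity does not pass to a smaller function. For Boltzmann entropy and any nonzero kernel $W\ge0$ (such as the torus benchmark), the hypothesized $\Gamma_{\tau,\bar m}$ is coercive, while yours tends to $-\infty$ because $m\log m$ loses to $\|W\|_\infty m^2$. Use the sharp bounds $\int V\,d\sigma\ge\underline V m$ and $\frac12\iint W\,d\sigma\,d\sigma\ge\frac{\underline W}{2}m^2$. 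With both repairs the proof closes.

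Your compactness route in Step~2 is a valid alternative to the paper's. You use narrow compactness plus lower semicontinuity of the relaxed convex functional, whose infinite recession slope excludes a singular limit. The paper instead first bounds $\sup_n\int U(r_n)\,dx$ and obtains weak $L^1$ compactness directly from de la Vall\'ee-Poussin and Dunford--Pettis.

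Your last paragraph is not needed, since the statement asserts only $\rho_1>0$ a.e.\ and full support. If you want a uniform lower bound anyway, no dual potentials are required. Apply the same creation-cost estimate on $\{\rho_1<\delta\}$, with $M$ the mass bound. This yields $\rho_1\ge\delta$ a.e.\ whenever $U'(\delta+)+\|V\|_\infty+M\|W\|_\infty+\frac1{2\tau}<0$, though the resulting constant degenerates as $\tau\downarrow0$.
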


\begin{introresult}[Theorem~\ref{thm:full-step-regularity}]\label{intro:regularity}
Under the standing entropy assumptions and two-sided bounds on the source density, the self-consistent chemical potential is nonincreasing along an optimal Monge map.  The new density has explicit two-sided bounds, satisfies the exact discrete Euler--Lagrange equation, and obeys $\HK^2(\mu_1,\bar\mu)=\tau^2I_g(\rho_1)$.  If, in addition, $V$ and $W$ are Lipschitz, then the new density is Lipschitz.  The result covers bounded convex domains and flat tori, including $W=0$.
\end{introresult}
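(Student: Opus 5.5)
The plan is to derive everything from the first-order optimality conditions of the exact step, written in the Monge form of $\HK$. By Theorem~A, a minimizer $\mu_1=\rho_1\,dx$ exists and $\rho_1>0$ a.e. With the source density $\bar\rho$ bounded above and below, both endpoints have positive densities. The Monge representation of \cite{LMS23} (and \cite{GGV25} on the flat torus) then provides an optimal pair $(T,q)$ with $\mu_1=T_\#(q^2\bar\mu)$. It also provides a Kantorovich potential $\varphi$ solving the $\HK$ dual problem, with the explicit relations $T(x)=\exp_x(\arctan(|\nabla\varphi|/2)\,\widehat{\nabla\varphi})$ and $q^2=1+\varphi/2$ (up to the $a=1,b=4$ normalization), together with the dual Hellinger–Hopf–Lax potential $\psi$ on the target side.

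First I would perturb $\mu_1$ by $(1+\varepsilon\eta)\mu_1$ and by push-forwards $(\mathrm{id}+\varepsilon\xi)_\#\mu_1$. Positivity of $\rho_1$ permits two-sided variations, and the first variation of $\HK^2$ is given by the dual potential. This yields the discrete Euler–Lagrange equation $\Phi_1:=U'(\rho_1)+V+W*\rho_1=-\psi/\tau$ a.e. on $\Omega$.

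Next I would establish monotonicity along $T$. The identity $\psi(T(x))$ versus $\varphi(x)$ from the cone structure, combined with the comparison argument of \cite[Sec.~4]{LM26}, should give $\Phi_1(T(x))\le\Phi_1(x)$, i.e.\ the chemical potential is nonincreasing along the optimal map. The argument in \cite{LM26} handles a fixed external potential. For the fully implicit $W*\rho_1$, I would freeze $W*\rho_1$ as part of an effective external potential $V_{\rm eff}=V+W*\rho_1$, which is continuous by continuity of $W$. The comparison would then be run for the linearized problem, whose minimizer is again $\mu_1$ by convexity in the $U$-part. This freezing step is where I expect the main obstacle. I need the frozen problem to share the minimizer, and the comparison (a maximum-principle argument at a point where $\Phi_1-\Phi_1\circ T$ is maximal) requires some continuity of $\rho_1$. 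I would therefore first prove the bounds in a regularized setting and then pass to the limit.

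From monotonicity together with the Jacobian equation $\rho_1(T(x))\det DT(x)=q(x)^2\bar\rho(x)$, I would invert $U'$ to get two-sided bounds on $\rho_1$. These bounds depend on $\inf\bar\rho$, $\sup\bar\rho$ and $\operatorname{osc}(V+W*\rho_1)$, and the last quantity is controlled using the mass bound of Theorem~A.

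For the metric–dissipation identity, I would use the potential identity of \cite{LMS23}, $\HK^2(\mu_1,\bar\mu)=\int(|\nabla\psi|^2+4\psi^2)\,d\mu_1$ in our normalization. Substituting $\psi=-\tau\Phi_1$ gives $\tau^2\int(|\nabla g|^2+4g^2)\rho_1=\tau^2I_g(\rho_1)$. This step needs $\nabla\psi$ to exist $\mu_1$-a.e., which follows from semiconcavity of $\psi$.

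Finally, if $V$ and $W$ are Lipschitz, then $V+W*\rho_1$ is Lipschitz, and $\psi$ is Lipschitz as an $\HK$ potential between bounded-density measures. It follows that $U'(\rho_1)$ is Lipschitz. Since $\rho_1$ lies in a compact subinterval of $(0,\infty)$ where $(U')^{-1}$ is Lipschitz (assuming $U''$ is bounded below there, as in the standing entropy assumptions), $\rho_1$ is Lipschitz. Convex domains and tori enter only through the Monge formula and through the geodesic structure used in the comparison step. The case $W=0$ is the specialization in which the freezing step is trivial.
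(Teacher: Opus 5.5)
Your plan has a genuine gap: the two steps that carry the theorem, monotonicity (i) and the density bounds (ii), are not actually supplied.

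\textbf{Monotonicity (i).} You describe the comparison as a pointwise maximum principle at a maximum of $\Phi_1-\Phi_1\circ T$, but this cannot be run. $\rho_1$ is only integrable and a.e.\ positive, and $T$ is merely a Borel, essentially injective map. The regularize-and-pass-to-the-limit step is unspecified, and nothing guarantees that $\Phi(T_nx)\le\Phi(x)$ survives a limit of optimal maps. The missing idea is a measure-theoretic competitor that needs no continuity.
\begin{itemize}
\item Take $A\subset\{\Phi_1<a<b<\Phi_1\circ T\}$ with $\bar\mu(A)>0$, truncated so that $\rho_1$ lies in a compact subset of $(0,\infty)$ and $\Phi_1$ is bounded on $A\cup T(A)$.
\item Replace $(T,q)$ on $A$ by $(\mathrm{id},\sqrt\alpha)$, where $\alpha=\mu_1(T(A))/\bar\mu(A)$. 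By Cauchy--Schwarz, $\int_Aq\cos r\,d\bar\mu\le\sqrt{\bar\mu(A)\,\mu_1(T(A))}$, so the cone cost does not increase.
\item Convexity of $\HK^2(\bar\mu,\cdot)$ extends this to $\mu_1+s(\alpha\bar\mu|_A-\mu_1|_{T(A)})$.
\item The localized first variation of $\mathcal F$ in that direction is at most $(a-b)\mu_1(T(A))<0$, contradicting minimality.
\end{itemize}
Freezing $W$ is not the obstacle you expect. The interaction expands exactly as $s\int W[\rho_1]h\,dx+O(s^2)$, so $\Phi_1$ appears directly in the first variation.

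\textbf{Density bounds (ii).} Monotonicity plus a Jacobian equation gives nothing without control of $q$ and $\det DT$, and you do not say how to obtain it. A pointwise Jacobian equation would in any case need second-order regularity you do not have. The paper argues as follows.
\begin{itemize}
\item Superlevel sets of $\Phi_1$ are backward invariant under $T$, and sublevel sets are forward invariant.
\item The mass identity $\int_{T^{-1}(B)}q^2\,d\bar\mu=\mu_1(B)$ then yields a positive-measure set on which $q\ge\Lambda_\varepsilon>1$ (respectively $q\le k_\varepsilon<1$).
\item Two further competitors each strictly lower $\mathcal J_\tau(\cdot\mid\bar\mu)$ to first order: replacing $q$ by $tq$ there, or adding $(1-t^2)\bar\mu|_A$ through a diagonal branch.
\item The oscillation of $P_1$ converts the resulting bounds on $\Phi_1$ into \eqref{eq:canonical-two-sided-density-bound}.
\end{itemize}

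\textbf{Ordering of the Euler--Lagrange step.} Your ordering is circular with respect to the available tools. Identifying the derivative of $\sigma\mapsto\HK^2(\sigma,\bar\mu)$ at $\mu_1$ with the dual potential needs three things: existence and uniqueness of the saturated source potential, and uniform Lipschitz bounds on the potentials of the perturbed pairs (for the Arzel\`a--Ascoli step in Lemma~\ref{lem:jko-euler-source-potential-full}). In the paper these come from Lemma~\ref{lem:transformed-potential-Lipschitz}, which assumes two-sided bounds on $\rho_1$, that is, part~(ii). This is why the Euler equation is proved last.

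Note that if you did have the Euler equation and the dual contact relations before (i), you would not need \cite{LM26} at all. For the saturated pair $(\psi,\chi)$ of $(\bar\mu,\mu_1)$, feasibility on the diagonal gives $(1-\psi(x))(1-\chi(x))\ge1$, while $(1-\psi(x))(1-\chi(Tx))=\cos^2r(x)$. Hence $\chi(Tx)\ge\chi(x)$, which is (i), since $\Phi_1=-\chi/(2\tau)$ in these variables. So the whole proposal rests on an Euler--Lagrange step you have not justified under a.e.\ positivity alone.

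Once (ii) and (iii) are in hand, your metric--dissipation and Lipschitz arguments match the paper.
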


\begin{introresult}[Theorem~\ref{thm:surrogate-one-step-consistency}]\label{intro:representability}
If a regular optimal Monge-growth pair is approximable in the chosen admissible class, there are parametric pairs whose endpoint densities, JKO objective values, and pair-objective values converge to those of the exact step. Their cone actions converge to the squared $\HK$ distance. At fixed $\tau$, this conclusion extends to any fixed number of steps under the corresponding approximation hypothesis at each step; no rate uniform in the number of steps is proved.
\end{introresult}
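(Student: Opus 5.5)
The strategy is a recovery-sequence argument. We start from a regular optimal Monge-growth pair $(T_*,q_*)$ for the exact step $\mu_1$ from $\bar\mu$. By Proposition~\ref{prop:incremental-HK-JKO-reparam} under the positive-density hypotheses, and by the regularity of Theorem~\ref{thm:full-step-regularity}, this pair satisfies
\[
(T_*)_\#(q_*^2\bar\mu)=\mu_1,\qquad \mathcal A_{\bar\mu}(T_*,q_*)=\HK^2(\mu_1,\bar\mu).
\]
The regular-pair approximation hypothesis supplies parametric pairs $(T_n,q_n)$ in the admissible class with $T_n\to T_*$ and $q_n\to q_*$. We take this convergence to be uniform, or in $C^1$, on $\Omega$, with $q_n$ bounded below and $\det DT_n$ bounded away from zero uniformly in $n$. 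We then show three things in order: the endpoint densities converge, the cone actions converge, and the energies converge.

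\emph{Endpoint densities.} The density of $(T_n)_\#(q_n^2\bar\mu)$ is given by the change-of-variables formula
\[
\rho_n(y)=\frac{q_n^2\bar\rho}{|\det DT_n|}\circ T_n^{-1}(y).
\]
For a $C^1$-small perturbation of a diffeomorphism, $T_n^{-1}\to T_*^{-1}$ uniformly and the Jacobians converge. Hence $\rho_n\to\rho_1$ uniformly, and therefore in $L^1$. The densities also stay in a fixed interval $[c,C]$ with $c>0$, using the two-sided bounds of Theorem~\ref{thm:full-step-regularity}.

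\emph{Cone actions.} The integrand of $\mathcal A_{\bar\mu}$ in \eqref{eq:incremental-HK-action} is continuous in $(q,T)$, since $\cos(\mathsf d\wedge\pi/2)$ is continuous, and it is dominated by $1+q^2+2q$. Dominated convergence therefore gives
\[
\mathcal A_{\bar\mu}(T_n,q_n)\to\mathcal A_{\bar\mu}(T_*,q_*)=\HK^2(\mu_1,\bar\mu).
\]
Separately, $\HK$ is continuous under weak (indeed $L^1$) convergence of measures with bounded mass, so $\HK^2(\rho_n,\bar\mu)\to\HK^2(\mu_1,\bar\mu)$. Combining these with \eqref{eq:incremental-HK-upper-bound} shows that the overestimate $\mathcal A-\HK^2\ge0$ tends to zero.

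\emph{Energies.} Since $U$ is continuous on $[c,C]$, the internal energy converges by dominated convergence. The potential and interaction terms converge because $V$ and $W$ are continuous on a compact domain and the measures converge weakly with bounded mass. Hence $\mathcal F(\rho_n)\to\mathcal F(\mu_1)$. Both $\mathcal J_\tau(\rho_n\mid\bar\mu)$ and $\mathcal I_\tau(T_n,q_n\mid\bar\mu)$ then converge to the JKO minimum $\mathcal J_\tau(\mu_1\mid\bar\mu)$. This common limit is also the infimum of $\mathcal I_\tau$, by Proposition~\ref{prop:incremental-HK-JKO-reparam}.

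\emph{Multi-step extension.} We argue by induction on $k$ at fixed $\tau$. Suppose the $k$-th approximate state converges in $L^1$, with uniform two-sided density bounds, to the exact $\mu_k$. The approximation hypothesis at step $k$ is posed relative to the approximate source. The exact-step map $\bar\mu\mapsto\mu_1$ must then be shown to be stable on a class of sources with two-sided bounds. I would derive this stability from uniqueness of the minimizer (strict convexity of $\mathcal F$ plus the metric term, or the positive-semidefinite interaction) together with $\Gamma$-convergence of $\mathcal J_\tau(\cdot\mid\bar\mu_n)$ to $\mathcal J_\tau(\cdot\mid\bar\mu)$, which itself uses continuity of $\HK$ in both arguments. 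A diagonal choice of the index $n$ at each step then yields convergence of all $K$ steps. No rate uniform in $K$ is available, because the stability constants compound from step to step.

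The main obstacle is this multi-step stability, together with setting up the hypothesis so that $C^1$ convergence keeps $T_n$ injective with nondegenerate Jacobian. In the one-step case, if only $L^2(\bar\mu)$ convergence of the pair is available, one recovers only weak convergence of the endpoints. Upgrading to density and energy convergence would then require the regularity part of the hypothesis.
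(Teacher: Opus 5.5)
Your one-step argument is correct and is essentially the paper's. $C^1$ convergence gives uniform convergence of the inverses and Jacobians, the change-of-variables formula gives $u_n\to\rho_*$ uniformly, and every term of $\mathcal I_\tau$ then converges. Two small points. First, the two-sided density bounds follow directly from $\rho_*=(q_*^2\bar\rho/J_{T_*})\circ T_*^{-1}$ for the regular pair; you should not invoke Theorem~\ref{thm:full-step-regularity}, whose extra hypotheses are not assumed here. Second, instead of continuity of $\HK$, the paper uses the sandwich $\mathcal J_\tau(\mu_*\mid\bar\mu)\le\mathcal J_\tau(\nu_n\mid\bar\mu)\le\mathcal I_\tau(T_n,q_n\mid\bar\mu)$. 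This also yields the quantitative tightness $\mathcal A_{\bar\mu}(T_n,q_n)-\HK^2(\bar\mu,\nu_n)\le2\tau[\mathcal I_\tau(T_n,q_n\mid\bar\mu)-\mathcal J_\tau(\mu_*\mid\bar\mu)]$.

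The multi-step extension is where your proposal has a genuine gap. You pose the approximation hypothesis at the approximate sources $\widehat\mu_{n,k}$, so you need stability of the exact-step map $\mathcal S_\tau$, which you plan to derive from uniqueness plus $\Gamma$-convergence. Uniqueness is not available under the stated hypotheses. $W$ is only continuous and symmetric, with coercivity imposed through $\Gamma_{\tau,\bar m}$, so $\mathcal J_\tau(\cdot\mid\bar\mu)$ need not be convex and the selection $\mathcal S_\tau$ need not be continuous. Uniqueness needs an extra assumption such as the positive semidefiniteness of Theorem~\ref{thm:a-posteriori}. Even granting it, $\Gamma$-convergence only gives weak convergence of minimizers. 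You would also still have to assume a regular, approximable optimal pair from each of the infinitely many $n$-dependent sources $\widehat\mu_{n,k}$, which is not ``the corresponding approximation hypothesis at each step.''

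The missing idea is to impose the hypothesis along the exact trajectory, on the optimal pairs $(T^*_{k+1},q^*_{k+1})$ from $\mu^\tau_k$ to $\mu^\tau_{k+1}$, and to apply their approximants to the neural state: $\widehat\mu^\tau_{n,k+1}=(T_{\theta_{n,k+1}})_\#(q_{\theta_{n,k+1}}^2\widehat\mu^\tau_{n,k})$. The density $u_{n,k+1}=(q_{\theta_{n,k+1}}^2u_{n,k}/J_{T_{\theta_{n,k+1}}})\circ T_{\theta_{n,k+1}}^{-1}$ depends continuously on the source density in $L^\infty$ (at a continuous limit), the map in $C^1$, and the growth factor in $L^\infty$. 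Hence uniform convergence $u_{n,k}\to\rho^\tau_k$ passes directly to $u_{n,k+1}\to\rho^\tau_{k+1}$, using only that $\rho^\tau_k$ is continuous, which it inherits from $\rho^\tau_0$ through the regular pairs. No stability of $\mathcal S_\tau$, uniqueness, or $\Gamma$-convergence enters. The limits are then the exact trajectory's values $\mathcal J_\tau(\mu^\tau_{k+1}\mid\mu^\tau_k)=\mathcal I_\tau(T^*_{k+1},q^*_{k+1}\mid\mu^\tau_k)$. The pair objectives converge by the same continuity, and $\mathcal J_\tau(\widehat\mu^\tau_{n,k+1}\mid\widehat\mu^\tau_{n,k})$ converges by \eqref{eq:HK2-FR2} and the triangle inequality for $\HK$. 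A single index $n$ serves all steps because the horizon is finite.
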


\begin{introresult}[Theorem~\ref{thm:a-posteriori}]\label{intro:a-posteriori}
Under the exact-step regularity assumptions and positive semidefinite interaction, the gap between a globally feasible dual lower bound and the Monge-growth primal upper bound controls both the JKO objective gap and the internal-energy convexity remainder relative to the unique exact step.  For Boltzmann entropy, it controls the squared $L^1$ error by $2(\nu(\Omega)+\mu_1(\Omega))\mathcal R$, where $\nu$ is the computed measure and $\mu_1$ the exact step; \eqref{eq:full-mass-sublevel-bound} bounds the latter mass.  No approximation assumption on the parametric class is used, but global dual feasibility must be verified.
\end{introresult}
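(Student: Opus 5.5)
The argument is a standard primal--dual sandwich combined with strong convexity of the objective along the flat segment joining $\nu$ and the exact step $\mu_1$. Write $\mathcal J:=\mathcal J_\tau(\cdot\mid\bar\mu)$, and let $\nu=T_\#(q^2\bar\mu)$ be the computed measure.

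\textbf{Step 1 (sandwich).} By the cone-action majorization \eqref{eq:intro-primal-majorization}, $\mathcal J(\nu)\le \mathcal I_\tau(T,q\mid\bar\mu)$. For the lower bound, any globally feasible dual pair $(\varphi,\psi)$ for the $\HK$ dual problem \cite{CPSV2018,LMS18} gives $\HK^2(\sigma,\bar\mu)\ge\int\varphi\,d\sigma+\int\psi\,d\bar\mu$ for every $\sigma$. Hence
\[
\mathcal J(\sigma)\ge \mathcal F(\sigma)+\tfrac1{2\tau}\Big(\int\varphi\,d\sigma+\int\psi\,d\bar\mu\Big),
\]
and taking the infimum in $\sigma$ (a convex conjugate computation) yields a computable lower bound $D\le\mathcal J(\mu_1)$. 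Set $\mathcal R:=\mathcal I_\tau(T,q\mid\bar\mu)-D\ge \mathcal J(\nu)-\mathcal J(\mu_1)\ge0$; this already gives the objective-gap claim.

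\textbf{Step 2 (convexity remainder).} Uniqueness and regularity of $\mu_1$ come from Theorem~\ref{thm:full-step-regularity}: $\rho_1$ is positive and bounded and satisfies the discrete Euler--Lagrange equation
\[
U'(\rho_1)+V+W[\rho_1]+\tfrac1{2\tau}\varphi_1=0 \quad\text{on }\Omega,
\]
with $\varphi_1$ the optimal $\HK$ potential. The point of full support is that this holds as an equality everywhere, not merely as an inequality off the support. I then expand $\mathcal J(\nu)-\mathcal J(\mu_1)$ term by term:
\begin{itemize}
\item $V$ is linear.
\item The interaction satisfies $W[\nu]-W[\mu_1]$ up to the quadratic term $\tfrac12\langle W(\nu-\mu_1),\nu-\mu_1\rangle\ge0$ by positive semidefiniteness.
\item The metric term obeys $\HK^2(\nu,\bar\mu)\ge \HK^2(\mu_1,\bar\mu)+\int\varphi_1\,d(\nu-\mu_1)$, since $\varphi_1$ (with its partner) is dual-feasible and optimal for $\mu_1$, so the dual representation gives a supporting hyperplane.
\item The internal energy yields the Bregman remainder $\int[U(\rho_\nu)-U(\rho_1)-U'(\rho_1)(\rho_\nu-\rho_1)]\,dx$.
\end{itemize}
If $\nu$ has a singular part, it enters through the recession slope and contributes nonnegatively since $U$ is superlinear. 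Summing, the first-order terms cancel by the Euler--Lagrange equation, leaving
\[
\mathcal J(\nu)-\mathcal J(\mu_1)\ge \text{Bregman remainder}\ge0 .
\]
Therefore the remainder is at most $\mathcal R$.

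\textbf{Step 3 (Boltzmann).} For $U(s)=s\log s - s$ the Bregman remainder is the generalized relative entropy
\[
\int\Big(\rho_\nu\log\tfrac{\rho_\nu}{\rho_1}-\rho_\nu+\rho_1\Big)dx .
\]
I would apply the unbalanced Pinsker inequality $\|\rho_\nu-\rho_1\|_{L^1}^2\le 2(\nu(\Omega)+\mu_1(\Omega))\,\mathrm{KL}$, proved by the usual Cauchy--Schwarz argument for $(\sqrt a-\sqrt b)^2$ and Hellinger bounds; I expect the constant to come out as stated. The mass $\mu_1(\Omega)$ is then controlled by \eqref{eq:full-mass-sublevel-bound}.

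\textbf{Main obstacle.} The hard part is Step 2: making the first-order optimality of $\varphi_1$ rigorous. One must check that $\varphi_1$ is a global supporting functional of $\sigma\mapsto\HK^2(\sigma,\bar\mu)$ with the right integrability against $\nu$. This is where the exact-step regularity (two-sided density bounds and bounded potentials) is essential, and where the torus case needs the Monge formula of \cite{GGV25}.
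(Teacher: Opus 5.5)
Your Steps~2 and~3 follow the paper's argument. They use the supporting-hyperplane inequality from the optimal transformed pair, which is feasible for every first argument, and the positive-semidefinite tangent inequality at $\mu_1$. The first-order terms then cancel through \eqref{eq:full-Euler-Lagrange-main}. For Boltzmann entropy, the bound $u\log(u/\rho_1)-u+\rho_1\ge(\sqrt u-\sqrt{\rho_1})^2$ is followed by Cauchy--Schwarz with $\int(\sqrt u+\sqrt{\rho_1})^2\,dx\le2(\nu(\Omega)+\mu_1(\Omega))$.

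The gap is in Step~1. When $W\neq0$, the infimum $\inf_\sigma\{\mathcal F(\sigma)+\frac1{2\tau}\int\varphi\,d\sigma\}$ equals $-\mathcal F^*(-\varphi/2\tau)$ for a functional containing the nonlocal quadratic $\mathcal W$. This is not a pointwise conjugate. Evaluating it means solving $U'(s)+V+W[s]+\varphi/(2\tau)=0$, which is another fully implicit minimization. Your $D$ is therefore a valid lower bound for $\mathcal J_\tau(\mu_1\mid\bar\mu)$, but it is not computable, and it is not the quantity $\mathcal R$ in the statement. As written, the proposal does not show that the gap is an a posteriori certificate.

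The missing idea is to use positive semidefiniteness a second time and linearize the interaction at the \emph{computed} candidate $\nu=u\,dx$. Convexity of $t\mapsto\mathcal W((1-t)\nu+t\sigma)$ gives $\mathcal W(\sigma)\ge\int W[u]\,d\sigma-\mathcal W(\nu)$. Combined with the feasible logarithmic pair $(\zeta,\xi)$, this makes the lower bound pointwise separable:
$\mathcal J_\tau(\sigma\mid\bar\mu)\ge\int[U(s)+as]\,dx-\mathcal W(\nu)+\frac1{2\tau}\int(1-e^{-\xi})\,d\bar\mu$, with $a=V+W[u]+\frac1{2\tau}(1-e^{-\zeta})$.
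Then $\inf_{s\ge0}\{U(s)+as\}=-U^*(-a)$ yields exactly the explicit $\mathcal R$ of \eqref{eq:ap-field}. Your $D$ dominates this explicit bound, so once this step is inserted your inequalities imply the stated ones.

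Two smaller corrections. First, uniqueness of $\mu_1$ is not part of Theorem~\ref{thm:full-step-regularity}, which applies to any minimizer. It follows from convexity of $\sigma\mapsto\HK^2(\sigma,\bar\mu)$ (a supremum of affine functionals), convexity of $\mathcal W$, and strict convexity of $\int U$. Equivalently, apply your Step~2 inequality to a second minimizer. Second, the singular-part remark is unnecessary: the hypothesis gives $\nu=u\,dx$, and $\mathcal F=+\infty$ off absolutely continuous measures by definition.
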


\subsection{Relation to previous work}
\paragraph{Hellinger--Kantorovich geometry and minimizing movements.}
The $\HK$ distance admits complementary dynamic, cone, entropy--transport, and dual formulations \cite{KMV16,CPSV2018,CPSV18,LMS16,LMS18}.  Alternating Wasserstein--Fisher--Rao JKO steps were proved to converge under suitable compactness assumptions in \cite{GM17}; direct $\HK$ minimizing movements for scalar reaction--diffusion equations were constructed in \cite{Fleissner21}; and $\HK$ geometry was used in a Hele--Shaw-type tumor-growth limit in \cite{DiMarinoChizat20}.  More recent work develops fine geodesic and Monge properties, regularity of unbalanced transport, and an EVI theory for geodesically semiconvex energies \cite{LMS23,GGV25,LM26}.  Our exact-step analysis concerns a fully implicit proximal problem in which the interaction field depends on the candidate measure.  As explained above, the comparison scheme is rooted in \cite[Sec.~4]{LM26}, but the self-consistent chemical potential replaces the density as the monotone quantity and necessitates the modified oscillation-sensitive argument of Appendix~\ref{sec:full-density-estimates}.

\paragraph{Numerical optimal transport and neural JKO solvers.}
Entropic scaling made discrete unbalanced transport, barycenters, and unbalanced gradient-flow steps accessible through Sinkhorn-type iterations \cite{ChizatScaling18}, while Eulerian Benamou--Brenier discretizations with operator splitting provide structure-preserving Wasserstein JKO solvers in low dimensions \cite{CarrilloCraig22}.  Minimax estimation of transport-growth pairs in Gaussian--Hellinger unbalanced transport is studied
in \cite{PII26}.
Neural methods instead seek mesh-free parametrizations of Wasserstein or related minimizing movements: convex transport potentials are used in \cite{Mokrov21}, variational representations of density terms in \cite{Fan22}, direct state parametrization in \cite{PKSH23}, blockwise invertible flows in \cite{Xu23}, and Lagrangian neural ODEs with density updates in \cite{Lee24}.  These are primarily balanced or mass-preserving precedents.  Neural unbalanced transport methods combine learned maps with mass reweighting between prescribed distributions~\cite{YangUhler19,Lubeck22,Eyring24}.
TIGON learns velocity and growth fields from temporal snapshots using a dynamic WFR formulation~\cite{TIGON24}.
For a prescribed free energy, we use a primal cone action for fully implicit $\HK$-JKO steps, obtaining energy-descent criteria and, together with feasible dual lower bounds, a posteriori error estimates.

\paragraph{Fixed-reference, dual, and resampling-based formulations.}
A semi-dual approach to Wasserstein JKO steps uses a fixed reference measure and an unbalanced-transport reformulation \cite{CCK24}. A broader dual Lagrangian framework for balanced and unbalanced path problems appears in the literature \cite{Neklyudov24}. Local continuous normalizing flows have also been combined with importance correction and resampling \cite{Hertrich25}.  In our restarted formulation, each pair acts on the current measure and represents only one time step.  The trade-off is the loss of direct density evaluation from a fixed base law. We address this with change-of-variables formulas for density propagation and growth-weighted particle updates.

\subsection{Scope and organization}\label{sec:scope}
We distinguish the formal flow \eqref{pde-intro}, the exact iterates $\mu_{k+1}^\tau=\mathcal S_\tau(\mu_k^\tau)$, and the idealized parametric iterates with exact density propagation
\begin{equation}\label{eq:intro-incremental-neural-state}
\widehat\mu_{k+1}^\tau
=(T_{\theta,k+1})_\#\bigl(q_{\theta,k+1}^2\widehat\mu_k^\tau\bigr)
=u_{\theta,k+1}^\tau dx.
\end{equation}
Recursive numerical runs additionally apply a mass-corrected density fit after each pair update (Algorithm~\ref{alg:composite-HK-JKO}).

The analysis is time-discrete and covers the exact $\HK$-JKO step, its Monge-growth pair formulation, and guarantees for admissible computed pairs. We do not prove convergence of the stochastic optimizer or a $\tau\downarrow0$ limit for the trained scheme. Remarks~\ref{rem:one-step-neural-scope} and~\ref{rem:ap-scope} detail the assumptions and limitations of the approximation and a posteriori results.

Section~\ref{sec:setting-main-results} develops the exact-step theory, and Section~\ref{sec:restarted-formulation} presents the Monge-growth formulation. Section~\ref{sec:neural-realization-algorithm} describes the neural algorithm, and Section~\ref{sec:analytical-results} gives the approximation and a posteriori results. Section~\ref{sec:experiments} presents numerical experiments.\footnote{Code for the numerical experiments is available at \url{https://github.com/tyty4/hkflow}.} Proofs are collected in Appendices~\ref{sec:proof-first-theorem}--\ref{sec:proof-a-posteriori}, and implementation details in Appendix~\ref{sec:design-notes}.

\section{\texorpdfstring{{Setting and the exact $\HK$-JKO step}}{Setting and the exact HK-JKO step}}\label{sec:setting-main-results}\label{sec:preliminaries}\label{sec:exact-step}

\subsection{\texorpdfstring{{The minimal setting}}{The minimal setting}}\label{sec:minimal-setting}\label{sec:neural-proximal-solver}
Throughout, $\Omega$ denotes either a compact convex subset of $\mathbb R^d$
with nonempty interior or the flat torus $\mathbb T^d$.
In the Euclidean case, we write $D=\operatorname{int}\Omega$ when the
open domain must be distinguished from its closure.
We denote by $\mathcal M_+(\Omega)$ the space of finite nonnegative Borel
measures on $\Omega$.  The symbol $dx$ denotes Lebesgue measure in the Euclidean case and the Riemannian volume measure on $\mathbb T^d$; ${\sf d}_\Omega$ denotes the corresponding Euclidean or flat-geodesic distance.  For a real-valued function $f$, we write $f_-:=\max\{-f,0\}$.

We reserve $W(x,y)$ for the interaction kernel.  For $\mu\in\mathcal M_+(\Omega)$, define the associated interaction potential
\[
W[\mu](x):=\int_\Omega W(x,y)\,d\mu(y),
\]
and, for a density $\rho$, write $W[\rho]:=W[\rho\,dx]$.
The free energy is
\begin{equation*}
\mathcal F(\mu)=
\begin{cases}
\displaystyle\int_\Omega U(\rho)\,dx+\int_\Omega V\rho\,dx
+\frac12\iint_{\Omega\times\Omega}W(x,y)\rho(x)\rho(y)\,dx\,dy,
&\mu=\rho\,dx,\\[1mm]
+\infty,&\text{otherwise}.
\end{cases}
\end{equation*}

For later use, denote the scalar interaction-energy functional separately by
\begin{equation*}
\mathcal W(\mu):=\frac12\iint_{\Omega\times\Omega}W(x,y)\,d\mu(x)d\mu(y).
\end{equation*}
Thus $W$ denotes the two-point kernel, $W[\mu]$ its induced one-point potential, and $\mathcal W(\mu)$ the scalar interaction energy.

\begin{assumption}[Interaction kernel]\label{ass:interaction}
We use the following regularity levels:
\[
\textbf{\emph{(W0)}}\quad
W\in C_{\mathrm{sym}}(\Omega\times\Omega),
\qquad
\textbf{\emph{(W1)}}\quad
W\in W^{1,\infty}_{\mathrm{sym}}(\Omega\times\Omega),
\]
where
\(
C_{\mathrm{sym}}(\Omega\times\Omega)
:=
\{F\in C(\Omega\times\Omega):F(x,y)=F(y,x)\},
\)
and similarly for $W^{1,\infty}_{\mathrm{sym}}(\Omega\times\Omega)$.
\end{assumption}

{\noindent The internal energy is subject to the following standing assumption.}

\begin{assumption}[Standing assumptions on the internal energy]\label{ass:standing-entropy}
Whenever the pointwise regularity theorem is invoked,
\[
U\in C([0,\infty))\cap C^2((0,\infty)),\qquad U''>0,
\qquad U'(0+)=-\infty,\qquad U'(+\infty)=+\infty.
\]
\end{assumption}

Thus $U':(0,\infty)\to\mathbb R$ is a strictly increasing bijection.  Assumptions on $V,W$ and coercivity of the full JKO objective are stated in each theorem.

{
The principal example is the Boltzmann entropy $U(r)=r\log r-r$; a negative contribution from the quadratic interaction energy requires the separate coercivity hypothesis in Theorem~\ref{thm:main-fully-implicit-HK-JKO-step}.
\par}

For a positive density $\rho$, set
\begin{equation*}
P[\rho\,dx]=V+W[\rho],
\qquad g_\rho=U'(\rho)+P[\rho\,dx].
\end{equation*}
On a bounded domain the formal no-flux condition, with $\sf n$ the outward unit normal, is
\begin{equation*}
\rho\nabla g_\rho\cdot{\sf n}=0\qquad\mathcal H^{d-1}\text{-a.e. on }\partial\Omega.
\end{equation*}

\subsection{\texorpdfstring{$\HK$ formulations and normalization}{HK formulations and normalization}}\label{sec:HK-formulations}
{\paragraph{The Hellinger--Kantorovich metric.}
Throughout we use the normalization $\HK=\HK_{1,4}$, characterized by the dynamic formulation \cite{CPSV2018,LMS16}}

\begin{equation*}
\HK^2(\mu_0,\mu_1)=\inf\int_0^1\int_\Omega(|v_t|^2+4|\alpha_t|^2)\,d\mu_t\,dt,
\end{equation*}
where the infimum is taken over narrowly continuous curves
$(\mu_t)_{t\in[0,1]}$ in $\mathcal M_+(\Omega)$ and Borel
velocity and reaction fields $v_t$ and $\alpha_t$ with finite
action, satisfying
\begin{equation*}
\partial_t\mu_t+\operatorname{div}(\mu_t v_t)
=4\alpha_t\mu_t,
\qquad
\mu|_{t=0}=\mu_0,\quad \mu|_{t=1}=\mu_1.
\end{equation*}
The continuity equation is understood weakly, with no boundary
flux in the Euclidean case and periodicity on the torus.
This dynamic formulation corresponds to the choice $a=1$, $b=4$ in \eqref{pde-intro}.  The equivalent entropy--transport, dual, and one-potential formulations, together with the elementary bounds used in the proofs, are recorded below.

Set $F(s)=s\log s-s+1$, $F^*(a)=e^a-1$, and
\begin{equation*}
c_{\HK}(x,y)=-\log\!\left(\cos^2({\sf d}_\Omega(x,y)\wedge\tfrac\pi2)\right).
\end{equation*}
The entropy--transport formulation is \cite{CPSV2018,LMS18}
\begin{equation}\label{eq:HK-entropy-transport}
\HK^2(\mu_0,\mu_1)=\inf_{\gamma}
\left\{\int F\!\left(\frac{d\gamma_0}{d\mu_0}\right)d\mu_0
+\int F\!\left(\frac{d\gamma_1}{d\mu_1}\right)d\mu_1
+\int c_{\HK}\,d\gamma\right\},
\end{equation}
where $\gamma_i\ll\mu_i$.  Its dual is
\begin{equation}\label{eq:HK-dual}
\HK^2(\mu,\nu)=
\sup_{z_0+z_1\le c_{\HK}}
\left[-\int F^*(-z_0)\,d\mu-\int F^*(-z_1)\,d\nu\right],
\end{equation}
and the one-potential form is
\begin{equation}\label{eq:HK-onepot}
\HK^2(\mu,\nu)=\sup_v
\left[-\int F^*(-v^c)\,d\mu-\int F^*(-v)\,d\nu\right],
\qquad v^c(x)=\inf_y\{c_{\HK}(x,y)-v(y)\}.
\end{equation}
The formulation actually used in this paper is the equivalent \emph{cone}, or
semi-coupling, form \cite{LMS18,CPSV2018}: with
\(
\mathfrak c(x,y,a,b):=a^2+b^2-2ab\cos({\sf d}_\Omega(x,y)\wedge\tfrac\pi2),
\)
\begin{equation}\label{eq:HK-cone}
\HK^2(\mu_0,\mu_1)
=\min\left\{
\int_{\Omega\times\Omega}\mathfrak c\bigl(x,y,r_0(x,y),r_1(x,y)\bigr)\,d\gamma
\ :\
\pi^0_\#(r_0^2\gamma)=\mu_0,\quad \pi^1_\#(r_1^2\gamma)=\mu_1
\right\},
\end{equation}
the minimum being taken over $\gamma\in\mathcal M_+(\Omega\times\Omega)$ and Borel
$r_0,r_1\ge0$ with $r_i\in L^2(\gamma)$.  Only one consequence of
\eqref{eq:HK-cone} is used below: \emph{every} admissible triple
$(\gamma,r_0,r_1)$ yields an upper bound for $\HK^2$.  In particular,
$\gamma=(\mathrm{id},T)_\#\mu$ with $(r_0,r_1)=(1,q)$ gives
\eqref{eq:incremental-HK-upper-bound}, $\gamma=(T,T')_\#\mu$ with
$(r_0,r_1)=(q,q')$ gives the common-base bound used in
Lemma~\ref{lem:pair-to-HK-stability}, and the diagonal choice
$\gamma=(\mathrm{id},\mathrm{id})_\#(\rho_0\,dx)$ with
$(r_0,r_1)=(1,\sqrt{\rho_1/\rho_0})$ gives the pure-reaction competitor.  The latter reads
%The pure-reaction competitor gives
\begin{equation}\label{eq:HK2-FR2}
\HK^2(\rho_0dx,\rho_1dx)\le\int(\sqrt{\rho_0}-\sqrt{\rho_1})^2dx
\le\|\rho_0-\rho_1\|_{L^1},
\end{equation}
and
\begin{equation}\label{eq:HK-mass-difference}
\HK(\mu,\nu)\ge|\sqrt{\mu(\Omega)}-\sqrt{\nu(\Omega)}|,
\qquad \HK^2(\mu,0)=\mu(\Omega).
\end{equation}
For $\tau>0$, define
\begin{equation*}
\mathcal J_\tau(\sigma\mid\mu):=\mathcal F(\sigma)+\frac1{2\tau}\HK^2(\sigma,\mu),
\end{equation*}
and, whenever a minimizer exists, choose
\begin{equation*}
\mathcal S_\tau(\mu)\in\operatorname*{argmin}_{\sigma\in\mathcal M_+(\Omega)}\mathcal J_\tau(\sigma\mid\mu).
\end{equation*}
The exact recursion generated by this operator is
\begin{equation*}
\mu_{k+1}^\tau=\mathcal S_\tau(\mu_k^\tau),
\qquad k=0,1,\dots,
\end{equation*}
and is kept notationally distinct from the neural recursion
\eqref{eq:intro-incremental-neural-state} throughout.  In both recursions the step is
\emph{fully implicit} in the interaction: a candidate $r\,dx$ enters through
\begin{equation*}
g_r=U'(r)+V+W[r],
\end{equation*}
and not through a field frozen at the preceding state.

\subsection{\texorpdfstring{{Theorem~A: existence, mass control, positivity}}{Theorem A: existence, mass control, positivity}}\label{sec:exact-existence}
For $\bar m=\bar\mu(\Omega)$, $\underline V:=\min_{\Omega}V$, and $\underline W:=\min_{(x,y)\in\Omega\times\Omega}W(x,y)$, we define
\begin{equation}\label{eq:scalar-full-coercivity}
\Gamma_{\tau,\bar m}(m)=|\Omega|U\!\left(\frac m{|\Omega|}\right)+\underline V m+\frac{\underline W}{2}m^2
+\frac1{2\tau}(\sqrt m-\sqrt{\bar m})^2.
\end{equation}

\begin{theorem}[Existence, mass control, and positivity]\label{thm:main-fully-implicit-HK-JKO-step}
Let $\Omega$ be either a compact convex subset of $\mathbb R^d$ with nonempty interior or the flat torus $\mathbb T^d$.
Assume that $U:[0,\infty)\to\mathbb R$ is convex, lower semicontinuous, finite at $0$, and superlinear. Let $V\in C(\Omega)$ and suppose that \textbf{\emph{(W0)}} holds. Assume also that $\Gamma_{\tau,\bar m}(m)\to+\infty$ as $m\to\infty$.
Then $\mathcal J_\tau(\cdot\mid\bar\mu)$ has a minimizer $\mu_1=\rho_1dx$.  Every minimizer satisfies
\begin{equation}\label{eq:full-mass-sublevel-bound}
\mu_1(\Omega)\le M_{\tau,\bar\mu}:=
\sup\left\{m\ge0:\Gamma_{\tau,\bar m}(m)\le|\Omega|U(0)+\frac{\bar m}{2\tau}\right\}<\infty.
\end{equation}
If $U'(0+)=-\infty$, then $\rho_1>0$ almost everywhere and $\operatorname{supp}\mu_1=\Omega$.
Here $U'(0+)$ denotes the right derivative of the convex function $U$ at $0$, which exists in $[-\infty,+\infty)$; no differentiability of $U$ is assumed in this theorem.
\end{theorem}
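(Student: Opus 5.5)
The plan is to use the direct method in the narrow topology on $\mathcal M_+(\Omega)$, with the mass bound doing double duty as the compactness estimate. First I record the value of the competitor $\sigma=0$. By \eqref{eq:HK-mass-difference} it gives
\[
\mathcal J_\tau(0\mid\bar\mu)=|\Omega|U(0)+\frac{\bar m}{2\tau}.
\]
Hence $\inf\mathcal J_\tau<\infty$, and every minimizer satisfies $\mathcal J_\tau(\mu_1\mid\bar\mu)\le |\Omega|U(0)+\bar m/(2\tau)$.

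Next I bound $\mathcal J_\tau(\rho\,dx\mid\bar\mu)$ from below by $\Gamma_{\tau,\bar m}(m)$, where $m=\int\rho$.
\begin{itemize}
\item Jensen's inequality for convex $U$ gives $\int U(\rho)\,dx\ge|\Omega|U(m/|\Omega|)$.
\item The potential term satisfies $\int V\rho\ge\underline V m$.
\item The interaction term satisfies $\frac12\iint W\rho\rho\ge\frac{\underline W}{2}m^2$.
\item The metric term satisfies $\HK^2\ge(\sqrt m-\sqrt{\bar m})^2$, again by \eqref{eq:HK-mass-difference}.
\end{itemize}
Thus $\mathcal J_\tau\ge\Gamma_{\tau,\bar m}(m)$, and the bound \eqref{eq:full-mass-sublevel-bound} for any minimizer follows immediately. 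Because $\Gamma_{\tau,\bar m}(m)\to\infty$ as $m\to\infty$, the sublevel set is bounded, so $M_{\tau,\bar\mu}<\infty$.

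For existence, take a minimizing sequence $\sigma_n=\rho_n\,dx$. The same argument gives uniformly bounded masses. Since $\Omega$ is compact, a subsequence converges narrowly to some $\mu_1$. Each term is lower semicontinuous along this sequence:
\begin{itemize}
\item The masses are bounded and $\underline V m$, $\frac{\underline W}{2}m^2$ and the metric term are bounded below, so $\int U(\rho_n)$ is bounded above. Superlinearity of $U$ then gives equi-integrability via de la Vallée-Poussin. So $\mu_1=\rho_1\,dx$ and $\rho_n\rightharpoonup\rho_1$ weakly in $L^1$, and convexity with lower semicontinuity of $U$ makes $\int U(\rho)$ weakly l.s.c.
\item The linear term $\int V\rho$ is continuous under narrow convergence, since $V\in C(\Omega)$.
\item The interaction term is continuous because $\sigma_n\otimes\sigma_n\to\mu_1\otimes\mu_1$ narrowly and $W$ is continuous and bounded on the compact set $\Omega\times\Omega$.
\item $\HK^2(\cdot,\bar\mu)$ is narrowly l.s.c.; indeed it is a metric metrizing narrow convergence on $\mathcal M_+$ of a compact space, as in \cite{LMS18}.
\end{itemize}
Hence $\mu_1$ is a minimizer.

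Positivity when $U'(0+)=-\infty$ is the main obstacle. I argue by contradiction: suppose $Z=\{\rho_1=0\}$ has positive measure. Perturb to $\rho_\varepsilon=\rho_1+\varepsilon\mathbbm 1_A$ with $A\subset Z$, $0<|A|<\infty$. The terms change as follows:
\begin{itemize}
\item The internal energy changes by $|A|\,(U(\varepsilon)-U(0))$. Since $U'(0+)=-\infty$, this equals $-\infty\cdot\varepsilon$ to first order, i.e.\ the change is $o(1)\cdot\varepsilon$ with arbitrarily large negative slope.
\item The $V$ and $W$ terms change by $O(\varepsilon)$ and $O(\varepsilon^2)$, using boundedness of $V$, $W$ and $\rho_1$'s finite mass.
\item The metric term changes by $O(\varepsilon)$. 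I would justify this via the triangle inequality: $\HK^2(\rho_\varepsilon,\bar\mu)\le(\HK(\rho_1,\bar\mu)+\HK(\rho_1,\rho_\varepsilon))^2$, together with \eqref{eq:HK2-FR2}, which gives $\HK(\rho_1,\rho_\varepsilon)^2\le\int(\sqrt{\rho_\varepsilon}-\sqrt{\rho_1})^2=\varepsilon|A|$.
\end{itemize}
This is delicate: the metric change is actually $O(\sqrt\varepsilon)$ in general, i.e.\ $2\HK(\rho_1,\bar\mu)\sqrt{\varepsilon|A|}+\varepsilon|A|$.

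So the comparison must be made between $U(\varepsilon)-U(0)$ and $\sqrt\varepsilon$, and I need a sharper metric estimate. The fix I would try first is a first-order variation of $\HK^2$ through the dual/entropy–transport formulation \eqref{eq:HK-entropy-transport}. Take an optimal plan $\gamma$ for $(\rho_1,\bar\mu)$ and keep it, modifying only the marginal density term $\int F(d\gamma_1/d\mu_1)\,d\mu_1$. On $A$ we have $\gamma_1=0$, so adding mass $\varepsilon$ there costs exactly $F(0)\,\varepsilon|A|=\varepsilon|A|$. The remaining part of the integrand is unchanged, since $\gamma_1\perp\varepsilon\mathbbm 1_A$. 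Hence $\HK^2(\rho_\varepsilon,\bar\mu)\le\HK^2(\rho_1,\bar\mu)+\varepsilon|A|$, which is linear in $\varepsilon$, while the internal energy decreases superlinearly in slope. For small $\varepsilon$ this contradicts minimality, so $\rho_1>0$ a.e.

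Finally, full support follows because $\rho_1>0$ a.e. on $\Omega$, which has nonempty interior (and the torus is covered as well), so every open set carries positive $\mu_1$-mass.
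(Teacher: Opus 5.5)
Your proposal is correct and follows the paper's proof essentially step by step. Mass control comes from the zero competitor together with the scalar lower bound $\Gamma_{\tau,\bar m}$. Existence comes from de la Vall\'ee-Poussin, narrow compactness, and lower semicontinuity of each term. For positivity, you reuse an entropy--transport plan for $(\mu_1,\bar\mu)$ whose marginal on the $\mu_1$ side vanishes on the vacuum set, so adding $\varepsilon\mathbbm 1_A$ raises $\HK^2$ by at most $F(0)\varepsilon|A|$, which is dominated by $|A|(U(\varepsilon)-U(0))$.

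The only difference is cosmetic. You take an exactly optimal plan (available from \cite{LMS18}), whereas the paper uses a $\delta$-optimal plan and lets $\delta\downarrow0$. You were also right to discard the triangle-inequality estimate, which only yields an $O(\sqrt\varepsilon)$ increment.
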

\noindent
\textit{Idea of the proof (Appendix \ref{sec:proof-first-theorem}).}
The proof uses the direct method, with a scalar coercivity estimate to control the total mass.  Testing against
$\sigma=0$ bounds the infimum by $|\Omega|U(0)+\bar m/(2\tau)$, because
$\HK^2(0,\bar\mu)=\bar\mu(\Omega)$; this is where the unbalanced geometry helps, since the
zero measure is an admissible competitor at finite cost.  Jensen's inequality, the elementary
bounds $V\ge\underline V$ and $W\ge\underline W$, and the mass estimate
\eqref{eq:HK-mass-difference} then bound $\mathcal J_\tau(\sigma\mid\bar\mu)$ from below by
the scalar function $\Gamma_{\tau,\bar m}$ of the total mass \emph{alone}.  This reduction
is the only place where the negative contribution of the quadratic interaction energy must be controlled, and it is why
the coercivity hypothesis is stated as a condition on one scalar function rather than as a
sign condition on $W$.

Superlinearity of $U$ upgrades the resulting mass bound to uniform
integrability through the de la Vall\'ee-Poussin criterion, so no singular part can appear
in the limit, while narrow convergence of $\sigma_n\otimes\sigma_n$ on the compact product
space handles the interaction term.

Strict positivity is obtained by adding
$\varepsilon\mathbbm1_B\,dx$ on a putative vacuum set $B$: the extra proximal cost is at
most $\varepsilon|B|/(2\tau)$ because $F(0)=1$, whereas the internal-energy increment is
$|B|\bigl(U(\varepsilon)-U(0)\bigr)$, which dominates every $O(\varepsilon)$ term when
$U'(0+)=-\infty$.

\medskip

\begin{corollary}[Explicit mass bound]\label{cor:explicit-mass-bound}
If the interaction energy is nonnegative on nonnegative measures and $U(r)\ge ar-C_a$ with $a>\|V_-\|_\infty$, then
\begin{equation}\label{eq:explicit-full-mass-bound}
\mu_1(\Omega)\le\frac{C_a|\Omega|+|\Omega|U(0)+\bar m/(2\tau)}{a-\|V_-\|_\infty}.
\end{equation}
\end{corollary}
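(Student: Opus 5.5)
We compare the minimizer with the zero competitor and then bound the objective from below in terms of total mass only. Since $\HK^2(0,\bar\mu)=\bar m$ by \eqref{eq:HK-mass-difference}, minimality of $\mu_1=\rho_1dx$ gives
\[
\mathcal J_\tau(\mu_1\mid\bar\mu)\le \mathcal J_\tau(0\mid\bar\mu)=|\Omega|U(0)+\frac{\bar m}{2\tau}.
\]
Write $m=\mu_1(\Omega)$. We bound each term of $\mathcal J_\tau(\mu_1\mid\bar\mu)$ from below.

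\emph{Interaction term.} Positivity of the interaction energy on nonnegative measures gives $\mathcal W(\mu_1)\ge0$.

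\emph{Potential term.} Since $V\ge-\|V_-\|_\infty$, we have $\int V\rho_1\,dx\ge -\|V_-\|_\infty m$.

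\emph{Internal energy.} Integrating the affine lower bound $U(r)\ge ar-C_a$ gives $\int U(\rho_1)\,dx\ge am-C_a|\Omega|$. This uses only the pointwise inequality, with no Jensen step.

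\emph{Proximal term.} We drop it, since it is nonnegative.

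Combining these bounds yields
\[
(a-\|V_-\|_\infty)\,m - C_a|\Omega|\le |\Omega|U(0)+\frac{\bar m}{2\tau}.
\]
Because $a>\|V_-\|_\infty$, we can divide and obtain \eqref{eq:explicit-full-mass-bound}.

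Before the comparison, we need the hypotheses of Theorem~\ref{thm:main-fully-implicit-HK-JKO-step} to hold, so that a minimizer exists. Alternatively, we can note that the argument bounds the mass of every minimizer. To verify coercivity of $\Gamma_{\tau,\bar m}$, we use the same bounds: $|\Omega|U(m/|\Omega|)\ge am-C_a|\Omega|$ and $\underline V\ge-\|V_-\|_\infty$. For the interaction term, positivity of $\mathcal W$ on all nonnegative measures, applied to point masses, gives $W(x,x)\ge0$, but it does not give $\underline W\ge0$.

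This is the one delicate point. Since $\Gamma$ uses $\underline W$, its coercivity may fail even though the direct argument above works. We therefore frame the corollary as a bound for any minimizer, whose existence is provided by Theorem~\ref{thm:main-fully-implicit-HK-JKO-step} when it applies. The existence proof itself goes through with $\mathcal W\ge0$ replacing the $\Gamma$-reduction, because the mass bound above supplies the needed coercivity directly. No step here is a real obstacle beyond this bookkeeping.
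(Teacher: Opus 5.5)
Your proposal is correct and follows the paper's proof essentially line by line: you compare with the zero competitor, bound each term from below (the affine bound on $U$, $V\ge-\|V_-\|_\infty$, and nonnegativity of the interaction and proximal terms), and divide by $a-\|V_-\|_\infty$. Your side remark that $\mathcal W\ge0$ on nonnegative measures does not force $\underline W\ge0$ is a fair clarification the paper leaves implicit: the corollary should be read as a bound on any minimizer, and the direct mass bound can itself secure that a minimizer exists.
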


\subsection{\texorpdfstring{Monge representation on domains and tori}{Monge representation on domains and tori}}\label{sec:monge-input}
The next proposition supplies the optimal Monge-growth pairs used in both Theorem~B and the exact pair reformulation in Section~\ref{sec:incremental-monge-growth}. Its proof depends on whether the domain is Euclidean or a flat torus. The subsequent arguments use the representation \eqref{eq:injective-growth-representation}, positivity of $q$, and essential injectivity of $T$.
\begin{proposition}[Monge representation and essential injectivity]\label{prop:HK-Monge-representation}
Let $\Omega$ be either a compact convex subset of $\mathbb R^d$ with nonempty interior or the flat torus $\mathbb T^d$, and let $\mu_i=\rho_i dx$ with $\rho_i>0$ almost everywhere.  There is an optimal Monge-growth pair $(T,q)$ from $\mu_0$ to $\mu_1$ satisfying
\begin{equation}\label{eq:injective-growth-representation}
T_\#(q^2\mu_0)=\mu_1,
\qquad
\HK^2(\mu_0,\mu_1)=\int_\Omega(1+q^2-2q\cos({\sf d}_\Omega(x,T(x))\wedge\tfrac\pi2))\,d\mu_0.
\end{equation}
Moreover, $q>0$ $\mu_0$-almost everywhere and $T$ may be chosen essentially injective. The same statement holds with $\mu_0$ and $\mu_1$ interchanged.
\end{proposition}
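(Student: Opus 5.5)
The plan is to derive the pair from the optimal plan of the entropy--transport formulation \eqref{eq:HK-entropy-transport}, using the known Monge structure of $\HK$-optimal plans. In the Euclidean case we invoke \cite{LMS23}; on $\mathbb T^d$ we invoke \cite{GGV25}.

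\emph{Step 1: optimal plan and densities.} Since $\mu_0,\mu_1$ are finite on a compact space, a minimizing plan $\gamma$ of \eqref{eq:HK-entropy-transport} exists, with marginals $\gamma_i=\sigma_i\mu_i$. Standard optimality conditions then give potentials $z_0,z_1$ with the following properties:
\begin{itemize}
\item $\sigma_i=e^{-z_i}$ on the marginal supports;
\item $z_0\oplus z_1\le c_{\HK}$ everywhere, with equality on $\operatorname{supp}\gamma$;
\item $\operatorname{supp}\gamma\subset\{{\sf d}<\pi/2\}$.
\end{itemize}
We also need the marginals to carry full mass where it matters. Because $\rho_0,\rho_1>0$ a.e., the reduced sets where $\sigma_i>0$ have full measure. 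This follows from the perturbation argument of \cite{LMS18}: if a positive-measure set of $\mu_0$ were left untransported while $\mu_1$ is also positive nearby, moving a little mass along short distances would lower the cost, since $c_{\HK}(x,y)\approx{\sf d}^2$ is small. Hence $\sigma_0>0$ $\mu_0$-a.e.

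\emph{Step 2: Monge map.} The potential $z_0$ is a $c_{\HK}$-transform. Since $c_{\HK}$ is locally semiconcave away from ${\sf d}=\pi/2$, $z_0$ is locally semiconcave, hence differentiable $dx$-a.e., and therefore $\mu_0$-a.e. because $\mu_0\ll dx$. The twist condition for $c_{\HK}$ on $\{{\sf d}<\pi/2\}$ then shows that $\gamma=(\mathrm{id},T)_\#\gamma_0$ with $T(x)=\exp_x(\arctan(|\nabla z_0|)\,\nabla z_0/|\nabla z_0|)$, up to the paper's normalization. On the torus this is exactly the manifold statement of \cite{GGV25}. Convexity of $\Omega$ ensures $T(x)\in\Omega$.

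\emph{Step 3: the growth factor and the cost identity.} Set $q(x)^2=\sigma_0(x)^{-1}\,\sigma_1(T(x))^{-1}\cdot\sigma_0(x)^2=\sigma_0(x)/\sigma_1(T(x))$, which equals $e^{z_1(T(x))-z_0(x)}$; equivalently, $q$ is defined so that $T_\#(q^2\mu_0)=\mu_1$. The pushforward identity comes from the marginal relation $\gamma_1=\sigma_1\mu_1=T_\#(\sigma_0\mu_0)$, after dividing by $\sigma_1$, which is positive $\mu_1$-a.e. With this choice we plug the triple $(\gamma,\sqrt{\sigma_0},\ldots)$ into the cone form \eqref{eq:HK-cone}. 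Concretely, the choices $r_0=1$, $r_1=q$ and base measure $(\mathrm{id},T)_\#\mu_0$ must be shown to achieve the value $\HK^2$. We verify this using the equivalence between the optimal entropy-transport plan and the optimal cone plan (\cite[Thm.~7.20]{LMS18}): on the optimal plan, $\cos{\sf d}=\sqrt{\sigma_0(x)\sigma_1(y)}$, and a direct computation gives $1+q^2-2q\cos{\sf d}$ integrated against $\mu_0$ equal to $\HK^2$. Positivity of $q$ follows since $\sigma_0>0$ a.e. and $\sigma_1<\infty$ on the support.

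\emph{Step 4: essential injectivity and symmetry.} Applying Steps 1--3 with the roles of $\mu_0$ and $\mu_1$ swapped yields a map $S$ with $\gamma=(S,\mathrm{id})_\#\gamma_1$. Since the optimal plan is unique, being induced by a map from either side, $S\circ T=\mathrm{id}$ $\mu_0$-a.e., so $T$ is essentially injective. The interchanged statement is then the same construction run in reverse.

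The main obstacle is Step 1's full-support claim: ruling out mass of $\mu_0$ that is destroyed rather than transported. Without it, $q$ could vanish on a set of positive measure and the pushforward relation would fail where $\sigma_1$ vanishes. For this step we would rely on the positivity of both densities combined with the first-order optimality relation $\sigma_0=e^{-z_0}$, with $z_0$ finite, as in \cite{LMS23}.
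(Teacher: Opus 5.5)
Your proposal is correct and follows essentially the paper's route: \cite{LMS23} in the Euclidean case, \cite{GGV25} on the torus, and essential injectivity from the reversed Monge map plus uniqueness of the entropy--transport optimizer, which is exactly the paper's torus argument. The positivity step you flag is settled in the paper by noting that full supports make the pair strongly reduced in the sense of \cite[Definition~2.6]{LMS23}. Your perturbation argument also works, but its mechanism is $F'(0^+)=-\infty$ against a finite cost rather than smallness of $c_{\HK}$, and appealing to finiteness of $z_0$ would be circular; note also that the map should read $T(x)=\exp_x\bigl(-\arctan(|\nabla z_0|/2)\,\nabla z_0/|\nabla z_0|\bigr)$.
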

\noindent
\textit{Idea of the proof (Appendix~\ref{sec:proof-monge-representation}).}
The Euclidean case is Theorem~3.3(4)--(5) and Corollary~3.5 of \cite{LMS23}.  Since both
densities are positive, their supports both equal $\Omega$, so each support lies in the
$\pi/2$-neighborhood of the other; this is precisely the strong-reduction condition of
\cite[Definition~2.6]{LMS23}.  The torus case follows by combining the Lipschitz-potential and
Monge results of \cite{GGV25}, with essential
injectivity obtained from uniqueness of the entropy--transport optimizer for the reversed
pair.  Appendix~\ref{sec:proof-monge-representation} also records the explicit form of
$(T,q)$ in terms of the logarithmic dual potential.

\paragraph{Orientation convention.}
The \emph{forward} pair defines the numerical update, $T_\#(q^2\mu_k)=\mu_{k+1}$, with cone action integrated against $\mu_k$. The \emph{reversed} pair maps $\mu_{k+1}$ back to $\mu_k$; its source-side dual potential appears in Theorem~\ref{thm:full-step-regularity}(iii), whose Euler equation holds at the new density. Proposition~\ref{prop:HK-Monge-representation} provides both orientations.

Accordingly, \eqref{eq:exact-step-dissipation-main} integrates against $\rho_{k+1}$, while $\mathcal A_{\mu_k}$ integrates against $\rho_k$. Their leading-order agreement in $\tau$ supplies the diagnostic reference values. The forward and reversed generating potentials also have opposite leading-order signs; see Remark~\ref{rem:tau-scaled-ansatz}.

\subsection[Theorem B: density bounds and discrete identities]{\texorpdfstring{{Theorem~B: density bounds, the discrete Euler equation, and the metric--dissipation identity}}{Theorem B: density bounds, the discrete Euler equation, and the metric--dissipation identity}}\label{sec:exact-regularity}
Let us define
\begin{equation*}
I_g(\rho)=\int_\Omega(|\nabla g_\rho|^2+4|g_\rho|^2)\rho\,dx.
\end{equation*}

\begin{theorem}[Density bounds and the exact discrete Euler equation]\label{thm:full-step-regularity}
Under the assumptions of Theorem~\ref{thm:main-fully-implicit-HK-JKO-step}, we further assume Assumption~\ref{ass:standing-entropy}.  Let $\bar\mu=\bar\rho\,dx$ with $\bar\rho\in L^\infty(\Omega)$ and $0<c_0\le\bar\rho\le C_0$ almost everywhere.  For a minimizer $\mu_1=\rho_1dx$, set $M_1=M_{\tau,\bar\mu}$ and
\begin{equation*}
B_1=\|V\|_\infty+M_1\|W\|_\infty,
\qquad \omega_1=\essosc_\Omega V+2M_1\|W\|_\infty,
\qquad c_*=(U')^{-1}(-B_1-1),
\end{equation*}
where $\essosc_\Omega f:=\esssup_\Omega f-\essinf_\Omega f$ denotes the essential oscillation.
Then:
\begin{enumerate}[(i)]
\item there exists an essentially injective optimal pair $(T,q)$ from $\bar\mu$ to $\mu_1$ such that
\begin{equation}\label{eq:chemical-potential-monotonicity-main}
\Phi_1(Tx)\le\Phi_1(x),\qquad \Phi_1:=U'(\rho_1)+V+W[\mu_1],
\end{equation}
for $\bar\mu$-almost every $x$;
\item the density satisfies
\begin{equation}\label{eq:canonical-two-sided-density-bound}
\begin{aligned}
0<\underline\rho_1&:=(U')^{-1}(U'(\min\{c_0,c_*\})-\omega_1)\le\rho_1,\\
\rho_1&\le(U')^{-1}(\max\{B_1,U'(C_0)\}+\omega_1)=:\overline\rho_1<\infty;
\end{aligned}
\end{equation}
\item for the reversed pair $(\mu_1,\bar\mu)$, there is a potential $\phi_1\in W^{1,\infty}(\Omega)$ such that $2\phi_1$ is the coefficient of $\mu_1$ in the transformed dual \eqref{eq:transformed-HK-dual}. With this normalization,
\begin{equation}\label{eq:full-Euler-Lagrange-main}
g_{\rho_1}:=U'(\rho_1)+V+W[\rho_1]=-\frac{\phi_1}{\tau}\qquad\text{a.e.},
\end{equation}
and
\begin{equation}\label{eq:exact-step-dissipation-main}
\HK^2(\mu_1,\bar\mu)=\tau^2I_g(\rho_1),
\qquad
\mathcal F(\mu_1)+\frac\tau2I_g(\rho_1)\le\mathcal F(\bar\mu).
\end{equation}
\end{enumerate}
Here $g_{\rho_1}=-\phi_1/\tau$ is Lipschitz by part~(iii), and $I_g(\rho_1)$ uses its weak gradient.
If, in addition, $V\in W^{1,\infty}(\Omega)$ and \textbf{\emph{(W1)}} holds, then $\rho_1\in W^{1,\infty}(\Omega)$.
\end{theorem}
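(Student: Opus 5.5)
The plan has four stages: a first-variation inequality, a comparison argument giving (i), the density bounds (ii) read off from (i), and then the Euler equation, the dissipation identity and Lipschitz regularity from the dual potential.

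\textbf{First variation.} By Theorem~\ref{thm:main-fully-implicit-HK-JKO-step}, $\rho_1>0$ a.e. Proposition~\ref{prop:HK-Monge-representation} applies since $\bar\rho\ge c_0>0$. It gives an essentially injective optimal pair $(T,q)$ from $\bar\mu$ to $\mu_1$ with $q>0$, so $\mu_1=T_\#(q^2\bar\mu)$ and the cone cost is attained. I then perturb the minimizer by local mass relocation along the pair. For two Lebesgue points $x,x'$ of $\bar\rho$, I move a small amount of the mass carried from a neighbourhood of $x$ so that it lands near $Tx'$ instead of near $Tx$. The cone action of the modified triple is an upper bound for $\HK^2$, so the perturbed competitor has objective at least $\mathcal J_\tau(\mu_1\mid\bar\mu)$. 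Because the interaction is fully implicit, the first variation of $\mathcal F$ at $\mu_1$ in direction $\delta$ is $\int\Phi_1\,d\delta$ (using symmetry of $W$), with $\Phi_1=U'(\rho_1)+V+W[\mu_1]$.

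\textbf{Part (i).} Following the comparison argument of \cite[Sec.~4]{LM26}, I combine the variational inequality from the relocation with the cyclical-monotonicity/$c$-concavity of the optimal plan. The goal is to show that $\Phi_1(Tx)>\Phi_1(x)$ on a set of positive $\bar\mu$-measure leads to a contradiction. On that set, shrinking the reaction part (lowering $q$ slightly) and keeping the mass at $x$ strictly decreases the objective. This step is the main obstacle. The monotone quantity is $\Phi_1$, not $\rho_1$, and $\Phi_1$ contains the nonlocal term $W[\mu_1]$. Hence the perturbation also changes $W[\mu_1]$ globally, and that change must be shown to be of higher order. It is controlled by $\|W\|_\infty$ times the perturbed mass, which is where the oscillation $\omega_1$ enters later. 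I would first try the perturbation $q\mapsto(1-\varepsilon\mathbbm 1_A)q$ combined with local redirection of $T$, and expand to first order.

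\textbf{Part (ii).} Given (i), I use $|V+W[\mu_1]|\le B_1$ (from $\mu_1(\Omega)\le M_1$ by \eqref{eq:full-mass-sublevel-bound}) and $\operatorname{osc}(V+W[\mu_1])\le\omega_1$.
\begin{itemize}
\item Upper bound: at points where $\rho_1$ is large, the reaction optimality condition $q^2$ versus $\bar\rho$, combined with (i), forces $U'(\rho_1)\le\max\{B_1,U'(C_0)\}+\omega_1$.
\item Lower bound: vacuum-type regions are excluded as in the positivity proof. There $U'(\rho_1)\to-\infty$, while comparison with the pure-reaction competitor $\sqrt{\rho_1/\bar\rho}$ bounds $\Phi_1$ below in terms of $c_0$ and $c_*$, minus $\omega_1$.
\end{itemize}

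\textbf{Part (iii) and the dissipation identity.} With two-sided bounds on both densities, the $\HK$ dual \eqref{eq:HK-onepot} for the reversed pair has a Lipschitz potential. This follows from \cite{LMS23} for convex domains and \cite{GGV25} for tori, and gives $\phi_1\in W^{1,\infty}$. Since $\rho_1$ is bounded away from $0$ and $\infty$, arbitrary bounded perturbations $\rho_1+\varepsilon h$ are admissible. The derivative of $\frac1{2\tau}\HK^2(\cdot,\bar\mu)$ at $\mu_1$ is the dual coefficient, normalized as $2\phi_1/(2\tau)$, so the first-order condition is $g_{\rho_1}=-\phi_1/\tau$ a.e., which is \eqref{eq:full-Euler-Lagrange-main}.

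The $\HK$ potential identity of \cite{LMS23} expresses $\HK^2(\mu_1,\bar\mu)$ as $\int(|\nabla\phi_1|^2+4\phi_1^2)\,d\mu_1$; on tori this uses the manifold Monge formula of \cite{GGV25}. Substituting $\phi_1=-\tau g_{\rho_1}$ gives $\tau^2I_g(\rho_1)$. Comparing with the competitor $\bar\mu$ then yields $\mathcal F(\mu_1)+\frac\tau2I_g\le\mathcal F(\bar\mu)$.

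\textbf{Lipschitz regularity.} If $V\in W^{1,\infty}$ and (W1) holds, then $W[\rho_1]$ is Lipschitz. Hence $U'(\rho_1)=-\phi_1/\tau-V-W[\rho_1]$ is Lipschitz. Since $U'$ is a $C^1$ bijection with $U''>0$ on the compact range $[\underline\rho_1,\overline\rho_1]$, its inverse is Lipschitz there, so $\rho_1$ is Lipschitz.
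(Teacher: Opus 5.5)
There is a genuine gap in (i) and (ii). The competitors that drive the comparison argument are never produced, and the substitutes you sketch do not close.

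\textbf{Part (i).} Everything hinges on a perturbation along which the $\HK^2$ term does not increase, and the proposal does not supply one.
\begin{itemize}
\item Redirecting mass from near $Tx$ to near $Tx'$ compares $\Phi_1$ at two target points, at a first-order metric cost of no definite sign. It says nothing about $\Phi_1(Tx)$ versus $\Phi_1(x)$, and cyclical monotonicity is not needed.
\item The perturbation $q\mapsto(1-\varepsilon\mathbbm1_A)q$ only removes mass from $T(A)$. It changes the cone integrand by $2\varepsilon q(\cos r-q)+O(\varepsilon^2)$, which has no sign and is of the same order as the energy gain, so a first-order expansion decides nothing.
\end{itemize}
The missing step is the cut-and-glue competitor of Lemma~\ref{lem:cut-and-glue-full}. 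On a localized set $A\subset\{\Phi_1<a<b<\Phi_1\circ T\}$, replace the branch $(T,q)$ by $(\mathrm{id},\sqrt\alpha)$ with $\alpha=\mu_1(T(A))/\bar\mu(A)$. Cauchy--Schwarz gives $\int_Aq\cos r\,d\bar\mu\le\sqrt{\bar\mu(A)\,\mu_1(T(A))}$, so the cone cost does not increase. Convexity of $\nu\mapsto\HK^2(\bar\mu,\nu)$ then extends this to the whole segment $\mu_1+s\eta$, with $\eta=\alpha\bar\mu|_A-\mu_1|_{T(A)}$. Identifying the new endpoint uses the restriction identity $T_\#(q^2\bar\mu|_A)=\mu_1|_{T(A)}$ (Lemma~\ref{lem:Monge-restriction-identity}); this is where essential injectivity enters. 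The energy then changes by $s\int\Phi_1\,d\eta+o(s)\le s(a-b)\mu_1(T(A))+o(s)<0$.

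Your phrase ``keeping the mass at $x$'' can be made precise in the same spirit. Scale both radial weights of the branch on $A\cap\{T\ne\mathrm{id}\}$ by $t$, and add a diagonal branch with weights $(\sqrt{1-t^2},\sqrt{1-t^2}\,q)$. The integrand then changes by $-2(1-t^2)q(1-\cos r)\le0$.

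The obstacle you single out is not one. By symmetry of $W$, $\mathcal W(\mu_1+s\eta)-\mathcal W(\mu_1)=s\int W[\mu_1]\,d\eta+\frac{s^2}{2}\iint W\,d\eta\,d\eta$ exactly, and $\omega_1$ does not arise from this remainder.

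\textbf{Part (ii).} Your sketch is pointwise, but $T$ is only an essentially injective Borel map with no Jacobian. So there is no pointwise link between $\rho_1(Tx)$, $q(x)^2$ and $\bar\rho(x)$. Nor may you use a reaction condition derived from the Euler equation, since (iii) requires (ii). The positivity argument yields only $\rho_1>0$. Run quantitatively (adding mass $m$ raises $\HK^2$ by at most $m$), it gives $\Phi_1\ge-1/(2\tau)$, a $\tau$-dependent bound that is not $\underline\rho_1$.

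The stated bounds come from a set-level argument built on (i).
\begin{itemize}
\item The set $B_+=\{\Phi_1\ge\ell+\esssup P_1\}$ is backward invariant under $T$, and $B_-=\{\Phi_1\le U'(c)+\essinf P_1\}$ is forward invariant.
\item The restriction identity gives mass balances. Take $\ell$ large enough that $\rho_1\ge\Lambda_\varepsilon^2C_0$ on $B_+$, with $\Lambda_\varepsilon>1$. Then $\int_{T^{-1}(B_+)}q^2\,d\bar\mu=\mu_1(B_+)\ge\Lambda_\varepsilon^2C_0|B_+|\ge\Lambda_\varepsilon^2\bar\mu(T^{-1}(B_+))$. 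Similarly, using $c_0$, $\int_{B_-}q^2\,d\bar\mu\le\bar\mu(B_-)/(1+\varepsilon)$.
\item These yield a set $A$ of positive measure on which $q\ge\Lambda_\varepsilon$ (resp.\ $q\le k_\varepsilon<1$).
\item On the upper side, scale $q$ down on $A$; on the lower side, add $(1-t^2)\bar\mu|_A$ through a split diagonal branch. Either modification lowers the cone cost to first order. Meanwhile $\Phi_1\ge0$ (resp.\ $\Phi_1<0$) on the modified region keeps the energy from increasing, contradicting minimality.
\end{itemize}
Only then does $\essosc_\Omega P_1\le\omega_1$ convert the null level sets of $\Phi_1$ into bounds on $\rho_1$. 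The choices $c_{\rm upp}=(U')^{-1}(B_1)$ and $c_{\rm low}=c_*$ produce the constants.

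\textbf{Part (iii).} This follows the paper, except that differentiability of $\HK^2(\cdot,\bar\mu)$ at $\mu_1$ is not automatic. It needs uniqueness of the saturated source potential and uniform $W^{1,\infty}$ bounds for the potentials at $\rho_1+\varepsilon h$, so that the two envelope inequalities squeeze to a derivative.
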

\begin{remark}
The local case follows by setting $W=0$, $B_1=\|V\|_\infty$, and $\omega_1=\essosc_\Omega V$.    
\end{remark}

\noindent
\textit{Idea of the proof (Appendix \ref{sec:full-interaction-analysis}).}
Two independent mechanisms are combined.

The first is a comparison principle for the step.  On a set $A$, replacing the optimal
Monge-growth pair by the identity map with the constant growth factor
$\sqrt{\mu_1(T(A))/\bar\mu(A)}$ does not increase the $\HK$ cost, by Cauchy--Schwarz
(Lemma~\ref{lem:cut-and-glue-full}).  Consequently an ordering violation of the
self-consistent chemical potential $\Phi_1$ along $T$ would allow one to move mass from a
region of high $\Phi_1$ to a region of low $\Phi_1$ without increasing the metric cost and with strictly
lower energy, contradicting minimality.  This gives $\Phi_1\circ T\le\Phi_1$
(Lemma~\ref{lem:monotonicity-full}). Using this monotonicity and the restriction identity
for an essentially injective Monge pair (Lemma~\ref{lem:Monge-restriction-identity}),
we compare suitable local mass variations on regions of high or low chemical potential.
Comparing their effects on the energy and the metric cost yields the two-sided density
bounds \eqref{eq:canonical-two-sided-density-bound}.

The second is a dual computation, carried out for the \emph{reversed} pair from $\mu_1$ to
$\bar\mu$, since the Euler--Lagrange equation is an identity at the new density.  For the source component $\psi$ supplied by
Lemma~\ref{lem:transformed-potential-Lipschitz}, that pair satisfies
$1-\psi=q\cos r$ and $\nabla\psi=-2q\sin r\,e$, where $r={\sf d}_\Omega(x,Tx)$ and $e$ is the
initial unit tangent of the selected geodesic.  With $\phi=\psi/2$ this yields the pointwise
identity
\[
|\nabla\phi|^2+4|\phi|^2=1+q^2-2q\cos r,
\]
whose integral is exactly $\HK^2$ (Corollary~\ref{cor:HK-energy-identity-full}).
Taking the first variation of the JKO objective with respect to the endpoint density and using uniqueness of that source component
identifies $\phi_1=-\tau g_{\rho_1}$, which is the Euler--Lagrange equation
\eqref{eq:full-Euler-Lagrange-main}; substituting it back into the identity gives
$\HK^2(\mu_1,\bar\mu)=\tau^2I_g(\rho_1)$.  If $V$ and $W$ are additionally Lipschitz,
then $U'(\rho_1)=-\phi_1/\tau-V-W[\rho_1]$ is Lipschitz.  Since $(U')^{-1}$ is
Lipschitz on the interval supplied by part~(ii), the density is then Lipschitz as well.

The comparison argument uses the essentially injective optimal pair from Proposition~\ref{prop:HK-Monge-representation}, the only part of the proof that depends on the state space.

\medskip

\section{\texorpdfstring{{The Monge-growth formulation}}{The restarted Monge-growth formulation}}\label{sec:restarted-formulation}

\subsection{\texorpdfstring{{Exact reparametrization and pair stability}}{Exact reparametrization and pair stability}}\label{sec:incremental-monge-growth}
The first point is variational: before any finite-dimensional restriction, the restarted pair variables reproduce the exact JKO problem.

\begin{proposition}[{One-step} $\HK$-JKO reparametrization]\label{prop:incremental-HK-JKO-reparam}
Let $\mu$ have a positive density and assume that
$\mathcal J_\tau(\cdot\mid\mu)$ admits a positive-density minimizer.  Then
\begin{equation}\label{eq:incremental-HK-JKO-reparam}
\min_{\sigma\in\mathcal M_+(\Omega)}
\mathcal J_\tau(\sigma\mid\mu)
=
\inf_{T,\,q\ge0}
\mathcal I_\tau(T,q\mid\mu).
\end{equation}
If $\mu_+$ is a positive-density minimizer and $(T_*,q_*)$ is an optimal Monge-growth pair from $\mu$ to $\mu_+$, then
\begin{equation}\label{eq:incremental-optimal-pair}
(T_*)_\#(q_*^2\mu)=\mu_+,
\qquad
\mathcal A_\mu(T_*,q_*)=\HK^2(\mu,\mu_+),
\end{equation}
and $(T_*,q_*)$ attains the right-hand side of \eqref{eq:incremental-HK-JKO-reparam}.
\end{proposition}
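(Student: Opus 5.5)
The argument is a two-sided comparison of optimal values. The inequality $\le$ in \eqref{eq:incremental-HK-JKO-reparam} comes from the cone upper bound, and $\ge$ is obtained by exhibiting a pair that attains the JKO minimum.

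\emph{Step 1 (lower bound for the pair problem).} Let $(T,q)$ be any admissible pair and put $\sigma:=T_\#(q^2\mu)$. The triple $\gamma=(\mathrm{id},T)_\#\mu$, $r_0=1$, $r_1=q$ is admissible in \eqref{eq:HK-cone}. Indeed, $\pi^0_\#(1\cdot\gamma)=\mu$, $\pi^1_\#(q^2\gamma)=T_\#(q^2\mu)=\sigma$, and $r_1\in L^2(\gamma)$ because $\int q^2\,d\mu<\infty$. Hence $\HK^2(\mu,\sigma)\le\mathcal A_\mu(T,q)$, which is \eqref{eq:incremental-HK-upper-bound}. Adding $\mathcal F(\sigma)$ gives \eqref{eq:intro-primal-majorization}:
\[
\min_{\sigma'}\mathcal J_\tau(\sigma'\mid\mu)\le\mathcal J_\tau(\sigma\mid\mu)\le\mathcal I_\tau(T,q\mid\mu).
\]
Taking the infimum over all admissible pairs yields $\min\mathcal J_\tau\le\inf\mathcal I_\tau$. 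The left minimum exists by hypothesis. If $\mathcal F(\sigma)=+\infty$ the inequality is trivial, and $\HK$ is symmetric, so the orientation of its arguments causes no issue.

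\emph{Step 2 (attainment).} Let $\mu_+$ be a positive-density minimizer. Since $\mu=\rho\,dx$ and $\mu_+$ both have densities that are positive almost everywhere, Proposition~\ref{prop:HK-Monge-representation} applies with $\mu_0=\mu$ and $\mu_1=\mu_+$. It provides an optimal pair $(T_*,q_*)$ satisfying $(T_*)_\#(q_*^2\mu)=\mu_+$ and $\mathcal A_\mu(T_*,q_*)=\HK^2(\mu,\mu_+)$, which is exactly \eqref{eq:incremental-optimal-pair}. Admissibility holds because $\int q_*^2\,d\mu=\mu_+(\Omega)<\infty$, and $T_*$ maps $\Omega$ into $\Omega$ and is Borel. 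Therefore
\[
\mathcal I_\tau(T_*,q_*\mid\mu)=\mathcal F(\mu_+)+\frac1{2\tau}\HK^2(\mu,\mu_+)=\mathcal J_\tau(\mu_+\mid\mu)=\min\mathcal J_\tau.
\]
Combined with Step~1, the infimum equals the minimum and is attained at $(T_*,q_*)$.

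The phrase ``an optimal Monge-growth pair'' in the statement means any pair realizing \eqref{eq:injective-growth-representation}. The same computation applies to every such pair, not only to the one constructed in Proposition~\ref{prop:HK-Monge-representation}.

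\emph{Main obstacle.} The analytic content lies entirely in Proposition~\ref{prop:HK-Monge-representation}: it requires the strong-reduction condition, which is why positive densities are needed on both sides. Given that proposition, what remains is to check that the Monge pair is admissible in the sense used here, namely Borel measurability and $q_*^2\in L^1(\mu)$. The one subtle point is that Proposition~\ref{prop:HK-Monge-representation} is stated for the truncated cost $\cos({\sf d}_\Omega\wedge\pi/2)$. This matches the definition of $\mathcal A_\mu$ in \eqref{eq:incremental-HK-action}, so no adjustment is needed.
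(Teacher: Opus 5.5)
Your proposal is correct and follows the paper's proof: the cone bound \eqref{eq:incremental-HK-upper-bound} gives $\min\mathcal J_\tau\le\inf\mathcal I_\tau$, and substituting the optimal pair from Proposition~\ref{prop:HK-Monge-representation} gives $\mathcal I_\tau(T_*,q_*\mid\mu)=\mathcal J_\tau(\mu_+\mid\mu)$, which yields the reverse inequality and attainment. Your explicit admissibility checks ($r_1\in L^2(\gamma)$ and $\int q_*^2\,d\mu=\mu_+(\Omega)<\infty$) and your remark on the symmetry of $\HK$ spell out steps the paper leaves implicit.
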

\begin{proof}
For every pair $(T,q)$, \eqref{eq:incremental-HK-upper-bound} gives
\[
\mathcal J_\tau\bigl(T_\#(q^2\mu)\mid\mu\bigr)
\le
\mathcal I_\tau(T,q\mid\mu).
\]
Since the left-hand side is bounded below by the minimum of the JKO problem, the infimum over $(T,q)$ is no smaller than that minimum.  Conversely, let $\mu_+$ be a positive-density minimizer.  Proposition~\ref{prop:HK-Monge-representation} supplies an optimal pair $(T_*,q_*)$ satisfying \eqref{eq:incremental-optimal-pair}.  Substitution gives
\[
\mathcal I_\tau(T_*,q_*\mid\mu)
=
\mathcal J_\tau(\mu_+\mid\mu),
\]
which proves the reverse inequality and attainment.
\end{proof}

Applying Proposition~\ref{prop:HK-Monge-representation} successively, every finite exact $\HK$-JKO sequence with positive densities admits an essentially injective restarted Monge-growth representation whose cone action equals the squared $\HK$ distance at each step.

For the approximation arguments below, the same cone representation also gives direct stability of the induced endpoint with respect to the pair.  We will often work with a current measure
\begin{equation*}
\mu=\rho\,dx,
\qquad
\rho\in C(\Omega),
\qquad
0<\underline\rho\le\rho\le\overline\rho.
\end{equation*}

\begin{lemma}[Stability of the pushforward in the pair]\label{lem:pair-to-HK-stability}
Let $\mu\in\mathcal M_+(\Omega)$ and let $(T,q)$, $(T',q')$ be admissible pairs.  Then
\begin{equation}\label{eq:pair-to-HK-stability}
\HK^2\bigl(T_\#(q^2\mu),\,T'_\#(q'^2\mu)\bigr)
\le
\int_\Omega\Bigl[(q-q')^2+q\,q'\,{\sf d}_\Omega(T,T')^2\Bigr]d\mu .
\end{equation}
In particular, if $\max\{q,q'\}\le A$ on $\Omega$, then
\begin{equation}\label{eq:pair-to-HK-stability-simple}
\HK\bigl(T_\#(q^2\mu),\,T'_\#(q'^2\mu)\bigr)
\le
\mu(\Omega)^{1/2}\Bigl(\|q-q'\|_{L^\infty}+A\,\|{\sf d}_\Omega(T,T')\|_{L^\infty}\Bigr).
\end{equation}
\end{lemma}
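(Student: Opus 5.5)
The plan is to bound the squared distance with an explicit common-base semi-coupling in the cone formulation \eqref{eq:HK-cone}, as the text after that formula announces. Set $\gamma:=(T,T')_\#\mu$ and let $r_0,r_1$ be the transported versions of $q,q'$. To avoid disintegration, I will use the equivalent formulation in which the plan is $\Gamma:=(\mathrm{id},T,T')_\#\mu$ on an auxiliary product, with weights $q,q'$ as functions of the base point. Alternatively I can directly take $\gamma=(T,T')_\#(\cdot)$ of the lifted measure on the cone. Either way, $\pi^0_\#(q^2\,\cdot)$ and $\pi^1_\#(q'^2\,\cdot)$ are exactly $T_\#(q^2\mu)$ and $T'_\#(q'^2\mu)$. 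The admissibility condition $\int q^2\,d\mu<\infty$ gives $r_i\in L^2$.

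If needed, the reduction to a genuine triple $(\gamma,r_0,r_1)$ on $\Omega\times\Omega$ goes through disintegration of $\mu$ along $(T,T')$. On each fiber, I would replace $q,q'$ by $r_0:=(\mathbb E[q^2\mid T,T'])^{1/2}$ and $r_1:=(\mathbb E[q'^2\mid T,T'])^{1/2}$. This preserves the marginal constraints. Since $\mathfrak c$ is jointly convex and $1$-homogeneous in $(a,b)$ after the change of variables $(a^2,b^2)$, Jensen's inequality shows that the cost does not increase. Hence
\[
\HK^2\bigl(T_\#(q^2\mu),T'_\#(q'^2\mu)\bigr)\le\int_\Omega\bigl[q^2+q'^2-2qq'\cos({\sf d}_\Omega(T,T')\wedge\tfrac\pi2)\bigr]d\mu .
\]
I expect this measure-theoretic step, which justifies using the pulled-back coupling, to be the main obstacle. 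I would first try to handle it via the cone/lifted formulation of \cite{LMS18}, where such couplings are directly admissible.

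The remaining work is a pointwise inequality. I will write $q^2+q'^2-2qq'\cos\theta=(q-q')^2+2qq'(1-\cos\theta)$ with $\theta={\sf d}_\Omega(T,T')\wedge\frac\pi2$. Then I use $1-\cos\theta=2\sin^2(\theta/2)\le\theta^2/2\le{\sf d}_\Omega(T,T')^2/2$ together with $qq'\ge0$. This gives $2qq'(1-\cos\theta)\le qq'\,{\sf d}_\Omega(T,T')^2$, and integrating yields \eqref{eq:pair-to-HK-stability}.

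For \eqref{eq:pair-to-HK-stability-simple}, I bound the integrand by $\|q-q'\|_\infty^2+A^2\|{\sf d}_\Omega(T,T')\|_\infty^2$, using $qq'\le A^2$, and integrate to pick up the factor $\mu(\Omega)$. Taking square roots and applying $\sqrt{x^2+y^2}\le x+y$ finishes the proof.
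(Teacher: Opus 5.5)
Your proposal is correct and is essentially the paper's proof. The paper takes $\lambda=(T,T')_\#\mu$ with amplitudes $\sqrt a,\sqrt b$, where $a,b$ are the densities of $(T,T')_\#(q^2\mu)$ and $(T,T')_\#(q'^2\mu)$ with respect to $\lambda$ (exactly your conditional second moments); your Jensen step for the cost, convex and $1$-homogeneous in $(a^2,b^2)$, is equivalent to its conditional Cauchy--Schwarz bound $d(T,T')_\#(qq'\mu)/d\lambda\le\sqrt{ab}$, and both use that the truncated cosine is nonnegative and constant on the fibres of $(T,T')$, while the pointwise splitting $(q-q')^2+2qq'(1-\cos\theta)$ and the closing square-root step match the paper as well.
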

\begin{proof}
{Set $S=(T,T')$ and $\lambda=S_\#\mu$, and define
\[
a=\frac{dS_\#(q^2\mu)}{d\lambda},\qquad
b=\frac{dS_\#(q'^2\mu)}{d\lambda},\qquad
c=\frac{dS_\#(qq'\mu)}{d\lambda}.
\]
Conditional Cauchy--Schwarz gives $c\le\sqrt{ab}$ $\lambda$-a.e.  Thus $\lambda$ with amplitudes $(\sqrt a,\sqrt b)$ is admissible in \eqref{eq:HK-cone}, even when $S$ is not injective. Since the truncated cosine is nonnegative, its cost is at most the following common-base integral:}
\[
\HK^2\bigl(T_\#(q^2\mu),T'_\#(q'^2\mu)\bigr)
\le
\int_\Omega\Bigl[q^2+q'^2-2q\,q'\cos\bigl({\sf d}_\Omega(T,T')\wedge\tfrac\pi2\bigr)\Bigr]d\mu,
\]
of which \eqref{eq:incremental-HK-upper-bound} is the case $(T',q')=(\mathrm{id},1)$.  Writing $s:={\sf d}_\Omega(T,T')\wedge\frac\pi2$ and $q^2+q'^2-2q\,q'\cos s=(q-q')^2+2q\,q'(1-\cos s)$, the bound $1-\cos s\le s^2/2\le{\sf d}_\Omega(T,T')^2/2$ gives \eqref{eq:pair-to-HK-stability}, and $\sqrt{x+y}\le\sqrt x+\sqrt y$ gives \eqref{eq:pair-to-HK-stability-simple}.
\end{proof}

Together with Proposition~\ref{prop:incremental-HK-JKO-reparam}, estimate \eqref{eq:pair-to-HK-stability-simple} converts pair approximation into a one-step state error whenever $\max\{q_\theta,q_*\}\le A$:
\begin{equation}\label{eq:eta-to-tau-squared}
\HK\bigl(\widehat\mu_{k+1},\mathcal S_\tau\widehat\mu_k\bigr)
\le
\widehat\mu_k(\Omega)^{1/2}\Bigl(\|q_\theta-q_*\|_{L^\infty}+A\|{\sf d}_\Omega(T_\theta,T_*)\|_{L^\infty}\Bigr).
\end{equation}

\subsection{\texorpdfstring{{Primal action versus restricted duality}}{Primal action versus restricted duality}}\label{sec:primal-vs-dual}
Even after parametric restriction, the cone action gives an upper bound on the squared $\HK$ distance, while a feasible dual potential gives a lower bound. The upper bound yields the following sufficient condition for energy descent.

Writing $r(x):={\sf d}_\Omega(x,T(x))\wedge\frac\pi2$, the cone integrand splits as
\begin{equation}\label{eq:cone-action-splitting}
1+q^2-2q\cos r=(1-q)^2+2q\,(1-\cos r).
\end{equation}
Accordingly, $\mathcal A_\mu=\mathcal A^{\rm re}_\mu+\mathcal A^{\rm tr}_\mu$, where
\[
\mathcal A^{\rm re}_\mu(T,q):=\int_\Omega(1-q)^2d\mu,
\qquad
\mathcal A^{\rm tr}_\mu(T,q):=\int_\Omega2q\,(1-\cos r)\,d\mu.
\]
These are the reaction and transport components of the action. Their ratios to the total action, when nonzero, are called the reaction and transport shares and are reported in Section~\ref{sec:experiments}.  The identity pair reproduces the source with zero action:
\begin{equation}\label{eq:identity-pair-zero-action}
(\mathrm{id})_\#(1^2\mu)=\mu,
\qquad
\mathcal A_\mu(\mathrm{id},1)=0,
\qquad
\mathcal I_\tau(\mathrm{id},1\mid\mu)=\mathcal F(\mu).
\end{equation}

\paragraph{An acceptance certificate for a computed state.}
A useful consequence of the one-sided formulation is a certificate for an arbitrary admissible pair, independently of how that pair was obtained.  Combining \eqref{eq:intro-primal-majorization} with the identity competitor gives the following criterion.

\begin{proposition}[One-step descent criterion with slack]\label{prop:one-step-certificate}
Let $\mu\in\mathcal M_+(\Omega)$ satisfy $\mathcal F(\mu)<\infty$, let $(T,q)$ be admissible for $\mu$, let $\delta\ge0$, and set $\mu_+:=T_\#(q^2\mu)$.  If
\begin{equation}\label{eq:descent-guard}
\mathcal I_\tau(T,q\mid\mu)
\le
\mathcal I_\tau(\mathrm{id},1\mid\mu)-\delta
=\mathcal F(\mu)-\delta,
\end{equation}
then
\begin{equation}\label{eq:unconditional-one-step-dissipation}
\mathcal F(\mu_+)+\frac1{2\tau}\HK^2(\mu_+,\mu)
\le\mathcal F(\mu)-\delta.
\end{equation}
\end{proposition}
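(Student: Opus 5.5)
The argument is a direct chain of two inequalities already available: the cone-action majorization \eqref{eq:intro-primal-majorization}, and the identity-pair evaluation \eqref{eq:identity-pair-zero-action}. No minimality or existence of a JKO minimizer is needed.

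First, we check that the equality in \eqref{eq:descent-guard} holds. By \eqref{eq:identity-pair-zero-action}, the identity pair is admissible, reproduces $\mu$, and has zero action. Hence $\mathcal I_\tau(\mathrm{id},1\mid\mu)=\mathcal F(\mu)+0$, which is finite by hypothesis. So the right-hand side of \eqref{eq:descent-guard} is a well-defined real number $\mathcal F(\mu)-\delta$.

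Second, we bound the JKO objective at $\mu_+$ by the pair objective. Since $(T,q)$ is admissible, $\mu_+=T_\#(q^2\mu)$ is a finite nonnegative measure, because $\mu_+(\Omega)=\int q^2\,d\mu<\infty$. The cone formulation \eqref{eq:HK-cone} with the triple $\gamma=(\mathrm{id},T)_\#\mu$, $(r_0,r_1)=(1,q)$ gives \eqref{eq:incremental-HK-upper-bound}, namely $\HK^2(\mu,\mu_+)\le\mathcal A_\mu(T,q)$. Adding $\mathcal F(\mu_+)$ and using symmetry of $\HK$ yields
\[
\mathcal F(\mu_+)+\frac1{2\tau}\HK^2(\mu_+,\mu)\le\mathcal I_\tau(T,q\mid\mu).
\]
Combining this with \eqref{eq:descent-guard} gives \eqref{eq:unconditional-one-step-dissipation}.

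The only point requiring care, rather than a genuine obstacle, is the extended-real arithmetic. The hypothesis \eqref{eq:descent-guard} forces $\mathcal I_\tau(T,q\mid\mu)<\infty$. Since $\mathcal A_\mu(T,q)\ge0$ (the integrand is $(1-q)^2+2q(1-\cos r)\ge0$ by \eqref{eq:cone-action-splitting}), it follows that $\mathcal F(\mu_+)<\infty$ and $\mathcal A_\mu(T,q)<\infty$. In particular $\mu_+$ is absolutely continuous, and the additions involve no $\infty-\infty$ ambiguity. We do need $\mathcal F(\mu_+)>-\infty$, which holds under the standing hypotheses: $U$ is bounded below on bounded masses, $V$ and $W$ are bounded, and the mass is finite. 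If this fails, the inequality is trivially true in the extended sense anyway.
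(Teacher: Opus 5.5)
Your proof is correct and follows the paper's argument: the cone-action bound \eqref{eq:incremental-HK-upper-bound}, together with symmetry of $\HK$, gives $\mathcal F(\mu_+)+\frac1{2\tau}\HK^2(\mu_+,\mu)\le\mathcal I_\tau(T,q\mid\mu)$, and the guard \eqref{eq:descent-guard} finishes. Your extended-real bookkeeping is extra care that the paper leaves implicit; note that $\mathcal A_\mu(T,q)<\infty$ already follows from admissibility, since the integrand is at most $(1+q)^2$.
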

Indeed, \eqref{eq:incremental-HK-upper-bound} gives
\[
\mathcal F(\mu_+)+\frac1{2\tau}\HK^2(\mu_+,\mu)
\le
\mathcal F(\mu_+)+\frac1{2\tau}\mathcal A_\mu(T,q)
=\mathcal I_\tau(T,q\mid\mu),
\]
and \eqref{eq:descent-guard} completes the argument.  The case $\delta=0$ is the acceptance criterion in Algorithm~\ref{alg:composite-HK-JKO}; summing the inequalities over steps accumulates the verified slacks in the dissipation bound. A strictly positive slack excludes the identity step.  In numerical use the criterion is exact only when the relevant functional values have controlled quadrature error.  Moreover, energy decay alone does not exclude a nontrivial pure-reaction update, which is why the transport and reaction shares are monitored in Remark~\ref{rem:fisher-rao-degeneration} and Section~\ref{sec:experiments}.

\paragraph{Primal upper bounds and restricted dual lower bounds.}

Primal and dual representations are equivalent before restriction. The primal upper bound and the dual lower bound remain valid after restriction.  For an admissible pair $(T_\theta,q_\theta)$ and the induced
candidate $\nu_\theta:=(T_\theta)_\#(q_\theta^2\mu)$, the cone action gives
\begin{equation}\label{eq:primal-upper-certificate}
\mathcal F(\nu_\theta)+\frac1{2\tau}\HK^2(\mu,\nu_\theta)
\le
\mathcal F(\nu_\theta)+\frac1{2\tau}\mathcal A_\mu(T_\theta,q_\theta)
=\mathcal I_\tau(T_\theta,q_\theta\mid\mu),
\end{equation}
which together with \eqref{eq:identity-pair-zero-action} yields the descent estimate
\eqref{eq:descent-guard}--\eqref{eq:unconditional-one-step-dissipation} for every pair satisfying the criterion.  Independently of acceptance, every admissible pair supplies an \emph{upper} bound for the exact proximal cost, and the same variables
generate the next state.  Writing $D_{\HK}(\varphi;\mu,\nu)$ for the objective of
\eqref{eq:HK-dual}--\eqref{eq:HK-onepot} at a feasible potential $\varphi$, a maximization
restricted to a finite class $\mathcal D_\Theta\subset\mathcal D$ has the opposite error,
$\sup_{\varphi\in\mathcal D_\Theta}D_{\HK}\le\HK^2(\mu,\nu)$, so descent of an
objective built from such a value does not by itself guarantee descent of the exact JKO
functional.  The price of the primal choice is that a restricted class may strictly
overestimate $\HK^2$; this is a representation error, and at the unrestricted level the bound
is attained by an optimal pair.  Duality remains available as the lower bound in the a
posteriori estimate
\[
D_{\HK}(\varphi;\mu,\nu_\theta)\le\HK^2(\mu,\nu_\theta)\le\mathcal A_\mu(T_\theta,q_\theta),
\]
which can be used to assess the tightness of a learned Monge-growth representation.
The same one-sided structure
underlies the reaction-only indicator of Remark~\ref{rem:fisher-rao-degeneration}; the
held-out-sample or enriched-quadrature acceptance test in Algorithm~\ref{alg:composite-HK-JKO} remains empirical unless its error is
controlled rigorously.

\section{Neural realization and algorithm}\label{sec:neural-realization-algorithm}

\subsection{Pair objective and neural parametrization}\label{sec:incremental-neural-param}
Assume that $T$ is an orientation-preserving $C^1$ diffeomorphism of $\Omega$ and write
\[
J_T(x):=|\det DT(x)|.
\]
If $T_\#(q^2\rho\,dx)=u\,dx$, then the change-of-variables formula gives
\begin{equation}\label{eq:incremental-induced-density}
u(T(x))=\frac{q(x)^2\rho(x)}{J_T(x)}.
\end{equation}
Thus \eqref{eq:incremental-pair-objective-abstract} becomes
\begin{equation}\label{eq:incremental-current-objective}
\begin{aligned}
\mathcal I_\tau(T,q\mid\rho\,dx)
={}&\int_\Omega
U\!\left(\frac{q(x)^2\rho(x)}{J_T(x)}\right)J_T(x)\,dx
+\int_\Omega V(T(x))q(x)^2\rho(x)\,dx\\
&+\frac12\iint_{\Omega\times\Omega}
W(T(x),T(y))q(x)^2q(y)^2\rho(x)\rho(y)\,dx\,dy\\
&+\frac1{2\tau}\int_\Omega
\left[
1+q(x)^2-2q(x)\cos\!\left({\sf d}_\Omega(x,T(x))\wedge\frac\pi2\right)
\right]\rho(x)\,dx.
\end{aligned}
\end{equation}

We use one parameter symbol $\theta$ for the map and growth networks:
\begin{equation*}
T_\theta=T_{\theta_T},
\qquad
q_\theta={q_{\rm ref}}\exp(Q_{\theta_q}),
\qquad
\theta=(\theta_T,\theta_q).
\end{equation*}
Here $q_{\rm ref}$ is a fixed positive reference profile. The experiments use $q_{\rm ref}=1$ and optimize the growth network jointly with the map. An optional pure-reaction reference can instead be defined by freezing the interaction at the current density and solving the following equation pointwise by bisection:
\begin{equation}\label{eq:growth-reference}
{q\Bigl(1+2\tau\,[U'(q^2\rho)+V+W[\rho]]\Bigr)=1,
\qquad
q=q_{\rm ref}.}
\end{equation}
For $W=0$, this is the exact Euler equation for the pure-reaction subproblem. For $W\ne0$, the fully implicit equation contains $W[q^2\rho]$ and couples spatial points, so \eqref{eq:growth-reference} supplies only a frozen-field reference. Any such competitor is evaluated with the same fully implicit objective as the trained pair.

The choice $Q_{\theta_q}=0$ reproduces the reference profile. At the level of continuous functions, the fixed positive continuous factor $q_{\rm ref}$ can be absorbed into the log-growth function by adding $\log q_{\rm ref}$. Approximation by a network with a bounded output remains subject to Assumption~\ref{ass:incremental-neural-recovery}. Appendix~\ref{sec:design-notes-experiments} describes the bounded neural output and initialization used in the experiments.
Given a current density $\rho$, the notation $u_\theta$ denotes the density determined by
\begin{equation*}
(T_\theta)_\#(q_\theta^2\rho\,dx)=u_\theta\,dx.
\end{equation*}

The map must be admissible by construction rather than made invertible only through a soft penalty. {The following constructions have this property; experiment-specific variants are detailed in Appendix~\ref{sec:design-notes-experiments}.}

{\emph{A constrained residual map.} The experiments in Section~\ref{sec:experiments} compose two blocks of this form. On a box $\Omega=\prod_j[l_j,h_j]$ of widths $w_j$ we set}
\begin{equation}\label{eq:residual-map}
{T_{\theta_T}(x)=x+s(x)\odot\delta_{\theta_T}(x),
\qquad
s_j(x)=\sin\Bigl(\pi\frac{x_j-l_j}{w_j}\Bigr),}
\end{equation}
{where $\odot$ is the componentwise product and $\delta_{\theta_T}$ is a bounded network output with $|\delta_j|\le A$ and $|\delta|\le\pi/2-\varepsilon$.  Only the factor of coordinate $j$ multiplies component $j$ of the displacement.  This enforces the domain constraint: since $\sin(\pi t)/\pi\le\min\{t,1-t\}$, the bound $A\le w_j/\pi$ gives $|s_j\delta_j|\le\operatorname{dist}(x_j,\{l_j,h_j\})$, so $T_{\theta_T}(\Omega)\subset\Omega$ with each boundary face preserved: the normal coordinate is fixed, while tangential motion along the face is allowed.  On the torus $s\equiv1$ and the network takes a periodic embedding of the spatial coordinates as input.}

The residual network uses continuously differentiable activations; ReLU is excluded from the map class. For ReLU, the location at which the map derivative jumps can depend on the parameters. Differentiating the integrated internal energy then produces an interface term that pointwise automatic differentiation does not capture. The radial displacement bound also uses a smooth contraction. For $c=\pi/2-\varepsilon$ and $r_0=c/2$, a raw displacement $z$ of norm $r$ is unchanged for $r\leq r_0$ and otherwise becomes $f(r)z/r$, where
\[
f(r)=r_0+(c-r_0)\tanh\!\left(\frac{r-r_0}{c-r_0}\right).
\]
This cap is $C^2$ at $r=r_0$, has radial derivative and tangential scaling at most one, and preserves each component bound. Thus it preserves the contraction estimate below while avoiding a moving derivative discontinuity.

{Orientation and injectivity are obtained from a contraction estimate rather than from a Jacobian penalty.  Writing $L$ for a Lipschitz bound of the network in the normalized input variable, one has the pointwise bound}
\begin{equation*}
{\bigl\|D\bigl(s\odot\delta_{\theta_T}\bigr)(x)\bigr\|_2
\;\le\;
\frac{A\,(\pi+2L)}{w_{\min}}
\;=:\;\kappa_\theta ,}
\end{equation*}
{because the diagonal part contributes $\max_j|s_j'||\delta_j|\le\pi A/w_{\min}$ and the remaining part contributes $\|s\|_\infty\operatorname{Lip}(\delta_{\theta_T})\le 2AL/w_{\min}$; on the torus the same argument gives $\kappa_\theta\le 2\pi AL/w_{\min}$.  If $\kappa_\theta<1$, then $DT_{\theta_T}=I+D(s\odot\delta_{\theta_T})$ is invertible with $\det DT_{\theta_T}>0$ and $T_{\theta_T}$ is injective because its displacement is a strict contraction.

On a box, write $G=\operatorname{int}\Omega$. The inverse function theorem makes $T_{\theta_T}(G)$ open in $\mathbb R^d$ and hence contained in $G$. Boundary preservation gives $T_{\theta_T}(G)=T_{\theta_T}(\Omega)\cap G$, which is relatively closed in $G$ by compactness. Connectedness gives $T_{\theta_T}(G)=G$, and taking closures yields $T_{\theta_T}(\Omega)=\Omega$. On the torus, the image is likewise open by local invertibility and closed by compactness, hence equals the connected torus. Thus $T_{\theta_T}$ is an orientation-preserving diffeomorphism of $\Omega$ onto itself, as is any finite composition of such blocks, and \eqref{eq:incremental-induced-density} is valid.

The implementation bounds each residual block using network coefficients and the spatial input derivatives, with tighter interval bounds when needed. It scales the displacement to enforce contraction, the domain constraint, and the displacement bound. Validation checks these bounds again; a candidate that fails is rejected.  The sampled minimum of $J_{T_\theta}$ remains an additional diagnostic.}

{\emph{A neural ODE flow.}}  {O}ne convenient {alternative} is the time-one flow of a neural ordinary differential equation,
\begin{equation}\label{eq:neural-ode-map}
\frac{d}{dt}\Phi_{\theta,t}(x)
=v_{\theta_T}\bigl(t,\Phi_{\theta,t}(x)\bigr),
\qquad
\Phi_{\theta,0}(x)=x,
\qquad
T_{\theta_T}(x)=\Phi_{\theta,1}(x).
\end{equation}
On a bounded domain, $v_{\theta_T}(t,\cdot)$ is taken $C^1$ in space, globally Lipschitz, and satisfies $v_{\theta_T}\cdot{\sf n}=0$ $\mathcal H^{d-1}$-almost everywhere on $\partial\Omega$; the stronger condition $v_{\theta_T}=0$ on $\partial\Omega$ is a simple sufficient choice.  The flow preserves the boundary and maps $\Omega$ diffeomorphically onto itself.  On the torus, the vector field is periodic.  In either case the exact flow is orientation preserving and Liouville's formula gives
\begin{equation}\label{eq:neural-ode-log-jacobian}
\log J_{T_{\theta_T}}(x)
=
\int_0^1
\operatorname{div}v_{\theta_T}\bigl(t,\Phi_{\theta,t}(x)\bigr)\,dt,
\qquad J_{T_{\theta_T}}(x)>0,
\end{equation}
as in continuous normalizing flows \cite{Chen18,Grathwohl19}.  Other invertible architectures may be used if they map $\Omega$ onto itself and provide a controlled positive Jacobian.  The ODE construction includes the identity map through $v_{\theta_T}=0$. The experiments use composed residual maps with Jacobians evaluated from the network derivatives; their common architecture and density model are described in Appendix~\ref{sec:design-notes-experiments}. For either map class, the approximation result below requires Assumption~\ref{ass:incremental-neural-recovery} for the regular optimal pair under consideration.

\subsection{\texorpdfstring{{Sampling, mass propagation, and the algorithm}}{Sampling, mass propagation, and the algorithm}}
Let $\mathfrak m=\mu(\Omega)$ and $\bar\mu=\mu/\mathfrak m$.  All terms in \eqref{eq:incremental-current-objective} can be evaluated from samples of the current measure.  For example,
\begin{equation}\label{eq:incremental-internal-expectation}
\int_\Omega
U\!\left(\frac{q^2\rho}{J_T}\right)J_T\,dx
=
\mathfrak m\,
\mathbb E_{X\sim\bar\mu}
\left[
\frac{J_T(X)}{\rho(X)}
U\!\left(\frac{q(X)^2\rho(X)}{J_T(X)}\right)
\right],
\end{equation}
whereas the interaction is
\begin{equation}\label{eq:incremental-interaction-expectation}
\frac{\mathfrak m^2}{2}
\mathbb E_{X,Y\sim\bar\mu}
\left[W(T(X),T(Y))q(X)^2q(Y)^2\right].
\end{equation}
For a single batch of independent samples, the interaction is estimated by the off-diagonal U-statistic
\begin{equation}\label{eq:incremental-interaction-U-statistic}
\frac{\mathfrak m^2}{2B(B-1)}
\sum_{i\ne j}
W(T(X_i),T(X_j))q(X_i)^2q(X_j)^2.
\end{equation}
The interaction estimator is unbiased when the $X_i$ are independent draws from $\bar\mu$, since each term with $i\ne j$ has the product sampling law. The proximal term is $\mathfrak m$ times the empirical average of the integrand in \eqref{eq:incremental-HK-action}.  If a uniformly selected subset of size $s\ge2$ is used, its mass weights are $w_i=\mathfrak m q(X_i)^2/s$ and the estimator is
\[
\frac{s}{2(s-1)}\sum_{i\ne j}w_iw_jW(T(X_i),T(X_j)).
\]
The correction $s/(s-1)$ is fixed by the sample count and does not depend on the learned growth weights.  When sampling from the current normalized measure, it draws independently, with replacement, from a density-weighted finite proposal pool.  Conditional on that pool the off-diagonal estimator is unbiased for the corresponding discrete sampling law; approximation of the current continuum measure by the pool remains a separate sampling error.

The interaction benchmark instead uses deterministic periodic quadrature.  If $h_i$ are spatial quadrature weights and $\omega_i=h_i\rho(X_i)$, its pair energy is
\begin{equation}\label{eq:incremental-interaction-quadrature}
\frac12\sum_{i,j}\omega_i\omega_j q(X_i)^2q(X_j)^2
W(T(X_i),T(X_j)).
\end{equation}
This full weighted double sum includes the diagonal and has no U-statistic correction.  A shifted grid with more quadrature nodes checks the trained pair; it is distinct from the training grid but is not a statistically independent random sample.  Quadrature refinement is an empirical discretization check, not a rigorous continuum error bound. The neural parametrization is the same for stochastic and deterministic integration.

The normalized sampling law must also be propagated with the growth weights.  Let $\pi=\mu/\mathfrak m$ and let $\mu_+=T_\#(q^2\mu)$.  Then
\begin{equation}\label{eq:normalized-monge-growth-update}
Z:=\int_\Omega q^2\,d\pi,
\qquad
\mathfrak m_+=\mathfrak m Z,
\qquad
\pi_+:=\frac{\mu_+}{\mathfrak m_+}
=T_\#\!\left(\frac{q^2}{Z}\,\pi\right).
\end{equation}
Consequently, if $X_i\sim\pi$ and $Y_i=T(X_i)$, the points $Y_i$ represent $\pi_+$ only after attaching normalized weights
\(
 w_i=q(X_i)^2/\sum_j q(X_j)^2
\)
or after resampling according to these weights.  The empirical measure of the unweighted points $T(X_i)$ represents the next normalized measure only when $q$ is constant.  At the transformed sample locations the density update is
\begin{equation}\label{eq:incremental-log-density-update}
\log u_+(T(x))
=
\log\rho(x)+2\log q(x)-\log J_T(x).
\end{equation}
The formulas in \eqref{eq:normalized-monge-growth-update} are exact; replacing $Z$ and $\pi_+$ by empirical estimates introduces a separate particle/quadrature error.  Importance resampling plays an analogous role in iterative neural JKO sampling \cite{Hertrich25}.

\begin{algorithm}[p]

\caption{Restarted Monge-growth neural $\HK$-JKO scheme with mass-corrected propagation}\label{alg:composite-HK-JKO}
\begin{algorithmic}[1]
\State Set $\widehat\mu_0^\tau=\rho_0\,dx$, $\mathfrak m_0=\widehat\mu_0^\tau(\Omega)$, and $\pi_0=\widehat\mu_0^\tau/\mathfrak m_0$.
\For{$k=0,1,\dots,K-1$}
    \State Write $\widehat\mu_k^\tau=u_{\theta,k}^\tau dx$ and choose current-measure sampling, importance sampling, or deterministic quadrature with source-mass weights $\omega_i$.
    \State Initialize a fresh admissible pair from the current-density predictor (Appendix~\ref{sec:design-notes-experiments}), or start from $(\mathrm{id},1)$ in the independent {pair-scaling and certificate} tests.
    \For{training iterations}
        \State Use the training points $X_1,\dots,X_B$ and weights from the chosen rule, refreshing samples according to the experiment protocol.
        \State Evaluate $T_\theta(X_i)$, $q_\theta(X_i)$, $J_{T_\theta}(X_i)$, and $u_{\theta,k}^\tau(X_i)$.
        \State Approximate \eqref{eq:incremental-current-objective}, using \eqref{eq:incremental-interaction-U-statistic} for iid interaction samples or \eqref{eq:incremental-interaction-quadrature} for deterministic quadrature.
        \State Update $\theta$ to decrease the objective. Compare saved parameter iterates (checkpoints) using one fixed training estimator throughout the step.
    \EndFor
    \State {\parbox[t]{0.88\linewidth}{Restore the selected parameters, fix the network normalization used for evaluation, enforce map contraction, and recompute the training gradient and growth residual. On a sample independent of training (held out) or a distinct finer quadrature grid, evaluate $\widehat{\mathcal I}_\tau(\theta)$ and $\widehat{\mathcal I}_\tau(\mathrm{id},1)=\widehat{\mathcal F}(\widehat\mu_k^\tau)$. {Reject the candidate if its objective exceeds the identity value beyond the recorded tolerance or another required validation check fails.} This is an \emph{empirical} acceptance test.}}
    \State {\parbox[t]{0.88\linewidth}{{The common neural solver uses strict rejection: if no candidate passes after the prescribed attempts, terminate with a failure; do not advance physical time or propagate an identity/growth fallback. The identity is the acceptance baseline, and the analytic pure-growth competitor is diagnostic only.}}}
    \State { Record $\mathcal A^{\rm tr}$, $\mathcal A^{\rm re}$ from \eqref{eq:cone-action-splitting}, $\min_iJ_{T_\theta}(X_i)$, {the applicable certified map-contraction bound (Appendix~\ref{sec:design-notes-experiments}),} and $\max_i{\sf d}_\Omega(X_i,T_\theta(X_i))$.}
    \State Freeze the accepted pair $(T_{\theta,k+1},q_{\theta,k+1})$ and choose transition points with source-mass weights $\omega_i$; current-measure samples use $\omega_i=\mathfrak m_k/B$.
    \State Set $Y_i=T_{\theta,k+1}(X_i)$, $\widetilde w_i=\omega_i q_{\theta,k+1}(X_i)^2$, and
    $\widehat Z_{k+1}=\mathfrak m_k^{-1}\sum_i\widetilde w_i$.
    \State Update $\mathfrak m_{k+1}=\mathfrak m_k\widehat Z_{k+1}$ and $w_i=\widetilde w_i/\sum_j\widetilde w_j$.
    \State Retain $\sum_iw_i\delta_{Y_i}$ as a weighted approximation of $\pi_{k+1}$, or resample the $Y_i$ according to $(w_i)_i$.
    \State {\parbox[t]{0.88\linewidth}{{\emph{(Density model.)}  The recursive runs of Section~\ref{sec:experiments} use density-model propagation: the log-density values \eqref{eq:incremental-log-density-update} at the mapped points $Y_i$ are fitted by a positive neural density model. Output coefficients are fitted by variable projection, and a scalar
    shift restores sampled mass. The fit must pass the recorded validation tolerances for density error, mass error, and energy increase; otherwise the step is rejected. This projection adds a per-step representation error.}}}
    \State Use the target values
    $\log u_{\rm pair}(Y_i)
    =\log u_{\theta,k}^\tau(X_i)+2\log q_{\theta,k+1}(X_i)-\log J_{T_{\theta,k+1}}(X_i)$.
    \State Define the ideal pair endpoint
    $\nu_{k+1}=(T_{\theta,k+1})_\#(q_{\theta,k+1}^2\widehat\mu_k^\tau)$.
    \State For the density-model runs set $\widehat\mu_{k+1}^\tau=\mathcal P_k\nu_{k+1}$, where $\mathcal P_k$ is the accepted mass-corrected fit; exact propagation would instead set $\widehat\mu_{k+1}^\tau=\nu_{k+1}$.
\EndFor
\end{algorithmic}
\end{algorithm}

If rigorous error bounds $|\widehat{\mathcal I}_\tau-\mathcal I_\tau|\le\varepsilon_{I,k}$ and $|\widehat{\mathcal F}-\mathcal F|\le\varepsilon_{F,k}$ are available, the stronger acceptance condition
\[
\widehat{\mathcal I}_\tau(\theta)+\varepsilon_{I,k}
\le \widehat{\mathcal F}(\widehat\mu_k^\tau)-\varepsilon_{F,k}
\]
implies the exact descent criterion \eqref{eq:descent-guard} with $\delta=0$.  The empirical guard used in the experiments does not provide these quadrature error bounds.

The inner loop needs density values at its evaluation points. Reusing a batch supplies values only at the stored points; a density model allows evaluation at new points. All recursive trajectory experiments fit a positive neural density after each step, with architectures listed in Table~\ref{tab:density-family} and fitting details in Appendix~\ref{sec:design-notes-experiments}.

The cone action and its upper bound apply to the pair-induced measure $\nu_{k+1}$ before density fitting. The fitted density is checked separately for representation error, mass error, and energy increase. These empirical checks do not certify its exact $\HK$-JKO objective or transfer the convergence result for exact pair propagation to the trained recursion. Weighted-particle propagation is an alternative, but repeated reweighting can reduce particle diversity; its effective sample size should therefore be monitored.

{
\begin{remark}[Cutoff and reaction-only degeneracy]\label{rem:selector-range-cutoff}\label{rem:fisher-rao-degeneration}
Where ${\sf d}_\Omega(x,T_\theta(x))>\pi/2$, the cone integrand is independent of the map and its $\theta_T$-gradient vanishes.  Although the exact optimal Monge map stays below the cutoff by Corollary~\ref{cor:HK-growth-couple-relations-full}, a neural iterate may enter the region where the map gradient of the cone cost vanishes.  If map learning then stalls, the update can collapse to the pure-reaction pair $T_\theta=\mathrm{id}$, producing the reaction-only limit $\partial_t\rho=-4\rho g_\rho$ while still satisfying the descent criterion with $\delta=0$.  Energy monotonicity is therefore insufficient: the maximal displacement and the transport share in \eqref{eq:cone-action-splitting} should be monitored.  This can be mitigated by constraining the displacement radius, by choosing the
transport--reaction scaling so that $\operatorname{diam}\Omega<\pi/2$, or by using
a parametrization for which the cutoff is inactive.  Penalties or smoothing,
however, define a modified numerical objective and do not by themselves guarantee
either $T_\theta(\Omega)\subset\Omega$ or $J_{T_\theta}>0$.

The transport share can also be used as an optional additional acceptance criterion.  Fix $\kappa\in(0,1)$ and accept a trained pair only if, in addition to the descent criterion \eqref{eq:descent-guard}, it satisfies
\begin{equation}\label{eq:transport-share-guard}
\mathcal A^{\rm tr}_\mu(T,q)\ge\kappa\,\mathcal A_\mu(T,q).
\end{equation}
Since \eqref{eq:transport-share-guard} only shrinks the accepted set, Proposition~\ref{prop:one-step-certificate} applies verbatim to every pair that passes both criteria, while $T=\mathrm{id}$ is now rejected whenever $\mathcal A_\mu(T,q)>0$.  Splitting the metric--dissipation identity \eqref{eq:exact-step-dissipation-main} supplies the theoretical reference share $\int|\nabla g_{\rho_1}|^2\rho_1\,dx/I_g(\rho_1)$ for interpreting choices of $\kappa$.  Like the descent criterion itself, this additional test is empirical unless its quadrature error is controlled rigorously.

\end{remark}
}

\FloatBarrier
\section{\texorpdfstring{{Guarantees for parametric and computed steps}}
{Guarantees for parametric and computed steps}}\label{sec:analytical-results}\label{sec:representability}

This section gives two complementary guarantees. Theorem~C is an a priori approximation result: if a regular optimal pair is approximable, there exist admissible parametric pairs whose endpoint densities and objective values converge to the exact ones. Theorem~D gives an a posteriori error bound for an admissible computed measure under additional convexity and dual-feasibility assumptions. It does not require convergence of the optimizer or an approximation assumption on the parametric class.

\subsection{\texorpdfstring{{Theorem~C: one-step representability}}{Theorem C: one-step representability}}\label{sec:one-step-representability}

\begin{assumption}[Approximation of a regular Monge-growth pair]\label{ass:incremental-neural-recovery}
Let $(T,q)$ be a Monge-growth pair that is regular in the following sense:
$T$ is an
orientation-preserving $C^1$ diffeomorphism \emph{of $\Omega$ onto itself} with
$\inf_\Omega J_T>0$, and $q\in C(\Omega)$ satisfies
$0<\inf q\le\sup q<\infty$.
The chosen network classes admit parameters $\theta_n$ such that
each $T_{\theta_n}$ is an orientation-preserving
$C^1$ diffeomorphism of $\Omega$ onto itself with $\inf_\Omega J_{T_{\theta_n}}>0$, and
\begin{equation}\label{eq:incremental-network-recovery}
\|T_{\theta_n}-T\|_{C^1(\Omega)}
+\|q_{\theta_n}-q\|_{L^\infty(\Omega)}\longrightarrow0.
\end{equation}
\end{assumption}

Surjectivity onto $\Omega$ is included because Appendix~\ref{sec:proof-third-theorem}
compares induced densities through globally defined inverses $T_{\theta_n}^{-1}$.
The constrained residual map \eqref{eq:residual-map}, its finite compositions, and the
neural ODE flow \eqref{eq:neural-ode-map} are onto under the respective conditions in
Section~\ref{sec:incremental-neural-param}.

\begingroup
\begin{theorem}[One-step variational approximation and asymptotic tightness of the cone action]\label{thm:surrogate-one-step-consistency}
Assume the hypotheses of Theorem~\ref{thm:main-fully-implicit-HK-JKO-step}.  Let $\bar\mu=\bar\rho\,dx$ with $\bar\rho\in C(\Omega)$ and $C^{-1}\le\bar\rho\le C$ on $\Omega$ for some $C\ge1$, and let
\[
\mu_*=\rho_*\,dx
\in
\operatorname*{argmin}_{\sigma\in\mathcal M_+(\Omega)}
\mathcal J_\tau(\sigma\mid\bar\mu).
\]
Suppose that an optimal Monge-growth pair $(T_*,q_*)$ from $\bar\mu$ to $\mu_*$ is regular and satisfies the approximation property of Assumption~\ref{ass:incremental-neural-recovery} in the chosen neural class.  Then, for every $\varepsilon>0$, there is a neural pair $(T_\theta,q_\theta)$ such that, with
\begin{equation*}
(T_\theta)_\#(q_\theta^2\bar\mu)=u_\theta\,dx=:\nu_\theta,
\end{equation*}
we have
\begin{equation}\label{eq:one-step-uniform-density-recovery}
\|u_\theta-\rho_*\|_{L^\infty(\Omega)}\le\varepsilon,
\end{equation}
\begin{equation}\label{eq:one-step-incremental-objective-recovery}
0\le
\mathcal I_\tau(T_\theta,q_\theta\mid\bar\mu)
-\mathcal J_\tau(\mu_*\mid\bar\mu)
\le\varepsilon,
\end{equation}
and
\begin{equation}\label{eq:one-step-original-jko-recovery}
0\le
\mathcal J_\tau(\nu_\theta\mid\bar\mu)
-\mathcal J_\tau(\mu_*\mid\bar\mu)
\le\varepsilon.
\end{equation}
Moreover, the primal cone majorization is quantitatively tight:
\begin{equation}\label{eq:one-step-cone-tightness}
0\le
\mathcal A_{\bar\mu}(T_\theta,q_\theta)
-\HK^2(\bar\mu,\nu_\theta)
\le 2\tau\varepsilon.
\end{equation}
\end{theorem}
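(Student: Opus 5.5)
The plan is to take the approximating sequence $\theta_n$ supplied by Assumption~\ref{ass:incremental-neural-recovery} for the regular optimal pair $(T_*,q_*)$ and prove that every quantity in the statement converges along it. Then we choose $n$ large. The lower bounds $0\le\cdots$ in \eqref{eq:one-step-incremental-objective-recovery}, \eqref{eq:one-step-original-jko-recovery}, \eqref{eq:one-step-cone-tightness} are immediate. They follow from minimality of $\mu_*$ and from the cone upper bound \eqref{eq:incremental-HK-upper-bound}, since $\mathcal J_\tau(\mu_*\mid\bar\mu)\le\mathcal J_\tau(\nu_\theta\mid\bar\mu)\le\mathcal I_\tau(T_\theta,q_\theta\mid\bar\mu)$. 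So only the upper bounds need work. By Proposition~\ref{prop:incremental-HK-JKO-reparam}, $\mathcal I_\tau(T_*,q_*\mid\bar\mu)=\mathcal J_\tau(\mu_*\mid\bar\mu)$. Hence it suffices to show $\mathcal I_\tau(T_{\theta_n},q_{\theta_n}\mid\bar\mu)\to\mathcal I_\tau(T_*,q_*\mid\bar\mu)$ and $\|u_{\theta_n}-\rho_*\|_\infty\to0$. The sandwich then gives \eqref{eq:one-step-original-jko-recovery} from \eqref{eq:one-step-incremental-objective-recovery}. For \eqref{eq:one-step-cone-tightness}, note that $\mathcal A-\HK^2=2\tau(\mathcal I_\tau-\mathcal J_\tau(\nu_\theta\mid\bar\mu))\le 2\tau(\mathcal I_\tau-\mathcal J_\tau(\mu_*\mid\bar\mu))\le2\tau\varepsilon$.

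\textbf{Step 1: uniform density convergence.} Using \eqref{eq:incremental-induced-density}, write $u_n(y)=q_n^2\bar\rho/J_{T_n}$ evaluated at $T_n^{-1}(y)$, and similarly $\rho_*(y)=q_*^2\bar\rho/J_{T_*}$ at $T_*^{-1}(y)$. Here the regularity of $T_*$ identifies $\rho_*$ with this continuous expression a.e. $C^1$ convergence gives $J_{T_n}\to J_{T_*}$ uniformly, and so $\inf J_{T_n}\ge\tfrac12\inf J_{T_*}$ for large $n$. The main technical point is uniform convergence $T_n^{-1}\to T_*^{-1}$. I would prove it as follows. Set $y=T_n(x)$. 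Then
\[
{\sf d}(T_*^{-1}y,x)\le \operatorname{Lip}(T_*^{-1})\,{\sf d}(T_*x,T_nx)\le \operatorname{Lip}(T_*^{-1})\|T_n-T_*\|_\infty .
\]
Here $T_*^{-1}$ is Lipschitz because $DT_*^{-1}$ is bounded and $\Omega$ is convex (or is a torus). Surjectivity of $T_n$ is used to cover every $y$. Since $q_*,\bar\rho,J_{T_*}$ are uniformly continuous on the compact set $\Omega$, composing with nearby points gives uniform convergence. Together with $q_n\to q_*$ uniformly and the two-sided bounds, this yields $\|u_n-\rho_*\|_\infty\to0$, with $u_n$ uniformly bounded above and below by positive constants.

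\textbf{Step 2: convergence of the pair objective.} I would treat each term of \eqref{eq:incremental-current-objective} separately.

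The internal energy equals $\int U(u_n)\,dy$. The values $u_n$ lie in a fixed compact interval $[a,b]$ with $a>0$ (or at least in $[0,b]$), and $U$ is convex and finite there, hence continuous on the interior. Continuity at $0$ is given if $U$ is lsc, convex and finite at $0$. Dominated convergence then applies.

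The potential term is $\int V\,u_n\,dy\to\int V\rho_*$, because $V$ is bounded.

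For the interaction term, write it as $\frac12\iint W\,u_nu_n$. The integrand converges uniformly since $W$ is bounded and $u_n\to\rho_*$ uniformly.

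For the action, the integrand $1+q_n^2-2q_n\cos({\sf d}(x,T_nx)\wedge\pi/2)$ converges uniformly, since $x\mapsto{\sf d}(x,T_nx)$ converges uniformly and $\cos(\cdot\wedge\pi/2)$ is continuous. Then $\mathcal A_{\bar\mu}(T_n,q_n)\to\mathcal A_{\bar\mu}(T_*,q_*)=\HK^2(\bar\mu,\mu_*)$.

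Choosing $n$ with both errors below $\varepsilon$ finishes the proof.

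The main obstacle I expect is Step 1, specifically the identification of the endpoint densities through the global inverses. This is where surjectivity and $\inf J>0$ in Assumption~\ref{ass:incremental-neural-recovery} are essential. A smaller issue is continuity of $U$ at $0$ for the internal energy. This is harmless here because the uniform lower bound on $u_n$ keeps the arguments away from $0$.
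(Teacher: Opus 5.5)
Your proposal is correct and follows the paper's proof step for step: the approximating sequence from Assumption~\ref{ass:incremental-neural-recovery}, uniform recovery of the induced density through the change-of-variables formula at a common target point, term-by-term convergence of $\mathcal I_\tau$, Proposition~\ref{prop:incremental-HK-JKO-reparam}, and the sandwich together with the identity $\mathcal A-\HK^2=2\tau(\mathcal I_\tau-\mathcal J_\tau(\nu_\theta\mid\bar\mu))$. The only local difference is how you show $T_n^{-1}\to T_*^{-1}$: you derive it quantitatively from $\operatorname{Lip}(T_*^{-1})<\infty$, which uses $\inf J_{T_*}>0$ and the convex or flat geometry, whereas the paper's Lemma~\ref{lem:inverse-uniform-convergence} uses a compactness argument valid for any continuous bijections, and both work here.
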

\begin{remark}[Role and scope of the regular-pair assumption]
\label{rem:one-step-neural-scope}\label{rem:regular-pair-sufficient}
Assumption~\ref{ass:incremental-neural-recovery} imposes regularity
and approximation conditions, as in classical convergence analyses. 
It separates the regularity of the exact step from the approximation capability of the chosen neural class. 
Theorem~\ref{thm:surrogate-one-step-consistency} transfers the
assumed pair approximation to density and objective convergence and
vanishing cone-action overestimation. The $C^1\times L^\infty$ topology
is a natural sufficient choice because the induced density
\eqref{eq:incremental-induced-density} depends on the Jacobian of the
transport map: approximation only in $L^2(\bar\mu)$, with bounded growth
factors, controls the $\HK$ distance between the endpoints
(Lemma~\ref{lem:pair-to-HK-stability}) but does not by itself control
their internal energy.

The exact theory provides essential injectivity but does not establish
the stronger regularity assumed here. For the approximating pairs, the
two-block residual architecture satisfies the structural requirements:
under the contraction and domain constraints of
Section~\ref{sec:incremental-neural-param}, its maps are
orientation-preserving $C^1$ diffeomorphisms of $\Omega$ onto itself
with Jacobian bounded away from zero, and its growth factors are
positive and continuous. Every accepted map satisfies its contraction
certificate, with positive sampled Jacobians
(Tables~\ref{tab:diagnostics} and~\ref{tab:tau-scaling}).
The experiments also provide evidence of small-time consistency:
the rescaled-field errors of independently trained pairs decrease under
time-step refinement (Table~\ref{tab:tau-scaling}), and the relative $L^2$ residual of the PDE decreases from $1.8\%$ to $0.07\%$
(Figure~\ref{fig:hk-euler-check}). 
These observations do not establish
the fixed-$\tau$ approximation property
\eqref{eq:incremental-network-recovery}.
Proposition~\ref{prop:one-step-certificate} and
Theorem~\ref{thm:a-posteriori} provide complementary guarantees
without this assumption.
For the potential-generated parametrization, sufficient conditions
and the remaining regularity issue are discussed in
Appendix~\ref{sec:design-notes-Zadm}.
\end{remark}

\noindent
\textit{Idea of the proof (Appendix~\ref{sec:proof-third-theorem}).}
The argument transfers convergence through the change-of-variables formula.
Assumption~\ref{ass:incremental-neural-recovery} gives $T_{\theta_n}\to T_*$ in $C^1$ and
$q_{\theta_n}\to q_*$ uniformly; since $J_{T_*}$ is bounded away from zero, so are the
$J_{T_{\theta_n}}$ for large $n$, and the inverse maps converge uniformly.  Writing the
induced densities through \eqref{eq:incremental-induced-density} at the same target point then
gives $u_{\theta_n}\to\rho_*$ uniformly, with all densities eventually bounded above and away from zero, so the three energy terms and the cone integrand converge and the pair
objective converges to $\mathcal I_\tau(T_*,q_*\mid\bar\mu)$.  By
Proposition~\ref{prop:incremental-HK-JKO-reparam} the latter equals the JKO minimum, and
every competitor is bounded below by that minimum, which makes the gap nonnegative and
vanishing.  The tightness statement \eqref{eq:one-step-cone-tightness} follows algebraically from the definitions:
\[
\begin{aligned}
&\mathcal A_{\bar\mu}(T_\theta,q_\theta)-\HK^2(\bar\mu,\nu_\theta)=2\tau\bigl[\mathcal I_\tau(T_\theta,q_\theta\mid\bar\mu)-\mathcal J_\tau(\nu_\theta\mid\bar\mu)\bigr],
\end{aligned}
\]
and the right-hand side is dominated by the pair-objective gap.  This last identity also shows directly how the metric overestimate is measured: it is exactly the excess of the pair objective over the exact JKO objective at the induced state.

\medskip
\endgroup

For fixed $\tau$, Corollary~\ref{cor:recursive-finite-time-tracking}
in Appendix~\ref{sec:proof-third-theorem} extends this recovery result
to any fixed finite number of JKO steps, provided the regular-pair
approximation hypothesis holds at each step.

{
\subsection{\texorpdfstring{{Theorem~D: an a posteriori error estimate from a feasible dual pair}}{Theorem D: an a posteriori error estimate from a feasible dual pair}}\label{sec:a-posteriori}
% The sandwich above can be applied to the optimal value of the step, not only to the metric
% cost of a fixed candidate.  Written with $F^*(a)=e^a-1$, the dual
% \eqref{eq:HK-dual} is a supremum of functionals affine in each measure argument.  Hence a
% globally feasible dual pair gives an affine lower bound for
% $\sigma\mapsto\HK^2(\sigma,\bar\mu)$, while the returned Monge-growth pair gives the primal
% upper bound through the cone action.  The resulting gap is an a posteriori certificate once
% global dual feasibility has itself been verified.  The proof and the algebraic decomposition of the gap are in Appendix~\ref{sec:proof-a-posteriori}.
Theorem~C establishes approximation by a parametric class, conditional on a regular optimal pair. We now bound the error of an individual computed measure. Its pair objective gives an upper bound on the exact JKO objective by \eqref{eq:primal-upper-certificate}. A globally feasible $\HK$ dual pair gives a lower bound on the squared distance. When the interaction is positive semidefinite, this dual bound also leads to a lower bound for the exact JKO minimum. The difference between the primal and dual bounds therefore controls the candidate's JKO objective gap.

Theorem~\ref{thm:a-posteriori} also bounds the internal-energy convexity remainder $U(u)-U(\rho_1)-U'(\rho_1)(u-\rho_1)$ integrated over $\Omega$. For Boltzmann entropy, this gives an $L^1$ density error bound. The proof and the explicit gap decomposition appear in Appendix~\ref{sec:proof-a-posteriori}.

\begin{theorem}[Theorem~D: an a posteriori error estimate for an admissible pair]\label{thm:a-posteriori}
Assume the hypotheses of Theorem~\ref{thm:full-step-regularity} and suppose that $W$ is
positive semidefinite, in the sense that
\[
\iint_{\Omega\times\Omega}W(x,y)\,d\eta(x)d\eta(y)\ge0
\qquad\text{for every finite signed measure }\eta.
\]
Then the exact step $\mu_1=\rho_1dx=\mathcal S_\tau(\bar\mu)$ is unique.  Let $(T,q)$ be
any admissible pair for $\bar\mu$ such that
$\nu:=T_\#(q^2\bar\mu)=u\,dx$ and
$\mathcal I_\tau(T,q\mid\bar\mu)<\infty$.  Let $\zeta,\xi$ be bounded Borel functions
satisfying the global dual-feasibility condition
\begin{equation}\label{eq:ap-feasible}
\zeta(x)+\xi(y)\le c_{\HK}(x,y)
\qquad\text{for every }x,y\in\Omega.
\end{equation}
Put $U^*(p):=\sup_{s\ge0}\{ps-U(s)\}$ and
\begin{equation}\label{eq:ap-field}
a:=V+W[u]+\frac1{2\tau}\bigl(1-e^{-\zeta}\bigr),
\qquad
\mathcal R:=\mathcal I_\tau(T,q\mid\bar\mu)
+\int_\Omega U^*(-a)\,dx+\mathcal W(\nu)
-\frac1{2\tau}\int_\Omega\bigl(1-e^{-\xi}\bigr)d\bar\mu .
\end{equation}
Then $\mathcal R\ge0$ and
\begin{equation}\label{eq:ap-objective-gap}
0\le
\mathcal J_\tau(\nu\mid\bar\mu)-\mathcal J_\tau(\mu_1\mid\bar\mu)
\le \mathcal R.
\end{equation}
Moreover,
\begin{equation}\label{eq:ap-estimate}
\int_\Omega
\bigl[U(u)-U(\rho_1)-U'(\rho_1)(u-\rho_1)\bigr]dx
\le \mathcal R.
\end{equation}
\end{theorem}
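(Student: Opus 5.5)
The plan is to sandwich the exact minimum $\mathcal J_\tau(\mu_1\mid\bar\mu)$ between the primal value $\mathcal I_\tau(T,q\mid\bar\mu)$ and an explicit dual lower bound built from $(\zeta,\xi)$ and the computed measure $\nu$.

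\emph{Lower bound.} Since $-F^*(-z)=1-e^{-z}$, the dual \eqref{eq:HK-dual} applied to the feasible pair $(\zeta,\xi)$ gives, for every $\sigma=s\,dx$,
\[
\HK^2(\sigma,\bar\mu)\ge\int_\Omega(1-e^{-\zeta})\,d\sigma+\int_\Omega(1-e^{-\xi})\,d\bar\mu .
\]
Here I use only that every globally feasible pair is admissible in the supremum, and I take $\zeta$ as the coefficient of the first argument $\sigma$. Positive semidefiniteness of $W$, applied to $\eta=\sigma-\nu$, gives the tangent-plane inequality
\[
\mathcal W(\sigma)\ge\int W[\nu]\,d\sigma-\mathcal W(\nu).
\]
Adding these two inequalities to $\int U(s)+\int Vs$, I get
\[
\mathcal J_\tau(\sigma\mid\bar\mu)\ge\int_\Omega\bigl[U(s)+a\,s\bigr]dx-\mathcal W(\nu)+\frac1{2\tau}\int(1-e^{-\xi})\,d\bar\mu .
\]
Fenchel's inequality $U(s)+as\ge-U^*(-a)$ then yields a $\sigma$-independent lower bound $L$. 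Applying this with $\sigma=\mu_1$, and combining with \eqref{eq:primal-upper-certificate}, which says $\mathcal J_\tau(\nu\mid\bar\mu)\le\mathcal I_\tau(T,q\mid\bar\mu)$, I obtain $\mathcal R=\mathcal I_\tau-L\ge\mathcal J_\tau(\nu\mid\bar\mu)-\mathcal J_\tau(\mu_1\mid\bar\mu)\ge0$. This proves \eqref{eq:ap-objective-gap} and $\mathcal R\ge0$. (If $\int U^*(-a)=+\infty$ the statement is trivial, so I assume it is finite.)

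\emph{Uniqueness.} The map $\sigma\mapsto\HK^2(\sigma,\bar\mu)$ is convex, being a supremum of affine functionals by \eqref{eq:HK-dual}. The interaction energy $\mathcal W$ is convex because $W$ is positive semidefinite, and $U$ is strictly convex by Assumption~\ref{ass:standing-entropy}. Hence $\mathcal J_\tau(\cdot\mid\bar\mu)$ is strictly convex on absolutely continuous measures. Since every finite-energy competitor is absolutely continuous, the minimizer supplied by Theorem~\ref{thm:main-fully-implicit-HK-JKO-step} is unique.

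\emph{Bregman estimate.} To prove \eqref{eq:ap-estimate} I will show
\[
\mathcal J_\tau(\nu\mid\bar\mu)-\mathcal J_\tau(\mu_1\mid\bar\mu)\ge\int\bigl[U(u)-U(\rho_1)-U'(\rho_1)(u-\rho_1)\bigr]dx,
\]
and then chain this with \eqref{eq:ap-objective-gap}. I expand each term of $\mathcal J_\tau$ around $\mu_1$:
\begin{enumerate}[(i)]
\item the internal energy gives exactly the Bregman remainder plus the linear term $\int U'(\rho_1)(u-\rho_1)$;
\item the potential energy is linear and gives $\int V(u-\rho_1)$;
\item the interaction energy gives $\int W[\rho_1](u-\rho_1)+\tfrac12\iint W\,d(\nu-\mu_1)^{\otimes2}$, and the last term is $\ge0$;
\item for the metric term I need the subgradient inequality
\[
\frac1{2\tau}\bigl[\HK^2(\nu,\bar\mu)-\HK^2(\mu_1,\bar\mu)\bigr]\ge\frac1\tau\int\phi_1\,(u-\rho_1)\,dx .
\]
\end{enumerate}
Given (iv), the linear terms combine into $\int\bigl(g_{\rho_1}+\phi_1/\tau\bigr)(u-\rho_1)$, which vanishes by the Euler--Lagrange equation \eqref{eq:full-Euler-Lagrange-main}. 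What remains is the Bregman remainder plus a nonnegative interaction term, which gives the claim.

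The main obstacle is step (iv). To prove it I will take the optimal transformed dual pair for $(\mu_1,\bar\mu)$ from Theorem~\ref{thm:full-step-regularity}(iii) and Lemma~\ref{lem:transformed-potential-Lipschitz}, whose $\mu_1$-coefficient is $2\phi_1$, and use it as a feasible, but not necessarily optimal, competitor in the dual for $(\nu,\bar\mu)$. Because that dual is affine in the first measure with coefficient $2\phi_1$, the difference of the two dual values is exactly $\int2\phi_1\,d(\nu-\mu_1)$. The care needed is that this pair must be feasible on all of $\Omega\times\Omega$, not merely on the supports. I expect this to hold because $\phi_1$ is Lipschitz and the densities are positive, so the potential can be taken as the $c$-transform globally. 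If that fails, I will fall back on a directional-derivative argument, replacing $\nu$ by $\mu_1+\varepsilon(\nu-\mu_1)$ and using convexity, to obtain the same inequality.
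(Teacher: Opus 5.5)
Your proposal is correct and matches the paper's proof essentially step for step. The lower bound combines the dual minorant from $(\zeta,\xi)$, the positive-semidefinite tangent inequality at $\nu$, and Fenchel's inequality; the convexity-remainder estimate uses $(2\phi_1,\chi_1)$ as a subgradient of the metric term, together with the tangent inequality at $\mu_1$ and the Euler--Lagrange equation. Your feasibility worry in step (iv) is settled by definition: the transformed dual constraint \eqref{eq:transformed-HK-constraint} is imposed for all $x,y\in\Omega$, so the optimal pair of Theorem~\ref{thm:full-step-regularity}(iii) is globally feasible and the fallback argument is not needed.
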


\begin{corollary}[Boltzmann $L^1$ error bound]\label{cor:ap-boltzmann-L1}
Under the hypotheses of Theorem~\ref{thm:a-posteriori}, assume in addition that
$U(s)=s\log s-s$.  Then
\begin{equation}\label{eq:ap-boltzmann-L1}
\|u-\rho_1\|_{L^1(\Omega)}^2
\le2\bigl(\nu(\Omega)+\mu_1(\Omega)\bigr)\mathcal R
\le2\bigl(\nu(\Omega)+M_{\tau,\bar\mu}\bigr)\mathcal R.
\end{equation}
\end{corollary}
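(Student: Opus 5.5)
The plan is to reduce the statement to a Csiszár--Kullback--Pinsker type inequality applied to the integrated convexity remainder that Theorem~\ref{thm:a-posteriori} already controls by $\mathcal R$. For $U(s)=s\log s-s$ we have $U'(s)=\log s$, so the remainder becomes
\[
U(u)-U(\rho_1)-U'(\rho_1)(u-\rho_1)=u\log\frac{u}{\rho_1}-u+\rho_1 .
\]
This is the pointwise integrand of the generalized (unnormalized) Kullback--Leibler divergence. Theorem~\ref{thm:full-step-regularity}(ii) gives $\rho_1\ge\underline\rho_1>0$ almost everywhere, so the expression is well defined. Integrating, \eqref{eq:ap-estimate} yields
\[
\mathrm{KL}(u\,\|\,\rho_1):=\int_\Omega\Bigl(u\log\frac{u}{\rho_1}-u+\rho_1\Bigr)dx\le\mathcal R .
\]

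The key step is the unnormalized Pinsker inequality
\[
\|u-\rho_1\|_{L^1}^2\le 2\bigl(\|u\|_{L^1}+\|\rho_1\|_{L^1}\bigr)\,\mathrm{KL}(u\|\rho_1).
\]
I would prove it pointwise first, from the scalar bound
\[
(a-b)^2\le 2\cdot\tfrac{a+b}{2}\cdot\ldots
\]
in its standard form
\[
a\log\frac ab-a+b\ge\frac{(a-b)^2}{\tfrac23(a+b)+\tfrac43 b}\quad(a\ge0,\ b>0).
\]
A simpler and sufficient route is
\[
a\log\frac ab-a+b\ge\frac{(a-b)^2}{2\max(a,b)}\ge\frac{(a-b)^2}{2(a+b)}.
\]
This follows from $h(t)=t\log t-t+1\ge (t-1)^2/(2\max(t,1))$, which is checked by Taylor's formula with integral remainder: $h''(s)=1/s\ge1/\max(t,1)$ for $s$ between $1$ and $t$.

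Next I would apply Cauchy--Schwarz with weight $u+\rho_1$:
\[
\|u-\rho_1\|_{L^1}^2\le\int(u+\rho_1)\,dx\int\frac{(u-\rho_1)^2}{u+\rho_1}\,dx\le(\nu(\Omega)+\mu_1(\Omega))\cdot 2\,\mathrm{KL}(u\|\rho_1).
\]
Combining this with the previous display gives the first inequality in \eqref{eq:ap-boltzmann-L1}. The second inequality follows from the mass bound \eqref{eq:full-mass-sublevel-bound}, $\mu_1(\Omega)\le M_{\tau,\bar\mu}$, together with $\mathcal R\ge0$.

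Two routine checks remain:
\begin{enumerate}[(i)]
\item The hypotheses of Theorem~\ref{thm:a-posteriori} hold for Boltzmann entropy. This is automatic, since they are assumed.
\item $u\in L^1$. This holds because $\nu$ is a finite measure, since $\int q^2d\bar\mu<\infty$. Finiteness of $\mathcal R$ is only needed when it is nontrivial.
\end{enumerate}

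The main obstacle is getting the constant exactly $2$ in the scalar inequality without appealing to normalized probability measures. The Taylor-remainder argument above handles this directly, since $\max(t,1)\le t+1$. I would check the case $u=0$ separately: there the integrand equals $\rho_1\ge\rho_1^2/(2\rho_1)$, so the bound still holds.
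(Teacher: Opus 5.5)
Your proof is correct and has the same skeleton as the paper's: identify the convexity remainder as the relative-entropy integrand, bound it below pointwise, then apply Cauchy--Schwarz and \eqref{eq:full-mass-sublevel-bound}. The difference is the pointwise comparison.

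The paper passes through the squared Hellinger integrand. With $t=\sqrt{u/\rho_1}$, the algebraic identity $2t^2\log t-t^2+1-(t-1)^2=2t(t\log t-t+1)\ge0$ gives $u\log(u/\rho_1)-u+\rho_1\ge(\sqrt u-\sqrt{\rho_1})^2$. Cauchy--Schwarz is then applied to the factorization $u-\rho_1=(\sqrt u-\sqrt{\rho_1})(\sqrt u+\sqrt{\rho_1})$, and the factor $2$ enters only through $(\sqrt u+\sqrt{\rho_1})^2\le2(u+\rho_1)$. You instead use the Taylor minorant $(u-\rho_1)^2/(2\max\{u,\rho_1\})$ and weight Cauchy--Schwarz by $u+\rho_1$.

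Neither minorant dominates the other. Yours is larger when $3-2\sqrt2<u/\rho_1<3+2\sqrt2$ and $u\neq\rho_1$; the Hellinger one is larger outside this range. Both yield exactly the stated constant.

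Your route gives a small sharpening. If you keep $2\max\{u,\rho_1\}$ as the Cauchy--Schwarz weight, you get $\|u-\rho_1\|_{L^1}^2\le2\|\max\{u,\rho_1\}\|_{L^1}\mathcal R$, which is about half the stated bound when $u$ is close to $\rho_1$. The paper's route is purely algebraic and has its own byproduct. The intermediate estimate $\int_\Omega(\sqrt u-\sqrt{\rho_1})^2dx\le\mathcal R$, combined with \eqref{eq:HK2-FR2}, gives $\HK^2(\nu,\mu_1)\le\mathcal R$, an error bound in the scheme's own metric.

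As a cleanup, delete the unfinished display before your ``standard form'', and that form itself. The standard refinement is $t\log t-t+1\ge 3(t-1)^2/(2(t+2))$, which gives denominator $\tfrac23a+\tfrac43b$. Yours has the larger denominator $\tfrac23a+2b$, so it is true but weaker, and you never use it.
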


The gap decomposes into a Fenchel--Young residual for the discrete Euler relation and a primal--dual metric gap; the exact decomposition is Proposition~\ref{prop:ap-gap-decomposition} in Appendix~\ref{sec:proof-a-posteriori}.

\begin{remark}[Constructing a feasible pair; monotonicity of the error bound in $\zeta$]\label{rem:ap-dual-construction}
Global feasibility \eqref{eq:ap-feasible} need not be verified after the fact; it can be arranged by construction.  Fix any bounded Borel $\xi$ and set
\begin{equation*}
\zeta(x):=\inf_{y\in\Omega}\bigl\{c_{\HK}(x,y)-\xi(y)\bigr\}.
\end{equation*}
Then \eqref{eq:ap-feasible} holds for every pair $(x,y)$ by definition, and
$-\sup_\Omega\xi\le\zeta\le-\inf_\Omega\xi$ because $c_{\HK}\ge0$ and $c_{\HK}(x,x)=0$, so $\zeta$ is admissible in Theorem~\ref{thm:a-posteriori}.  If moreover the transport--reaction weighting is normalized so that $\diam\Omega<\pi/2$,  then $c_{\HK}$ is Lipschitz on $\Omega\times\Omega$ and $\zeta$ inherits its Lipschitz constant.

A pointwise lower bound for the exact $c$-transform preserves feasibility and gives a possibly larger error bound.  Suppose $\zeta_-\le\zeta$ pointwise.  Then $\zeta_-$ is again feasible.  Moreover, $\zeta\mapsto a$ is nondecreasing by \eqref{eq:ap-field}, and $U^*$ is nondecreasing on $\mathbb R$, so replacing $\zeta$ by $\zeta_-$ can only increase $\mathcal R$.  Hence \emph{any} rigorous pointwise lower bound for the exact $c$-transform yields a valid, possibly larger, error bound.  For Lipschitz $\xi$ and $\diam\Omega<\pi/2$, one such lower bound is available on a finite grid $\{y_j\}\subset\Omega$ with fill radius $h:=\sup_{y\in\Omega}\min_j{\sf d}_\Omega(y,y_j)$:
\begin{equation}\label{eq:ap-grid-lower-bound}
\zeta_-(x):=\min_j\bigl\{c_{\HK}(x,y_j)-\xi(y_j)\bigr\}-Lh\le\zeta(x),
\qquad
L:=\Lip(c_{\HK})+\Lip(\xi),
\end{equation}
uniformly in $x$.  This correction is what distinguishes a rigorous evaluation from the batchwise minimum excluded in Remark~\ref{rem:ap-scope}.  An unadjusted minimum over sampled points is an \emph{upper} bound for $\zeta$ and does not guarantee feasibility, whereas subtracting $Lh$ restores it.  Global feasibility can therefore be enforced by construction.  A useful numerical error bound still requires deterministic control of the three quadratures in \eqref{eq:ap-field} and a sufficiently tight dual choice.
\end{remark}

{
\begin{remark}[Scope]\label{rem:ap-scope}
Theorem~\ref{thm:a-posteriori} avoids approximation and regular-pair assumptions, but it applies only to $W=0$ or, more generally, a positive-semidefinite interaction and requires global dual feasibility.  Numerical use additionally requires a rigorous evaluation of the gap.  In particular, replacing the exact $c$-transform by a batchwise minimum does not verify \eqref{eq:ap-feasible}, whereas the corrected variant \eqref{eq:ap-grid-lower-bound} of Remark~\ref{rem:ap-dual-construction} does.  The estimate controls only the induced density, not the map-growth pair or its cone action.  Finite-horizon propagation would require an additional stability estimate for $\mathcal S_\tau$ and rigorously controlled per-step gaps; no such propagation is proved here.
\end{remark}
}
}

\input{sections/experiments}

\section{Discussion and conclusion}\label{sec:conclusion}
We developed a Monge-growth neural minimizing-movement scheme
for $\HK$ dynamics with fully implicit pair interactions.  At each step, a fresh spatial map and growth factor act on the current measure through a single-level objective.  The key design choice is the one-sided primal approximation.  Since the cone action majorizes $\HK^2$ and vanishes at the identity pair, every admissible pair satisfying the descent criterion {has an exact, unprojected endpoint obeying the discrete energy inequality; the computed density fit is checked separately}.  The criterion with $\delta=0$ is compatible with $T=\mathrm{id}$, so the transport--reaction split is needed to detect reaction-only degeneration.

For the exact step, Theorems~\ref{thm:main-fully-implicit-HK-JKO-step} and~\ref{thm:full-step-regularity} establish existence, mass control, positivity, density bounds, the discrete Euler--Lagrange equation, and the metric--dissipation identity on bounded convex domains and flat tori.  With Lipschitz $V$ and $W$, the density is Lipschitz as well.  At the parametric level, Theorem~\ref{thm:surrogate-one-step-consistency} gives one-step recovery under a regular-pair approximation hypothesis; Corollary~\ref{cor:recursive-finite-time-tracking} extends this recovery to each fixed finite horizon by finite induction.  These are conditional representability results and do not imply convergence of the trained stochastic solver. {In particular, approximation by the fixed two-block architecture used in the experiments is not proved.}

{Under a positive-semidefinite interaction, Theorem~\ref{thm:a-posteriori} complements the descent criterion with a one-step state-error estimate.  A globally feasible dual pair supplies a rigorous lower bound for the exact JKO minimum.  Combined with the primal cone-action upper bound, it controls the JKO-objective gap and the convexity remainder of $U$, with an explicit $L^1$ bound for Boltzmann entropy.  Remark~\ref{rem:ap-scope} records the limitations of this estimate and the additional ingredients needed for finite-horizon propagation.}  Section~\ref{sec:experiments} checks the small-step pair structure, evaluates a deterministic-grid version of the primal--dual certificate, and tests the transport--reaction and fully implicit design choices by ablation.  The 8D example reproduces Gaussian dynamics; projection errors, optimizer work, and interaction residual checks are reported in Appendix~\ref{sec:design-notes-experiments}. These experiments do not replace the missing continuum quadrature control or step-uniform estimates.  A general $\tau\downarrow0$ theory would additionally require stepwise estimates uniform in the step index.

\section*{Acknowledgements}
Geuntaek Seo was supported by the National Research Foundation of Korea (NRF) grant funded by the Korea government(MSIT) (RS-2023-00219980).
Hwijae Son was supported by National Research Foundation of Korea (NRF) grants funded by the Korean government (Ministry of Science and ICT, MSIT) (No. RS-2026-25491905 and RS-2026-25506577).
Hyung Ju Hwang was supported by the National Research Foundation of Korea(NRF) grant funded by the Korea government(MSIT) (RS-2023-00219980 and RS-2022-00165268).

\appendix

\section{Proof of Theorem~\ref{thm:main-fully-implicit-HK-JKO-step} (Theorem~A)}\label{sec:proof-first-theorem}
\begin{proof}[Proof of Theorem~\ref{thm:main-fully-implicit-HK-JKO-step}]
We prove existence, the quantitative mass estimate, strict positivity in this order.

\paragraph{Step 1: a coercive lower bound depending only on the mass.}
The zero measure is admissible and
\begin{equation}\label{eq:zero-objective-value}
\mathcal J_\tau(0\mid\bar\mu)
=|\Omega|U(0)+\frac{\bar m}{2\tau},
\qquad \bar m:=\bar\mu(\Omega),
\end{equation}
because $\HK^2(0,\bar\mu)=\bar m$.  Hence the infimum of the one-step
functional is finite.  Moreover, every competitor with finite energy is of the
form $\sigma=r\,dx$.  Writing $m=\sigma(\Omega)=\int_\Omega r\,dx$, Jensen's
inequality gives
\[
\int_\Omega U(r)\,dx
\ge |\Omega|U\!\left(\frac{m}{|\Omega|}\right).
\]
Since $V\ge\underline V$ on $\Omega$ and $W\ge\underline W$ on $\Omega\times\Omega$,
\[
\int_\Omega V\,d\sigma\ge \underline V m,
\qquad
\frac12\iint_{\Omega\times\Omega}W(x,y)\,d\sigma(x)d\sigma(y)
\ge \frac{\underline W}{2}m^2.
\]
Finally, the mass estimate \eqref{eq:HK-mass-difference} yields
\[
\HK^2(\sigma,\bar\mu)
\ge (\sqrt m-\sqrt{\bar m})^2.
\]
Combining the preceding inequalities, we obtain the scalar lower bound
\begin{equation}\label{eq:full-objective-scalar-lower-bound}
\mathcal J_\tau(\sigma\mid\bar\mu)
\ge \Gamma_{\tau,\bar m}(m).
\end{equation}
This estimate is the point at which the possibly negative quadratic part of
the interaction is taken into account.  The assumed coercivity of
$\Gamma_{\tau,\bar m}$ therefore controls the total mass even when the
interaction energy itself is not nonnegative.

\paragraph{Step 2: compactness of a minimizing sequence.}
Let $\sigma_n=r_n\,dx$ be a minimizing sequence.  By discarding finitely many
terms, we may assume
\[
\mathcal J_\tau(\sigma_n\mid\bar\mu)
\le \mathcal J_\tau(0\mid\bar\mu)+1.
\]
If $m_n=\sigma_n(\Omega)$, then \eqref{eq:full-objective-scalar-lower-bound}
and the coercivity of $\Gamma_{\tau,\bar m}$ imply
\begin{equation}\label{eq:minimizing-sequence-mass-bound}
\sup_n m_n<\infty.
\end{equation}
The potential and interaction terms are bounded from below on this
mass-bounded family:
\[
\int_\Omega V r_n\,dx\ge \underline V m_n,
\qquad
\frac12\iint W(x,y)r_n(x)r_n(y)\,dx\,dy
\ge \frac{\underline W}{2}m_n^2.
\]
Since the proximal term is nonnegative and the objective values are bounded
from above, it follows that
\begin{equation}\label{eq:internal-energy-uniform-bound}
\sup_n\int_\Omega U(r_n)\,dx<\infty.
\end{equation}

We spell out why this gives compactness of the densities.  Convexity and
finiteness of $U$ provide an affine lower bound $U(s)\ge-a s-b$ on
$[0,\infty)$ for suitable $a,b\ge0$.  Thus
\[
\Psi(s):=U(s)+as+b
\]
is nonnegative and remains superlinear.  From
\eqref{eq:minimizing-sequence-mass-bound} and
\eqref{eq:internal-energy-uniform-bound},
\[
\sup_n\int_\Omega\Psi(r_n)\,dx<\infty.
\]
The de la Vall\'ee-Poussin criterion shows that $(r_n)_n$ is uniformly
integrable.  By the Dunford--Pettis theorem, after passing to a subsequence,
there is $r\in L^1(\Omega)$ such that
\begin{equation}\label{eq:weak-L1-minimizing-sequence}
r_n\rightharpoonup r
\qquad\text{weakly in }L^1(\Omega).
\end{equation}
The positive cone of $L^1$ is weakly closed, so $r\ge0$.  Setting
$\sigma=r\,dx$, \eqref{eq:weak-L1-minimizing-sequence} also implies the narrow
convergence $\sigma_n\rightharpoonup\sigma$ in $\mathcal M_+(\Omega)$.
In particular, the weak $L^1$ argument rules out the creation of a singular
part in the limit; this is the compactness role played by the superlinearity of
$U$.

\paragraph{Step 3: passage to the limit.}
The convex integral functional is weakly lower semicontinuous on $L^1$, hence
\begin{equation}\label{eq:internal-energy-lsc-existence}
\int_\Omega U(r)\,dx
\le \liminf_{n\to\infty}\int_\Omega U(r_n)\,dx.
\end{equation}
Because $V\in C(\Omega)\subset L^\infty(\Omega)$,
\[
\int_\Omega Vr_n\,dx\longrightarrow\int_\Omega Vr\,dx.
\]
For the interaction term, narrow convergence together with the uniform mass
bound implies
\[
\sigma_n\otimes\sigma_n\rightharpoonup\sigma\otimes\sigma
\qquad\text{narrowly in }\mathcal M_+(\Omega\times\Omega).
\]
Indeed, the convergence is immediate for functions of the form
$f(x)g(y)$, and finite sums of such functions are uniformly dense in
$C(\Omega\times\Omega)$.  Since $W$ is continuous and bounded on $\Omega\times\Omega$,
\begin{equation}\label{eq:interaction-continuity-existence}
\frac12\iint W(x,y)\,d\sigma_n(x)d\sigma_n(y)
\longrightarrow
\frac12\iint W(x,y)\,d\sigma(x)d\sigma(y).
\end{equation}
The squared Hellinger--Kantorovich distance is lower semicontinuous under
narrow convergence of finite measures; this follows, for instance, from its
entropy--transport formulation \eqref{eq:HK-entropy-transport}; see
\cite{LMS18}.  Consequently,
\[
\HK^2(\sigma,\bar\mu)
\le\liminf_{n\to\infty}\HK^2(\sigma_n,\bar\mu).
\]
Together with \eqref{eq:internal-energy-lsc-existence} and
\eqref{eq:interaction-continuity-existence}, this gives
\[
\mathcal J_\tau(\sigma\mid\bar\mu)
\le\liminf_{n\to\infty}\mathcal J_\tau(\sigma_n\mid\bar\mu).
\]
Thus $\sigma=\rho_1dx$ is a minimizer.

\paragraph{Step 4: the quantitative mass bound.}
Let $\mu_1=\rho_1dx$ be any minimizer and put $m_1=\mu_1(\Omega)$.  Minimality
against the zero measure, \eqref{eq:zero-objective-value}, and
\eqref{eq:full-objective-scalar-lower-bound} give
\[
\Gamma_{\tau,\bar m}(m_1)
\le \mathcal J_\tau(\mu_1\mid\bar\mu)
\le |\Omega|U(0)+\frac{\bar m}{2\tau}.
\]
The scalar sublevel appearing in \eqref{eq:full-mass-sublevel-bound} is
nonempty because it contains $m=0$, and it is bounded by the coercivity of
$\Gamma_{\tau,\bar m}$.  Hence $M_{\tau,\bar\mu}<\infty$ and
$m_1\le M_{\tau,\bar\mu}$.

\paragraph{Step 5: strict positivity when $U'(0+)=-\infty$.}
Assume now that $U'(0+)=-\infty$ and suppose, by contradiction, that
$B:=\{x\in\Omega:\rho_1(x)=0\}$ has positive Lebesgue measure.

We first estimate the proximal term.  For \(\delta>0\), choose an
entropy--transport plan \(\gamma\) for \((\mu_1,\bar\mu)\), with marginals
\(\gamma_0,\gamma_1\), such that
\begin{equation}\label{eq:proximal--1}
\begin{aligned}
&\int_\Omega
F\!\left(\frac{d\gamma_0}{d\mu_1}\right)d\mu_1
+
\int_\Omega
F\!\left(\frac{d\gamma_1}{d\bar\mu}\right)d\bar\mu
+
\int_{\Omega\times\Omega}c_{\HK}\,d\gamma
\le
\HK^2(\mu_1,\bar\mu)+\delta .
\end{aligned}    
\end{equation}
Since \(\gamma_0\ll\mu_1\) and \(\mu_1(B)=0\), we have
\(\gamma_0(B)=0\).   
For $\varepsilon>0$, if we define
\[
\mu_\varepsilon
:=\mu_1+\varepsilon\mathbbm1_B\,dx,
\]
then \(\mu_1\ll\mu_\varepsilon\), and hence
\(\gamma_0\ll\mu_\varepsilon\).  
Thus the same plan \(\gamma\) is
admissible for \((\mu_\varepsilon,\bar\mu)\), and
$$
\frac{d\gamma_0}{d\mu_\varepsilon}
=
\frac{d\gamma_0}{d\mu_1}
\quad\text{on }\Omega\setminus B,
\qquad
\frac{d\gamma_0}{d\mu_\varepsilon}=0
\quad\text{on }B.
$$
Therefore
\begin{equation}\label{eq:proximal--2}
\begin{aligned}
\int_\Omega
F\!\left(\frac{d\gamma_0}{d\mu_\varepsilon}\right)d\mu_\varepsilon
&=
\int_{\Omega\setminus B}
F\!\left(\frac{d\gamma_0}{d\mu_1}\right)d\mu_1
+
\int_BF(0)\,d\mu_\varepsilon
=
\int_\Omega
F\!\left(\frac{d\gamma_0}{d\mu_1}\right)d\mu_1
+\varepsilon|B|,
\end{aligned}    
\end{equation}
where we used $F(0)=1$.  
The second entropy term and the transport-cost term are
unchanged.  Since \(\HK^2(\mu_\varepsilon,\bar\mu)\) is the infimum over
all admissible entropy--transport plans, using this same \(\gamma\) as a
competitor gives
$$
\begin{aligned}
\HK^2(\mu_\varepsilon,\bar\mu)
&\le
\int_\Omega
F\!\left(\frac{d\gamma_0}{d\mu_\varepsilon}\right)d\mu_\varepsilon
+
\int_\Omega
F\!\left(\frac{d\gamma_1}{d\bar\mu}\right)d\bar\mu
+
\int_{\Omega\times\Omega}c_{\HK}\,d\gamma.
\end{aligned}
$$
By combining \eqref{eq:proximal--1} and \eqref{eq:proximal--2}, and
letting \(\delta\downarrow0\), we obtain
\begin{equation}\label{eq:vacuum-mass-addition-HK}
\HK^2(\mu_\varepsilon,\bar\mu)
\le
\HK^2(\mu_1,\bar\mu)+\varepsilon|B|.
\end{equation}

Since $W$ is symmetric, the interaction increment has the exact expansion
\[
\begin{aligned}
&\frac12\iint W(x,y)\,d\mu_\varepsilon(x)d\mu_\varepsilon(y)
-\frac12\iint W(x,y)\,d\mu_1(x)d\mu_1(y)\\
&\qquad=
\varepsilon\int_B W[\mu_1](x)\,dx
+\frac{\varepsilon^2}{2}
\iint_{B\times B}W(x,y)\,dx\,dy.
\end{aligned}
\]
Using minimality of $\mu_1$ and \eqref{eq:vacuum-mass-addition-HK}, we obtain
\begin{equation}\label{eq:positivity-competitor-estimate}
\begin{aligned}
0\le{}&\mathcal J_\tau(\mu_\varepsilon\mid\bar\mu)
-\mathcal J_\tau(\mu_1\mid\bar\mu)\\
\le{}&|B|\bigl(U(\varepsilon)-U(0)\bigr)
+\varepsilon\int_B\bigl(V+W[\mu_1]\bigr)\,dx
+\frac{\varepsilon^2}{2}
\iint_{B\times B}W(x,y)\,dx\,dy
+\frac{\varepsilon|B|}{2\tau}.
\end{aligned}
\end{equation}
Because $V,W$ are bounded and $\mu_1$ has finite mass, all terms on the
right-hand side except the internal-energy increment are bounded from above by
$C\varepsilon|B|+C\varepsilon^2|B|^2$, with $C$ independent of
$\varepsilon$.  Dividing \eqref{eq:positivity-competitor-estimate} by
$\varepsilon|B|$ gives
\[
0\le \frac{U(\varepsilon)-U(0)}{\varepsilon}
+C+C\varepsilon|B|.
\]
The right-hand side tends to $-\infty$ as $\varepsilon\downarrow0$ because
$U'(0+)=-\infty$, which is impossible.  Thus $\rho_1>0$ almost everywhere.  Every nonempty
relatively open subset of $\Omega$ has positive Lebesgue measure, so it has
positive $\mu_1$-mass; hence $\operatorname{supp}\mu_1=\Omega$.
\end{proof}

\begin{proof}[Proof of Corollary~\ref{cor:explicit-mass-bound}]
We only need a coercive estimate in the total mass.  We obtain it by retaining the affine lower bound for the internal energy and the negative part of the external potential, and then compare the minimizer with the zero measure.

Let $\sigma=r\,dx$ have mass $m$.  Under the assumptions of the corollary,
\[
\int_\Omega U(r)\,dx\ge am-C_a|\Omega|,
\qquad
\int_\Omega V\,d\sigma\ge-\|V_-\|_\infty m,
\]
while the interaction and proximal terms are nonnegative.  Therefore
\[
\mathcal J_\tau(\sigma\mid\bar\mu)
\ge (a-\|V_-\|_\infty)m-C_a|\Omega|.
\]
For a minimizer $\mu_1$, comparison with the zero measure gives
\[
(a-\|V_-\|_\infty)\mu_1(\Omega)-C_a|\Omega|
\le |\Omega|U(0)+\frac{\bar m}{2\tau}.
\]
Since $a>\|V_-\|_\infty$, rearranging proves
\eqref{eq:explicit-full-mass-bound}.
\end{proof}

\section{Proof of Proposition~\ref{prop:HK-Monge-representation}}\label{sec:proof-monge-representation}
\begin{proof}[Proof of Proposition~\ref{prop:HK-Monge-representation}]
We treat the two state spaces separately.  In the Euclidean case the cited Monge theorem applies directly after extension by zero; on the torus we use the logarithmic-potential formulation and then verify essential injectivity from the reversed optimizer.

In the Euclidean case, extend the measures by zero outside $\Omega$.  Since both densities are positive almost everywhere, both supports equal $\Omega$.  Thus each support lies in the $\pi/2$-neighborhood of the other, so the pair is \emph{strongly reduced} in the sense of \cite[Definition~2.6]{LMS23}; equivalently, there is no non-reduced marginal part to discard.  Theorem~3.3(4)--(5) and Corollary~3.5 of \cite{LMS23} then give the Monge-growth representation, positivity of the growth factor, and essential injectivity.  Convexity keeps every selected segment in $\Omega$.

We next turn to the flat torus, where the same conclusion is obtained through the Riemannian entropy--transport theory.

Now let $\Omega=\mathbb T^d$ and regard it as a compact flat Riemannian manifold.  The two measures have full support and form an admissible pair for the $\HK$ cost, since $c_{\HK}(x,x)=0$.  The Lipschitz-potential and Monge results of \cite[Sections~2.2 and~3.4.1]{GGV25} give finite Lipschitz logarithmic potentials $(z_0,z_1)$ and a unique entropy--transport optimizer
\[
\gamma=(\mathrm{id},T)_\#\gamma_0,
\qquad
\gamma_0=e^{-z_0}\mu_0,
\qquad
\gamma_1=e^{-z_1}\mu_1,
\]
concentrated on the graph of the Riemannian $c$-exponential map
\[
T(x)=\exp_x\!\left(
-\arctan\!\left(\frac{|\nabla z_0(x)|}{2}\right)
\frac{\nabla z_0(x)}{|\nabla z_0(x)|}
\right)
\]
for $\mu_0$-almost every differentiability point, with the usual convention when $\nabla z_0=0$.  The corresponding growth factor is
\[
q(x)=e^{-z_0(x)}\left(1+\frac14|\nabla z_0(x)|^2\right)^{1/2}>0,
\]
and the Monge formulation gives both identities in \eqref{eq:injective-growth-representation}.

It remains to verify essential injectivity of the torus map.  For this we compare the forward optimizer with the optimizer of the reversed problem.

Apply the same result to the reversed pair $(\mu_1,\mu_0)$ and denote its Monge map by $S$, so that the reversed optimizer is $(\mathrm{id},S)_\#\gamma_1$.  The entropy--transport cost is symmetric under transposition, so $\gamma^{\mathsf T}$ is optimal for the reversed pair; by its uniqueness,
$\gamma^{\mathsf T}=(\mathrm{id},S)_\#\gamma_1$.  A plan that is concentrated both on the graph of $T$ read forwards and on the graph of $S$ read backwards satisfies $S(T(x))=x$ for $\gamma_0$-almost every $x$, so $T$ is injective off a $\gamma_0$-null set.  Since $z_0$ is finite on the compact torus, $e^{-z_0}$ is bounded away from zero and $\gamma_0$ and $\mu_0$ are equivalent; therefore $T$ is essentially injective with respect to $\mu_0$ as well.  Interchanging the two measures gives the assertion for either ordering.  All distances, gradients, and exponential maps in this case are understood in the flat Riemannian sense.
\end{proof}

\section{Proof of Theorem~\ref{thm:full-step-regularity} (Theorem~B)}\label{sec:proof-second-theorem}\label{sec:full-interaction-analysis}

\paragraph{Notation used in this appendix.}
We write the source as $\mu_0=\rho_0\,dx$ with $c_{\min}=c_0$ and $c_{\max}=C_0$.
The symbol $\Phi_1$ denotes the chemical potential \eqref{eq:Phi-full}, while $\phi_1$
denotes the dual potential of Theorem~\ref{thm:full-step-regularity}(iii); the relation
$\phi_1=-\tau\Phi_1$ is proved below, not assumed in the density-bound argument.  The first
subsection uses the forward pair $\mu_0\to\mu_1$, whereas Subsection~\ref{sec:dual-euler}
uses the reversed pair $\mu_1\to\mu_0$; the symbol $T$ is reused for these different maps.
Subsection~\ref{sec:dual-euler} is moreover stated for a generic ordered pair $(\mu,\nu)$.
In its application to a JKO step, $\mu$ is the minimizer (the new state) and $\nu$ is the
\emph{reference} measure (the preceding state).

\subsection{Monotonicity of the discrete chemical potential along the optimal map and density bounds}\label{sec:full-density-estimates}
Throughout this subsection the pair $(T,q)$ is oriented \emph{forward}, from the source to
the new state.
Let $\Omega$ be either of the two state spaces in Theorem~\ref{thm:full-step-regularity}, and let
\[
\mu_0=\rho_0dx,
\qquad 0<c_{\min}\le\rho_0\le c_{\max}<\infty,
\]
and let $\mu_1=\rho_1dx=\mathcal S_\tau(\mu_0)$.  By Theorem~\ref{thm:main-fully-implicit-HK-JKO-step}, $\rho_1>0$ almost everywhere.  By Proposition~\ref{prop:HK-Monge-representation}, let $(T,q)$ be a positive, essentially injective optimal Monge-growth pair satisfying \eqref{eq:injective-growth-representation}.  Choose Borel representatives of the self-consistent field and density and define the chemical potential by
\begin{equation}\label{eq:Phi-full}
P_1=P[\mu_1]=V+W[\mu_1],
\qquad
\Phi_1(x)=U'(\rho_1(x))+P_1(x).
\end{equation}
All pointwise relations in this subsection are understood for these representatives after null-set modifications.  Since $q>0$ almost everywhere and $T_\#(q^2\mu_0)=\mu_1$, these modifications do not affect the asserted $\mu_0$-almost-everywhere statements.

\begin{lemma}[Restriction identity for an essentially injective Monge pair]\label{lem:Monge-restriction-identity}
After modifying $T$ on a $\mu_0$-null set, there is a Borel set $G\subset\Omega$ of full $\mu_0$-measure on which $T$ is injective.  For every Borel set $A\subset G$,
\begin{equation*}
T_\#(q^2\mu_0|_A)=\mu_1|_{T(A)},
\qquad
\mu_1(T(A))=\int_Aq^2\,d\mu_0.
\end{equation*}
The same identities hold after completing the measures.
\end{lemma}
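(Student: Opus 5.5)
The plan is to build the set $G$ explicitly from essential injectivity, and then obtain both identities from measure theory alone. No optimality of the pair is needed beyond $T_\#(q^2\mu_0)=\mu_1$ and $q>0$ $\mu_0$-a.e.

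\emph{Construction of $G$.} By Proposition~\ref{prop:HK-Monge-representation}, $T$ is essentially injective. So there is a Borel set $G_0$ with $\mu_0(\Omega\setminus G_0)=0$ on which $T$ is injective; if the proposition only gives a $\mu_0$-measurable set, we shrink it to a Borel subset of full measure. We also intersect with $\{q>0\}$, which has full measure. The $\mu_0$-null modification of $T$ serves only to make $T$ Borel on all of $\Omega$ and to let us discard the bad set. Changing $T$ on a $\mu_0$-null set leaves $T_\#(q^2\mu_0)$ unchanged, so \eqref{eq:injective-growth-representation} persists. Call the resulting set $G$.

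\emph{Measurability of $T(A)$.} By Lusin--Souslin, the injective image of a Borel set under a Borel map between Polish spaces is Borel. Hence $T(A)$ is Borel for every Borel $A\subset G$, and $T|_G$ has a Borel inverse on $T(G)$.

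\emph{The restriction identity.} For Borel $E\subset\Omega$, injectivity on $G$ gives
\[
T^{-1}(E\cap T(A))\cap G=T^{-1}(E)\cap A .
\]
Since $G$ has full $\mu_0$-measure,
\[
T_\#(q^2\mu_0|_A)(E)=\int_{T^{-1}(E)\cap A}q^2\,d\mu_0=\int_{T^{-1}(E\cap T(A))\cap G}q^2\,d\mu_0=\mu_1(E\cap T(A)),
\]
where the last equality uses $T_\#(q^2\mu_0)=\mu_1$. This gives $T_\#(q^2\mu_0|_A)=\mu_1|_{T(A)}$. Taking $E=\Omega$ yields $\mu_1(T(A))=\int_A q^2\,d\mu_0$.

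\emph{Completed measures.} A $\mu_0$-measurable $A$ differs from a Borel set $A'\subset G$ by a subset of a Borel $\mu_0$-null set $N$. Then $T(A)\,\triangle\,T(A')\subset T(N\cap G)$. By the formula already proved, $\mu_1(T(N\cap G))=\int_{N\cap G}q^2\,d\mu_0=0$. So $T(A)$ is $\mu_1$-measurable and the identities pass to the completions.

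\emph{Main obstacle.} The only delicate point is the measurability of the image $T(A)$. Lusin--Souslin handles it, provided we first arrange $G$ to be Borel; everything else is routine.
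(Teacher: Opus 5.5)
Your proof is correct and follows the paper's argument: a Borel full-measure set $G$ on which $T$ is injective, Lusin--Souslin for the Borel measurability of $T(A)$, and the set identity $T^{-1}(E\cap T(A))\cap G=T^{-1}(E)\cap A$ combined with $T_\#(q^2\mu_0)=\mu_1$. Your handling of the completed measures through $\mu_1(T(N\cap G))=0$ spells out what the paper compresses into ``completion gives the final assertion.''
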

\begin{proof}
We show that restricting the source of an essentially injective Monge pair restricts the target to the corresponding image set, with no additional mass coming from outside.

Choose a Borel full-measure set on which the Borel representative of $T$ is injective.  By the Lusin--Souslin theorem, $T(A)$ is Borel for every Borel $A$ in that set.  For a Borel set $B$,
\[
T_\#(q^2\mu_0|_A)(B)
=\int_{A\cap T^{-1}(B)}q^2\,d\mu_0
=\int_{G\cap T^{-1}(B\cap T(A))}q^2\,d\mu_0
=\mu_1(B\cap T(A)),
\]
where injectivity on $G$ and $\mu_0(\Omega\setminus G)=0$ are used in the middle equality.  Completion gives the final assertion.
\end{proof}

The comparison argument below adapts the density-estimate scheme of
\cite[Sec.~4]{LM26} to the present fully implicit JKO step.
The essential difference is that the localized first variation involves the
self-consistent chemical potential $\Phi_1$ of \eqref{eq:Phi-full}, rather than
$U'(\rho_1)$ alone.  Consequently, the relevant invariant sets are superlevel
sets of $\Phi_1$, and converting this invariance into density bounds introduces
the field-oscillation loss $\omega_1$.  In this correspondence,
Lemma~\ref{lem:monotonicity-full} and
Propositions~\ref{prop:density-upper-full}--\ref{prop:density-lower-full}
play the roles of \cite[Lem.~4.1 and Props.~4.2--4.3]{LM26}, respectively.

For the remainder of this subsection, fix a full-$\mu_0$-measure Borel set
$G\subset\Omega$ on which $T$ is injective.

\begin{lemma}[Localized first variation of the full energy]\label{lem:localized-first-variation}
Assume $V\in L^\infty(\Omega)$, and $W\in L^\infty(\Omega\times\Omega)$ is symmetric, $\rho>0$ almost everywhere, and $h\in L^\infty(\Omega)$ is supported in a set $E$ on which $0<c\le\rho\le C$.  Then
\begin{equation*}
\mathcal F((\rho+sh)dx)-\mathcal F(\rho dx)
=s\int_\Omega\bigl(U'(\rho)+V+W[\rho]\bigr)h\,dx+o(s)
\end{equation*}
as $s\downarrow0$.
\end{lemma}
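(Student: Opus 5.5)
The plan is to split the increment into the three parts of $\mathcal F$ and treat each separately. The perturbed density is $\rho_s:=\rho+sh$. Since $h$ is bounded and supported in $E$, where $\rho\ge c>0$, for $0<s\le s_0:=c/(2\|h\|_\infty)$ we have $\rho_s\ge c/2$ on $E$ and $\rho_s=\rho$ off $E$. So $\rho_s\,dx$ is a nonnegative competitor, and only the values of $U$ on the compact interval $K:=[c/2,\,C+c/2]$ enter the internal-energy increment. We may assume $\mathcal F(\rho\,dx)<\infty$; otherwise the identity is read as trivial or excluded.

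\emph{Internal energy.} The increment is
\[
\int_E\bigl[U(\rho+sh)-U(\rho)\bigr]dx .
\]
Because $U$ is $C^1$ on $(0,\infty)$ (Assumption~\ref{ass:standing-entropy} is in force in this subsection), the mean value theorem gives, pointwise on $E$,
\[
\frac{U(\rho+sh)-U(\rho)}{s}=U'(\rho+\theta sh)\,h,\qquad \theta=\theta(x)\in(0,1).
\]
This quotient converges to $U'(\rho)h$ as $s\downarrow0$. It is dominated by $\sup_K|U'|\,\|h\|_\infty\mathbbm 1_E$, which is integrable on the bounded domain. Dominated convergence then yields $s\int U'(\rho)h\,dx+o(s)$. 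The one delicate point is that $U'$ is bounded only on compacts of $(0,\infty)$. The two-sided bound $c\le\rho\le C$ on $E$ is exactly what handles this, and this step is the one I expect to need the most care. If only convexity of $U$ were available, I would instead use monotone convergence of convex difference quotients on $E$.

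\emph{Potential and interaction energies.} The potential part is exactly linear: $s\int Vh\,dx$, which is finite since $V\in L^\infty$ and $h\in L^\infty$ with bounded support. For the interaction, symmetry of $W$ gives the exact expansion
\[
\mathcal W((\rho+sh)dx)-\mathcal W(\rho\,dx)=s\int W[\rho]\,h\,dx+\frac{s^2}{2}\iint W(x,y)h(x)h(y)\,dx\,dy .
\]
We have $\|W[\rho]\|_\infty\le\|W\|_\infty\|\rho\|_{L^1}<\infty$ because $\rho$ has finite mass, and the quadratic term is bounded by $\tfrac{s^2}{2}\|W\|_\infty\|h\|_{L^1}^2=o(s)$. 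Summing the three contributions gives the claimed first-order expansion with the chemical potential $U'(\rho)+V+W[\rho]$.
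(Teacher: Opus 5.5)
Your proposal is correct and follows the paper's proof essentially step for step. The matching steps are: the mean value theorem plus dominated convergence on the compact range of $\rho+sh$ over $E$ for the internal energy, exact linearity of the potential term, and the exact symmetric expansion of the interaction with an $O(s^2)$ remainder. Your extra bookkeeping only makes explicit what the paper leaves implicit, namely the explicit threshold $s_0$, the bound $\|W[\rho]\|_\infty\le\|W\|_\infty\|\rho\|_{L^1}$, and the finiteness of $\mathcal F(\rho\,dx)$.
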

\begin{proof}
We compute the first variation term by term.  The internal and external parts are immediate under the localized two-sided density bound, while the interaction contributes the linear field $W[\rho]$ plus a quadratic remainder.

For sufficiently small $s$, the values of $\rho+sh$ on $E$ remain in a compact subset of $(0,\infty)$.  The mean-value theorem and dominated convergence give the internal-energy derivative.  Symmetry of $W$ gives the exact interaction expansion
\[
\mathcal W((\rho+sh)dx)-\mathcal W(\rho dx)
=s\int_\Omega W[\rho]h\,dx
+\frac{s^2}{2}\iint W(x,y)h(x)h(y)\,dx\,dy.
\]
The last term is $O(s^2)$, which proves the claim.
\end{proof}

\begin{lemma}[Localized target-replacement competitor]\label{lem:cut-and-glue-full}
Let \((T,q)\) satisfy \eqref{eq:injective-growth-representation}, and let \(A\subset G\) be Borel with \(0<\mu_0(A)<\infty\).  Put
\[
m_0=\mu_0(A),
\qquad m_1=\mu_1(T(A))=\int_Aq^2\,d\mu_0,
\qquad \alpha=\frac{m_1}{m_0},
\]
and
\[
\widetilde\mu_1=\mu_1-\mu_1|_{T(A)}+\alpha\mu_0|_A.
\]
Then \(\HK^2(\mu_0,\widetilde\mu_1)\le\HK^2(\mu_0,\mu_1)\).  Consequently, for
\[
\mu_1^s=(1-s)\mu_1+s\widetilde\mu_1
=\mu_1+s\bigl(\alpha\mu_0|_A-\mu_1|_{T(A)}\bigr),
\qquad s\in[0,1],
\]
it holds \(\HK^2(\mu_0,\mu_1^s)\le\HK^2(\mu_0,\mu_1)\).
\end{lemma}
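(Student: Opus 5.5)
The plan is to build an explicit admissible triple in the cone formulation \eqref{eq:HK-cone} for the pair $(\mu_0,\widetilde\mu_1)$. Its cost should be at most that of the optimal Monge triple for $(\mu_0,\mu_1)$.

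Start from the optimal triple $\gamma=(\mathrm{id},T)_\#\mu_0$ with amplitudes $(r_0,r_1)=(1,q)$. By \eqref{eq:injective-growth-representation} its cost equals $\HK^2(\mu_0,\mu_1)$. Split it into the part over $A$ and the part over $\Omega\setminus A$. On $\Omega\setminus A$, keep the Monge triple unchanged. On $A$, replace it by the diagonal triple $\gamma_A=(\mathrm{id},\mathrm{id})_\#\mu_0|_A$ with amplitudes $(1,\sqrt\alpha)$. The marginals then have to be checked.

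\emph{Marginal check.} The first marginal is $\mu_0|_{\Omega\setminus A}+\mu_0|_A=\mu_0$. By Lemma~\ref{lem:Monge-restriction-identity}, the untouched part has second marginal
\[
T_\#(q^2\mu_0|_{G\setminus A})=\mu_1-\mu_1|_{T(A)}.
\]
Injectivity on $G$ and the restriction identity are used here, together with the fact that $\mu_0$-null sets do not matter. The diagonal part contributes $\alpha\mu_0|_A$. So the total second marginal is $\widetilde\mu_1$.

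\emph{Cost comparison.} Only the $A$-portion of the cost changes. The old cost is $\int_A(1+q^2-2q\cos r)\,d\mu_0$ with $r={\sf d}_\Omega(x,Tx)\wedge\pi/2$. Since $\cos r\le 1$ and $q\ge0$, this is at least $m_0+m_1-2\int_Aq\,d\mu_0$. Cauchy--Schwarz gives $\int_Aq\,d\mu_0\le\sqrt{m_0m_1}$. Hence the old cost is at least $(\sqrt{m_0}-\sqrt{m_1})^2$. The new cost is
\[
\int_A(1+\alpha-2\sqrt\alpha)\,d\mu_0=(\sqrt{m_0}-\sqrt{m_1})^2 .
\]
So the new triple has cost at most $\HK^2(\mu_0,\mu_1)$, and \eqref{eq:HK-cone} gives $\HK^2(\mu_0,\widetilde\mu_1)\le\HK^2(\mu_0,\mu_1)$. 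The hypothesis $m_1<\infty$ follows from $q\in L^2(\mu_0)$, so $\alpha$ is finite.

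\emph{Convex combination.} For $\mu_1^s$, use joint convexity of $\HK^2$ in its arguments, which is visible from the dual formula \eqref{eq:HK-dual} as a supremum of affine functionals. This gives
\[
\HK^2(\mu_0,\mu_1^s)\le(1-s)\HK^2(\mu_0,\mu_1)+s\HK^2(\mu_0,\widetilde\mu_1)\le\HK^2(\mu_0,\mu_1).
\]
Alternatively, one can mix the two triples directly, with amplitudes scaled by $\sqrt{1-s}$ and $\sqrt s$ on the $A$-part.

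The main obstacle is the marginal bookkeeping. Removing the $A$-part of the Monge plan must remove exactly $\mu_1|_{T(A)}$ and nothing else. This is where essential injectivity and Lemma~\ref{lem:Monge-restriction-identity} are essential; everything else is Cauchy--Schwarz and convexity.
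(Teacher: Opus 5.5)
Your proof is correct and follows the paper's argument. The paper also replaces $(T,q)$ on $A$ by $(\mathrm{id},\sqrt\alpha)$, uses Lemma~\ref{lem:Monge-restriction-identity} to check that exactly $\mu_1|_{T(A)}$ is removed, compares the $A$-portions of the cone action via $\cos r\le1$ and Cauchy--Schwarz, and concludes by convexity of $\nu\mapsto\HK^2(\mu_0,\nu)$; the only difference is that the paper derives this convexity from the entropy--transport formulation rather than from the dual, which is equally valid.
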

\begin{proof}
We first construct a competitor that replaces the target mass generated from $A$ by a diagonal growth branch supported on $A$.  The choice of $\alpha$ preserves the amount of target mass.

By Lemma~\ref{lem:Monge-restriction-identity}, the target generated by the restriction to $A$ is exactly $\mu_1|_{T(A)}$.  Define a new admissible pair $(\widetilde T,\widetilde q)$ by keeping $(T,q)$ outside \(A\) and setting $\widetilde T=\mathrm{id}$, $\widetilde q=\sqrt\alpha$ on \(A\).  Its pushforward is $\widetilde T_\#(\widetilde q^2\mu_0)=\widetilde\mu_1$, again by Lemma~\ref{lem:Monge-restriction-identity}.

We next compare the cone action of this replacement with the original optimal branch.  The calculation shows that the replacement cannot increase the metric cost.

The two pairs agree outside $A$, so only the contributions on $A$ have to be compared.  The new one is
\[
\int_A\bigl(1+\alpha-2\sqrt\alpha\bigr)d\mu_0=m_0+m_1-2\sqrt{m_0m_1},
\]
since $\alpha m_0=m_1$ and $\sqrt\alpha\,m_0=\sqrt{m_0m_1}$, while the original one is
\[
m_0+m_1-2\int_Aq\cos\left({\sf d}_\Omega(x,T(x))\wedge\frac\pi2\right)\,d\mu_0 .
\]
By Cauchy--Schwarz,
\(
\int_Aq\cos\left({\sf d}_\Omega(x,T(x))\wedge\frac\pi2\right)\,d\mu_0
\le\int_Aq\,d\mu_0\le\mu_0(A)^{1/2}\bigl(\int_Aq^2d\mu_0\bigr)^{1/2}=\sqrt{m_0m_1},
\)
so $\mathcal A_{\mu_0}(\widetilde T,\widetilde q)\le\mathcal A_{\mu_0}(T,q)$.  Since the
new pair is admissible whereas $(T,q)$ is optimal,
\[
\HK^2(\mu_0,\widetilde\mu_1)
\le\mathcal A_{\mu_0}(\widetilde T,\widetilde q)
\le\mathcal A_{\mu_0}(T,q)
=\HK^2(\mu_0,\mu_1).
\]
Having obtained the endpoint competitor, we pass to the interpolated measures used in the variation argument.  Convexity of the squared $\HK$ distance is exactly what is needed here.

Finally, $\nu\mapsto\HK^2(\mu_0,\nu)$ is convex by the entropy--transport formulation, so
\[
\HK^2(\mu_0,\mu_1^s)
\le(1-s)\HK^2(\mu_0,\mu_1)+s\HK^2(\mu_0,\widetilde\mu_1)
\le\HK^2(\mu_0,\mu_1).
\]
\end{proof}

\begin{lemma}[Monotonicity of the discrete chemical potential along the optimal map]\label{lem:monotonicity-full}
Under the assumptions of this subsection,
\begin{equation}\label{eq:chemical-potential-monotonicity}
\Phi_1(Tx)\le\Phi_1(x)
\qquad\text{for }\mu_0\text{-almost every }x.
\end{equation}
\end{lemma}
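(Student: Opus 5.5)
We argue by contradiction and follow the comparison scheme of \cite[Lem.~4.1]{LM26}, with $\Phi_1$ in place of $U'(\rho_1)$. Suppose the set $\{x\in G:\Phi_1(Tx)>\Phi_1(x)\}$ has positive $\mu_0$-measure. Writing it as a countable union over rationals $a<b$ of $\{x\in G:\Phi_1(x)<a<b<\Phi_1(Tx)\}$, one such set has positive $\mu_0$-measure. Intersecting further with $\{c\le\rho_1\le C\}$, and with the analogous set for $\rho_1\circ T$, uses $\rho_1>0$ a.e.\ (Theorem~\ref{thm:main-fully-implicit-HK-JKO-step}) and $q>0$. This yields a Borel set $A\subset G$ with $0<\mu_0(A)<\infty$ such that
\[
\Phi_1<a\ \text{on }A,\qquad \Phi_1>b\ \text{on }T(A),
\]
and such that $\rho_1$ is bounded above and away from zero on $A\cup T(A)$. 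Here $\mu_1(T(A))=\int_Aq^2\,d\mu_0>0$ by Lemma~\ref{lem:Monge-restriction-identity}. Since $\mu_1$-negligible sets are Lebesgue negligible, the sets $A$ and $T(A)$ are defined up to the null-set conventions fixed before the lemma.

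Next we use the competitor $\mu_1^s=\mu_1+s(\alpha\mu_0|_A-\mu_1|_{T(A)})$ of Lemma~\ref{lem:cut-and-glue-full}, whose metric term does not exceed that of $\mu_1$. Its density perturbation is $h=\alpha\rho_0\mathbbm1_A-\rho_1\mathbbm1_{T(A)}$, which is bounded and supported where $\rho_1$ is bounded above and away from zero. Since $\rho_0\le C_0$, we have $h\in L^\infty$. For $s\in[0,1]$ the density $\rho_1+sh$ stays nonnegative, since $\rho_1(1-s)\ge0$ on $T(A)$. Lemma~\ref{lem:localized-first-variation} then gives
\[
\mathcal F(\mu_1^s)-\mathcal F(\mu_1)=s\Bigl(\alpha\int_A\Phi_1\,d\mu_0-\int_{T(A)}\Phi_1\,d\mu_1\Bigr)+o(s).
\]
On an overlap $A\cap T(A)$, where $\rho_1+sh$ may pick up both terms, the lemma still applies because $h$ is simply the sum of the two pieces. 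The bracket is strictly less than $a\,\alpha\mu_0(A)-b\,\mu_1(T(A))=(a-b)m_1<0$, using $\alpha m_0=m_1$. Hence $\mathcal J_\tau(\mu_1^s\mid\mu_0)<\mathcal J_\tau(\mu_1\mid\mu_0)$ for small $s>0$, contradicting minimality. Therefore $\Phi_1(Tx)\le\Phi_1(x)$ for $\mu_0$-a.e.\ $x$.

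The main obstacle is the localization bookkeeping. First, $\Phi_1$ must be locally bounded on $A\cup T(A)$ so the first variation is legitimate; the truncation by density levels handles this, and it requires $V$ and $W[\mu_1]$ to be bounded. Second, the pointwise statement at $Tx$ concerns a Borel representative evaluated on the image set. Essential injectivity and Lemma~\ref{lem:Monge-restriction-identity} are exactly what make the target mass generated by $A$ equal to $\mu_1|_{T(A)}$ and ensure that it is removed without affecting other mass.
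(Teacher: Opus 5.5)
Your proof is correct and follows the paper's argument step for step. It uses the same rational splitting into sets $\{\Phi_1<a<b<\Phi_1\circ T\}$, the same direction $h=\alpha\rho_0\mathbbm1_A-\rho_1\mathbbm1_{T(A)}$ from Lemma~\ref{lem:cut-and-glue-full}, and the same first-order energy bound by $(a-b)\mu_1(T(A))<0$. The only difference is cosmetic: you truncate by levels of $\rho_1$ and $\rho_1\circ T$ and get boundedness of $\Phi_1$ from bounded $V$ and $W[\mu_1]$, whereas the paper truncates directly by $|\Phi_1|\le n$ and $n^{-1}\le q\le n$, which is equivalent here.
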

\begin{proof}
We argue by contradiction.  We first localize a strict violation of the desired monotonicity to a set on which the density and chemical potential are uniformly controlled.

Suppose the conclusion fails, so that $S:=\{x\in G:\Phi_1(Tx)>\Phi_1(x)\}$ has positive
$\mu_0$-measure.  For rationals $a<b$ put
$S_{a,b}=\{x\in G:\Phi_1(x)<a<b<\Phi_1(Tx)\}$.  Then $S=\bigcup_{a<b\text{ rational}}S_{a,b}$
is a countable union, so $\mu_0(S_{a,b})>0$ for at least one such pair; fix it.  Intersecting
$S_{a,b}$ with the sets
\begin{equation}\label{eq:upper--lower--restriction}
\{n^{-1}\le q\le n\},\qquad
\{\rho_1\ge n^{-1}\}\cap T^{-1}\{\rho_1\ge n^{-1}\},\qquad
\{|\Phi_1|\le n\}\cap T^{-1}\{|\Phi_1|\le n\}    
\end{equation}
and letting $n\to\infty$, we may in addition assume that on the resulting set $A\subset G$ the
factor $q$ is bounded above and away from zero, and that $\rho_0$, $\rho_1$ and $\Phi_1$ are
bounded on $A\cup T(A)$ with $\rho_1$ bounded away from zero there.  This is possible because
$\rho_0$ is bounded and strictly positive, $\rho_1>0$ almost everywhere, and $q>0$ for
$\mu_0$-almost every point; the sets $T(A)$ are Borel by Lemma~\ref{lem:Monge-restriction-identity}.

We evaluate the variation of the full energy in the direction given by Lemma~\ref{lem:cut-and-glue-full}. The strict ordering of $\Phi_1$ on $A$ and $T(A)$ makes it negative.

Let \(\eta=\alpha\mu_0|_A-\mu_1|_{T(A)}\) be the direction in Lemma~\ref{lem:cut-and-glue-full}, that is, $\eta=h\,dx$ with
$h=\alpha\rho_0\mathbbm1_A-\rho_1\mathbbm1_{T(A)}$.  Then $h\in L^\infty$, it is supported in
$E=A\cup T(A)$, on which $\rho_1$ is bounded above and away from zero, and
$h_-\le\rho_1\mathbbm1_{T(A)}\le\rho_1$ pointwise, so
Lemma~\ref{lem:localized-first-variation} applies with $\rho=\rho_1$ and yields
\[
\mathcal F(\mu_1+s\eta)-\mathcal F(\mu_1)
=s\int_\Omega\Phi_1\,d\eta+o(s).
\]
Note that  $\Phi_1<a$ on $A$ and  $\Phi_1>b$ on $T(A)$, the latter because every
$y\in T(A)$ is $y=Tx$ for some $x\in A$.  Hence, using
$\alpha\mu_0(A)=m_1=\mu_1(T(A))$ from Lemma~\ref{lem:Monge-restriction-identity} and the
definition of $\alpha$,
\[
\int_\Omega\Phi_1\,d\eta
=\alpha\int_A\Phi_1\,d\mu_0-\int_{T(A)}\Phi_1\,d\mu_1
\le a\alpha\mu_0(A)-b\mu_1(T(A))
=(a-b)\mu_1(T(A))<0.
\]
This gives the required contradiction: the energy decreases to first order, whereas the proximal term does not increase.

The energy therefore decreases for small \(s>0\), while Lemma~\ref{lem:cut-and-glue-full} shows that the $\HK$ term does not increase.  This contradicts the minimality of \(\mu_1\).
\end{proof}

Replacing $G$ by its intersection with the full-measure Borel set on which \eqref{eq:chemical-potential-monotonicity} holds, we henceforth assume that $T$ is injective on $G$ and
\[
\Phi_1(Tx)\le\Phi_1(x)
\qquad\text{for every }x\in G.
\]
Recall the standing bounds
\[
0<c_{\min}\le \rho_0\le c_{\max}<\infty.
\]

\begin{proposition}[Upper density bound]\label{prop:density-upper-full}
Let \(c_{\rm upp}>0\) satisfy
\[
1+2\tau\min\{U'(c_{\rm upp})+\essinf_\Omega P_1,0\}>0,
\]
and set
\begin{equation}\label{eq:upper-threshold-constant}
\Lambda=\frac1{1+2\tau\min\{U'(c_{\rm upp})+\essinf_\Omega P_1,0\}}\ \ (\ge1).
\end{equation}
Then
\begin{equation}\label{eq:upper-density-bound-final}
\rho_1(x)
\le
(U')^{-1}\!\left(
\max\{U'(c_{\rm upp}),U'(\Lambda^2c_{\max})\}
+\essosc_\Omega P_1
\right)
\end{equation}
for almost every \(x\in\Omega\).
\end{proposition}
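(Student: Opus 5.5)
The plan is to show $\Phi_1\le\kappa:=\max\{U'(c_{\rm upp}),U'(\Lambda^2c_{\max})\}+\esssup_\Omega P_1$ almost everywhere. Granting this, $U'(\rho_1)=\Phi_1-P_1\le\kappa-\essinf P_1$, which is exactly \eqref{eq:upper-density-bound-final} after applying the increasing map $(U')^{-1}$. The argument combines three ingredients: the monotonicity of Lemma~\ref{lem:monotonicity-full}, a pointwise bound on the growth factor, and a mass count through Lemma~\ref{lem:Monge-restriction-identity}.

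\emph{Step 1: a growth-factor bound.} I first show that $q(x)\le\Lambda$ for $\mu_0$-a.e.\ $x\in G$ with $\Phi_1(Tx)\ge U'(c_{\rm upp})+\essinf P_1$. Suppose not, and let $B\subset G$ be a set of positive measure on which this inequality holds but $q>\Lambda$. After truncating as in \eqref{eq:upper--lower--restriction}, I may assume that $q$, $\rho_1$ and $\Phi_1$ are bounded on $B\cup T(B)$ and that $\rho_1$ is bounded away from $0$ there. Consider the competitor pair that keeps $(T,q)$ off $B$ and uses $(T,(1-s)q)$ on $B$. By Lemma~\ref{lem:Monge-restriction-identity}, its endpoint is
\[
\mu_1-(2s-s^2)\mu_1|_{T(B)}.
\]
\begin{itemize}
\item Lemma~\ref{lem:localized-first-variation} gives the energy change $-2s\int_B\Phi_1(Tx)q^2\,d\mu_0+o(s)$.
\item By \eqref{eq:incremental-HK-upper-bound}, and because $\mathcal A_{\mu_0}(T,q)=\HK^2(\mu_0,\mu_1)$, the metric term changes by at most $\frac1{2\tau}\bigl(-2s\int_B(q^2-q\cos r)\,d\mu_0\bigr)+O(s^2)$.
\end{itemize}
Adding the two contributions, the total first-order change is
\[
-\frac s\tau\int_B q\bigl[q(1+2\tau\Phi_1(Tx))-\cos r\bigr]d\mu_0 .
\]
On $B$ we have $1+2\tau\Phi_1(Tx)\ge1+2\tau\min\{U'(c_{\rm upp})+\essinf P_1,0\}=\Lambda^{-1}$, using that the minimum with $0$ only lowers the quantity. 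Hence $q(1+2\tau\Phi_1(Tx))>1\ge\cos r$, so the bracket is positive and the objective strictly decreases for small $s$. This contradicts minimality.

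\emph{Step 2: invariance of the superlevel set.} Let $E=\{\Phi_1>\kappa\}$ and suppose $|E|>0$. Set $A:=G\cap T^{-1}(E)$. Since $\Phi_1(Tx)\le\Phi_1(x)$ on $G$, we get $A\subset E$. Points of $E$ not hit by $T(G)$ form a $\mu_1$-null set, because $T_\#(q^2\mu_0)=\mu_1$. Lemma~\ref{lem:Monge-restriction-identity} therefore gives $\mu_1(E)=\int_Aq^2\,d\mu_0$.

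\emph{Step 3: the mass count.} On $A$ we have $\Phi_1(Tx)>\kappa\ge U'(c_{\rm upp})+\essinf P_1$, so Step~1 yields $q\le\Lambda$. Consequently
\[
\mu_1(E)\le\Lambda^2c_{\max}|A|\le\Lambda^2c_{\max}|E|.
\]
On the other hand, on $E$ we have $U'(\rho_1)=\Phi_1-P_1>\kappa-\esssup P_1\ge U'(\Lambda^2c_{\max})$, so $\rho_1>\Lambda^2c_{\max}$ there. This forces $\mu_1(E)>\Lambda^2c_{\max}|E|$, a contradiction, and the bound follows.

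The main obstacle is making the one-sided variation in Step~1 rigorous. The perturbation $-(2s-s^2)\mu_1|_{T(B)}$ must remain nonnegative and satisfy the hypotheses of Lemma~\ref{lem:localized-first-variation}; this is why the truncation keeping $\rho_1$ bounded on $T(B)$ is needed. The $s^2$ terms in the cone action also have to be controlled; they are uniform once $q$ is bounded on $B$. Finally, some care is needed to pass from a.e.\ statements in $Tx$ to statements on $B$ via essential injectivity.
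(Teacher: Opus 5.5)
Your proposal is correct and uses the same ingredients as the paper's proof. These are the backward invariance of a superlevel set of $\Phi_1$ under $T$ from Lemma~\ref{lem:monotonicity-full}, the mass count through Lemma~\ref{lem:Monge-restriction-identity} and $\rho_0\le c_{\max}$, and the competitor that keeps $T$ fixed and scales $q$ down on a set. The only difference is the order: you first prove the pointwise bound $q\le\Lambda$ on $\{\Phi_1\circ T\ge U'(c_{\rm upp})+\essinf_\Omega P_1\}$ by keeping the integrand $q[q(1+2\tau\Phi_1\circ T)-\cos r]$ pointwise, whereas the paper first extracts a set where $q\ge\Lambda+\varepsilon$ from the mass count and then argues with a uniform lower bound on $\Phi_1$, which is why it needs the slack $\Lambda_\varepsilon$, $(1+\varepsilon)^2c_{\rm upp}$ and a final limit $\varepsilon\downarrow0$ that your strict-inequality version avoids.
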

\begin{proof}
We prove the upper bound by excluding a sufficiently high superlevel set of the chemical potential.  The monotonicity lemma makes this set backward-invariant under the optimal map, and the rest of the argument turns that invariance into a contradiction with minimality.

Fix \(\varepsilon>0\), put \(\Lambda_\varepsilon=\Lambda+\varepsilon\), and define
\[
\ell_\varepsilon
=\max\left\{
U'((1+\varepsilon)^2c_{\rm upp}),
U'(\Lambda_\varepsilon^2c_{\max})
\right\},
\qquad
B=\{x:\Phi_1(x)\ge \ell_\varepsilon+\esssup_\Omega P_1\}.
\]
On \(B\) one has $U'(\rho_1)=\Phi_1-P_1\ge\ell_\varepsilon$, hence, $U'$ being increasing,
\begin{equation}\label{eq:upper-B-density}
\rho_1\ge(1+\varepsilon)^2c_{\rm upp}
\qquad\text{and}\qquad
\rho_1\ge \Lambda_\varepsilon^2c_{\max}
\qquad\text{on }B .
\end{equation}
Moreover, \eqref{eq:chemical-potential-monotonicity} gives \(T^{-1}(B)\cap G\subset B\) because if
$x\in G$ and $Tx\in B$, then $\Phi_1(x)\ge\Phi_1(Tx)\ge\ell_\varepsilon+\esssup_\Omega P_1$. We write  $B':=T^{-1}(B)\cap G$. 

Suppose that this high-potential set carries positive target mass, that is, assume \(\mu_1(B)>0\).  Since $G$ has full $\mu_0$-measure,
$T_\#(q^2\mu_0)=\mu_1$ gives $\int_{B'}q^2d\mu_0=\mu_1(B)$.  Combining this with
\eqref{eq:upper-B-density}, with $B'\subset B$, and with $\rho_0\le c_{\max}$,
\begin{equation}\label{eq:upper-growth-mass}
\int_{B'}q^2\,d\mu_0
=\mu_1(B)=\int_B \rho_1 dx
\ge \Lambda_\varepsilon^2c_{\max}|B|
\ge \Lambda_\varepsilon^2c_{\max}|B'|
\ge \Lambda_\varepsilon^2\mu_0(B').
\end{equation}
In particular $\mu_0(B')>0$; If not, $\mu_1(B)=0$ due to the first equality in  \eqref{eq:upper-growth-mass}.

If $q<\Lambda_\varepsilon$ held $\mu_0$-almost everywhere on
$B'$, then $\int_{B'}q^2d\mu_0<\Lambda_\varepsilon^2\mu_0(B')$, contradicting
\eqref{eq:upper-growth-mass}; hence $A_0:=B'\cap\{q\ge\Lambda_\varepsilon\}$ has positive
$\mu_0$-measure.  Truncating as in the proof of Lemma~\ref{lem:monotonicity-full} (see \eqref{eq:upper--lower--restriction}), choose
$A\subset A_0$ of positive $\mu_0$-measure on which, in addition, $\rho_1$ and $\Phi_1$ are
bounded on $T(A)$; the lower bound \eqref{eq:upper-B-density} keeps $\rho_1$ away from zero
there.  Put $\mu_1^*=\mu_1|_{T(A)}$ and, for $0<t<1$,
\[
\mu_1^t=\mu_1-(1-t^2)\mu_1^*.
\]
We will show that, for \(t<1\) sufficiently close to \(1\), this competitor has strictly smaller JKO objective than \(\mu_1\), contradicting the minimality of \(\mu_1\).

To exploit this large-growth region, we scale the growth factor down on $A$ while leaving the map unchanged.  We now quantify the resulting decrease in the cone action, and hence in the squared $\HK$ term: 
Let $(T_t,q_t)$ agree with $(T,q)$ outside $A$ and equal $(T,tq)$ on $A$.  By
Lemma~\ref{lem:Monge-restriction-identity}, $(T_t)_\#(q_t^2\mu_0)=\mu_1-(1-t^2)\mu_1^*=\mu_1^t$,
so $(T_t,q_t)$ is admissible for $\mu_1^t$ and only its contribution on $A$ differs from
that of $(T,q)$.  Writing $r(x)={\sf d}_\Omega(x,T(x))\wedge\frac\pi2$, that difference is
\[
\int_A\bigl[(1+q^2-2q\cos r)-(1+t^2q^2-2tq\cos r)\bigr]d\mu_0
=(1-t)\int_A\bigl[(1+t)q^2-2q\cos r\bigr]d\mu_0 .
\]
Here $\int_Aq^2d\mu_0=\mu_1(T(A))=\mu_1^*(\Omega)$ by
Lemma~\ref{lem:Monge-restriction-identity}, while $\cos r\le1$ and $q\ge\Lambda_\varepsilon$
on $A$ give $\int_Aq\cos r\,d\mu_0\le\int_Aq\,d\mu_0\le\Lambda_\varepsilon^{-1}\int_Aq^2d\mu_0$.
Since $(T,q)$ is optimal and $(T_t,q_t)$ is merely admissible,
\eqref{eq:injective-growth-representation} and \eqref{eq:incremental-HK-upper-bound} yield
\begin{equation}\label{eq:upper-HK-gain}
\HK^2(\mu_0,\mu_1)-\HK^2(\mu_0,\mu_1^t)
\ge\mathcal A_{\mu_0}(T,q)-\mathcal A_{\mu_0}(T_t,q_t)
\ge(1-t)\left(1+t-\frac2{\Lambda_\varepsilon}\right)\mu_1^*(\Omega).
\end{equation}

We next compare the energy change for the same perturbation.  On $T(A)$ the chemical potential has a uniform lower bound, so the localized first variation gives a matching first-order energy change.

Set $a=U'(c_{\rm upp})+\essinf_\Omega P_1$.  Since $A\subset T^{-1}(B)$, one has
$T(A)\subset B$ and therefore, by \eqref{eq:upper-B-density},
$\Phi_1\ge U'((1+\varepsilon)^2c_{\rm upp})+\essinf_\Omega P_1\ge a$ on $T(A)$.  With
$s=1-t^2$ and $h=-\rho_1\mathbbm1_{T(A)}$, which satisfies $h_-\le\rho_1$,
Lemma~\ref{lem:localized-first-variation} gives
\begin{equation}\label{eq:upper-energy-gain}
\mathcal F(\mu_1)-\mathcal F(\mu_1^t)
=s\int_{T(A)}\Phi_1\,d\mu_1+o(s)
\ge(1-t)(1+t)a\,\mu_1^*(\Omega)+o(1-t).
\end{equation}
Combining \eqref{eq:upper-HK-gain} and \eqref{eq:upper-energy-gain}, dividing by $1-t$, we obtain
\[
\liminf_{t \uparrow 1}\frac{\mathcal J_\tau(\mu_1\mid\mu_0)-\mathcal J_\tau(\mu_1^t\mid\mu_0)}{1-t}
\ge
\left[
\frac1{2\tau}\left(2-\frac2{\Lambda_\varepsilon}\right)+2a
\right]\mu_1^*(\Omega)
=
\left[
\frac1{\tau}\left(1-\frac1{\Lambda_\varepsilon}\right)+2a
\right]\mu_1^*(\Omega).
\]
The right-hand side is positive;  When $a\ge0$ this is immediate, since $\Lambda=1$ and
$\Lambda_\varepsilon>1$.  When $a<0$ one has $\Lambda=(1+2\tau a)^{-1}$, so that
\[
\frac1\tau\left(1-\frac1{\Lambda}\right)+2a
=\frac1\tau\bigl(1-(1+2\tau a)\bigr)+2a
=-2a+2a=0,
\]
and the expression at $\Lambda_\varepsilon>\Lambda$ is strictly larger because
$\Lambda\mapsto1-\Lambda^{-1}$ is increasing.  
Hence
$$ \mathcal J_\tau(\mu_1^t\mid\mu_0) < \mathcal J_\tau(\mu_1\mid\mu_0) $$
for \(t<1\) sufficiently close to \(1\), contradicting the minimality of \(\mu_1\).
Therefore $\mu_1(B)=0$, and since $\rho_1$ is bounded below by a positive constant on $B$ by \eqref{eq:upper-B-density}, also $|B|=0$.

The contradiction shows that the high-potential set is negligible.  It remains only to translate this chemical-potential statement back into a pointwise upper bound for the density.

Let \(\xi_\varepsilon\) be defined by
\[
U'(\xi_\varepsilon)=\ell_\varepsilon+\essosc_\Omega P_1.
\]
If $\rho_1(x)\ge\xi_\varepsilon$, then
$\Phi_1(x)\ge\ell_\varepsilon+\essosc_\Omega P_1+\essinf_\Omega P_1=\ell_\varepsilon+\esssup_\Omega P_1$,
so \(\{\rho_1\ge\xi_\varepsilon\}\subset B\) and hence \(\rho_1<\xi_\varepsilon\) almost everywhere.  Letting \(\varepsilon\downarrow0\) and using the continuity of $U'$ proves \eqref{eq:upper-density-bound-final}.
\end{proof}

\begin{proposition}[Lower density bound]\label{prop:density-lower-full}
Choose \(c_{\rm low}>0\) such that
\[
U'(c_{\rm low})+\essinf_\Omega P_1<0,
\]
which is always possible under \(U'(0+)=-\infty\).  Set \(\bar c=\min\{c_{\min},c_{\rm low}\}\).  Then
\begin{equation}\label{eq:lower-density-bound-final}
\rho_1(x)
\ge
(U')^{-1}\!\left(
U'(\bar c)-\essosc_\Omega P_1
\right)
\end{equation}
for almost every \(x\in\Omega\).
\end{proposition}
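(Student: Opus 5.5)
We mirror the proof of Proposition~\ref{prop:density-upper-full}, with the roles of superlevel and sublevel sets exchanged. Here we exclude a low sublevel set of the chemical potential by \emph{adding} mass instead of removing it.

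Fix $\varepsilon>0$ and choose a level $\ell_\varepsilon<U'(\bar c)$ with $U'((1-\varepsilon)^{-1}\ell)$-type slack; concretely, take $\ell_\varepsilon=U'((1-\varepsilon)\bar c)$. Put
\[
B=\{x:\Phi_1(x)\le\ell_\varepsilon+\essinf_\Omega P_1\}.
\]
On $B$ we have $U'(\rho_1)\le\ell_\varepsilon$, hence $\rho_1\le(1-\varepsilon)\bar c$. This gives $\rho_1\le(1-\varepsilon)c_{\min}$ and $\rho_1\le(1-\varepsilon)c_{\rm low}$ on $B$. Monotonicity \eqref{eq:chemical-potential-monotonicity} gives forward invariance: if $x\in G\cap B$, then $\Phi_1(Tx)\le\Phi_1(x)$, so $T(B\cap G)\subset B$.

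Suppose $|B|>0$; then $\mu_0(B')>0$ for $B':=B\cap G$, since $\rho_0\ge c_{\min}$. By Lemma~\ref{lem:Monge-restriction-identity} and injectivity,
\[
\int_{B'}q^2\,d\mu_0=\mu_1(T(B'))\le\mu_1(B)\le(1-\varepsilon)c_{\min}|B|.
\]
We also have $\mu_0(B')\ge c_{\min}|B|$. Hence $\int_{B'}q^2\,d\mu_0\le(1-\varepsilon)\mu_0(B')$, so the set $A_0:=B'\cap\{q\le\sqrt{1-\varepsilon}\}$ has positive $\mu_0$-measure. Truncating as in \eqref{eq:upper--lower--restriction}, we pick $A\subset A_0$ on which $q$ is bounded away from $0$ and $\rho_1,\Phi_1$ are bounded on $T(A)$ with $\rho_1$ bounded below.

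The competitor scales the growth up on $A$. Let $(T_t,q_t)=(T,tq)$ on $A$ with $t>1$, so that
\[
\mu_1^t=\mu_1+(t^2-1)\mu_1|_{T(A)}.
\]
The cone-action change on $A$ is
\[
(t-1)\int_A\bigl[(1+t)q^2-2q\cos r\bigr]d\mu_0.
\]
To bound it, we use $\cos r\ge0$ for the term with a favourable sign. The troublesome term is $-2q\cos r$, which is where the metric gain comes from: we need the lower bound $\int_Aq\cos r\,d\mu_0\ge\kappa\int_Aq^2\,d\mu_0$ with $\kappa>1$. On $A$, $q\le\sqrt{1-\varepsilon}$ only gives $q\ge q^2/\sqrt{1-\varepsilon}$ if $\cos r$ is close to $1$, which we do not control. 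This is the main obstacle.

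I would bypass it with a pure-reaction competitor instead. On $A\cup T(A)$, replace the branch by identity with constant growth, as in Lemma~\ref{lem:cut-and-glue-full}. Alternatively, perturb by adding $s\,\rho_0\mathbbm1_A\,dx$ and use the diagonal plan, whose cost derivative is controlled by $F$ as in \eqref{eq:vacuum-mass-addition-HK}. The refined version uses the cone with identity on $A$ and factor $\sqrt{\rho_1/\rho_0}$-type growth. The first-order metric cost of adding mass $\delta\rho_0$ on a region where $q\le\sqrt{1-\varepsilon}$ is at most $\frac{1}{2\tau}\bigl(1-\sqrt{1-\varepsilon}\bigr)$-proportional. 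For the energy, $\Phi_1\le\ell_\varepsilon+\essinf P_1\le U'(c_{\rm low})+\essinf P_1<0$ on $A$, so by Lemma~\ref{lem:localized-first-variation} the energy gain is strictly negative and of fixed size. After the scaling $t\downarrow1$ we compare
\[
\frac{1}{2\tau}\bigl(2(1-\kappa^{-1})\bigr)+2\Phi\quad\text{with}\quad\kappa\ge(1-\varepsilon)^{-1/2},
\]
where the sign of $\Phi$ wins by the choice of $c_{\rm low}$ (this is why both thresholds enter $\bar c$). This contradicts minimality, so $|B|=0$.

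Finally we convert back to a density bound. If $U'(\rho_1)<\ell_\varepsilon-\essosc P_1$, then
\[
\Phi_1<\ell_\varepsilon-\esssup P_1+\esssup P_1\le\ell_\varepsilon+\essinf P_1+(\text{slack}),
\]
so the point lies in $B$. Hence a.e.\ $\rho_1\ge(U')^{-1}(\ell_\varepsilon-\essosc P_1)$, and letting $\varepsilon\downarrow0$ gives \eqref{eq:lower-density-bound-final}.
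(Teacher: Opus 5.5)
Your reduction is sound and matches the paper: the sublevel set $B$, its forward invariance $T(B\cap G)\subset B$, the averaged bound $\int_{B\cap G}q^2\,d\mu_0\le(1-\varepsilon)\mu_0(B\cap G)$, and hence a set $A_0\subset B\cap G$ of positive $\mu_0$-measure on which $q\le\sqrt{1-\varepsilon}<1$. (Your last step has an algebra slip; the correct chain is $\Phi_1\le U'(\rho_1)+\esssup_\Omega P_1<\ell_\varepsilon+\essinf_\Omega P_1$, and no slack term is needed.)

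\textbf{The gap is the competitor, which is the core of the proof.} You are right that scaling $q$ up on $A$ cannot work. That variation is first-order stationary at $t=1$, because $(T,q)$ minimizes $\mathcal I_\tau(\cdot\mid\mu_0)$ by Proposition~\ref{prop:incremental-HK-JKO-reparam}. But none of your replacements closes the argument:
\begin{itemize}
\item The cut-and-glue competitor of Lemma~\ref{lem:cut-and-glue-full} moves target mass from $T(A)$ to $A$, i.e.\ toward larger $\Phi_1$. It is exactly the variation whose failure to improve yields $\Phi_1\circ T\le\Phi_1$, so it gives no energy decrease.
\item Reusing the old entropy--transport plan, as in \eqref{eq:vacuum-mass-addition-HK}, gives a \emph{positive} first-order metric cost $\frac1{2\tau}\int_A(1-\sigma)\,d\mu_0$. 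Here $\sigma$ is the density of the plan's $\mu_1$-side marginal with respect to $\mu_1$. This cost is at most $\frac1{2\tau}$ per unit mass because $F(0)=1$, but it is not small. At $y\in A$, the value $\sigma(y)$ is governed by $q$ and $r$ at $T^{-1}(y)$, not by $q$ on $A$.
\item Since $c_{\rm low}$ is arbitrary subject only to $U'(c_{\rm low})+\essinf_\Omega P_1<0$, the mere sign of $\Phi_1$ on $B$ cannot dominate such a cost.
\end{itemize}
Your bound ``at most $\frac1{2\tau}(1-\sqrt{1-\varepsilon})$-proportional'' and the comparison with $\kappa\ge(1-\varepsilon)^{-1/2}$ are carried over from the upper-bound proof; no construction here produces them. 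Computing the derivative through dual potentials would be circular, because Lemma~\ref{lem:transformed-potential-Lipschitz} needs the two-sided density bound you are trying to prove.

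\textbf{The missing idea} is to make the metric term \emph{decrease} when mass is added on $A$. You do this by drawing the new mass from the source points of $A$ themselves and depositing it along the diagonal, where $\cos 0=1$ removes the uncontrolled $\cos r$. Set $s_t=1-t^2$ and $\mu_1^t=\mu_1+s_t\mu_0|_A$, and build a cone triple as follows:
\begin{itemize}
\item Over $A$, lower the source weight of the transport branch from $1$ to $t$ and keep its target weight $q$, so the mass delivered to $T(A)$ is unchanged.
\item Send the freed source mass $s_t\mu_0|_A$ through a diagonal branch with weights $(\sqrt{s_t},\sqrt{s_t})$, which costs nothing.
\item On $A\cap\{T=\mathrm{id}\}$ the two branches coincide, so merge them into weights $(1,\sqrt{q^2+s_t})$.
\end{itemize}
The cost on $A$ then drops by $(1-t)(1+t-2q\cos r)$ per unit of $\mu_0$; on the fixed points use $(1-\sqrt{q^2+s_t})^2\le(t-q)^2$. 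Hence
\[
\HK^2(\mu_0,\mu_1)-\HK^2(\mu_0,\mu_1^t)\ge(1-t)\bigl(1+t-2\sqrt{1-\varepsilon}\bigr)\mu_0(A)>0
\]
for $t$ near $1$. This is the only place where $q<1$ is used.

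You must also truncate so that $\rho_1\ge\delta$ on $A$ itself; your truncation controls $T(A)$, which is the wrong set for this perturbation. Then Lemma~\ref{lem:localized-first-variation} gives $\mathcal F(\mu_1^t)-\mathcal F(\mu_1)=s_t\int_A\Phi_1\,d\mu_0+o(s_t)<0$. Both parts of $\mathcal J_\tau$ decrease, and minimality is contradicted using only the sign of $\Phi_1$ on $B$.
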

\begin{proof}
The lower bound is proved by the analogous low-potential argument, but the competitor is different: increasing the target locally is achieved here by splitting source mass between the original transport branch and a diagonal branch.  We therefore first identify a region where the optimal growth factor is strictly smaller than one, and then build this split competitor explicitly.

Fix \(\varepsilon>0\), set \(c_\varepsilon=\bar c/(1+\varepsilon)\), and define
\[
B=\{x:\Phi_1(x)\le U'(c_\varepsilon)+\essinf_\Omega P_1\}.
\]
By \eqref{eq:chemical-potential-monotonicity}, \(T(B\cap G)\subset B\), and on \(B\) one has \(\rho_1\le c_\varepsilon\).

Assume that the low-potential set has positive measure.  Forward invariance implies that the average of $q^2$ on this set is strictly below one.

If \(|B|>0\), then $\mu_0(B\cap G)>0$ and Lemma~\ref{lem:Monge-restriction-identity} gives
\[
\int_{B\cap G}q^2\,d\mu_0
=\mu_1(T(B\cap G))
\le\mu_1(B)
\le c_\varepsilon|B|
\le\frac1{1+\varepsilon}\mu_0(B\cap G).
\]
Here $\mu_0(\Omega\setminus G)=0$ and the positive lower bound on $\rho_0$ imply
$|B\setminus G|=0$, so the last step uses
$c_\varepsilon|B|=\frac{\bar c}{1+\varepsilon}|B\cap G|\le\frac{c_{\min}}{1+\varepsilon}|B\cap G|\le\frac{1}{1+\varepsilon}\mu_0(B\cap G)$,
by $\bar c\le c_{\min}\le\rho_0$.

We sharpen this average estimate into a pointwise one on a positive-measure subset: there must be a region on which $q$ is uniformly bounded by some $k_\varepsilon<1$.

Choose
\[
k_\varepsilon^2=\frac{1+\varepsilon/2}{1+\varepsilon}<1.
\]
There is a measurable \(A_0\subset B\cap G\) of positive \(\mu_0\)-measure on which \(q\le k_\varepsilon\): otherwise \(q>k_\varepsilon\) \(\mu_0\)-almost everywhere on \(B\cap G\), contradicting the previous display because \(k_\varepsilon^2>(1+\varepsilon)^{-1}\).  Since \(\rho_1>0\) almost everywhere, there is \(\delta>0\) such that \(A:=A_0\cap\{\rho_1\ge\delta\}\) still has positive \(\mu_0\)-measure; on \(A\) one has \(\delta\le\rho_1\le c_\varepsilon\) and \(\rho_0\mathbbm1_A\in L^\infty\) is nonnegative, so Lemma~\ref{lem:localized-first-variation} applies with \(E=A\).

We now construct the competitor. 
For \(t<1\) close to \(1\), set
\[
\mu_1^t=\mu_1+(1-t^2)\mu_0|_A.
\]
To estimate $\HK^2(\mu_0,\mu_1^t)$, we construct an admissible
triple in the cone formulation \eqref{eq:HK-cone}, preserving
the original transported target mass and adding mass on $A$
through a diagonal branch. Admissibility supplies the required
cost upper bound.
Writing \(s_t:=1-t^2\), we reduce the source radial weight of the original transport branch from \(1\) to \(t\), while keeping its target radial weight \(q\) unchanged, and assign the missing source contribution \(s_t\mu_0|_A\) to a diagonal branch with radial weights \((\sqrt{s_t},\sqrt{s_t})\). Thus the original transported target mass is preserved, while the diagonal branch adds exactly \(s_t\mu_0|_A\) to the target. On the fixed-point set \(T(x)=x\), however, these two branches coincide and their radial weights must be merged.
Accordingly, set
\[
A_{\rm fix}:=A\cap\{x:T(x)=x\},
\qquad A_*:=A\setminus A_{\rm fix},
\]
and take
\[
\gamma
=(\mathrm{id},T)_\#\mu_0|_{A_*}
+(\mathrm{id},\mathrm{id})_\#\mu_0|_A
+(\mathrm{id},T)_\#\mu_0|_{\Omega\setminus A}.
\]
These three pieces are mutually singular.  Define $(r_0,r_1)$ by
\[
(r_0,r_1)=
\begin{cases}
(t,q),&\text{on }(\mathrm{id},T)(A_*),\\
(\sqrt{s_t},\sqrt{s_t}),&\text{on }(\mathrm{id},\mathrm{id})(A_*),\\
(1,\sqrt{q^2+s_t}),&\text{on }(\mathrm{id},\mathrm{id})(A_{\rm fix}),\\
(1,q),&\text{on }(\mathrm{id},T)(\Omega\setminus A).
\end{cases}
\]
Since $T=\mathrm{id}$ on $A_{\rm fix}$ and $T_\#(q^2\mu_0)=\mu_1$, the marginals are
\[
\pi^0_\#(r_0^2\gamma)
=t^2\mu_0|_{A_*}+s_t\mu_0|_{A_*}+\mu_0|_{A_{\rm fix}}
 +\mu_0|_{\Omega\setminus A}=\mu_0,
\]
\[
\begin{aligned}
\pi^1_\#(r_1^2\gamma)
&=T_\#(q^2\mu_0|_{A_*})+s_t\mu_0|_{A_*}
 +(q^2+s_t)\mu_0|_{A_{\rm fix}}
 +T_\#(q^2\mu_0|_{\Omega\setminus A})\\
&=\mu_1+s_t\mu_0|_A=\mu_1^t,
\end{aligned}
\]
so the triple is admissible.

We next estimate the metric effect of this split construction.  On both the moving and fixed-point parts of $A$, the new cone cost is strictly smaller to first order because $q<1$.

On $A_*$ its excess cost over the original optimal pair is the
negative of
\[
(1-t)\bigl(1+t-2q\cos r\bigr),
\qquad r(x):={\sf d}_\Omega(x,T(x))\wedge\frac\pi2.
\]
On $A_{\rm fix}$ the merged diagonal branch has cost
$\bigl(1-\sqrt{q^2+s_t}\bigr)^2$.  Since
$1=\sqrt{t^2+s_t}$ and $a\mapsto\sqrt{a^2+s_t}$ is $1$-Lipschitz on $[0,\infty)$,
\[
\bigl(1-\sqrt{q^2+s_t}\bigr)^2\le(t-q)^2.
\]
Thus on $A_{\rm fix}$ the decrease relative to the original cost $(1-q)^2$ is at least
$(1-t)(1+t-2q)$.  Using $q\le k_\varepsilon$ on $A$ and $\cos r\le1$ on $A_*$, the total
cost decrease is therefore at least
\[
(1-t)(1+t-2k_\varepsilon)\mu_0(A).
\]
Since the triple above is merely admissible while $(T,q)$ is optimal, this gives
\begin{equation*}
\HK^2(\mu_0,\mu_1)-\HK^2(\mu_0,\mu_1^t)
\ge(1-t)(1+t-2k_\varepsilon)\mu_0(A)>0
\end{equation*}
for \(t\) sufficiently close to \(1\), because $k_\varepsilon<1$.
It remains to compare the energy.  Since $A$ lies in the low-potential set, adding mass there also decreases the energy to first order, which contradicts minimality.

  With \(s=1-t^2\), the localized first variation gives
\[
\mathcal F(\mu_1^t)-\mathcal F(\mu_1)
=s\int_A\Phi_1\,d\mu_0+o(s).
\]
Since \(A\subset B\) and
\(
U'(c_\varepsilon)+\essinf_\Omega P_1<U'(c_{\rm low})+\essinf_\Omega P_1<0,
\)
the energy also decreases for \(t\) close to \(1\).  This contradicts minimality, so \(|B|=0\).

Thus the low-potential set is negligible.  We finally convert this information into the asserted lower density bound and then let $\varepsilon\downarrow0$.

Define \(\xi_\varepsilon\) by
\[
U'(\xi_\varepsilon)
=U'(c_\varepsilon)-\essosc_\Omega P_1.
\]
If $\rho_1(x)\le\xi_\varepsilon$, then
$\Phi_1(x)\le U'(c_\varepsilon)-\essosc_\Omega P_1+\esssup_\Omega P_1=U'(c_\varepsilon)+\essinf_\Omega P_1$,
so \(\{\rho_1\le\xi_\varepsilon\}\subset B\) and hence \(\rho_1>\xi_\varepsilon\) almost everywhere.  Let \(\varepsilon\downarrow0\), so that $c_\varepsilon\uparrow\bar c$, to obtain \eqref{eq:lower-density-bound-final}.
\end{proof}

\subsection{Dual potential and discrete Euler equation}\label{sec:dual-euler}

Here $(T,q)$ is always oriented from the first argument of $\HK$ to the second.  In the
application to Theorem~\ref{thm:full-step-regularity} the ordered pair is $(\mu_1,\mu_0)$,
so this $T$ is the reversed map, from the new state to the source.

\paragraph{Transformed dual potentials.}
The logarithmic dual variables of \eqref{eq:HK-dual} are convenient to rescale.  Writing
$K(x,y):=\cos^2({\sf d}_\Omega(x,y)\wedge\frac\pi2)=e^{-c_{\HK}(x,y)}$ and substituting
\begin{equation}\label{eq:log-to-transformed}
\psi:=1-e^{-z_0},
\qquad
\chi:=1-e^{-z_1},
\end{equation}
into \eqref{eq:HK-dual} turns the objective into a linear one, because
$-F^*(-z)=1-e^{-z}$ for $F^*(a)=e^a-1$, and turns the constraint $z_0+z_1\le c_{\HK}$ into
$e^{-z_0}e^{-z_1}\ge e^{-c_{\HK}}$.  Hence \eqref{eq:HK-dual} is equivalent to
\begin{equation}\label{eq:transformed-HK-dual}
\HK^2(\mu,\nu)
=
\sup\left\{\int_\Omega \psi\,d\mu+\int_\Omega \chi\,d\nu\right\},
\end{equation}
the supremum being over Borel $\psi,\chi\le1$ subject to
\begin{equation}\label{eq:transformed-HK-constraint}
(1-\psi(x))(1-\chi(y))
\ge K(x,y)
=\cos^2\!\left({\sf d}_\Omega(x,y)\wedge\frac\pi2\right)
\qquad\text{for all }x,y\in\Omega.
\end{equation}
We call $(\psi,\chi)$ a \emph{transformed} dual pair; $\psi$ is its \emph{source component}
and $\chi$ its \emph{target component}.  The pair is \emph{saturated} when each component is
the envelope of the other with respect to \eqref{eq:transformed-HK-constraint}, that is,
\begin{equation}\label{eq:tight-source-representative}
\psi(x)=1-\sup_{y\in\Omega}\frac{K(x,y)}{1-\chi(y)},
\qquad
\chi(y)=1-\sup_{x\in\Omega}\frac{K(x,y)}{1-\psi(x)} .
\end{equation}
Saturation fixes a canonical representative: replacing a feasible pair by the corresponding
envelopes preserves feasibility and does not decrease the objective.  Following \cite{LMS23},
we call the source component $\psi$ of this envelope-normalized pair the \emph{tight source
potential}; it is the component differentiated below.  
% The symbols $\psi,\chi$ avoid
% conflict with the density notation $u$.

\begin{lemma}[Regularity and uniqueness of transformed $\HK$ dual potentials]\label{lem:transformed-potential-Lipschitz}
Let $\Omega$ be either a compact convex subset of $\mathbb R^d$ with nonempty interior or the flat torus $\mathbb T^d$, and let
$\mu=\rho\,dx$, $\nu=\tilde\rho\,dx$ satisfy
\[
0<m\le\rho,\tilde\rho\le M<\infty\qquad\text{a.e.}
\]
There is a saturated optimal pair $(\psi,\chi)$ in \eqref{eq:transformed-HK-dual} with
$\psi,\chi\in W^{1,\infty}(\Omega)$.  Its saturated source component $\psi$ is unique $\mu$-almost everywhere.  Moreover, for some $\delta>0$,
\begin{equation}\label{eq:dual-away-from-one}
1-\psi\ge\delta,\qquad 1-\chi\ge\delta
\qquad\text{on }\Omega,
\end{equation}
and the associated Monge map stays uniformly below the cutoff $\pi/2$.
Moreover, $\delta$ and the $W^{1,\infty}$ norms of $\psi$ and $\chi$ may be chosen to depend
only on $m$, $M$, and $\Omega$.
\end{lemma}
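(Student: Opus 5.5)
The plan is to derive everything from the logarithmic potentials $(z_0,z_1)$ of \eqref{eq:HK-dual}, using the known regularity theory. We then transfer it through the change of variables \eqref{eq:log-to-transformed}, $\psi=1-e^{-z_0}$ and $\chi=1-e^{-z_1}$. Under this map, feasibility, optimality and saturation in \eqref{eq:HK-dual} with the $c$-transform correspond exactly to \eqref{eq:transformed-HK-constraint} and \eqref{eq:tight-source-representative}. Indeed, $z_1(y)=\inf_x\{c_{\HK}(x,y)-z_0(x)\}$ is equivalent to $e^{-z_1(y)}=\sup_x K(x,y)/e^{-z_0(x)}$. Hence it suffices to produce $c_{\HK}$-conjugate optimal potentials $(z_0,z_1)$ that are bounded and Lipschitz with bounds depending only on $m,M,\Omega$. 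Then \eqref{eq:dual-away-from-one} holds with $\delta=e^{-\max(\|z_0\|_\infty,\|z_1\|_\infty)}$, and the Lipschitz bounds pass to $\psi,\chi$ because $z\mapsto1-e^{-z}$ is Lipschitz on bounded sets. In the Euclidean case we invoke \cite{LMS23}; strong reduction holds as in the proof of Proposition~\ref{prop:HK-Monge-representation}. On the torus we invoke \cite[Secs.~2.2, 3.4.1]{GGV25}. Existence of optimal potentials is then granted, and we must make the bounds quantitative.

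\emph{Uniform sup bounds.} We use the optimality conditions of the entropy--transport problem: $\gamma_0=e^{-z_0}\mu$, $\gamma_1=e^{-z_1}\nu$, and $z_0(x)+z_1(y)=c_{\HK}(x,y)$ on $\operatorname{supp}\gamma$. Since $c_{\HK}(x,x)=0$, testing the dual with $z_0=z_1=0$ and comparing with the primal diagonal plan gives $\HK^2\le\|\rho-\tilde\rho\|_{L^1}\le M|\Omega|$. For the lower bound on $z_0$, we look at $e^{-z_0}=d\gamma_0/d\mu$. If $z_0(x)$ were very negative at some $x$, then conjugacy $z_1(y)\le c_{\HK}(x,y)-z_0(x)$ fails to constrain, whereas $z_0(x)\le c_{\HK}(x,x)-z_1(x)=-z_1(x)$. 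So $z_0\ge -C$ follows from an upper bound on $z_1$, and conversely. For the upper bound on $z_1$ we use the transformed form: $\chi\le1$ always, and the total mass balance $\int e^{-z_0}\,d\mu$ is controlled by $M|\Omega|$ plus $\HK^2$. Combined with the Lipschitz estimate below (oscillation bounded by $\operatorname{Lip}\cdot\diam$) and $\rho\ge m$, a large $z_1$ everywhere would make $\gamma_1(\Omega)$ tiny while $\gamma_0(\Omega)$ stays bounded below, which contradicts $\pi^0_\#\gamma$ and $\pi^1_\#\gamma$ being marginals of the same plan. Two-sided density bounds thus pin both potentials into a window $[-C,C]$.

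\emph{Lipschitz bound, cutoff, uniqueness.} The $c$-transform of a bounded function is semiconcave and locally Lipschitz wherever $K(x,y)$ is bounded away from $0$, i.e.\ where the distance stays below $\pi/2$. Given $1-\chi\ge\delta$, the supremum in \eqref{eq:tight-source-representative} exceeds $\delta'$ (take $y=x$), so the maximizers satisfy $K(x,y)\ge\delta'\delta$. This gives ${\sf d}(x,y)\le\pi/2-\eta(\delta)$, which is the uniform cutoff for the Monge map $x\mapsto y$. On that region $K$ is smooth with bounded gradient, so $\psi$ is Lipschitz as a supremum of uniformly Lipschitz functions, and likewise $\chi$. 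For uniqueness we use the first-order relation $\nabla\psi(x)=-2q\sin r\,e$ together with $1-\psi=q\cos r$. Any optimal plan is induced by the Monge pair, and Proposition~\ref{prop:HK-Monge-representation} together with uniqueness of the entropy--transport optimizer \cite{LMS18,GGV25} gives unique $(T,q)$. This determines $\psi=1-q\cos r$ $\mu$-a.e., hence uniqueness.

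\textbf{Main obstacle.} I expect the sup bound, namely that the bounds depend only on $m,M,\Omega$, to be the hardest step. The argument above is circular between the sup bound and the Lipschitz bound. I would first break the circularity by bootstrapping: the cutoff and Lipschitz argument only requires some $\delta>0$. A compactness/contradiction argument over densities in $[m,M]$ then upgrades this to uniformity, using narrow stability of optimal potentials.
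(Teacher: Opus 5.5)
Your reduction to the logarithmic potentials, the citations of \cite{LMS23} and \cite{GGV25} for existence, Lipschitz regularity and the Monge map, and your cutoff argument via maximizers in the saturation formula all match the paper. For uniqueness, the gradient identity is not needed. Fenchel equality gives $1-\psi=d\gamma_0/d\mu$ directly, and uniqueness of the entropy--transport optimizer then determines $\psi$ $\mu$-a.e. (In the paper, $1-\psi=q\cos r$ is only derived afterwards, using this lemma.)

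The genuine gap is the final assertion: that $\delta$ and the $W^{1,\infty}$ norms depend only on $m$, $M$, $\Omega$. This is exactly the part you flag. Your sup bound controls the oscillation of $z_1$ by $\Lip(z_1)\,\diam\Omega$, but the Lipschitz bound you have for a $c_{\HK}$-transform itself requires the sup bound, so the argument is circular. The proposed escape, a compactness/contradiction argument over densities in $[m,M]$ using ``narrow stability of optimal potentials,'' does not close the circle. Extracting convergent subsequences of potentials via Arzel\`a--Ascoli already requires uniform sup and Lipschitz bounds. Narrow compactness of the optimal plans only gives uniform integrability of $d\gamma_0/d\mu$, not an $L^\infty$ bound. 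A smaller point: the inequality $z_0(x)\le-z_1(x)$ you quote gives an \emph{upper} bound on $z_0$. The lower bound $z_0\ge-\sup z_1$ comes from conjugacy and $c_{\HK}\ge0$.

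The missing idea is a scale-invariant estimate in the transformed variables.
\begin{itemize}
\item Put $a=1-\psi$ and $b=1-\chi$, so saturation reads $a(x)=\sup_y K(x,y)/b(y)$. Since $0\le K\le1$ and $K(x,x)=1$, this gives $\max_\Omega a=1/\min_\Omega b$.
\item Since $K(\cdot,y)$ is $1$-Lipschitz, $|a(x)-a(x')|\le a_{\max}\,{\sf d}_\Omega(x,x')$. Hence $a\ge a_{\max}/2$ on $B_{1/2}(x_0)\cap\Omega$ for a maximizer $x_0$, \emph{regardless of how large $a_{\max}$ is}.
\item The dual value together with the feasible pair $(0,0)$ gives $\int a\,d\mu\le\mu(\Omega)+\nu(\Omega)-\HK^2(\mu,\nu)\le2M|\Omega|$.
\item Using $\rho\ge m$ and $v_\Omega:=\inf_x|B_{1/2}(x)\cap\Omega|>0$, this yields $a_{\max}\le4M|\Omega|/(mv_\Omega)$. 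Equivalently, $\min b\ge\delta_0:=mv_\Omega/(4M|\Omega|)$, and symmetrically $\min a\ge\delta_0$.
\item Saturation then gives $a,b\le\delta_0^{-1}$ and $\Lip(\psi),\Lip(\chi)\le\Lip(K)/\delta_0$.
\end{itemize}
No compactness argument is needed. In your logarithmic language, the key observation is that $e^{-z_0}$ is Lipschitz with constant $\max e^{-z_0}$, and this is what breaks the circularity.
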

\begin{proof}
We first establish existence, uniqueness, and Lipschitz regularity of the transformed dual potentials.  The argument is quoted separately in the Euclidean and torus settings because the available regularity results are formulated differently.

In the Euclidean case, both measures have compact full support, so each support lies in the
$\pi/2$-neighborhood of the other and the pair is strongly reduced in the sense of
\cite[Definition~2.6]{LMS23}.  The uniqueness of the optimal marginal densities from
\cite[Theorem~2.14]{LMS23}, together with the tight-potential regularity and Monge results in
\cite[Theorems~3.3 and~3.6 and Corollary~3.5]{LMS23}, yields a saturated Lipschitz
transformed pair and the associated Monge map.  In the present normalization the optimal
marginal densities are
\(1-\psi=\sigma_0\) and \(1-\chi=\sigma_1,\)
so their uniqueness gives uniqueness of the saturated source component $\mu$-almost everywhere.
Moreover, the factors $1-\psi$ and $1-\chi$ are positive and continuous on the compact full
supports and hence are bounded away from zero.

We now treat the torus using the logarithmic potentials and then transform back to the variables
$(\psi,\chi)$.  On the flat torus both measures have full support.  In particular they form an
admissible pair for the $\HK$ cost, since for each $x$ one may choose $y=x$, for which
$c_{\HK}(x,x)=0$.  The Lipschitz-potential and Monge results of
\cite[Sections~2.2 and~3.4.1]{GGV25}
yield a unique Lipschitz logarithmic optimal pair $(z_0,z_1)$ and the associated Monge map.
The change of variables \eqref{eq:log-to-transformed} gives the corresponding saturated pair in
\eqref{eq:transformed-HK-dual}; uniqueness of $z_0$ gives uniqueness of the source component.
Since $z_0,z_1$ are finite and Lipschitz on the compact torus,
$e^{-z_0}$ and $e^{-z_1}$ have positive lower bounds.

Thus, in either geometry, after decreasing $\delta>0$ if necessary,
\eqref{eq:dual-away-from-one} holds.  It remains to verify that the associated Monge map stays
uniformly below the cutoff.  On the contact set,
\[
(1-\psi(x))(1-\chi(T(x)))
=\cos^2\!\left({\sf d}_\Omega(x,T(x))\wedge\frac\pi2\right).
\]
Hence \eqref{eq:dual-away-from-one} gives
\[
\cos^2\!\left({\sf d}_\Omega(x,T(x))\wedge\frac\pi2\right)\ge\delta^2,
\]
and therefore
\[
{\sf d}_\Omega(x,T(x))
\le \arccos(\delta)<\frac\pi2
\]
for $\mu$-almost every $x$ (after replacing $\delta$ by $\min\{\delta,1\}$ if necessary).

For the later Euler argument we need more than pairwise regularity: the bounds must be uniform over all density pairs satisfying the same two-sided estimates.  We obtain this by compactness and the saturated-envelope representation.

To make this quantitative, consider all density pairs with the same $m$ and $M$.  Set
\[
a:=1-\psi \ge 0,\qquad b:=1-\chi \ge0.
\]
The preceding argument gives positive continuous $a,b$ on the compact space $\Omega$.
Since $0\le K\le1$, $K(x,x)=1$, and the pair is saturated,
\eqref{eq:tight-source-representative} implies
\[
\max_\Omega a=\frac1{\min_\Omega b},
\qquad
\max_\Omega b=\frac1{\min_\Omega a}.
\]
Let $a_{\max}:=\max_\Omega a$.  The saturation formula and $\Lip(K)\le1$ give
\[
|a(x)-a(x')|
\le \frac{\Lip(K)}{\min_\Omega b}\,{\sf d}_\Omega(x,x')
\le a_{\max}\,{\sf d}_\Omega(x,x').
\]
If $x_0$ is a maximizer of $a$, then $a\ge a_{\max}/2$ on
$B_{1/2}(x_0)\cap\Omega$.  Since $\Omega$ is fixed, compact, and is either convex with
nonempty interior or a flat torus,
\[
v_\Omega:=\inf_{x\in\Omega}|B_{1/2}(x)\cap\Omega|>0,
\]
where the balls are Riemannian in the torus case.  The feasible pair $(0,0)$ in
\eqref{eq:transformed-HK-dual} shows that the optimal value is nonnegative, and hence
\[
\int_\Omega a\,d\mu+\int_\Omega b\,d\nu
=\mu(\Omega)+\nu(\Omega)-\HK^2(\mu,\nu)
\le2M|\Omega|.
\]
Using $\rho\ge m$ therefore yields
\[
\frac{m a_{\max}v_\Omega}{2}
\le\int_\Omega a\,d\mu
\le2M|\Omega|,
\qquad
a_{\max}\le\frac{4M|\Omega|}{m v_\Omega}.
\]
The same argument for $b$ gives the same bound for $\max b$.  Consequently
\[
a,b\ge\delta_0:=\frac{m v_\Omega}{4M|\Omega|}>0,
\]
uniformly over the stated class of densities.  The saturation formulas then also give
$a,b\le\delta_0^{-1}$ and
\[
\Lip(\psi),\Lip(\chi)\le\frac{\Lip(K)}{\delta_0},
\]
where we used the elementary inequality
\(\left|\sup_y f(y)-\sup_y g(y)\right|
\le \sup_y |f(y)-g(y)|.\)
So the full $W^{1,\infty}$ bounds depend only on $m$, $M$, and $\Omega$.  This is the
uniformity used in Lemma~\ref{lem:jko-euler-source-potential-full} for the family of perturbed
densities.
\end{proof}

\begin{corollary}[Identities for an optimal Monge-growth pair]\label{cor:HK-growth-couple-relations-full}
Let \((T,q)\) be an essentially injective optimal Monge-growth pair from \(\mu\) to \(\nu\), so that \(T_\#(q^2\mu)=\nu\), and let $T$ also be the Monge map of Lemma~\ref{lem:transformed-potential-Lipschitz}.  The two coincide by uniqueness of the entropy--transport optimizer in the two cases covered by that lemma.  For the saturated representatives in Lemma~\ref{lem:transformed-potential-Lipschitz},
\begin{equation}\label{eq:dual-growth-relations}
1-\psi(x)=q(x)\cos r(x),
\qquad
1-\chi(T(x))=\frac{\cos r(x)}{q(x)},
\qquad
r(x):={\sf d}_\Omega(x,T(x)),
\end{equation}
for \(\mu\)-a.e. \(x\) on the transported part.  In particular, for some \(\varepsilon>0\),
\[
r(x)\le\frac\pi2-\varepsilon
\qquad\text{for \(\mu\)-a.e. }x.
\]
\end{corollary}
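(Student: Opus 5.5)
The plan is to derive both identities in \eqref{eq:dual-growth-relations} from complementary slackness between the cone formulation \eqref{eq:HK-cone} and the transformed dual \eqref{eq:transformed-HK-dual}. The optimal pair $(T,q)$ attains the primal value. The saturated Lipschitz pair $(\psi,\chi)$ of Lemma~\ref{lem:transformed-potential-Lipschitz} attains the dual value. Since there is no duality gap, the pointwise integrand of the difference must vanish $\mu$-a.e. I take for granted, as the statement says, that $T$ coincides $\mu$-a.e. with the Monge map of that lemma. This follows from uniqueness of the entropy--transport optimizer: in the Euclidean case this is \cite{LMS23}, and on the torus it is \cite{GGV25}, together with essential injectivity as in the proof of Proposition~\ref{prop:HK-Monge-representation}.

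\textbf{Rewriting the gap as one integral.} Using $T_\#(q^2\mu)=\nu$, I write $\int\chi\,d\nu=\int\chi(T(x))\,q(x)^2\,d\mu(x)$. Put $a(x):=1-\psi(x)$, $b(x):=1-\chi(T(x))$ and $c(x):=\cos(r(x)\wedge\frac\pi2)$. Then
\[
\HK^2(\mu,\nu)-\Bigl(\int\psi\,d\mu+\int\chi\,d\nu\Bigr)
=\int_\Omega\bigl[a+q^2b-2qc\bigr]\,d\mu ,
\]
and the left-hand side is zero.

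\textbf{Pointwise nonnegativity.} The constraint \eqref{eq:transformed-HK-constraint}, evaluated at $y=T(x)$, gives $ab\ge c^2$. Lemma~\ref{lem:transformed-potential-Lipschitz} gives $a,b\ge\delta>0$. This constraint holds for every $(x,y)$ because the potentials are genuine Lipschitz functions, not merely a.e.-defined ones, so composing with $T$ raises no null-set issue. By AM--GM,
\[
a+q^2b\;\ge\;2q\sqrt{ab}\;\ge\;2qc .
\]
So the integrand is nonnegative, and since its integral is zero it vanishes $\mu$-a.e.

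\textbf{Extracting the identities.} Equality in both inequalities holds a.e., which gives $a=q^2b$ and $ab=c^2$. Since $q>0$ $\mu$-a.e. by Proposition~\ref{prop:HK-Monge-representation}, these give $a=qc$ and $b=c/q$. Because $a\ge\delta>0$, we get $c>0$, hence $r<\pi/2$ and the truncation can be dropped; this is \eqref{eq:dual-growth-relations}. For the uniform cutoff, $c^2=ab\ge\delta^2$, so $r\le\arccos\delta$ with $\delta\le1$, and $\varepsilon:=\frac\pi2-\arccos\delta$ works. This reproduces the last step of the lemma.

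The main point needing care is that the dual optimum really equals $\HK^2$ for this pair, with no duality gap. This is supplied by the cited duality \eqref{eq:HK-dual} together with the change of variables \eqref{eq:log-to-transformed}. A second point is that the phrase ``on the transported part'' is vacuous here, since $q>0$ a.e. and $r<\pi/2$.
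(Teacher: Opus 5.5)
Your argument is correct, and it takes a genuinely different route from the paper.

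The paper works in the entropy--transport formulation. It writes the optimizer as $\gamma=(\mathrm{id},T)_\#(h\mu)$ and minimizes the pointwise integrand $F(h)+q^2F(h/q^2)+c_{\HK}(x,T(x))\,h$ to get $h=q\cos r$. It then uses the equality cases of Fenchel's inequality in the logarithmic variables to identify $h=e^{-z_0}=1-\psi$ and $d\gamma_1/d\nu=e^{-z_1}=1-\chi$. Combining these with $(d\gamma_1/d\nu)\circ T=h/q^2$ yields the second identity. That route genuinely needs $\gamma$ to be concentrated on the graph of the given $T$, which is why the coincidence with the lemma's Monge map via uniqueness is invoked. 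It also needs essential injectivity to compare target densities along $T$. In exchange, it identifies the marginals of the optimal plan as a by-product.

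Your complementary-slackness argument between the cone action and the transformed dual, closed by AM--GM, uses less. It needs only $\mathcal A_\mu(T,q)=\HK^2(\mu,\nu)$, the pushforward identity, attainment in \eqref{eq:transformed-HK-dual}, feasibility at every $(x,T(x))$, and $1-\psi\ge\delta$. Consequently it proves \eqref{eq:dual-growth-relations} for every optimal Monge-growth pair, injective or not, paired with any everywhere-feasible optimal dual pair.

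In particular, the coincidence of $T$ with the lemma's map, which you cite but never actually use, is superfluous. So is the appeal to Proposition~\ref{prop:HK-Monge-representation} for $q>0$: where $q=0$ your vanishing integrand equals $a\ge\delta>0$, so $q>0$ holds $\mu$-a.e.\ automatically.
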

\begin{proof}
We identify the density of the optimal entropy--transport plan relative to the source in two different ways.  The primal minimization gives it in terms of $(T,q)$, while dual optimality gives it in terms of the source potential; equating the two descriptions yields the claimed identities.

The entropy--transport optimizer is concentrated on the graph of $T$, so it has the form
\(\gamma=(\mathrm{id},T)_\#(h\mu)\) for a Borel $h\ge0$, and it remains to identify $h$.  Its
first marginal is $\gamma_0=h\mu$, so $d\gamma_0/d\mu=h$.  Its second marginal is
$\gamma_1=T_\#(h\mu)$, while $\nu=T_\#(q^2\mu)$; since $T$ is essentially injective, the two
densities compare at the transported point, namely
\[
\frac{d\gamma_1}{d\nu}\circ T=\frac h{q^2}
\qquad\mu\text{-a.e.},
\qquad\text{hence}\qquad
\int_\Omega F\!\left(\frac{d\gamma_1}{d\nu}\right)d\nu
=\int_\Omega F\!\left(\frac h{q^2}\right)q^2\,d\mu .
\]
Substituting this and $\int c_{\HK}\,d\gamma=\int c_{\HK}(x,T(x))h\,d\mu$ into
\eqref{eq:HK-entropy-transport}, the entropy--transport cost on the right-hand side of \eqref{eq:HK-entropy-transport} becomes $\int_\Omega\Theta\,d\mu$
with pointwise integrand
\[
\Theta=F(h)+q^2F\!\left(\frac h{q^2}\right)+c_{\HK}(x,T(x))\,h .
\]
Being optimal among all plans, $\gamma$ is in particular optimal within the class of plans of
this form, in which $h$ is an unconstrained nonnegative Borel function.  Since
$F'(\sigma)=\log\sigma$, minimizing $\Theta$ in $h>0$ pointwise therefore gives
\[
\log h+\log\frac h{q^2}+c_{\HK}(x,T(x))=0,
\qquad\text{that is}\qquad
\frac{h^2}{q^2}=e^{-c_{\HK}(x,T(x))}=\cos^2r,
\]
so \(h=q\cos r\).

Let $(z_0,z_1)$ be the optimal logarithmic dual pair corresponding
to $(\psi,\chi)$ through \eqref{eq:log-to-transformed}.
We apply Fenchel's inequality
\[
F(s)+F^*(-z)+zs\ge0,
\qquad
\text{with equality if and only if }s=e^{-z},
\]
to $(s,z)=(h,z_0)$ and
$(s,z)=(d\gamma_1/d\nu,z_1)$.
Using the identities
$\int z_0h\,d\mu=\int z_0(x)\,d\gamma(x,y)$ and
$\int_\Omega z_1(y)\,\frac{d\gamma_1}{d\nu}(y)\,d\nu(y)=\int z_1(y)\,d\gamma(x,y)$,
the entropy--transport objective \eqref{eq:HK-entropy-transport}
minus the dual objective \eqref{eq:HK-dual} equals the sum of
the corresponding integrals against $\mu$ and $\nu$, together with
\[
\int_{\Omega\times\Omega}
\bigl(c_{\HK}(x,y)-z_0(x)-z_1(y)\bigr)\,d\gamma(x,y).
\]
All three terms are nonnegative, and their sum is zero at optimality.
Hence equality holds in both Fenchel inequalities, giving
\[
h=e^{-z_0}=1-\psi,
\qquad
\frac{d\gamma_1}{d\nu}=e^{-z_1}=1-\chi,
\]
$\mu$- and $\nu$-almost everywhere, respectively, by
\eqref{eq:log-to-transformed}.
Combining these identities with $h=q\cos r$ and the previously
established relation $(d\gamma_1/d\nu)\circ T=h/q^2$ yields
\[
1-\psi=q\cos r,
\qquad
1-\chi(T(x))
=\frac{h(x)}{q(x)^2}
=\frac{\cos r(x)}{q(x)}
\quad\text{for $\mu$-a.e. }x.
\]
Finally, the lower bounds in \eqref{eq:dual-away-from-one} give
\[
\cos^2r(x)
=(1-\psi(x))(1-\chi(T(x)))
\ge\delta^2
\]
for some $0<\delta<1$.
Thus $r(x)\le\arccos\delta<\pi/2$ for $\mu$-a.e. $x$,
which proves the uniform bound below the cutoff.
\end{proof}

\begin{corollary}[Discrete $\HK$ identity for the source-side dual potential]\label{cor:HK-energy-identity-full}
Assume the hypotheses of Lemma~\ref{lem:transformed-potential-Lipschitz}.  Let $\psi$ be its unique saturated source component, let $(T,q)$ be the associated Monge pair, and set $\phi=\psi/2$.  Then
\begin{equation}\label{eq:HK-source-energy-identity}
\int_\Omega\left(|\nabla\phi|^2+4|\phi|^2\right)d\mu
=\HK^2(\mu,\nu).
\end{equation}
\end{corollary}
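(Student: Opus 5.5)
The plan is to establish a pointwise identity on the transported part and then integrate it against $\mu$. Corollary~\ref{cor:HK-growth-couple-relations-full} gives, for $\mu$-a.e.\ $x$,
\[
1-\psi(x)=q(x)\cos r(x),\qquad r(x)={\sf d}_\Omega(x,T(x))<\tfrac\pi2 .
\]
Since $r$ stays uniformly below the cutoff, $c_{\HK}(x,T(x))=-\log\cos^2 r(x)$ with no truncation. The remaining ingredient is a gradient formula,
\[
\nabla\psi(x)=-2q(x)\sin r(x)\,e(x),
\]
where $e(x)$ is the initial unit tangent of the geodesic from $x$ to $T(x)$, and $e=0$ when $r=0$.

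\emph{Deriving the gradient formula.} By Lemma~\ref{lem:transformed-potential-Lipschitz}, $\psi$ is Lipschitz, so Rademacher's theorem gives differentiability a.e. On the flat torus we use a local chart. The saturation formula \eqref{eq:tight-source-representative} says
\[
1-\psi(x)=\sup_y\frac{K(x,y)}{1-\chi(y)},
\]
and for $\mu$-a.e.\ $x$ the supremum is attained at $y=T(x)$; this is exactly the contact-set identity used in the proof of Corollary~\ref{cor:HK-growth-couple-relations-full}. Consequently the function
\[
x'\mapsto (1-\psi(x'))(1-\chi(Tx))-K(x',Tx)
\]
is nonnegative and vanishes at $x'=x$. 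At every point of differentiability of $\psi$, the first-order condition therefore reads
\[
-\nabla\psi(x)\,(1-\chi(Tx))=\nabla_xK(x,Tx).
\]
Near $y$ with $0<{\sf d}(x,y)<\pi/2$, and on the torus away from the cut locus, $K=\cos^2{\sf d}$ is differentiable in $x$ with
\[
\nabla_xK=-2\cos r\sin r\,\nabla_x{\sf d}=2\cos r\sin r\,e .
\]
When $r=0$, $K$ is differentiable with zero gradient. Substituting $1-\chi(Tx)=\cos r/q$ from \eqref{eq:dual-growth-relations} and dividing by $\cos r>0$ gives
\[
\nabla\psi=-2q\sin r\,e .
\]

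\emph{The main obstacle.} I expect the hardest step to be the differentiability of $K(\cdot,Tx)$ at $x$ on the torus when $Tx$ might lie on the cut locus of $x$. I would handle it by noting that a smooth lower support function suffices. Pick any minimizing geodesic; then $\cos^2$ of the distance along a fixed chart lift of that geodesic minorizes $K$ and touches it at $x$. This gives a one-sided inequality, and because $\psi$ is differentiable at $x$ that inequality already forces the equality above. In fact, it also shows that the selected geodesic is a.e.\ unique. An alternative is to invoke the explicit formula for $T$ from Appendix~\ref{sec:proof-monge-representation}.

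\emph{The pointwise identity and integration.} With $\phi=\psi/2$,
\[
|\nabla\phi|^2+4\phi^2=q^2\sin^2 r+(1-q\cos r)^2=1+q^2-2q\cos r .
\]
Every point of $\Omega$ is transported, since $q>0$ $\mu$-a.e.\ by Proposition~\ref{prop:HK-Monge-representation}, so the identity holds $\mu$-a.e. Integrating against $\mu$ and using \eqref{eq:injective-growth-representation} gives $\HK^2(\mu,\nu)$, which proves \eqref{eq:HK-source-energy-identity}.
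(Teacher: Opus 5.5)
Your proof is correct, and in the Euclidean case it is the paper's argument. At a contact point $y=T(x)$, the saturation formula gives the first-order condition and hence $\nabla\psi=-2q\sin r\,e$. The pointwise identity $|\nabla\phi|^2+4\phi^2=1+q^2-2q\cos r$ then integrates to $\HK^2(\mu,\nu)$. As the paper does, restrict the first-order condition to interior points; this is harmless because $\partial\Omega$ is Lebesgue-null.

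The only real difference is on the torus. The paper does not rerun the first-order argument there. It sets $z=-\log(1-\psi)$ and reads $\nabla\psi=e^{-z}\nabla z=-2q\sin r\,e$ off the explicit $c$-exponential Monge formula of \cite{GGV25}, namely $T=\exp_x(-r\,\nabla z/|\nabla z|)$, $|\nabla z|=2\tan r$ and $e^{-z}=q\cos r$.

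You instead reuse the same first-order argument, with $\cos^2$ of the distance to a fixed lift of $T(x)$ as a smooth lower support function for $K(\cdot,T(x))$ that touches at $x$. This remains valid even if $T(x)$ lies on the cut locus of $x$, which is a genuine possibility because $\pi/2$ can exceed the injectivity radius of the flat torus. It also yields, as a by-product, $\mu$-a.e. uniqueness of the minimizing geodesic, although the identity only needs $|e|=1$.

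The paper's route is shorter once the cited formula is granted. Yours is self-contained and treats both geometries by one mechanism.
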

\begin{proof}
We prove the identity by first expressing the gradient of the source dual potential in terms of the Monge pair.  The derivation is slightly different in Euclidean space and on the torus, but both give the same pointwise formula.

Let \(r={\sf d}_\Omega(x,T(x))<\pi/2\), and let \(e(x)\) be the initial unit tangent \emph{at $x$ of the minimizing geodesic running from $x$ to $T(x)$}, with \(e=0\) when \(r=0\); in the Euclidean case $e(x)=(T(x)-x)/|T(x)-x|$.  Fix a point at which the Lipschitz function $\psi$ is differentiable.

At an interior differentiability point $x$, fix $y=T(x)$.
By \eqref{eq:tight-source-representative}, the function
\[
x'\longmapsto 1-\frac{K(x',y)}{1-\chi(y)}-\psi(x')
\]
is nonnegative and vanishes at $x'=x$.
Its gradient therefore vanishes there, giving
\[
\nabla \psi(x)
=-\frac{\nabla_xK(x,T(x))}{1-\chi(T(x))} .
\]
For $r>0$, we have $K(x,y)=\cos^2 r$ and
$\nabla_x{\sf d}_\Omega(x,y)=-e(x)$ at $y=T(x)$.
The chain rule therefore gives
\[
\nabla_xK(x,T(x))
=-2\cos r\sin r\,\nabla_x{\sf d}_\Omega(x,T(x))
=2\cos r\,\sin r\,e(x).
\]
The resulting formula also holds when $r=0$, since
$K(\cdot,y)$ is differentiable at $x=y$ with zero gradient.
Inserting this together with $1-\chi(T(x))=\cos r/q$ from \eqref{eq:dual-growth-relations},
\[
\nabla\psi(x)
=-\frac{2\cos r\,\sin r\;e(x)}{\cos r/q}
=-2q(x)\sin r(x)\,e(x).
\]

For $\Omega=\mathbb T^d$, set $z=-\log(1-\psi)$, which is the logarithmic potential $z_0$ of \eqref{eq:log-to-transformed}.  The Monge formula in \cite[Section~3.4.1]{GGV25} gives, at almost every differentiability point of $z$,
\[
T(x)=\exp_x\!\left(-r(x)\frac{\nabla z(x)}{|\nabla z(x)|}\right),
\quad
r(x)={\sf d}_\Omega(x,T(x))=\arctan\!\left(\frac{|\nabla z(x)|}{2}\right),
\quad
q(x)=\frac{e^{-z(x)}}{\cos r(x)},
\]
with the evident convention when $\nabla z(x)=0$.  Thus $e=-\nabla z/|\nabla z|$ and, from
$|\nabla z|=2\tan r$ and $e^{-z}=q\cos r$,
\[
\nabla \psi=e^{-z}\nabla z=-q\cos r\,|\nabla z|\,e=-2q\sin r\,e.
\]

Hence the same gradient identity holds on both state spaces, with the Riemannian gradient on the torus.  By \eqref{eq:dual-growth-relations}, \(\psi=1-q\cos r\).  Therefore
\[
|\nabla\phi|^2=q^2\sin^2r,
\qquad
4|\phi|^2=(1-q\cos r)^2,
\]
and hence
\[
|\nabla\phi|^2+4|\phi|^2
=1+q^2-2q\cos r.
\]
Integrating and using \eqref{eq:injective-growth-representation} proves \eqref{eq:HK-source-energy-identity}.
\end{proof}

\begin{lemma}[Euler--Lagrange identity for the fully implicit $\HK$-JKO step]\label{lem:jko-euler-source-potential-full}
Let $\mu=\rho\,dx$ minimize
\[
\mathcal F(\sigma)+\frac1{2\tau}\HK^2(\sigma,\nu),
\]
assume Assumption~\ref{ass:standing-entropy}, $V\in C(\Omega)$, \textbf{\emph{(W0)}}, and $\nu=\tilde\rho\,dx$ with $0<m\le\rho,\tilde\rho\le M$.  Let $\phi$ be the source-side dual potential from Corollary~\ref{cor:HK-energy-identity-full}.  Then
\begin{equation}\label{eq:full-Euler-Lagrange}
U'(\rho)+V+W[\rho]=-\frac\phi\tau
\qquad\text{a.e. on }\Omega.
\end{equation}
In particular, the full chemical potential $g_\rho$ has a globally Lipschitz representative.
\end{lemma}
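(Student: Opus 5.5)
We will derive \eqref{eq:full-Euler-Lagrange} by a one-sided first-variation argument in the target argument of the proximal term, using the linear dual representation \eqref{eq:transformed-HK-dual} as the derivative of $\sigma\mapsto\HK^2(\sigma,\nu)$. Fix an arbitrary $h\in L^\infty(\Omega)$ and consider the perturbed densities $\rho_s=\rho+sh$ for $|s|$ small. Since $m\le\rho\le M$, we have $m/2\le\rho_s\le 2M$ for $|s|\le s_0$. Hence Lemma~\ref{lem:localized-first-variation} applies with $E=\Omega$, and gives $\mathcal F(\rho_s dx)-\mathcal F(\mu)=s\int g_\rho h\,dx+o(s)$.

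For the metric term we use the dual in both directions. Let $(\psi,\chi)$ be the saturated optimal pair for $(\mu,\nu)$. It is feasible for $(\rho_s dx,\nu)$, so
\[
\HK^2(\rho_s dx,\nu)-\HK^2(\mu,\nu)\ \ge\ s\int_\Omega\psi h\,dx .
\]
For the reverse bound, let $(\psi_s,\chi_s)$ be the saturated optimal pair for $(\rho_s dx,\nu)$. It is feasible for $(\mu,\nu)$, which gives $\HK^2(\rho_s dx,\nu)-\HK^2(\mu,\nu)\le s\int\psi_s h\,dx$. The uniform bounds of Lemma~\ref{lem:transformed-potential-Lipschitz} depend only on $m/2$, $2M$ and $\Omega$, so the family $\{\psi_s\}$ is equibounded and equi-Lipschitz. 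By Arzel\`a--Ascoli, along any sequence $s_n\to0$ a subsequence converges uniformly to some $\hat\psi$, and we may likewise take $\chi_{s_n}\to\hat\chi$ uniformly. The limit pair is feasible, since \eqref{eq:transformed-HK-constraint} passes to uniform limits. It is optimal for $(\mu,\nu)$, because the values converge: $\HK^2(\rho_{s_n}dx,\nu)\to\HK^2(\mu,\nu)$ by \eqref{eq:HK2-FR2} and the triangle inequality. Taking envelopes, which preserves optimality, and invoking the uniqueness of the saturated source component, we get $\hat\psi=\psi$ $\mu$-a.e. So $\psi_s\to\psi$ in $L^1$, and $\HK^2(\cdot,\nu)$ is Gateaux differentiable at $\mu$ in direction $h$ with derivative $\int\psi h\,dx=2\int\phi h\,dx$.

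This is the step I expect to be the main obstacle. A subtle point is that the subsequential limit need not itself be saturated. I would handle this by noting that its envelope dominates it pointwise and is still optimal; since $\mu$ has positive density, equality of the objectives then forces $\hat\psi$ to equal the envelope source a.e. With uniqueness, this gives $\hat\psi=\psi$. As a fallback, the one-sided inequality alone suffices: use $s>0$ with $h$ and with $-h$, combining the lower bound from the fixed pair with the upper bound in the limit.

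Minimality of $\mu$ now gives, for all $h\in L^\infty$,
\[
0\le \mathcal J_\tau(\rho_s dx\mid\nu)-\mathcal J_\tau(\mu\mid\nu)=s\int_\Omega\Bigl(g_\rho+\frac{\phi}{\tau}\Bigr)h\,dx+o(s).
\]
Dividing by $s$ and letting $s\to0^\pm$ yields $\int(g_\rho+\phi/\tau)h\,dx=0$ for all $h$, hence $g_\rho=-\phi/\tau$ a.e. Finally, $\phi=\psi/2\in W^{1,\infty}(\Omega)$ by Lemma~\ref{lem:transformed-potential-Lipschitz}, so $-\phi/\tau$ is a globally Lipschitz representative of $g_\rho$.
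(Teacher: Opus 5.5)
Your proposal is correct and follows essentially the same route as the paper. Both arguments use the two-sided envelope bounds from the transformed dual \eqref{eq:transformed-HK-dual}, the uniform $W^{1,\infty}$ bounds of Lemma~\ref{lem:transformed-potential-Lipschitz}, Arzel\`a--Ascoli, identification of every subsequential limit with $\psi$ through uniqueness of the saturated source component, and then the first variation of $\mathcal F$. The differences are minor: you obtain continuity of $\HK^2(\rho_s\,dx,\nu)$ from \eqref{eq:HK2-FR2} and the triangle inequality, and you handle saturation of the limit by an envelope replacement, whereas the paper uses the envelope bounds and passes the saturation identities to the limit directly.
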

\begin{proof}
We first compute the directional derivative of the squared $\HK$ term with respect to its first argument.  The key point is that the saturated source dual potentials for the perturbed measures converge uniformly to the one at the minimizer.

For $h\in L^\infty(\Omega)$, the positive lower and upper density bounds permit the two-sided perturbation $\rho_\varepsilon=\rho+\varepsilon h$ for sufficiently small $|\varepsilon|$.  Set
\(
G(\sigma)=\HK^2(\sigma,\nu).
\)
There are then $0<m'\le M'<\infty$, independent of $\varepsilon$, with $m'\le\rho_\varepsilon\le M'$ for all small $|\varepsilon|$.  Let $(\psi_\varepsilon,\chi_\varepsilon)$ be saturated optimal transformed dual pairs for $(\rho_\varepsilon dx,\nu)$, and let $(\psi,\chi)$ be one for $(\mu,\nu)$.  For $\varepsilon>0$, using $(\psi,\chi)$ as a competitor at $\rho_\varepsilon dx$ and $(\psi_\varepsilon,\chi_\varepsilon)$ as a competitor at $\mu$ in \eqref{eq:transformed-HK-dual} gives the two envelope bounds
\[
\varepsilon\int_\Omega \psi\, h\,dx
\le
G(\rho_\varepsilon dx)-G(\mu)
\le
\varepsilon\int_\Omega \psi_\varepsilon h\,dx,
\]
because the $\nu$-integrals cancel in each comparison.  
By Lemma~\ref{lem:transformed-potential-Lipschitz}, both components
$(\psi_\varepsilon,\chi_\varepsilon)$ are uniformly bounded in
$W^{1,\infty}(\Omega)$. By Arzel\`a--Ascoli on the compact space
$\Omega$, every sequence $\varepsilon_n\downarrow0$ admits a
subsequence along which the pair converges uniformly to some
$(\psi_*,\chi_*)$.

The envelope bounds above and the uniform sup-norm bounds imply
$G(\rho_{\varepsilon_n}dx)\to G(\mu)$.
Together with $\rho_{\varepsilon_n}\to\rho$ in $L^\infty(\Omega)$
and the uniform convergence of
$(\psi_{\varepsilon_n},\chi_{\varepsilon_n})$,
this allows us to pass to the limit and obtain
\[
G(\mu)
=
\int_\Omega \psi_*\,d\mu+\int_\Omega \chi_*\,d\nu .
\]

The dual constraint also passes to the limit, so
\((\psi_*,\chi_*)\) is feasible for \((\mu,\nu)\); hence the equality
above shows that it is optimal. Finally, the uniform positive lower
bounds on \(1-\psi_{\varepsilon_n}\) and
\(1-\chi_{\varepsilon_n}\), together with uniform convergence, allow
passage to the saturation identities
\eqref{eq:tight-source-representative}.

% The envelope bounds above and the uniform sup-norm bounds imply
% $G(\rho_{\varepsilon_n}dx)\to G(\mu)$. Passing to the limit in the
% dual constraint and the optimal objective therefore shows that
% $(\psi_*,\chi_*)$ is optimal for $(\mu,\nu)$.
% The uniform positive lower bounds for $1-\psi_{\varepsilon_n}$
% and $1-\chi_{\varepsilon_n}$ also allow passage to the saturation
% identities \eqref{eq:tight-source-representative}.

By Lemma~\ref{lem:transformed-potential-Lipschitz},
uniqueness of the saturated optimal source component gives
$\psi_*=\psi$ $\mu$-almost everywhere.
Since both functions are continuous and $\mu$ has full support,
they agree everywhere.
Every subsequential source limit is thus $\psi$, proving
$\psi_\varepsilon\to\psi$ uniformly on $\Omega$
as $\varepsilon\downarrow0$.
  Therefore
\[
\lim_{\varepsilon\downarrow0}
\frac{G(\rho_\varepsilon dx)-G(\mu)}{\varepsilon}
=
\int_\Omega \psi\, h\,dx.
\]
Applying the same argument to $-h$ yields the left derivative, so that
\[
\left.\frac d{d\varepsilon}\right|_{\varepsilon=0}
G(\rho_\varepsilon dx)
=\int_\Omega \psi\,h\,dx
=2\int_\Omega\phi h\,dx.
\]
Therefore
\[
\left.\frac d{d\varepsilon}\right|_{\varepsilon=0}
\frac1{2\tau}\HK^2(\rho_\varepsilon\,dx,\nu)
=\frac1\tau\int_\Omega\phi h\,dx.
\]

With the proximal derivative identified, we now differentiate the full energy.  Minimality for arbitrary bounded perturbations then yields the Euler--Lagrange identity.

The perturbed densities remain in a compact subset of $(0,\infty)$, while $V$ and $W$ are bounded.  Thus differentiation under the integral and symmetry of $W$ give
\[
\left.\frac d{d\varepsilon}\right|_{\varepsilon=0}
\mathcal F(\rho_\varepsilon\,dx)
=\int_\Omega\bigl(U'(\rho)+V+W[\rho]\bigr)h\,dx.
\]
Minimality for all bounded $h$ yields \eqref{eq:full-Euler-Lagrange}.
\end{proof}

\begin{corollary}[Discrete metric--dissipation identity]\label{cor:exact-step-dissipation-full}
Under the hypotheses of Corollary~\ref{cor:HK-energy-identity-full} and Lemma~\ref{lem:jko-euler-source-potential-full},
\begin{equation*}
\HK^2(\rho\,dx,\nu)=\tau^2 I_g(\rho).
\end{equation*}
\end{corollary}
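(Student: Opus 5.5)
The plan is to combine the two preceding results pointwise and then integrate. By Corollary~\ref{cor:HK-energy-identity-full}, applied to the ordered pair $(\mu,\nu)=(\rho\,dx,\nu)$ with the hypotheses of Lemma~\ref{lem:transformed-potential-Lipschitz} (two-sided bounds $m\le\rho,\tilde\rho\le M$), the source-side potential $\phi=\psi/2\in W^{1,\infty}(\Omega)$ satisfies
\[
\HK^2(\rho\,dx,\nu)=\int_\Omega\bigl(|\nabla\phi|^2+4|\phi|^2\bigr)\rho\,dx .
\]
By Lemma~\ref{lem:jko-euler-source-potential-full}, the same $\phi$ satisfies $g_\rho=U'(\rho)+V+W[\rho]=-\phi/\tau$ almost everywhere. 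It is essential that both statements refer to the \emph{same} saturated source component. This is guaranteed because Lemma~\ref{lem:transformed-potential-Lipschitz} makes the saturated source component unique $\mu$-a.e., and hence everywhere by continuity and full support of $\mu$.

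The substitution then proceeds in three steps. First, take $-\phi/\tau$ as the Lipschitz representative of $g_\rho$, as asserted at the end of Lemma~\ref{lem:jko-euler-source-potential-full}. Second, identify the weak gradient as $\nabla g_\rho=-\nabla\phi/\tau$ almost everywhere. For Lipschitz functions the weak gradient agrees with the a.e.\ classical gradient, and two functions equal a.e.\ have the same weak gradient, so this is the gradient used in $I_g$. Third, compute
\[
|\nabla g_\rho|^2+4|g_\rho|^2=\tau^{-2}\bigl(|\nabla\phi|^2+4|\phi|^2\bigr)
\]
a.e. Multiplying by $\rho$ and integrating gives $\tau^2 I_g(\rho)=\HK^2(\rho\,dx,\nu)$.

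The only delicate point is this consistency of representatives and gradients. The gradient formula $\nabla\psi=-2q\sin r\,e$ in Corollary~\ref{cor:HK-energy-identity-full} holds at differentiability points, which by Rademacher's theorem form a set of full Lebesgue measure. Because $\rho\,dx\ll dx$, this suffices for the integral identity. I expect no further obstacle, since all integrands are bounded ($\phi\in W^{1,\infty}$, $\rho\le M$), so every integral is finite.
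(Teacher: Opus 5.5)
Your proposal is correct and follows the paper's proof exactly: the paper simply reads off $\phi=-\tau g_\rho$ from the Euler--Lagrange identity \eqref{eq:full-Euler-Lagrange} and substitutes it into \eqref{eq:HK-source-energy-identity}. Your extra bookkeeping is sound but not strictly needed, since the Euler--Lagrange lemma is already stated for the very $\phi$ of Corollary~\ref{cor:HK-energy-identity-full}; that bookkeeping covers the choice of the Lipschitz representative, the identification of weak and a.e.\ gradients, and the uniqueness of the saturated source component.
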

\begin{proof}
By \eqref{eq:full-Euler-Lagrange}, $\phi=-\tau g_\rho$.  Substitute into \eqref{eq:HK-source-energy-identity}.
\end{proof}

\begin{proof}[Proof of Theorem~\ref{thm:full-step-regularity}]
We now assemble the preceding results.  We first obtain the forward Monge statement and the two-sided density bounds; these are the inputs needed to invoke the dual regularity theory for the reversed pair.

Proposition~\ref{prop:HK-Monge-representation} and Lemma~\ref{lem:monotonicity-full} give part (i).  By Theorem~\ref{thm:main-fully-implicit-HK-JKO-step}, $\mu_1(\Omega)\le M_1$, so
\[
\|P_1\|_\infty\le B_1,\qquad \essosc P_1\le\omega_1.
\]
In Proposition~\ref{prop:density-upper-full}, choose $c_{\rm upp}=(U')^{-1}(B_1)$.  Then $U'(c_{\rm upp})+\essinf_\Omega P_1=B_1+\essinf_\Omega P_1\ge0$ because $\essinf_\Omega P_1\ge-B_1$, so the minimum in \eqref{eq:upper-threshold-constant} vanishes and $\Lambda=1$.  With $c_{\max}=C_0$, \eqref{eq:upper-density-bound-final} then reads
$\rho_1\le(U')^{-1}(\max\{B_1,U'(C_0)\}+\essosc_\Omega P_1)$, which is the upper bound in \eqref{eq:canonical-two-sided-density-bound}.  In Proposition~\ref{prop:density-lower-full}, choose $c_{\rm low}=c_*=(U')^{-1}(-B_1-1)$.  Since $\essinf_\Omega P_1\le B_1$, one has $U'(c_*)+\essinf_\Omega P_1\le-1<0$, so $c_*$ is admissible there, and $\bar c=\min\{c_0,c_*\}$ gives the lower bound.

We next work with the reversed ordering $(\mu_1,\mu_0)$.  The two-sided bounds just proved allow us to identify the discrete Euler equation and a Lipschitz representative of the chemical potential.

Apply Lemma~\ref{lem:transformed-potential-Lipschitz}, Corollary~\ref{cor:HK-energy-identity-full}, and Lemma~\ref{lem:jko-euler-source-potential-full} to the ordered pair $(\mu_1,\mu_0)$, whose two densities are two-sided bounded by part~(ii) and by hypothesis, respectively.  Note that the Monge pair used in this second half runs from $\mu_1$ to $\mu_0$ and is therefore not the pair of part~(i).  Equation \eqref{eq:full-Euler-Lagrange} gives \eqref{eq:full-Euler-Lagrange-main}, so that $\Phi_1=-\phi_1/\tau$ has a Lipschitz representative.  Its weak gradient defines $I_g(\rho_1)$ without requiring separate spatial derivatives of $V$ or $W[\rho_1]$.

We combine the elementary one-step energy inequality with the exact metric identity to obtain the dissipation statement.
Since $\mathcal F(\bar\mu)<\infty$, the minimality gives
\begin{equation}\label{eq:one-step-full-energy-dissipation}
\mathcal F(\mu_1)+\frac1{2\tau}\HK^2(\mu_1,\bar\mu)
=\mathcal J_\tau(\mu_1\mid\bar\mu)
\le\mathcal J_\tau(\bar\mu\mid\bar\mu)
=\mathcal F(\bar\mu).    
\end{equation}
 Corollary~\ref{cor:exact-step-dissipation-full} gives the metric identity, and
combining it with \eqref{eq:one-step-full-energy-dissipation} gives the last inequality.

Finally, if $V\in W^{1,\infty}(\Omega)$ and \textbf{\emph{(W1)}} holds, then $P_1$ is Lipschitz, since $\Lip(W[\rho_1])\le\Lip(W)\,\mu_1(\Omega)$.  By Lemma~\ref{lem:transformed-potential-Lipschitz}, $\phi_1$ is Lipschitz, so $U'(\rho_1)=-\phi_1/\tau-P_1$ is Lipschitz.  The continuous positive function $U''$ has a positive minimum on $[\underline\rho_1,\overline\rho_1]$, hence $(U')^{-1}$ is Lipschitz on $U'([\underline\rho_1,\overline\rho_1])$.  Solving \eqref{eq:full-Euler-Lagrange-main} gives $\rho_1\in W^{1,\infty}(\Omega)$ under these additional assumptions.
\end{proof}

\section{Proof of Theorem~\ref{thm:surrogate-one-step-consistency} and Corollary~\ref{cor:recursive-finite-time-tracking} (Theorem~C)}\label{sec:proof-third-theorem}\label{sec:neural-consistency}

We first record the elementary fact about inverses that both proofs of this appendix use.

\begin{lemma}[Uniform convergence of inverses]\label{lem:inverse-uniform-convergence}
Let $\Omega$ be compact, let $S,S_n:\Omega\to\Omega$ be continuous bijections of $\Omega$ onto
itself, and suppose $S_n\to S$ uniformly.  Then $S_n^{-1}\to S^{-1}$ uniformly.
\end{lemma}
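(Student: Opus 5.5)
The plan is a compactness and uniqueness argument, run entirely on the compact space $\Omega$ with the metric ${\sf d}_\Omega$; in the flat-torus case only the geodesic distance is used. Write $\|f-g\|_\infty:=\sup_{x\in\Omega}{\sf d}_\Omega(f(x),g(x))$ for maps into $\Omega$. Note first that $S^{-1}$ is continuous: $S$ is a continuous bijection from a compact space onto a Hausdorff space, hence a homeomorphism. The same holds for each $S_n^{-1}$. Since $\Omega$ is compact, $S^{-1}$ is uniformly continuous, and we fix a modulus of continuity $\omega$ for it.

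The direct route is an estimate in one line. For any $y\in\Omega$, put $x_n:=S_n^{-1}(y)$, so that $S_n(x_n)=y$. Then
\[
{\sf d}_\Omega\bigl(S^{-1}(y),S_n^{-1}(y)\bigr)
={\sf d}_\Omega\bigl(S^{-1}(S_n(x_n)),S^{-1}(S(x_n))\bigr)
\le\omega\bigl({\sf d}_\Omega(S_n(x_n),S(x_n))\bigr)
\le\omega\bigl(\|S_n-S\|_\infty\bigr).
\]
The right-hand side is independent of $y$ and tends to zero because $S_n\to S$ uniformly. This proves $\|S_n^{-1}-S^{-1}\|_\infty\to0$.

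The key step, and the only potential obstacle, is to use the modulus of continuity of the \emph{limit} inverse $S^{-1}$. No uniform equicontinuity of the family $(S_n^{-1})_n$ is needed. The identity $S^{-1}\circ S=\mathrm{id}$ reduces the comparison to evaluating $S$ and $S_n$ at the same point $x_n$. If one tried instead to work with moduli of the $S_n^{-1}$, one would need equicontinuity that is not available from the hypotheses. The argument above avoids this.

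As a fallback, one can also argue by contradiction. Suppose there are points $y_n$ with ${\sf d}_\Omega(S_n^{-1}(y_n),S^{-1}(y_n))\ge\varepsilon$. Pass to a subsequence along which $y_n\to y$ and $x_n:=S_n^{-1}(y_n)\to x$. Then $S(x)=\lim S_n(x_n)=y$ by uniform convergence and continuity of $S$. Hence $x_n\to S^{-1}(y)$, while $S^{-1}(y_n)\to S^{-1}(y)$ as well, which contradicts the assumed separation by $\varepsilon$. Either version suffices, and I would present the direct estimate.
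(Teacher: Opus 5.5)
Your proof is correct. Your fallback argument is exactly the paper's proof: it extracts convergent subsequences of $y_n$ and $x_n=S_n^{-1}(y_n)$, uses ${\sf d}_\Omega(S(x_n),y_n)\le\|S-S_n\|_\infty$ to identify the limit of $x_n$ as $S^{-1}(y)$, and then uses continuity of $S^{-1}$ to contradict the assumed separation.

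Your preferred direct route is a different, quantitative version of the same idea. Writing $S_n^{-1}(y)=S^{-1}(S(x_n))$ and $S^{-1}(y)=S^{-1}(S_n(x_n))$ gives $\|S_n^{-1}-S^{-1}\|_\infty\le\omega(\|S_n-S\|_\infty)$. For the last inequality in your display, $\omega$ must be nondecreasing. You should therefore fix it explicitly, for instance $\omega(t):=\sup\{{\sf d}_\Omega(S^{-1}(a),S^{-1}(b)):{\sf d}_\Omega(a,b)\le t\}$, which tends to $0$ by uniform continuity of $S^{-1}$.

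Compared with the paper's contradiction argument, your version avoids subsequence extraction. It also makes clear that continuity of the $S_n$ is never used; only their bijectivity and the fact that $S$ is a homeomorphism matter. Finally, it yields a rate: when $S^{-1}$ is Lipschitz, the inverse error is linear in $\|S_n-S\|_\infty$. The paper's argument is purely qualitative, but that is all Theorem~C needs.
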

\begin{proof}
We prove the claim by contradiction.  A failure of uniform convergence of the inverse maps would produce convergent preimages whose limit violates injectivity of $S$.

Both $S$ and the $S_n$ are homeomorphisms, being continuous bijections of a compact Hausdorff
space. Otherwise, some subsequence satisfies ${\sf d}_\Omega(S_n^{-1}(y_n),S^{-1}(y_n))\ge\delta$ for a fixed $\delta>0$ and points $y_n\in\Omega$.  By compactness we may
assume $y_n\to y$ and $x_n:=S_n^{-1}(y_n)\to x$.  Then
${\sf d}_\Omega(S(x_n),y_n)={\sf d}_\Omega(S(x_n),S_n(x_n))\le\|S-S_n\|_\infty\to0$, so
$S(x)=y$ by continuity, that is, $x=S^{-1}(y)$.  Since $S^{-1}$ is continuous,
$S^{-1}(y_n)\to S^{-1}(y)=x$ as well, contradicting
${\sf d}_\Omega(x_n,S^{-1}(y_n))\ge\delta$.
\end{proof}

\subsection{\texorpdfstring{{One step}}{One step}}
\begin{proof}[Proof of Theorem~\ref{thm:surrogate-one-step-consistency}]
We first show that neural approximation of the regular optimal pair implies uniform recovery of the induced density.  We then pass to the pair objective, and finally use primal majorization to recover the original JKO objective and the cone action.

Let $(T_*,q_*)$ be the regular optimal Monge-growth pair.  By Assumption~\ref{ass:incremental-neural-recovery}, there are neural pairs $(T_{\theta_n},q_{\theta_n})$ such that
\[
T_{\theta_n}\to T_*
\quad\text{in }C^1(\Omega),
\qquad
q_{\theta_n}\to q_*
\quad\text{uniformly}.
\]
Since $J_{T_*}$ is bounded away from zero and $DT_{\theta_n}\to DT_*$ uniformly, the same is true for $J_{T_{\theta_n}}$ for all sufficiently large $n$.  Assumption~\ref{ass:incremental-neural-recovery} makes $T_*$ and every $T_{\theta_n}$ a bijection of $\Omega$ onto itself, so Lemma~\ref{lem:inverse-uniform-convergence} applies and $T_{\theta_n}^{-1}\to T_*^{-1}$ uniformly; in particular $u_{\theta_n}$ below is defined at every point of $\Omega$.  If
\[
(T_{\theta_n})_\#(q_{\theta_n}^2\bar\rho\,dx)=u_{\theta_n}\,dx,
\]
then \eqref{eq:incremental-induced-density} gives, at a common point $y$,
\[
u_{\theta_n}(y)
=
\frac{q_{\theta_n}(T_{\theta_n}^{-1}(y))^2
\bar\rho(T_{\theta_n}^{-1}(y))}
{J_{T_{\theta_n}}(T_{\theta_n}^{-1}(y))}.
\]
The same formula holds for $\rho_*$ with $(T_*,q_*)$.  Hence
\[
\|u_{\theta_n}-\rho_*\|_{L^\infty(\Omega)}\longrightarrow0.
\]
This proves \eqref{eq:one-step-uniform-density-recovery} for all sufficiently large $n$.

Having recovered the state uniformly, we now pass to the surrogate objective.  Uniform convergence of maps, amplitudes, densities, and Jacobians is enough to pass to every term in $\mathcal I_\tau$.

The same change-of-variables formula and the positive continuous source put $\rho_*$ and the approximating densities in a fixed positive compact interval.  Finiteness and convexity of $U$ imply its continuity there.  Continuity of $U$, $V$, and $W$ on the relevant compact sets, uniform convergence of the maps and amplitudes, and convergence of the Jacobians imply convergence of the three energy terms in \eqref{eq:incremental-current-objective}.  The integrand in \eqref{eq:incremental-HK-action} also converges uniformly.  Therefore
\[
\mathcal I_\tau(T_{\theta_n},q_{\theta_n}\mid\bar\mu)
\longrightarrow
\mathcal I_\tau(T_*,q_*\mid\bar\mu).
\]
By Proposition~\ref{prop:incremental-HK-JKO-reparam},
\[
\mathcal I_\tau(T_*,q_*\mid\bar\mu)
=
\mathcal J_\tau(\mu_*\mid\bar\mu),
\]
and every admissible competitor has objective at least this minimum.  Thus the difference in \eqref{eq:one-step-incremental-objective-recovery} is nonnegative and converges to zero.

{
We finally compare the surrogate value with the original JKO value.  The primal upper bound sandwiches the exact JKO objective between the optimum and the surrogate objective, and the same comparison controls the excess cone action.

Since $\nu_{\theta_n}=u_{\theta_n}\,dx$, the minimality and the primal majorization give
\[
\mathcal J_\tau(\mu_*\mid\bar\mu)
\le
\mathcal J_\tau(\nu_{\theta_n}\mid\bar\mu)
\le
\mathcal I_\tau(T_{\theta_n},q_{\theta_n}\mid\bar\mu).
\]
Hence the original JKO gap is bounded by the pair-objective gap.  Moreover,
\[
\begin{aligned}
0
&\le
\mathcal A_{\bar\mu}(T_{\theta_n},q_{\theta_n})
-\HK^2(\bar\mu,\nu_{\theta_n})\\
&=2\tau\left[
\mathcal I_\tau(T_{\theta_n},q_{\theta_n}\mid\bar\mu)
-\mathcal J_\tau(\nu_{\theta_n}\mid\bar\mu)
\right]\\
&\le
2\tau\left[
\mathcal I_\tau(T_{\theta_n},q_{\theta_n}\mid\bar\mu)-
\mathcal J_\tau(\mu_*\mid\bar\mu)
\right].
\end{aligned}
\]
Choosing $n$ sufficiently large proves \eqref{eq:one-step-uniform-density-recovery}--\eqref{eq:one-step-cone-tightness}.}
\end{proof}

\subsection{\texorpdfstring{{A fixed finite horizon}}{A fixed finite horizon}}
\begingroup
\begin{corollary}[Finite-horizon composition of the one-step recovery]\label{cor:recursive-finite-time-tracking}
Fix a horizon $\mathcal T>0$ and $0<\tau\le\mathcal T$, let $N_\tau=\lfloor\mathcal T/\tau\rfloor$, and let
$\mu_{k+1}^\tau=\mathcal S_\tau(\mu_k^\tau)=\rho_{k+1}^\tau dx$, $0\le k<N_\tau$, be exact
$\HK$-JKO iterates satisfying Theorem~\ref{thm:full-step-regularity}, with $\rho_0^\tau$
positive and continuous.  For each $k$ choose an optimal Monge-growth pair
$(T_{k+1}^*,q_{k+1}^*)$ from $\mu_k^\tau$ to $\mu_{k+1}^\tau$, and suppose that every chosen
pair {satisfies all of Assumption~\ref{ass:incremental-neural-recovery}, including the network recovery property \eqref{eq:incremental-network-recovery}}.

Then there exist neural pairs $(T_{\theta_{n,k+1}},q_{\theta_{n,k+1}})$ and positive
continuous densities generated recursively from the current neural measure, with
$u_{n,0}^\tau:=\rho_0^\tau$,
\begin{equation}\label{eq:finite-horizon-action-faithful-recursion}
\widehat\mu_{n,0}^\tau=\mu_0^\tau=u_{n,0}^\tau dx,
\qquad
\widehat\mu_{n,k+1}^\tau
=(T_{\theta_{n,k+1}})_\#\bigl(q_{\theta_{n,k+1}}^2\widehat\mu_{n,k}^\tau\bigr)
=u_{n,k+1}^\tau dx,
\end{equation}
such that the pairs and the represented states converge,
\begin{equation}\label{eq:finite-horizon-pair-state-recovery}
\max_{0\le k\le N_\tau}
\|u_{n,k}^\tau-\rho_k^\tau\|_{L^\infty(\Omega)}
+
\max_{0\le k<N_\tau}
\left(
\|T_{\theta_{n,k+1}}-T_{k+1}^*\|_{C^1(\Omega)}
+
\|q_{\theta_{n,k+1}}-q_{k+1}^*\|_{L^\infty(\Omega)}
\right)
\longrightarrow0,
\end{equation}
and both variational objectives are recovered uniformly over the horizon:
\begin{equation}\label{eq:finite-horizon-variational-action-recovery}
\begin{aligned}
&\max_{0\le k<N_\tau}
\left|
\mathcal I_\tau(T_{\theta_{n,k+1}},q_{\theta_{n,k+1}}\mid\widehat\mu_{n,k}^\tau)
-\mathcal J_\tau(\mu_{k+1}^\tau\mid\mu_k^\tau)
\right|\longrightarrow0,\\
&\max_{0\le k<N_\tau}
\left|
\mathcal J_\tau(\widehat\mu_{n,k+1}^\tau\mid\widehat\mu_{n,k}^\tau)
-\mathcal J_\tau(\mu_{k+1}^\tau\mid\mu_k^\tau)
\right|\longrightarrow0 .
\end{aligned}
\end{equation}
The cone action and its transport and reaction components converge likewise, uniformly in $k$, to
$\tau^2I_g(\rho_{k+1}^\tau)$ and to $\mathcal A^{\sharp}_{\mu_k^\tau}(T_{k+1}^*,q_{k+1}^*)$ for
$\sharp\in\{{\rm tr},{\rm re}\}$.
\end{corollary}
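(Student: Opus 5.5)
The argument is an induction on $k$ that repeats, step by step, the change-of-variables argument in the proof of Theorem~\ref{thm:surrogate-one-step-consistency}. The one new point is that the neural source $\widehat\mu_{n,k}^\tau$ is no longer the exact source $\mu_k^\tau$. Because $N_\tau$ is finite, it suffices to prove each convergence for every fixed $k$; the maximum over finitely many indices then converges too. For each step we take the approximating sequence $\theta_{n,k+1}$ given by Assumption~\ref{ass:incremental-neural-recovery} for the pair $(T_{k+1}^*,q_{k+1}^*)$. This sequence does not depend on the recursion, so the pair part of \eqref{eq:finite-horizon-pair-state-recovery} holds by assumption. Only the densities and objectives need to be propagated.

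\emph{State recovery.} The induction hypothesis is that $u_{n,k}^\tau\to\rho_k^\tau$ uniformly, with $u_{n,k}^\tau$ positive and continuous. The base case $k=0$ is trivial. For the step, the exact density $\rho_k^\tau$ is continuous: for $k=0$ by hypothesis, and for $k\ge1$ by Theorem~\ref{thm:full-step-regularity}, which gives a Lipschitz representative via $U'(\rho)=-\phi/\tau-P$. It is also bounded between positive constants by \eqref{eq:canonical-two-sided-density-bound}. We apply \eqref{eq:incremental-induced-density} at a common point $y$:
\[
u_{n,k+1}^\tau(y)=\frac{q_{\theta}(T_\theta^{-1}y)^2\,u_{n,k}^\tau(T_\theta^{-1}y)}{J_{T_\theta}(T_\theta^{-1}y)},
\]
and the same formula holds for $\rho_{k+1}^\tau$ with the starred pair. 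Lemma~\ref{lem:inverse-uniform-convergence} gives $T_\theta^{-1}\to (T^*_{k+1})^{-1}$ uniformly. Uniform continuity of $q^*_{k+1}$, of $J_{T^*_{k+1}}$ and of $\rho_k^\tau$ on the compact set $\Omega$ then handles the composition, and $u_{n,k}^\tau-\rho_k^\tau\to0$ uniformly controls the change of source. The Jacobians are bounded away from zero, so uniform convergence of the quotient follows. Continuity and positivity of $u_{n,k+1}^\tau$ are preserved because $T_\theta$ is a $C^1$ diffeomorphism and $q_\theta$ is continuous and positive for large $n$. This closes the induction and gives the state part of \eqref{eq:finite-horizon-pair-state-recovery}.

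\emph{Objectives.} I would write $\mathcal I_\tau(\cdot\mid\widehat\mu_{n,k}^\tau)$ through \eqref{eq:incremental-current-objective}, an integral over $dx$ with source density $u_{n,k}^\tau$. All integrands converge uniformly, since densities, maps, amplitudes and Jacobians converge uniformly and all arguments of $U$ stay in a fixed compact subinterval of $(0,\infty)$, where $U$ is continuous. Hence $\mathcal I_\tau\to\mathcal I_\tau(T^*_{k+1},q^*_{k+1}\mid\mu_k^\tau)=\mathcal J_\tau(\mu_{k+1}^\tau\mid\mu_k^\tau)$ by Proposition~\ref{prop:incremental-HK-JKO-reparam}. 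The second line of \eqref{eq:finite-horizon-variational-action-recovery} is the step I expect to be the main obstacle. The sandwich used for Theorem~\ref{thm:surrogate-one-step-consistency} no longer applies, because $\widehat\mu_{n,k+1}^\tau$ is compared with the minimum of a different problem, namely the one with source $\widehat\mu_{n,k}^\tau$. I would instead prove continuity directly. $\mathcal F(\widehat\mu_{n,k+1}^\tau)\to\mathcal F(\mu_{k+1}^\tau)$ because the densities converge uniformly. For the metric term, \eqref{eq:HK2-FR2} gives $\HK(\widehat\mu_{n,j}^\tau,\mu_j^\tau)\le\|u_{n,j}^\tau-\rho_j^\tau\|_{L^1}^{1/2}\to0$ for $j=k,k+1$, and the triangle inequality for $\HK$ then gives convergence of $\HK^2(\widehat\mu_{n,k+1}^\tau,\widehat\mu_{n,k}^\tau)$.

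Finally, the cone actions $\mathcal A_{\widehat\mu_{n,k}^\tau}(T_\theta,q_\theta)$ and their components from \eqref{eq:cone-action-splitting} converge by the same uniform-integrand argument to $\mathcal A^{\sharp}_{\mu_k^\tau}(T^*_{k+1},q^*_{k+1})$. The limiting total equals $\HK^2(\mu_k^\tau,\mu_{k+1}^\tau)$ by \eqref{eq:incremental-optimal-pair}. By \eqref{eq:exact-step-dissipation-main} and the symmetry of $\HK$, this equals $\tau^2I_g(\rho_{k+1}^\tau)$.
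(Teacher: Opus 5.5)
Your proposal follows the paper's proof essentially step for step. The states are handled by induction on the change-of-variables formula together with Lemma~\ref{lem:inverse-uniform-convergence}; $\mathcal I_\tau$ and the cone action by uniform convergence of the integrands plus Proposition~\ref{prop:incremental-HK-JKO-reparam}; and the second line of \eqref{eq:finite-horizon-variational-action-recovery} by \eqref{eq:HK2-FR2} with the triangle inequality, not the one-step sandwich. Your explicit check that $\rho_k^\tau$ is uniformly continuous, which the paper leaves implicit, is a useful addition. Under the standing hypotheses, however, Theorem~\ref{thm:full-step-regularity} gives only a continuous (not Lipschitz) representative, because $V+W[\rho_k^\tau]$ is merely continuous unless $V$ and $W$ are Lipschitz. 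Continuity is all your argument needs, and it also follows by induction from the change-of-variables formula for the regular starred pair.
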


\begin{proof}[Proof of Corollary~\ref{cor:recursive-finite-time-tracking}]
The finite-horizon statement is obtained by synchronizing the one-step approximations over the finitely many steps and then propagating the state error recursively.  After the states are controlled, the objective and action limits follow from the same continuity arguments as in the one-step result.

For each $0\le k<N_\tau$, Assumption~\ref{ass:incremental-neural-recovery} supplies neural pairs such that
\[
T_{\theta_{n,k+1}}\longrightarrow T_{k+1}^*
\quad\text{in }C^1(\Omega),
\qquad
q_{\theta_{n,k+1}}\longrightarrow q_{k+1}^*
\quad\text{uniformly}.
\]
Since the horizon contains finitely many steps, the sequences may be indexed by one common $n$ so that these convergences hold simultaneously for every $k$. 

We first propagate the density approximation.  The induction uses the change-of-variables formula together with uniform convergence of the inverse maps and Jacobians.

Set $\widehat\mu_{n,0}^\tau=\mu_0^\tau$ and define the recursion in \eqref{eq:finite-horizon-action-faithful-recursion}.  We prove the density convergence by induction.  Suppose that $u_{n,k}^\tau\to\rho_k^\tau$ uniformly.  The change-of-variables formula gives, at $y\in\Omega$,
\[
u_{n,k+1}^\tau(y)
=
\frac{
 q_{\theta_{n,k+1}}(T_{\theta_{n,k+1}}^{-1}(y))^2
 u_{n,k}^\tau(T_{\theta_{n,k+1}}^{-1}(y))
}{
 J_{T_{\theta_{n,k+1}}}(T_{\theta_{n,k+1}}^{-1}(y))
}.
\]
The same identity holds for $\rho_{k+1}^\tau$ with
$(u_{n,k}^\tau,T_{\theta_{n,k+1}},q_{\theta_{n,k+1}})$ replaced by
$(\rho_k^\tau,T_{k+1}^*,q_{k+1}^*)$.  The Jacobians stay uniformly away from zero and the inverse maps converge uniformly.  Hence $u_{n,k+1}^\tau\to\rho_{k+1}^\tau$ uniformly.  The induction starts at $k=0$, and finiteness of the horizon yields \eqref{eq:finite-horizon-pair-state-recovery}.  In particular, for all sufficiently large $n$, all source and induced densities lie in one positive compact interval.

With uniform state convergence established at every step, we can pass to the pair objective and the cone action step by step; finiteness of the horizon makes the convergence uniform in $k$.

Uniform convergence of the densities and amplitudes, $C^1$ convergence of the maps, and the formulas \eqref{eq:incremental-current-objective} and \eqref{eq:incremental-HK-action} imply
\[
\begin{aligned}
&\mathcal I_\tau
(T_{\theta_{n,k+1}},q_{\theta_{n,k+1}}\mid\widehat\mu_{n,k}^\tau)
\longrightarrow
\mathcal I_\tau(T_{k+1}^*,q_{k+1}^*\mid\mu_k^\tau),\\
&\mathcal A_{\widehat\mu_{n,k}^\tau}
(T_{\theta_{n,k+1}},q_{\theta_{n,k+1}})
\longrightarrow
\mathcal A_{\mu_k^\tau}(T_{k+1}^*,q_{k+1}^*).
\end{aligned}
\]
The selected exact pair is optimal, so Proposition~\ref{prop:incremental-HK-JKO-reparam} and \eqref{eq:exact-step-dissipation-main} give
\[
\begin{aligned}
\mathcal I_\tau(T_{k+1}^*,q_{k+1}^*\mid\mu_k^\tau)
&=\mathcal J_\tau(\mu_{k+1}^\tau\mid\mu_k^\tau),\\
\mathcal A_{\mu_k^\tau}(T_{k+1}^*,q_{k+1}^*)
&=\HK^2(\mu_k^\tau,\mu_{k+1}^\tau)
=\tau^2I_g(\rho_{k+1}^\tau).
\end{aligned}
\]
This proves the first limit in \eqref{eq:finite-horizon-variational-action-recovery} together with the cone-action convergence asserted at the end of the corollary.

We next recover the original JKO objective.  Uniform density convergence gives convergence in $\HK$, and the triangle inequality transfers this to the distance between consecutive approximate states.

By \eqref{eq:HK2-FR2}, uniform density convergence implies
$\HK(\widehat\mu_{n,k}^\tau,\mu_k^\tau)\to0$ for every $k$.  Since $\HK$ is a metric, the triangle inequality gives
\[
\left|\HK(\widehat\mu_{n,k+1}^\tau,\widehat\mu_{n,k}^\tau)-\HK(\mu_{k+1}^\tau,\mu_k^\tau)\right|
\le
\HK(\widehat\mu_{n,k+1}^\tau,\mu_{k+1}^\tau)+\HK(\widehat\mu_{n,k}^\tau,\mu_k^\tau)
\longrightarrow0 .
\]
The energy terms converge by uniform density convergence, so the original JKO objectives converge as well.  This proves the second limit in \eqref{eq:finite-horizon-variational-action-recovery}.

It remains only to separate the limiting cone action into its transport and reaction components.  This follows directly from uniform convergence of the two split integrands and of the source densities.

The two integrands in the splitting \eqref{eq:cone-action-splitting} converge uniformly, while the source densities converge uniformly; hence the transport and reaction components converge as asserted at the end of the corollary.
\end{proof}

% \begin{remark}[Three consequences of Corollary~\ref{cor:recursive-finite-time-tracking}]\label{rem:finite-horizon-consequences}
% Under the hypotheses of Corollary~\ref{cor:recursive-finite-time-tracking}, three statements
% follow at once from its proof.

% First, for every $n$ and $k$ the cone action majorizes the
% squared $\HK$ distance between its own endpoints, and both quantities converge to the same exact value
% $\tau^2I_g(\rho_{k+1}^\tau)$, so the primal majorization is asymptotically tight uniformly over the
% horizon.

% Second, combining this with the metric-dissipation identity
% \eqref{eq:exact-step-dissipation-main} recovers the exact discrete slopes,
% \[
% \max_{0\le k<N_\tau}
% \left|\tau^{-1}\HK(\widehat\mu_{n,k+1}^\tau,\widehat\mu_{n,k}^\tau)-\sqrt{I_g(\rho_{k+1}^\tau)}\right|
% \longrightarrow0 .
% \]

% Third, the horizon is finite, so the action convergence may be summed; together with energy convergence
% at the terminal state and the exact one-step inequality this gives the represented counterpart of the
% discrete energy balance,
% \[
% \mathcal F(\widehat\mu_{n,N_\tau}^\tau)
% +\frac1{2\tau}\sum_{k=0}^{N_\tau-1}
% \mathcal A_{\widehat\mu_{n,k}^\tau}(T_{\theta_{n,k+1}},q_{\theta_{n,k+1}})
% \longrightarrow
% \mathcal F(\mu_{N_\tau}^\tau)+\frac\tau2\sum_{k=0}^{N_\tau-1}I_g(\rho_{k+1}^\tau)
% \le \mathcal F(\mu_0^\tau).
% \]
% All three are statements about a fixed horizon and carry no rate and no constant uniform in
% the number of phases.
% \end{remark}

\endgroup

\section{Proof of Theorem~\ref{thm:a-posteriori} and Corollary~\ref{cor:ap-boltzmann-L1} (Theorem~D)}\label{sec:proof-a-posteriori}

\begin{proof}[Proof of Theorem~\ref{thm:a-posteriori}]
We prove the certificate in two stages.  First we construct a computable lower bound for the exact JKO minimum from a feasible dual pair and the tangent inequality for the positive-semidefinite interaction.  We then compare this lower bound with the primal cone-action value and convert the resulting objective gap into a state-error bound.

For $\sigma=s\,dx$, every pair satisfying \eqref{eq:ap-feasible} is a competitor in
\eqref{eq:HK-dual}, and therefore
\begin{equation}\label{eq:ap-minorant}
\HK^2(\sigma,\bar\mu)\ge
\int_\Omega\bigl(1-e^{-\zeta}\bigr)s\,dx
+\int_\Omega\bigl(1-e^{-\xi}\bigr)d\bar\mu .
\end{equation}
In particular, $\sigma\mapsto\HK^2(\sigma,\bar\mu)$ is convex in the flat structure of
$\mathcal M_+(\Omega)$, since it is a supremum of affine functionals.  Positive
semidefiniteness makes $\mathcal W$ convex, and $U''>0$ makes $\int U$ strictly convex on
its finite domain.  Existence from Theorem~\ref{thm:main-fully-implicit-HK-JKO-step} thus
gives uniqueness of $\mu_1$.

We now linearize the interaction energy at the candidate measure $\nu$.  Positive semidefiniteness is precisely what makes the quadratic interaction convex and validates this tangent lower bound.

The tangent inequality for the convex quadratic form at $\nu$ is
\[
\mathcal W(\sigma)\ge
\int_\Omega W[u]s\,dx-\mathcal W(\nu).
\]
Indeed, setting $
f(t):=\mathcal W\bigl((1-t)\nu+t\sigma\bigr),$
the positive semidefiniteness of $W$ makes $f$ convex, so that $f(1)\ge f(0)+f'(0)$. 
Since
$$
f'(0)=\int_\Omega W[u](s-u)\,dx
\quad\text{and}\quad
\int_\Omega W[u]u\,dx=2\mathcal W(\nu),
$$
this gives precisely the displayed inequality.

Combining the metric minorant with the interaction tangent bound gives a pointwise-separable lower bound for every competitor.  Minimizing that lower bound through the convex conjugate of $U$ produces the dual certificate.

Combining the interaction tangent inequality with \eqref{eq:ap-minorant} and using the
definition of $a$ in \eqref{eq:ap-field} gives
\[
\mathcal J_\tau(\sigma\mid\bar\mu)
\ge
\int_\Omega[U(s)+a s] \,dx-\mathcal W(\nu)
+\frac1{2\tau}\int_\Omega(1-e^{-\xi})\,d\bar\mu.
\]

We now optimize the preceding lower bound in the density variable and compare it with the primal upper certificate supplied by $(T,q)$.

Since $\inf_{s\ge0}\{U(s)+as\}=-U^*(-a)$ pointwise,
\[
\min_\sigma\mathcal J_\tau(\sigma\mid\bar\mu)=\mathcal J_\tau(\mu_1\mid\bar\mu)
\ge-\int_\Omega U^*(-a)dx-\mathcal W(\nu)
+\frac1{2\tau}\int_\Omega(1-e^{-\xi})d\bar\mu.
\]
On the primal side, \eqref{eq:primal-upper-certificate} gives
$\mathcal J_\tau(\nu\mid\bar\mu)\le\mathcal I_\tau(T,q\mid\bar\mu)$.
Subtracting the two bounds proves \eqref{eq:ap-objective-gap} and $\mathcal R\ge0$.
(If $\int_\Omega U^*(-a)\,dx=+\infty$, then $\mathcal R=+\infty$ and the conclusions hold
vacuously; the estimate is informative only when the dual choice makes this integral finite.)

We next convert the objective gap into a state error.  For this purpose we use the exact step's own normalized source dual potential and the Euler--Lagrange identity.  Let $\phi_1$ be the normalized
source-side transformed dual potential in Theorem~\ref{thm:full-step-regularity}(iii), so
that the coefficient of the first argument in \eqref{eq:transformed-HK-dual} is $2\phi_1$; write $\chi_1$ for the companion target component.  Since $(2\phi_1,\chi_1)$ is feasible in \eqref{eq:transformed-HK-dual} for every first argument and optimal at $\mu_1$,
\[
\HK^2(\sigma,\bar\mu)\ge2\int_\Omega\phi_1 s\,dx+\int_\Omega\chi_1\,d\bar\mu,
\qquad
\HK^2(\mu_1,\bar\mu)=2\int_\Omega\phi_1\rho_1\,dx+\int_\Omega\chi_1\,d\bar\mu,
\]
and subtracting the second from the first gives
\[
\HK^2(\sigma,\bar\mu)-\HK^2(\mu_1,\bar\mu)
\ge2\int_\Omega\phi_1(s-\rho_1)dx.
\]
Positive semidefiniteness also gives, by the tangent inequality applied at $\mu_1$ instead of $\nu$,
\[
\mathcal W(\sigma)-\mathcal W(\mu_1)
\ge\int_\Omega W[\rho_1](s-\rho_1)dx.
\]
Adding these to the exact identity $\int V\,d\sigma-\int V\,d\mu_1=\int V(s-\rho_1)dx$ and to
$\int_\Omega[U(s)-U(\rho_1)]dx$ yields
\[
\mathcal J_\tau(\sigma\mid\bar\mu)-\mathcal J_\tau(\mu_1\mid\bar\mu)
\ge
\int_\Omega\bigl[U(s)-U(\rho_1)\bigr]dx
+\int_\Omega\Bigl(V+W[\rho_1]+\frac{\phi_1}{\tau}\Bigr)(s-\rho_1)\,dx,
\]
and the Euler--Lagrange equation \eqref{eq:full-Euler-Lagrange-main} identifies the bracket in the
second integral as $-U'(\rho_1)$, so that
\[
\mathcal J_\tau(\sigma\mid\bar\mu)-\mathcal J_\tau(\mu_1\mid\bar\mu)
\ge
\int_\Omega[U(s)-U(\rho_1)-U'(\rho_1)(s-\rho_1)]dx.
\]
Taking $\sigma=\nu$ and using \eqref{eq:ap-objective-gap} proves
\eqref{eq:ap-estimate}.
\end{proof}

\begin{proof}[Proof of Corollary~\ref{cor:ap-boltzmann-L1}]
For Boltzmann entropy, the convexity remainder becomes the relative-entropy integrand.  We first compare it pointwise with squared Hellinger distance and then convert the latter into an $L^1$ estimate by Cauchy--Schwarz.

For $U(s)=s\log s-s$ one has $U'(s)=\log s$, so the integrand in \eqref{eq:ap-estimate} is
\[
U(u)-U(\rho_1)-U'(\rho_1)(u-\rho_1)
=u\log\frac{u}{\rho_1}-u+\rho_1 .
\]
It dominates the squared Hellinger integrand.  With
$t=\sqrt{u/\rho_1}$, the difference after division by $\rho_1$ is
\[
2t^2\log t-t^2+1-(t-1)^2
=2t\,(t\log t-t+1)\ge0,
\]
since $t\log t-t+1\ge0$ for $t\ge0$, with the continuous convention $0\log0=0$.  Hence
\[
u\log\frac{u}{\rho_1}-u+\rho_1
\ge(\sqrt u-\sqrt{\rho_1})^2 .
\]
The pointwise estimate gives Hellinger control.  We now pass from Hellinger distance to $L^1$ using the factorization $u-\rho_1=(\sqrt u-\sqrt{\rho_1})(\sqrt u+\sqrt{\rho_1})$.

Cauchy--Schwarz then gives
\[
\|u-\rho_1\|_{L^1}^2
\le
\left(\int_\Omega(\sqrt u-\sqrt{\rho_1})^2dx\right)
\left(\int_\Omega(\sqrt u+\sqrt{\rho_1})^2dx\right)
\le2\bigl(\nu(\Omega)+\mu_1(\Omega)\bigr)\mathcal R.
\]
The remaining inequality in \eqref{eq:ap-boltzmann-L1} is the mass bound
\eqref{eq:full-mass-sublevel-bound}.
\end{proof}

{
\begin{proposition}[Decomposition of the gap]\label{prop:ap-gap-decomposition}
Under the hypotheses of Theorem~\ref{thm:a-posteriori},
\begin{equation}\label{eq:ap-gap-decomposition}
\begin{aligned}
\mathcal R
={}&\underbrace{\int_\Omega\bigl[U(u)+a\,u+U^*(-a)\bigr]dx}_{=:~\mathcal R_{\rm EL}\ge0}\\
&+\underbrace{\frac1{2\tau}\left[\mathcal A_{\bar\mu}(T,q)
-\int_\Omega\bigl(1-e^{-\zeta}\bigr)u\,dx
-\int_\Omega\bigl(1-e^{-\xi}\bigr)d\bar\mu\right]}_{=:~\mathcal R_{\rm met}\ge0}.
\end{aligned}
\end{equation}
\end{proposition}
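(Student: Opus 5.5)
The identity is pure bookkeeping. I will expand $\mathcal R$ from its definition \eqref{eq:ap-field} and regroup the terms; the two sign claims then follow from Fenchel--Young and from the primal--dual sandwich.

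First write $\mathcal I_\tau(T,q\mid\bar\mu)=\int_\Omega U(u)\,dx+\int_\Omega Vu\,dx+\mathcal W(\nu)+\frac1{2\tau}\mathcal A_{\bar\mu}(T,q)$, using $\nu=u\,dx$. By symmetry of $W$ we have $\int_\Omega W[u]u\,dx=2\mathcal W(\nu)$. Hence $\mathcal W(\nu)+\mathcal W(\nu)=\int_\Omega W[u]u\,dx$, so the extra $+\mathcal W(\nu)$ appearing in $\mathcal R$ combines with the interaction energy inside $\mathcal I_\tau$. Next insert the definition of $a$:
\[
\int_\Omega a u\,dx=\int_\Omega Vu\,dx+\int_\Omega W[u]u\,dx+\frac1{2\tau}\int_\Omega(1-e^{-\zeta})u\,dx .
\]
Adding and subtracting $\frac1{2\tau}\int_\Omega(1-e^{-\zeta})u\,dx$ gives
\[
\mathcal R=\int_\Omega[U(u)+au+U^*(-a)]\,dx+\frac1{2\tau}\Bigl[\mathcal A_{\bar\mu}(T,q)-\int_\Omega(1-e^{-\zeta})u\,dx-\int_\Omega(1-e^{-\xi})\,d\bar\mu\Bigr],
\]
which is exactly \eqref{eq:ap-gap-decomposition}. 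The one point needing care is the finiteness of each piece. Since $\mathcal I_\tau<\infty$, the terms $\int U(u)$, $\int Vu$ and $\mathcal A$ are finite; $\zeta,\xi$ are bounded; and $W[u]$ is bounded because $\nu$ has finite mass. So the only possibly infinite term is $\int U^*(-a)$, and in that case both sides equal $+\infty$, consistent with the convention in the proof of Theorem~\ref{thm:a-posteriori}.

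For the signs, $\mathcal R_{\rm EL}\ge0$ follows pointwise from the Fenchel--Young inequality $U(s)+U^*(p)\ge ps$ applied with $p=-a$ and $s=u\ge0$, because $U^*$ is defined as a supremum over $s\ge0$. For $\mathcal R_{\rm met}\ge0$, combine the dual minorant \eqref{eq:ap-minorant} at $\sigma=\nu$, which is valid by the feasibility \eqref{eq:ap-feasible}, with the cone upper bound \eqref{eq:incremental-HK-upper-bound}:
\[
\int_\Omega(1-e^{-\zeta})u\,dx+\int_\Omega(1-e^{-\xi})\,d\bar\mu\le\HK^2(\nu,\bar\mu)\le\mathcal A_{\bar\mu}(T,q).
\]

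I expect no real obstacle. The only easy-to-miss point is the factor-of-two bookkeeping for the interaction term, which relies on the symmetry of $W$ under (W0).
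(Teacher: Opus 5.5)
Your proposal is correct and follows the paper's proof essentially verbatim: you substitute $a$, use $\int_\Omega W[u]u\,dx=2\mathcal W(\nu)$ so that the $1-e^{-\zeta}$ terms cancel, then obtain $\mathcal R_{\rm EL}\ge0$ from Fenchel--Young and $\mathcal R_{\rm met}\ge0$ from the cone upper bound combined with the dual minorant \eqref{eq:ap-minorant}. One small correction: $\int_\Omega W[u]u\,dx=2\mathcal W(\nu)$ follows directly from Fubini and the definition of $W[u]$, so it does not use the symmetry of $W$.
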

\begin{proof}
We verify the decomposition by direct algebra.  After substitution, the candidate-dependent terms cancel exactly; the two remaining pieces are nonnegative for independent reasons, namely Fenchel--Young and the primal--dual metric bounds.

Substituting $a$ from \eqref{eq:ap-field}, using
$\int_\Omega W[u]u\,dx=2\mathcal W(\nu)$, and adding the two terms in
\eqref{eq:ap-gap-decomposition}, the contributions involving $1-e^{-\zeta}$ cancel and the
remaining terms are exactly $\mathcal R$ in \eqref{eq:ap-field}.  Moreover,
$\mathcal R_{\rm EL}\ge0$ by Fenchel--Young, while
\[
\mathcal A_{\bar\mu}(T,q)
\ge\HK^2(\bar\mu,\nu)
\ge\int_\Omega(1-e^{-\zeta})u\,dx
+\int_\Omega(1-e^{-\xi})\,d\bar\mu
\]
by \eqref{eq:incremental-HK-upper-bound} and \eqref{eq:ap-minorant}; hence
$\mathcal R_{\rm met}\ge0$.
\end{proof}

For Boltzmann entropy and positive $u$ one has $U^*(p)=e^p$, so with $r=U'(u)+a=\log u+a$ the
identity $e^{-a}=ue^{-r}$ gives $\mathcal R_{\rm EL}=\int_\Omega u(e^{-r}-1+r)dx$, which is
$\frac12\int_\Omega r^2u\,dx$ to leading order.  The decomposition therefore separates a
discrete Euler residual from the primal--dual metric gap.
}

\section{Design and implementation notes}\label{sec:design-notes}
This appendix records the potential-generated parametrization and its admissibility
conditions, step-scaling limitations, and the implementation details for
Algorithm~\ref{alg:composite-HK-JKO} and Section~\ref{sec:experiments}.

\subsection{Potential-generated pairs and admissibility}\label{sec:design-notes-Zadm}

{
\begin{remark}[Potential-generated and step-scaled pairs]\label{rem:potential-generated-pair}\label{rem:tau-scaled-ansatz}
Proposition~\ref{prop:HK-Monge-representation} suggests parametrizing an optimal pair by one scalar network $z_\theta$:
\begin{equation}\label{eq:potential-generated-pair}
T_{z_\theta}(x)=\exp_x\!\left(-\arctan\!\left(\frac{|\nabla z_\theta|}{2}\right)\frac{\nabla z_\theta}{|\nabla z_\theta|}\right),
\qquad
q_{z_\theta}(x)=e^{-z_\theta}\left(1+\frac{|\nabla z_\theta|^2}{4}\right)^{1/2},
\end{equation}
with $T_{z_\theta}(x)=x$ when $\nabla z_\theta(x)=0$.  For $z_\theta$ in the admissible class $\mathcal Z_{\rm adm}$ defined below, the cutoff is inactive and
\begin{equation*}
\mathcal A_\mu(T_{z_\theta},q_{z_\theta})
=\int_\Omega\left[1-2e^{-z_\theta}+e^{-2z_\theta}\left(1+\frac{|\nabla z_\theta|^2}{4}\right)\right]d\mu
=\int_\Omega\left(z_\theta^2+\frac{|\nabla z_\theta|^2}{4}\right)d\mu+O\bigl(\|z_\theta\|_{C^1}^3\bigr).
\end{equation*}
The reversed and forward orientations carry opposite leading-order signs: $z_{\rm rev}\approx-2\tau g_{\rho_1}$ and $z_{\rm fwd}\approx2\tau g_{\rho_1}$.  This motivates the scaled ansatz
\begin{equation}\label{eq:tau-scaled-ansatz}
z_\theta=\tau\,w_\theta .
\end{equation}
For a smooth positive current density $\rho$ with smooth $g_\rho$, suppose that $z_\tau:=2\tau g_\rho$ belongs to $\mathcal Z_{\rm adm}$ for all sufficiently small $\tau$.  Then the explicit forward predictor gives, to first order,
\[
T_{z_\tau}=\mathrm{id}-\tau\nabla g_\rho+O(\tau^2),
\qquad
q_{z_\tau}^2=1-4\tau g_\rho+O(\tau^2),
\]
and therefore
\[
(T_{z_\tau})_\#(q_{z_\tau}^2\rho\,dx)
=\bigl[\rho+\tau\{\operatorname{div}(\rho\nabla g_\rho)-4\rho g_\rho\}\bigr]dx+o(\tau)
\]
in the distributional sense.  This is a first-order predictor calculation, not a consistency estimate for the exact
minimizer.  The admissibility and approximation requirements are stated below;
Section~\ref{sec:design-notes-scaling} discusses step scaling and step uniformity.
\end{remark}
}

The parametrization \eqref{eq:potential-generated-pair} of
Remark~\ref{rem:potential-generated-pair} defines an admissible pair only after domain,
injectivity, and minimizing-branch conditions are imposed.  We therefore restrict it to
\[
\mathcal Z_{\rm adm}:=
\left\{
 z\in C^2(\Omega)\ \middle|\
 \begin{aligned}
 &T_z:\Omega\to\Omega\text{ is an orientation-preserving}\\[-1mm]
 &\qquad C^1\text{ diffeomorphism},\qquad \inf_\Omega J_{T_z}>0,\\
 &{\sf d}_\Omega(x,T_z(x))
 =\arctan\!\left(\frac{|\nabla z(x)|}{2}\right)
 \quad\text{for every }x\in\Omega
 \end{aligned}
\right\}.
\]
On a bounded convex Euclidean domain the last identity follows once
$T_z(\Omega)\subset\Omega$; on the torus it additionally requires the exponential
displacement to remain on the minimizing geodesic branch.
The trade-off is the following.  By Proposition~\ref{prop:HK-Monge-representation} every optimal pair between positive densities has the form \eqref{eq:potential-generated-pair} for a Lipschitz $z$, so this parametrization matches the structure of an exact optimal pair, removes the cutoff, and eliminates the directions along which $T$ and $q$ trade off at fixed pushforward.  {It does not, however, guarantee admissibility for an arbitrary network output, and standard uniform universal approximation \cite{Leshno93} does not by itself yield Assumption~\ref{ass:incremental-neural-recovery} for this class: one must separately require a regular potential $z_*\in C^2(\Omega)$ and a network sequence with
\[
\|z_{\theta_n}-z_*\|_{C^2(\Omega)}\longrightarrow0
\]
preserving the conditions defining $\mathcal Z_{\rm adm}$, which then gives $T_{z_{\theta_n}}\to T_{z_*}$ in $C^1$ and $q_{z_{\theta_n}}\to q_{z_*}$ uniformly.}

Lemma~\ref{lem:transformed-potential-Lipschitz} relates $z_*$ to the transformed dual potential; the exact-step theory supplies only Lipschitz regularity. The Euler--Lagrange equation \eqref{eq:full-Euler-Lagrange-main} is algebraic in the density and potential and gives no elliptic regularity gain. Even with smoother $V$ and $W$, the $C^2$ control needed here would require additional regularity theory for unbalanced optimal transport. The density bounds \eqref{eq:canonical-two-sided-density-bound} and the uniform sub-cutoff bound of Corollary~\ref{cor:HK-growth-couple-relations-full} identify the relevant non-degenerate regime; see also \cite{GGV25}. Establishing this additional regularity remains open here.

{Under the step-scaled ansatz $z_\theta=\tau w_\theta$, invertibility near the identity follows from a quantitative Hessian bound such as $\tau\|D^2w_\theta\|_\infty\ll1$ together with the boundary or minimizing-branch condition; small $\tau$ alone is not sufficient.  Finally, $J_{T_{z_\theta}}$ involves second derivatives, so the stochastic trace estimators available for \eqref{eq:neural-ode-log-jacobian} do not apply directly.}  The two parametrizations are complementary: \eqref{eq:neural-ode-map} guarantees admissibility by construction, whereas \eqref{eq:potential-generated-pair} matches the exact optimal-pair structure under the additional restrictions above.  The main representability theorem is formulated for a generic admissible pair class and applies to either choice.

\subsection{Step scaling and uniform estimates}\label{sec:design-notes-scaling}
The scaling $z_\theta=\tau w_\theta$ in Remark~\ref{rem:tau-scaled-ansatz} separates the
$O(\tau)$ potential from its $O(1)$ profile.  For admissible potentials $z_*=\tau w_*$ and
$z_\theta$ with uniformly bounded $C^1$ profiles, an $O(\tau)$ approximation of $w_*$ in $C^1$
yields $O(\tau^2)$ errors in the map
and growth factor of \eqref{eq:potential-generated-pair}, which enter the one-step stability
estimate \eqref{eq:eta-to-tau-squared}.

The bounds of Theorem~\ref{thm:full-step-regularity} do not, however, give step-uniform density
control by direct iteration.  For Boltzmann entropy, \eqref{eq:canonical-two-sided-density-bound}
incurs a factor $e^{\omega_1}$ per step, where $\omega_1$ is the oscillation of the self-consistent
field; without additional $\tau$-scaled control, this yields no $\tau$-independent bound over
$N_\tau=\lfloor\mathcal T/\tau\rfloor$ steps.  Sharper estimates or a separate argument are needed.
The results in the main text require neither this scaling nor step-uniform bounds.

\input{sections/experiment_details}

\bibliographystyle{unsrtnat}
\bibliography{references}
\end{document}

%% file: sections/experiments.tex
% HKFLOW_SOLVER_PROFILE: neural
\section{Numerical experiments}\label{sec:experiments}\label{sec:numerical-protocol}
\begingroup
\setlength{\textfloatsep}{8pt plus 2pt minus 2pt}
\setlength{\floatsep}{7pt plus 2pt minus 2pt}
\setlength{\intextsep}{8pt plus 2pt minus 2pt}
\renewcommand{\topfraction}{0.95}
\renewcommand{\bottomfraction}{0.90}
\renewcommand{\textfraction}{0.05}
\renewcommand{\floatpagefraction}{0.72}
\captionsetup[figure]{font=small,skip=3pt}
\setcounter{topnumber}{4}
\setcounter{bottomnumber}{3}
\setcounter{totalnumber}{6}
\makeatletter
\setlength{\@fptop}{0pt}
\setlength{\@fpsep}{10pt}
\setlength{\@fpbot}{0pt plus 1fil}
\makeatother

\input{generated/trajectory_overview.tex}
The code for the numerical experiments is available at \url{https://github.com/tyty4/hkflow}.

Trajectory states are fitted with the positive neural density family in Table~\ref{tab:density-family}. Masses, moments, and reference errors are quadrature estimates; Gaussian references are used only for evaluation; the trainable density family also contains nonquadratic log densities. Finite-box errors include truncation. Comparisons use a common seed and are descriptive, not statistical significance claims.

Time-dependent error curves are shown for $t\geq5\tau$, omitting the first five time levels; other diagnostics and tabulated values use the full trajectories.

Trajectory pairs are checked against the identity on held-out samples or enriched quadrature. {The common neural solver terminates if its prescribed attempts fail validation; no fallback is propagated. Rejected attempts do not advance physical time.} Ablation endpoints use the latest time attained by every compared trajectory; curves retain their own accepted clocks. Tables report these times, null/fallback counts, empirical guards, action shares, and map contraction bounds.

The ``reference transport share'' is a leading-order diagnostic at the numerical reference state, when available, or the projected neural endpoint. It is sensitive to differentiation, quadrature, and small near-equilibrium action; it is not a finite-step transport plan. The optional transport-share acceptance criterion is not imposed. Implementation details are in Appendix~\ref{sec:design-notes-experiments}.

\begin{table}[!htbp]
\centering\small
\caption{Positive neural state families and the actual pair-training rules. Importance counts are draws before rejection outside the box.}
\label{tab:density-family}
\begingroup
\setlength{\tabcolsep}{4pt}
\renewcommand{\arraystretch}{1.08}
\let\densitytabular\tabular
\renewcommand{\tabular}[1]{\densitytabular{@{}>{\raggedright\arraybackslash}p{1.6in}c>{\raggedright\arraybackslash}p{1.8in}>{\raggedright\arraybackslash}p{1.8in}@{}}}
\selectedtable{generated/density_family_table.tex}{1,2,4,5,7}
\endgroup
\par\smallskip
\parbox{\linewidth}{\footnotesize Here, ``mixed-activation FCNN'' denotes a fully connected neural network combining quadratic and $\tanh$ activations; see Appendix~\ref{sec:design-notes-experiments}.}
\end{table}

\begin{table}[!htbp]
\centering\scriptsize
\resizebox{\linewidth}{!}{\selectedtable{generated/diagnostics_table.tex}{1,2,4,5,7}}
\caption{Trajectory diagnostics. Action shares are cone-action-weighted; $\max_\ell\kappa_{\theta,\ell}<1$ certifies the two blocks of each accepted map.}
\label{tab:diagnostics}
\end{table}

\FloatBarrier
\subsection{One-step PDE consistency and small-time scaling}\label{sec:one-step-numerics}
For $g(\rho)=\log\rho+V$, the one-dimensional $\HK$ flow is
\begin{equation*}
\partial_t\rho=\partial_x(\rho\,\partial_x g)-4\rho g.
\end{equation*}
We compare the backward time increment and the endpoint PDE operator pointwise:
\[
L_\tau:=\frac{\rho_\tau-\rho_0}{\tau},
\qquad
F_\tau:=\partial_x(\rho_\tau\partial_xg_\tau)-4\rho_\tau g_\tau,
\qquad g_\tau=\log\rho_\tau+V.
\]
Each $\tau$ uses an independent one-step update from
\[
V(x)=\tfrac12\bigl(1-\cos(\pi x)\bigr),
\qquad
\rho_0(x)=I_0(0.2)^{-1}\exp\!\left(0.2\cos(\pi x)\right),
\]
where $I_0$ is the modified Bessel function.  Since $\partial_xg_0=0.3\pi\sin(\pi x)$, the smooth positive source satisfies no flux at $x=\pm1$.  The fitted chemical potential uses the common trainable FCNN with cosine inputs enforcing zero normal derivative; architecture, budget, seed, and grid are fixed across $\tau$.

Spatial derivatives are evaluated by automatic differentiation in CPU, using fixed FCNN weights learned on MPS. We report the relative strong residual over the full interval,
\[
\mathcal R^{\rm str}_\tau
=\frac{\|L_\tau-F_\tau\|_{L^2(-1,1)}}
{\|L_\tau\|_{L^2(-1,1)}+\|F_\tau\|_{L^2(-1,1)}}.
\]
It includes truncation, projection, and training errors, so Figure~\ref{fig:hk-euler-check} is an agreement diagnostic rather than a convergence-rate estimate.

\begin{figure}[!htbp]
    \centering
    \safeincludegraphics[width=0.98\linewidth]{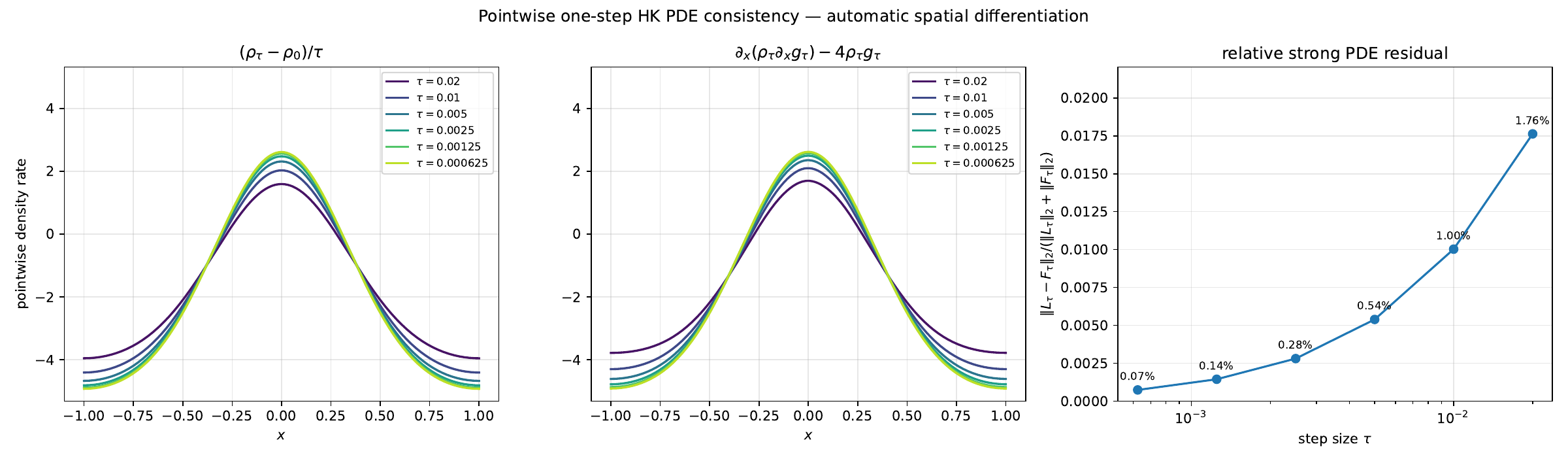}
    \caption{Pointwise one-step PDE consistency under $\tau$ refinement. Left and center: $L_\tau$ and $F_\tau$ on a common scale. Right: $\mathcal R^{\rm str}_\tau$.}
    \label{fig:hk-euler-check}
\end{figure}

The explicit forward predictor in Remark~\ref{rem:tau-scaled-ansatz} has the expansion
\[
T_{\rm pred}=\mathrm{id}-\tau\nabla g_\rho+O(\tau^2),
\qquad
q_{\rm pred}^2=1-4\tau g_\rho+O(\tau^2).
\]
We test the optimized pairs against these fields using the same Neumann-compatible $\rho_0,V$ as above. Each $\tau$ uses fresh dense map and growth networks, initialized at $(\mathrm{id},1)$ and optimized in the same pair objective. The predictor fields are used to evaluate the resulting pairs, without a predictor-fitting loss. The entropy quadrature uses the current-source score through {the integration-by-parts correction \eqref{eq:jcp-ibp-correction}}. Architecture, quadrature, and optimizer budget are fixed across $\tau$ (Appendix~\ref{sec:design-notes-experiments}). Figure~\ref{fig:tau-scaling} and Table~\ref{tab:tau-scaling} report errors over the full interval. Separate spatial and optimizer refinements check that the observed field errors are resolved; this is an empirical one-step consistency test.

\begin{figure}[!htbp]
    \centering
    \safeincludegraphics[width=0.96\linewidth]{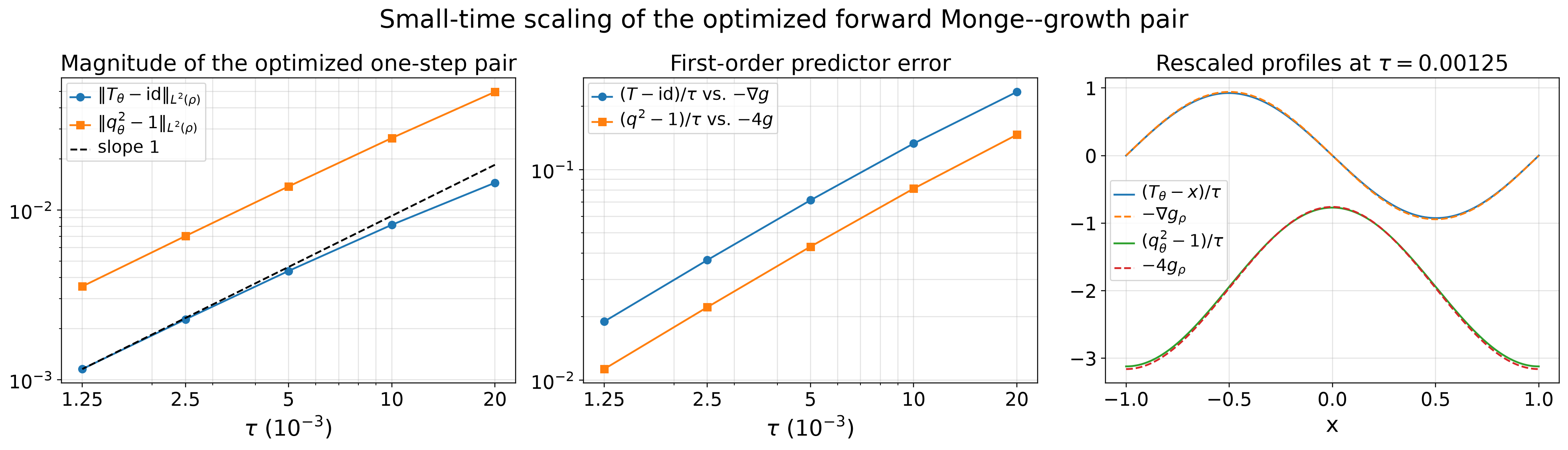}
    \caption{Small-time pair scaling.  Left: pair deviations; center: rescaled-field errors; right: profiles at the smallest $\tau$.}
    \label{fig:tau-scaling}
\end{figure}

\begin{table}[!htbp]
\centering\scriptsize
\resizebox{\linewidth}{!}{\input{generated/tau_scaling_table.tex}}
\caption{Small-time pair diagnostics on $[-1,1]$. Velocity and reaction errors are relative $L^2(\rho_0\,dx)$ errors of $(T-\mathrm{id})/\tau$ and $(q^2-1)/\tau$ against $-g_0'$ and $-4g_0$.}
\label{tab:tau-scaling}
\end{table}

\input{generated/one_step_observations.tex}

\FloatBarrier
\subsection{Entropy--potential relaxation and the role of both pair components}\label{sec:entropy-potential-experiments}
On $[-1,1]^d$ we take
\[
\begin{gathered}
U(s)=s\log s-s,
\qquad
V(x)=20|x|^2,\\
\rho_0(x)=\prod_{j=1}^d C_\beta\exp\!\left(
-20x_j^2+\beta(x_j-x_j^3/3)\right),
\qquad \beta=4,\\
C_\beta^{-1}=\frac12\int_{-1}^{1}\exp\!\left(
-20s^2+\beta(s-s^3/3)\right)\,ds.
\end{gathered}
\]
The source has mass $2^d$ and satisfies the no-flux condition
$\partial_j(\log\rho_0+V)=\beta(1-x_j^2)=0$ on each face.
The asymmetric tilt produces directed transport with bounded chemical-potential variation.
We compare with a finite-volume trajectory and the equilibrium
$\rho_\infty=e^{-20|x|^2}$ at accepted physical times.

\begin{figure}[!htbp]
    \centering
    \safeincludegraphics[width=0.88\linewidth]{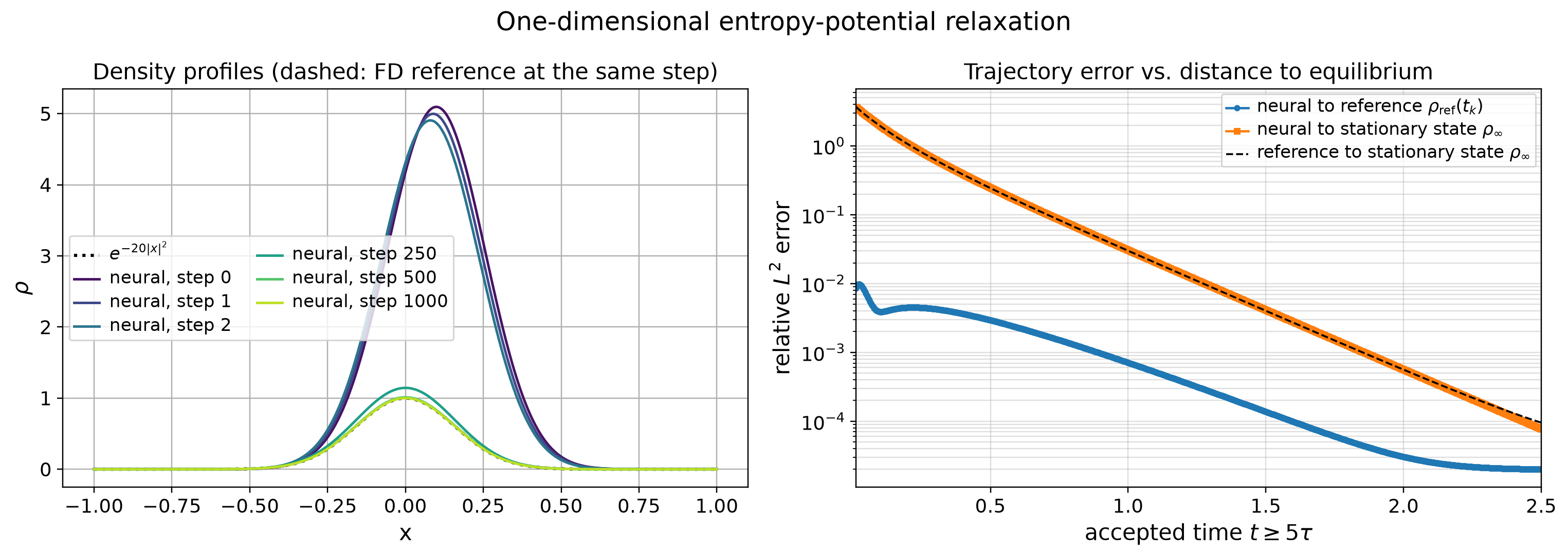}
    \caption{One-dimensional relaxation. Step labels count attempts; finite-volume references and diagnostics use the corresponding accepted physical time.}
    \label{fig:hk-stationary-1d}
\end{figure}

To assess both $\HK$ mechanisms, we compare the full pair with pure growth ($T_\theta=\mathrm{id}$) and transport only ($q_\theta=1$), reporting trajectory error and transport share alongside energy.

\begin{figure}[!htbp]
    \centering
    \safeincludegraphics[width=0.98\linewidth]{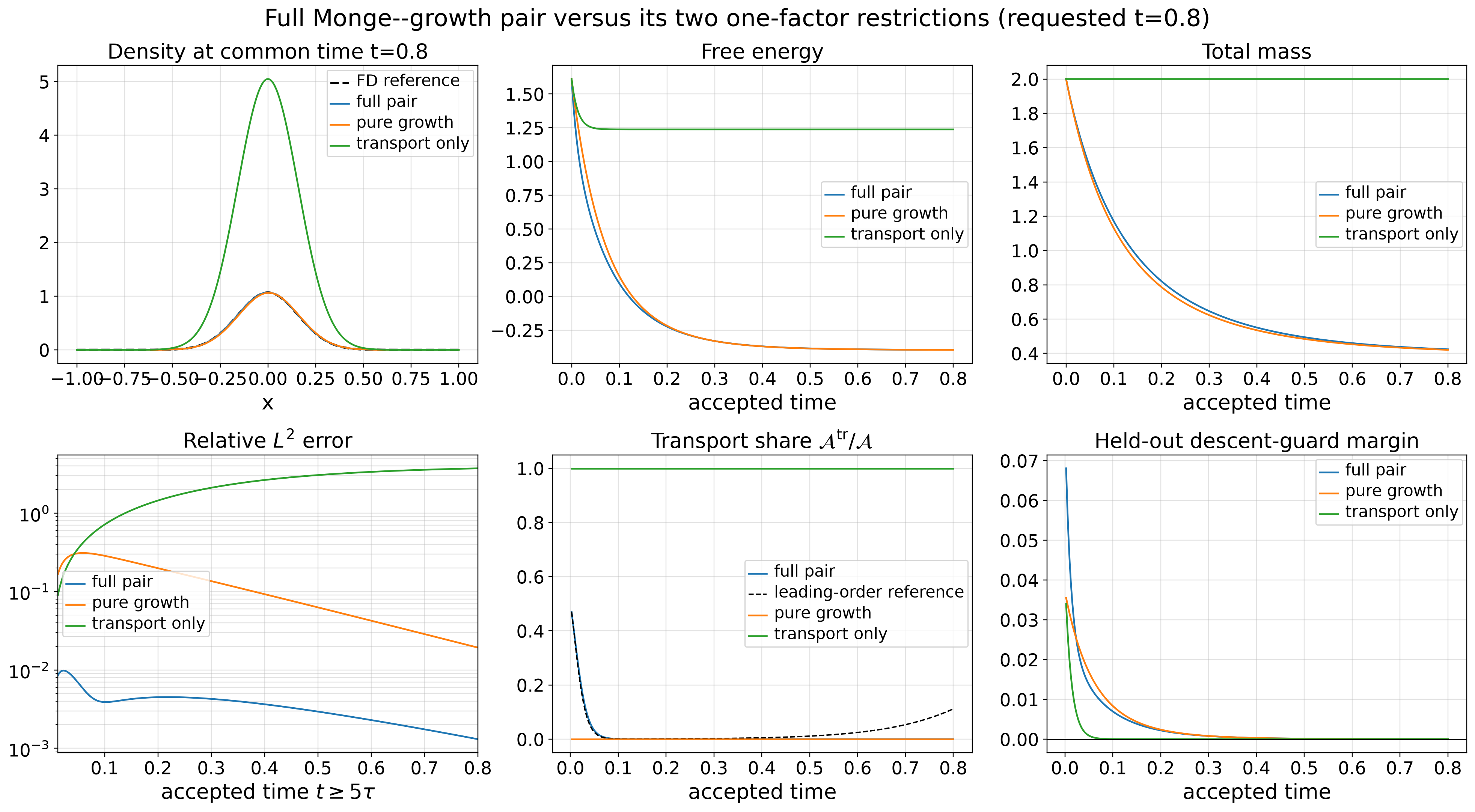}
    \caption{Full pair versus pure-growth and transport-only restrictions.}
    \label{fig:restriction-ablation}
\end{figure}

\begin{table}[!htbp]
\centering\scriptsize
\resizebox{\linewidth}{!}{\input{generated/restriction_table.tex}}
\caption{Pair-restriction diagnostics at a common attained physical time.  Transport shares use the prefix ending at that time.  The energy-monotonicity flag uses $E_{k+1}-E_k\leq 10^{-7}(1+|E_k|)$, a tighter tolerance than the acceptance guard.}
\label{tab:pure-growth-ablation}
\end{table}

\begin{figure}[!t]
    \centering
    \safeincludegraphics[width=0.90\linewidth]{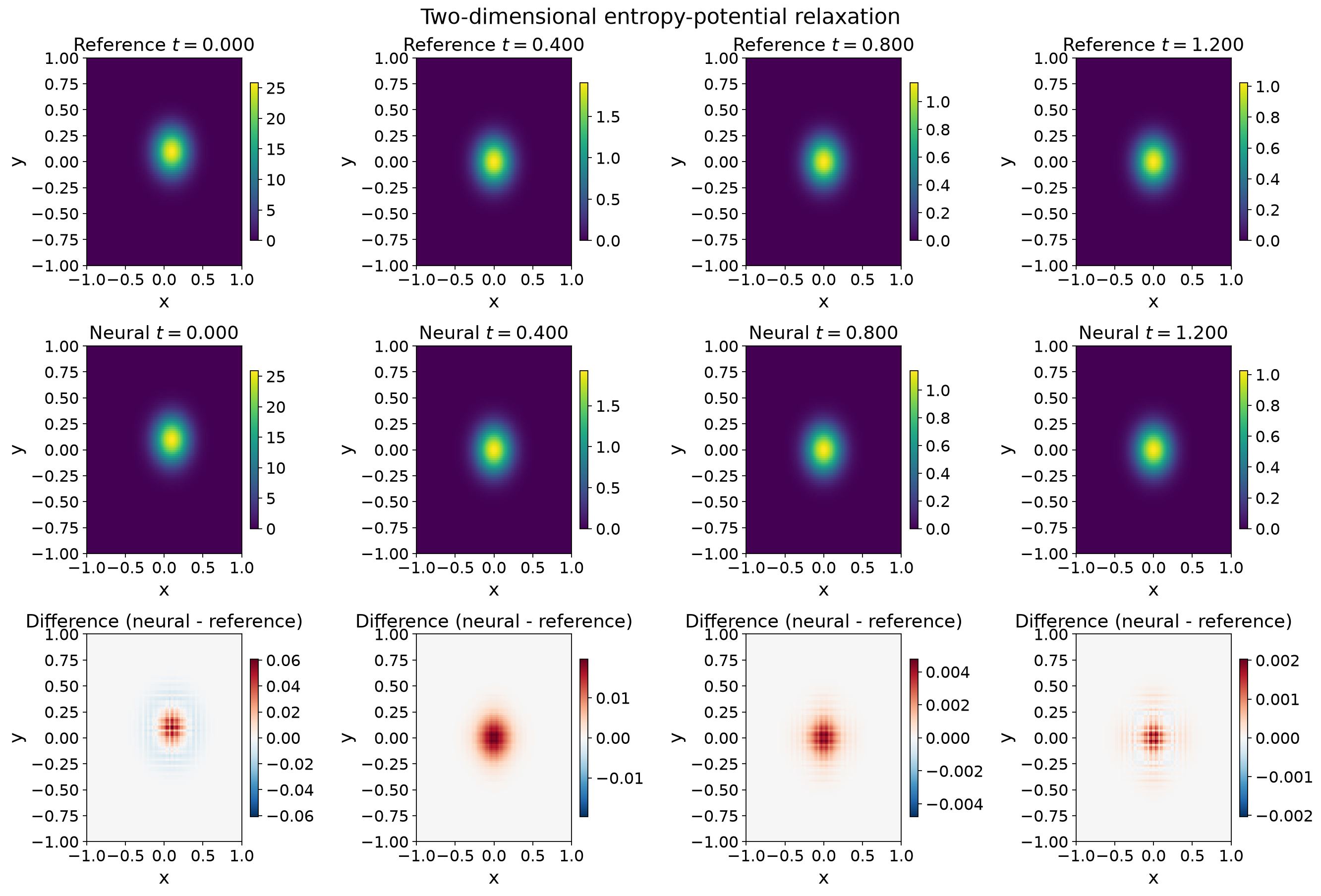}
    \par\vspace{2pt}
    \safeincludegraphics[width=0.74\linewidth]{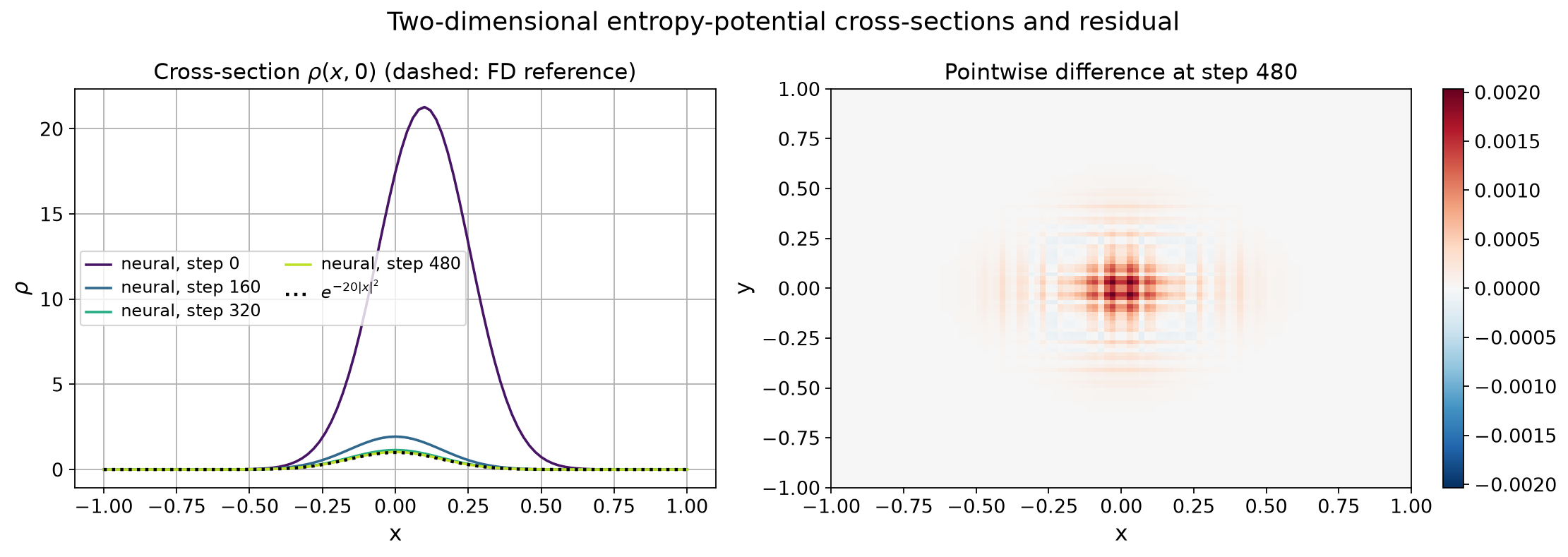}
    \par\vspace{2pt}
    \safeincludegraphics[width=0.88\linewidth]{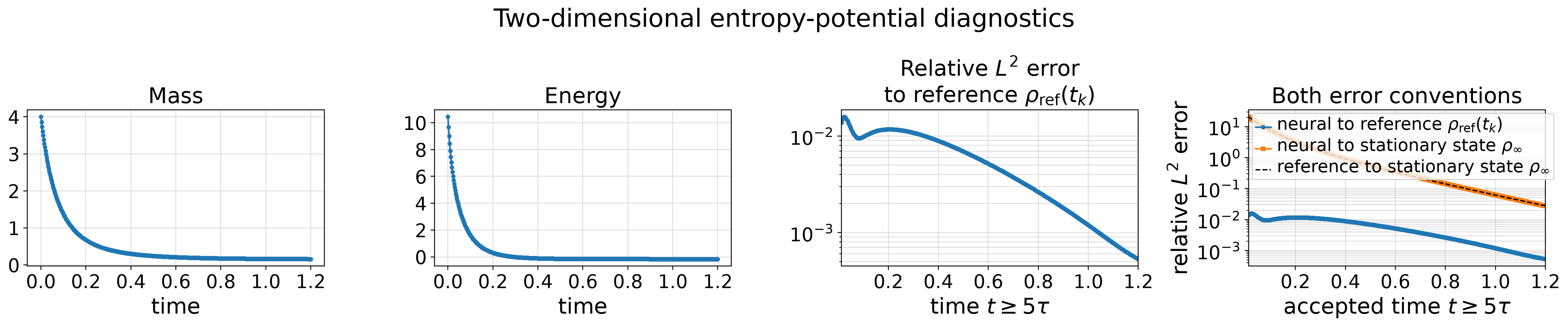}
    \caption{Two-dimensional relaxation.  Top: reference, neural, and signed error; middle: axial sections; bottom: trajectory diagnostics.}
    \label{fig:hk-stationary-2d}
\end{figure}

\input{generated/relaxation_observations.tex}

\subsection{Gaussian entropy and confinement flows}\label{sec:gaussian-experiments}\label{sec:entropy-highdim}\label{sec:fokker-planck-experiments}
For Boltzmann entropy, the $\HK$ equation is
\begin{equation*}
\partial_t\rho
=\Delta\rho+\nabla\cdot(\rho\nabla V)-4\rho(\log\rho+V).
\end{equation*}
We consider $V=0$ in eight dimensions and $V(x)=\tfrac12(x-\mu)^\top\Sigma^{-1}(x-\mu)$ in two dimensions. Gaussian initial data admit whole-space Gaussian-ansatz references for density, mass, mean, and covariance. Both learned states use the same positive FCNN as the non-Gaussian and interaction tests. Its latent log density combines quadratic and $\tanh$ features, so the approximation family is not restricted to Gaussians. These cases reproduce Gaussian dynamics within this larger approximation family; finite-box errors also include truncation.

Figure~\ref{fig:hk-entropy-flow-8d} shows entropy flow on $[-3.5,3.5]^8$, with unshown coordinates fixed at zero. It uses full-input dense maps, learned neural growth, and Gaussian importance proposals whose moments are estimated from the current neural density; Table~\ref{tab:trajectory-refinement} checks density and mass errors at fixed $T=0.15$ under time refinement. Figure~\ref{fig:hk-fp-2d} shows confinement and mean motion. Initial data and numerical settings are specified in Appendix~\ref{sec:design-notes-experiments}. FP2D uses distinct deterministic Gauss--Legendre rules; enriched checks assess integration error but do not establish optimizer convergence.

\begin{figure}[!htbp]
    \centering
    \safeincludegraphics[width=0.92\linewidth]{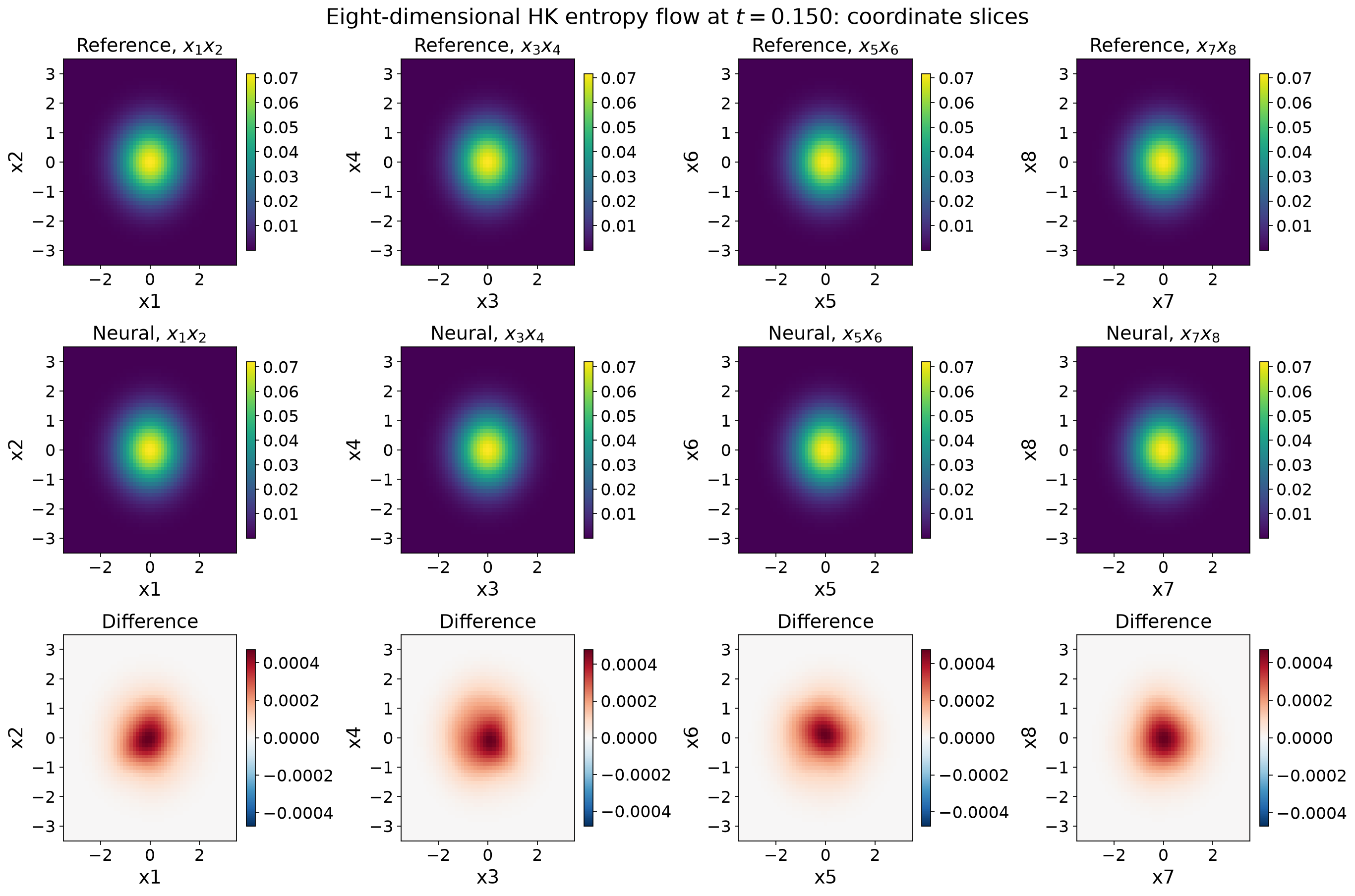}
    \par\vspace{2pt}
    \safeincludegraphics[width=0.68\linewidth]{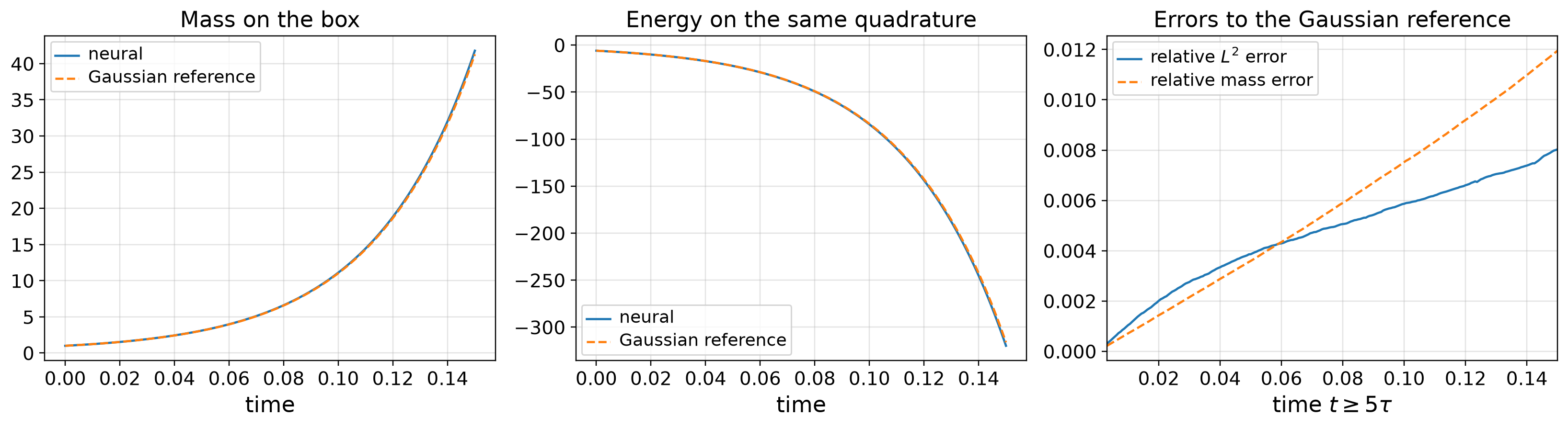}
    \caption{Eight-dimensional Gaussian entropy flow.  Top: final coordinate sections; bottom: mass, energy, and error to the whole-space Gaussian reference.}
    \label{fig:hk-entropy-flow-8d}
\end{figure}

\begin{figure}[!htbp]
    \centering
    \safeincludegraphics[width=0.86\linewidth]{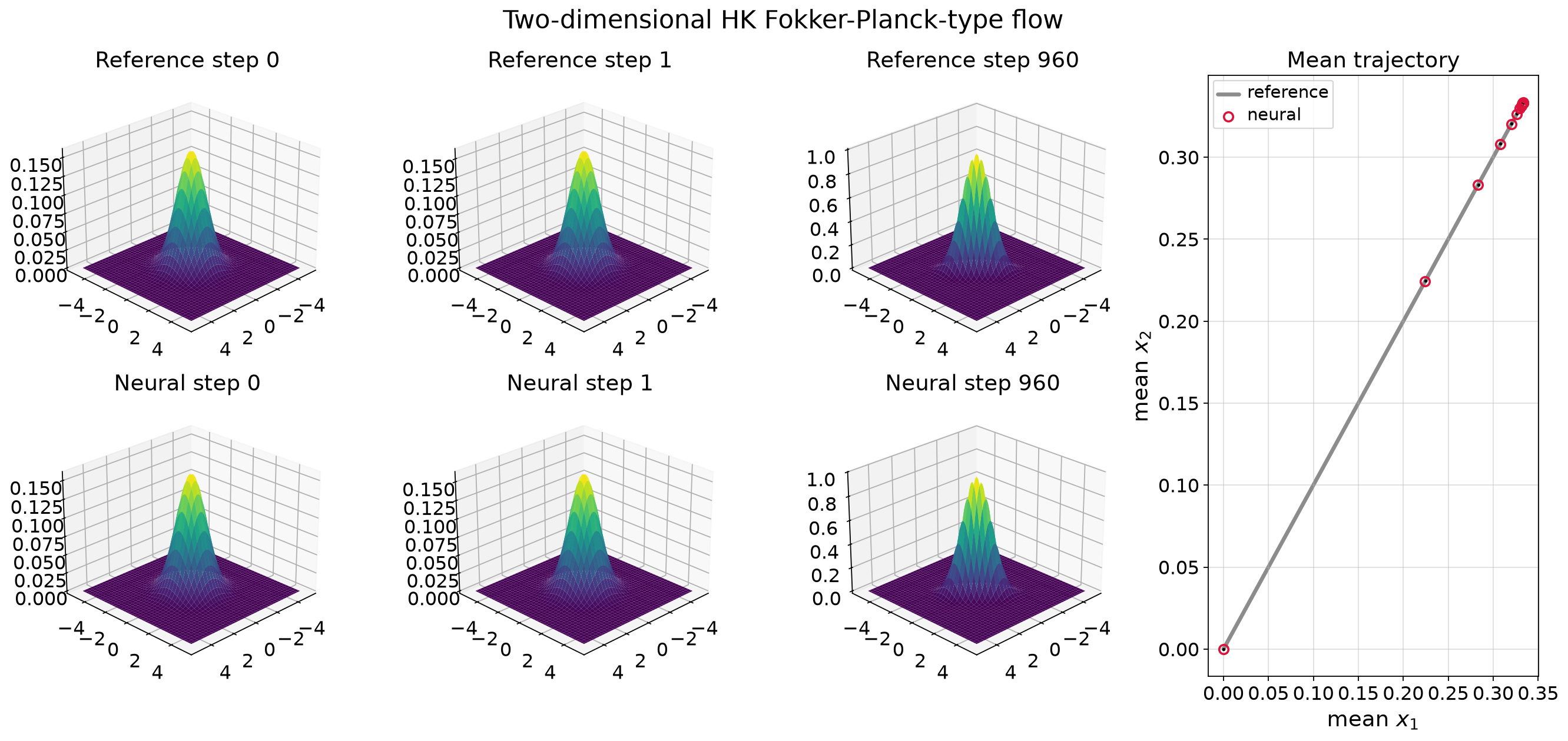}
    \par\vspace{2pt}
    \safeincludegraphics[width=0.78\linewidth]{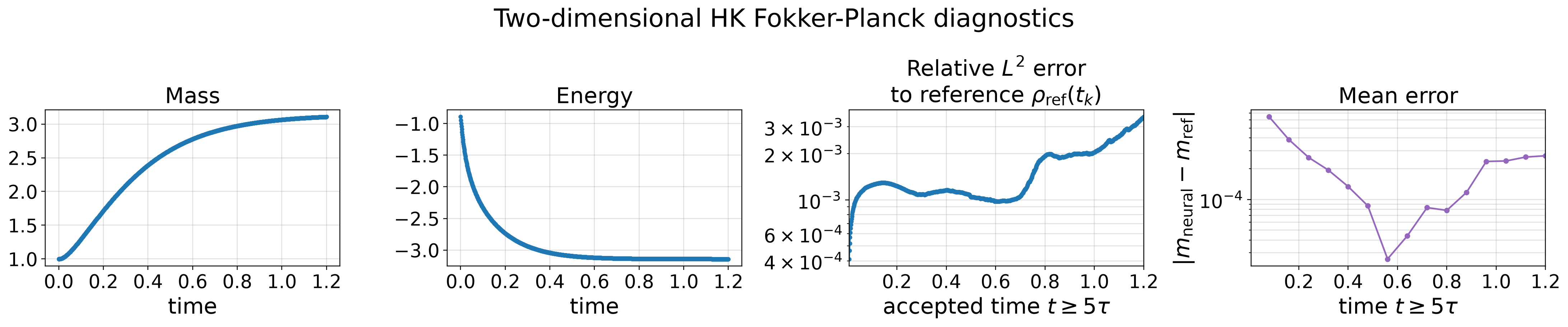}
    \caption{Two-dimensional confinement test: density and mean-path comparisons with mass, energy, density error, and mean error.}
    \label{fig:hk-fp-2d}
\end{figure}

\input{generated/gaussian_observations.tex}

\FloatBarrier
\subsection{Coercive fully implicit interaction benchmark}\label{sec:interaction-experiment}
On the flat two-dimensional torus we use
\[
W(x,y)=\frac12-\frac12\exp\!\left(-\frac{|x-y|^2}{2(0.35)^2}\right),
\]
with $V(x)=0.4+0.5(2-\cos(\pi x_1)-\cos(\pi x_2))$. Each coordinate of $x-y$ is reduced to $[-1,1)$, giving the same periodic kernel in the neural objective and the FFT reference. The shift preserves $\nabla_xW$ and ensures coercivity, but changes the reaction term through $W[\rho]$. We compare the fully implicit field $W[\rho]$ with a frozen preceding-state field, using full deterministic pair quadrature and enriched validation grids. The common initial density is specified in Appendix~\ref{sec:design-notes-experiments}.

Table~\ref{tab:interaction-ablation} compares endpoint density errors and full-periodic PDE residuals, applying the same operator to the independent reference. Fixed-horizon time refinement and separate spatial and optimizer checks are given in Appendix~\ref{sec:design-notes-experiments}. This comparison does not establish a uniform accuracy advantage for either treatment. The strong-form residual is a grid-dependent finite-difference diagnostic; the refinement checks do not establish spatial convergence.

\begin{figure}[!htbp]
    \centering
    \safeincludegraphics[width=0.98\linewidth]{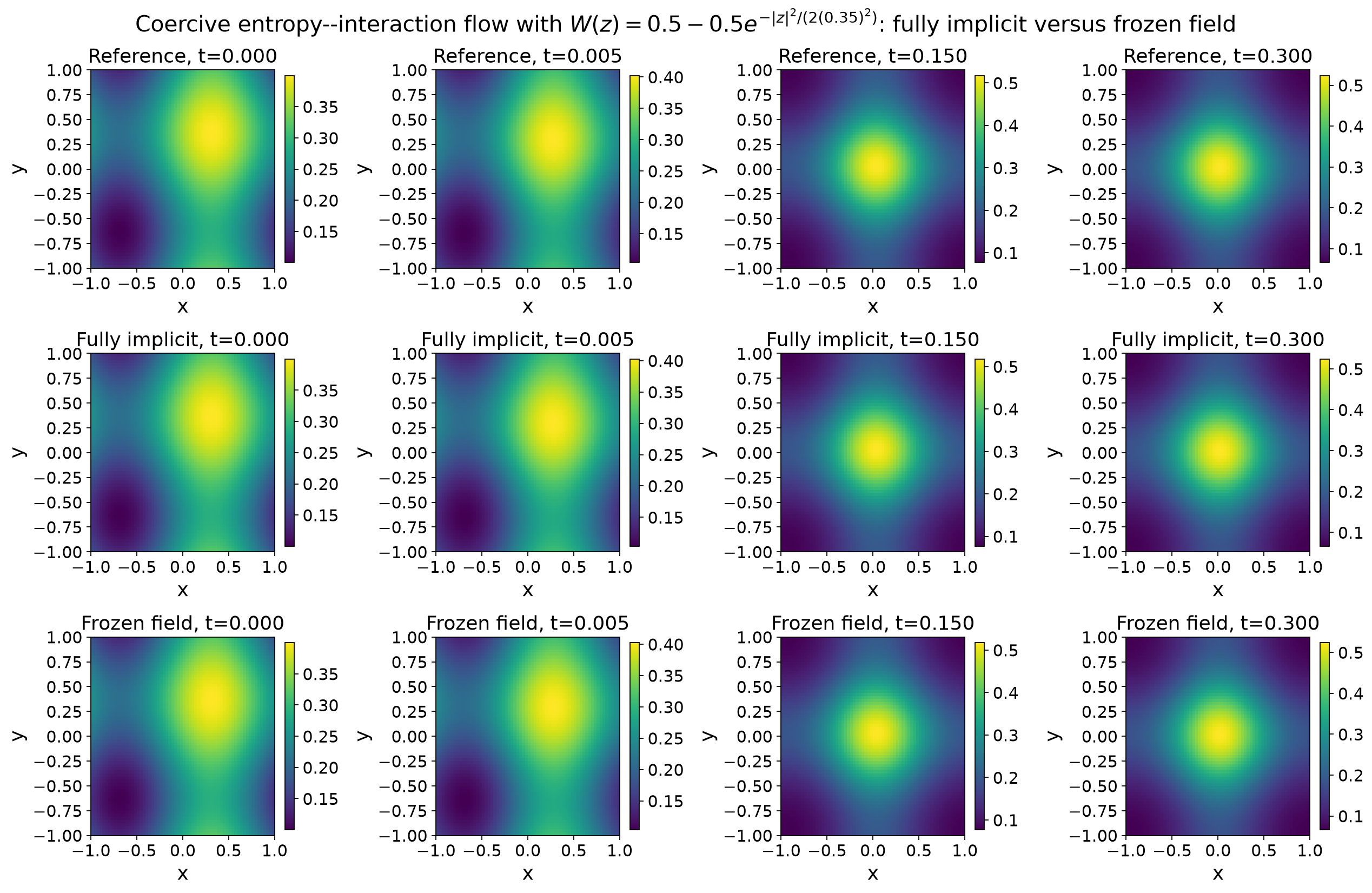}
    \caption{Interaction benchmark: periodic reference, fully implicit update, and frozen-field ablation at matched times.}
    \label{fig:hk-interaction-fields}
\end{figure}

\begin{figure}[!htbp]
    \centering
    \safeincludegraphics[width=0.96\linewidth]{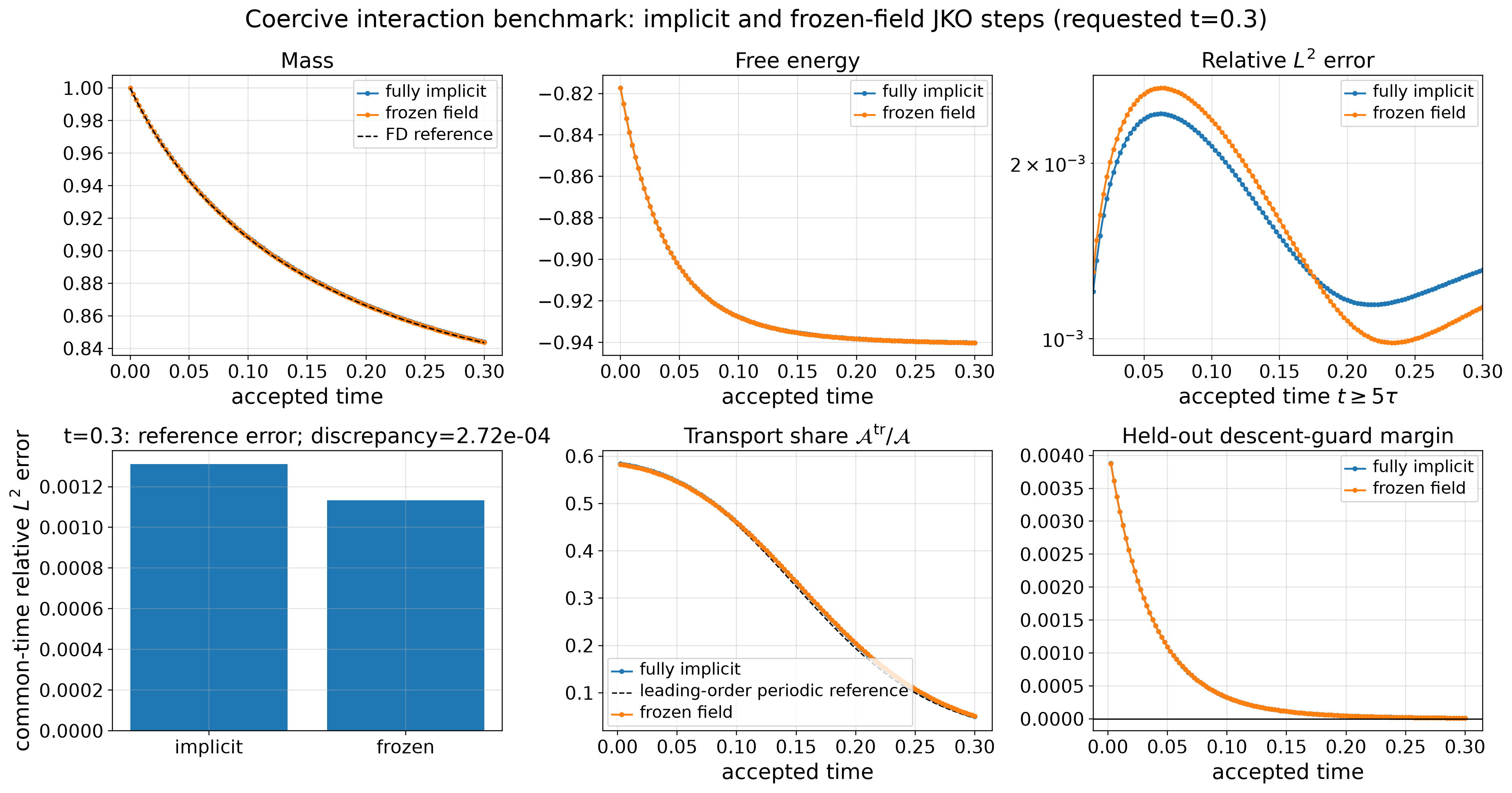}
    \caption{Interaction diagnostics: mass, common fully implicit energy, trajectory error, transport share, and guard margin.}
    \label{fig:hk-interaction-diagnostics}
\end{figure}

\begin{table}[!htbp]
\centering\scriptsize
\resizebox{\linewidth}{!}{\input{generated/interaction_table.tex}}
\caption{Implicit/frozen comparison at the common attained physical time. Density errors use the bilinearly interpolated $64^2$ reference; residuals use the same full-periodic operator for both trajectories on the refined $128^2$ grid. Because the reference is advanced using the same spatial operator, its residual primarily reflects temporal discretization. Null and fallback counts cover all attempted steps; the transport share is that of the last update reaching the common time.}
\label{tab:interaction-ablation}
\end{table}

\FloatBarrier
\subsection{Finite-grid primal--dual certificate}\label{sec:certificate-experiment}
For a one-dimensional Boltzmann step with $W=0$, we evaluate the optimized pair's density $u_\theta$ by exact map inversion and change of variables. The discrete candidate consists of its values on the fixed 65-node grid; source, potential, and pair solver are specified in Appendix~\ref{sec:design-notes-experiments}. A nonnegative entropy--transport coupling gives a primal upper value for this same candidate. A grid dual pair is shifted until
\[
\zeta_i+\xi_j\le c_{\HK}(x_i,x_j)
\quad\hbox{for every grid pair }(i,j),
\]
and hence gives a certified lower value.  The gap $R_h$ and a rigorous mass upper bound yield the discrete analogue of Corollary~\ref{cor:ap-boltzmann-L1}.  The radius applies to the exact finite-grid minimizer; the comparison with a numerical grid iterate is diagnostic only, and its own primal--dual gap is reported separately.  No continuum certificate is claimed.  The discrete bounds are evaluated in floating-point arithmetic with a numerical feasibility margin; they are not interval-arithmetic proofs of the reported decimal values.

\begin{figure}[!htbp]
    \centering
    \safeincludegraphics[width=0.98\linewidth]{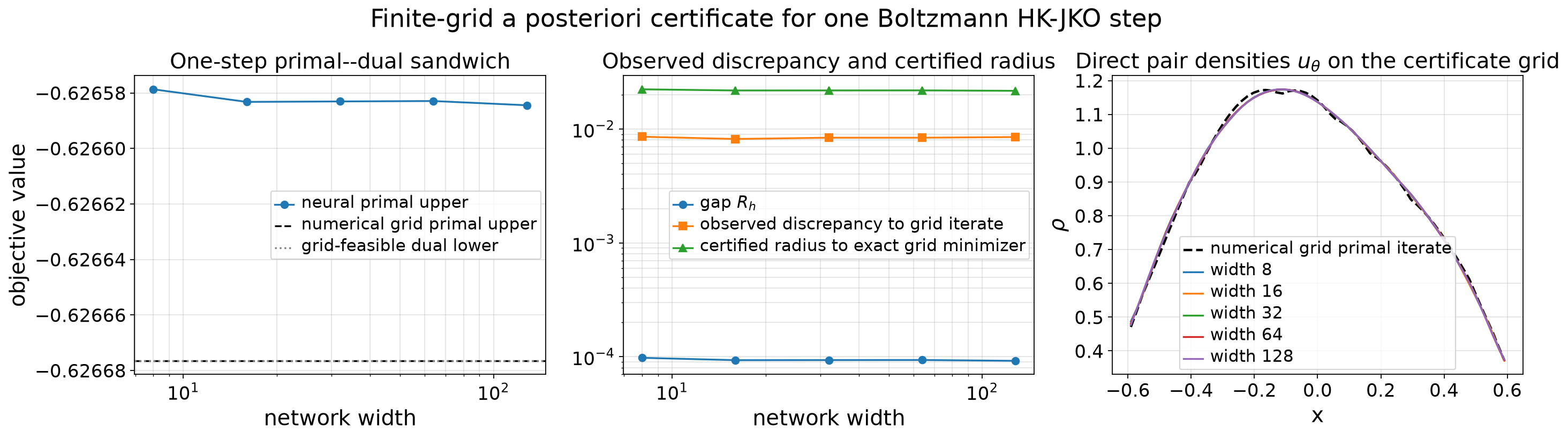}
    \caption{Finite-grid certificate across network widths, with depth fixed. Left: primal and dual values; center: gap, observed error, and certified radius; right: directly evaluated $u_\theta$ and the numerical grid reference.}
    \label{fig:certificate-1d}
\end{figure}

\begin{table}[!htbp]
\centering\scriptsize
\resizebox{\linewidth}{!}{\input{generated/certificate_table.tex}}
\caption{Finite-grid certificate. Feasibility is checked over all grid pairs. The scaled metric gap measures coupling-solve accuracy with the candidate density fixed.}
\label{tab:certificate-1d}
\end{table}

\input{generated/certificate_observations.tex}

\input{generated/certificate_reference_results.tex}

\FloatBarrier
\endgroup

%% file: generated/trajectory_overview.tex
All main experiments use the common neural solver. Time steps, physical horizons and optimizer budgets are reported in Appendix~\ref{sec:design-notes-experiments}.

%% file: generated/tau_scaling_table.tex
\begin{tabular}{@{}cccccc@{}}
\toprule
$\tau$ & $\|T-\mathrm{id}\|_{L^2(\rho)}$ & $\|q^2-1\|_{L^2(\rho)}$ & velocity error & reaction error & $\min J_T$ \\
\midrule
0.020 & 0.014 & 0.050 & 0.234 & 0.146 & 0.958 \\
0.010 & 8.169e-03 & 0.027 & 0.133 & 0.081 & 0.975 \\
5.000e-03 & 4.370e-03 & 0.014 & 0.071 & 0.043 & 0.987 \\
2.500e-03 & 2.265e-03 & 7.014e-03 & 0.037 & 0.022 & 0.993 \\
1.250e-03 & 1.153e-03 & 3.542e-03 & 0.019 & 0.011 & 0.996 \\
\bottomrule
\end{tabular}

%% file: generated/one_step_observations.tex
The relative strong PDE residual decreases from $0.0176$ at $\tau=0.02$ to $0.000743$ at $\tau=0.000625$. In the independent pair-scaling test, the smallest step $\tau=0.00125$ gives velocity and reaction field errors $0.019$ and $0.0113$, respectively. These finite-resolution observations do not by themselves establish an asymptotic rate.

%% file: generated/restriction_table.tex
\begin{tabular}{@{}lccccccc@{}}
\toprule
Configuration & $\mathcal F$ & energy monotone? & rel. $L^2$ & $\overline{\mathcal A^{\rm tr}/\mathcal A}$ & reference & common $t$ & reached $t$ \\
\midrule
full pair $(T_\theta,q_\theta)$ & -0.395 & yes & 1.308e-03 & 0.150 & 0.145 & 0.800 & 0.800 \\
pure growth, $T_\theta=\mathrm{id}$ & -0.395 & yes & 0.019 & 0 & 0.109 & 0.800 & 0.800 \\
transport only, $q_\theta=1$ & 1.237 & yes & 3.726 & 1.000 & 0.316 & 0.800 & 0.800 \\
\bottomrule
\end{tabular}

%% file: generated/relaxation_observations.tex
In one dimension, the final relative $L^2$ error to the finite-volume trajectory is $2.01\times10^{-5}$ at $t=2.5$. In two dimensions, the final relative $L^2$ error to the finite-volume trajectory is $0.000532$ at $t=1.2$. At the common ablation time, full-pair, pure-growth and transport-only errors are $0.00131$, $0.0193$, $3.73$, respectively. The full pair improves trajectory agreement even though endpoint energy alone does not rank these constrained dynamics by accuracy.

%% file: generated/gaussian_observations.tex
For eight-dimensional entropy flow, the recorded endpoint relative $L^2$ density error is $0.00803$ at $t=0.15$. For FP2D, the recorded endpoint relative $L^2$ density error is $0.00348$ at $t=1.2$. These values use the reported error quadratures and whole-space Gaussian references; they include time discretization, finite-box, learned-pair and density-fit errors.

%% file: generated/interaction_table.tex
\begin{tabular}{@{}lccccccccc@{}}
\toprule
interaction treatment & rel. $L^2$ & mass & true energy & transport share & residual (neural/ref.) & common $t$ & reached $t$ & null & fallback \\
\midrule
fully implicit & 1.312e-03 & 0.844 & -0.940 & 0.050 & 0.021/4.787e-03 & 0.300 & 0.300 & 0 & 0 \\
frozen field & 1.133e-03 & 0.844 & -0.940 & 0.051 & 0.021/4.787e-03 & 0.300 & 0.300 & 0 & 0 \\
\bottomrule
\end{tabular}

%% file: generated/certificate_table.tex
\begin{tabular}{@{}cccccccc@{}}
\toprule
width & primal upper & dual lower & gap $R_h$ & observed $L^1$ & certified radius & metric gap$/2\tau$ & feasibility violation \\
\midrule
8 & -0.627 & -0.627 & 9.783e-05 & 8.534e-03 & 0.022 & 1.261e-09 & -1.000e-10 \\
16 & -0.627 & -0.627 & 9.330e-05 & 8.159e-03 & 0.022 & 1.378e-10 & -1.000e-10 \\
32 & -0.627 & -0.627 & 9.346e-05 & 8.371e-03 & 0.022 & 5.154e-10 & -1.000e-10 \\
64 & -0.627 & -0.627 & 9.360e-05 & 8.370e-03 & 0.022 & 1.779e-10 & -1.000e-10 \\
128 & -0.627 & -0.627 & 9.208e-05 & 8.485e-03 & 0.022 & 2.226e-10 & -1.000e-10 \\
\bottomrule
\end{tabular}

%% file: generated/certificate_observations.tex
Across the tested widths, the largest relative growth-equation RMS and maximum residuals are $9.02\times10^{-5}$ and $0.000583$. The observed $L^1$ distance to the numerical grid iterate is at most $0.00853$, while the certified radii range from $0.0216$ to $0.0223$. The radii remain valid across all widths; the measured errors and gaps need not decrease monotonically with width.

%% file: generated/certificate_reference_results.tex
For the numerical grid reference itself, the primal upper value is $-0.6267$, the feasible dual lower value is $-0.6267$, and the gap is $2.746\times10^{-9}$. The same discrete error estimate gives it an $L^1$ radius $0.000118$. These computed bounds, rather than the optimizer iteration budget alone, describe reference accuracy. Dual feasibility is checked with an outward numerical margin in floating-point arithmetic.

%% file: sections/experiment_details.tex
% HKFLOW_PRESENTATION: compact-v1
% HKFLOW_JCP_REVISION: stored-diagnostics-v1
\subsection{Experimental implementation and refinement checks}\label{sec:design-notes-experiments}
\paragraph{Parametrization and optimization.}
Each map composes two full-input dense residual FCNN blocks with certified contraction bounds. Scalar fields use trainable mixed features
\[
F_\eta(z)=a_0+a\cdot z+\sum_{j=1}^{m}
\bigl[b_j(w_j\cdot z+c_j)^2+d_j\tanh(w_j\cdot z+c_j)\bigr].
\]
The density is represented as $\exp(\operatorname{clip}(F_\eta,-300,30))$, replacing $F_\eta$ by $F_\eta-V$ for no-flux potential problems, with a fitted mass shift; a separate growth network uses $q_\eta=\exp(4\tanh(\tau F_\eta/4))$. Cosine inputs enforce the no-flux chemical-potential condition. Interaction uses four periodic inputs (one sine/cosine pair per coordinate), width 256, and width 512 in the capacity check.

Trajectory pairs, including the one-step PDE test, are initialized by fitting output coefficients to the source predictor $T\approx\mathrm{id}-\tau\nabla g_\rho$, $\log q\approx-2\tau g_\rho$ of Remark~\ref{rem:tau-scaled-ansatz} before joint training; independent pair-scaling and certificate tests start from $(\mathrm{id},1)$.
Joint L-BFGS with base budget $N$ is followed by a conditioned stage of budget $\max(20,\min(400,\lfloor N/2\rfloor))$; when growth is learned, each stage includes growth polishing of the same objective with budget $\max(50,\min(400,\lfloor N/3\rfloor))$.
Recursive propagation fits the log density with up to 40 L-BFGS iterations for hidden parameters, variable projection for output coefficients, and sampled-mass restoration; the certificate uses the raw inverse-map pushforward.
Across the six trajectories in Table~\ref{tab:jcp-projection-work},
the per-fit Frobenius change in the density hidden-input weight matrix
was at most $2\times10^{-5}$, and was zero at every interaction step.
This diagnostic excludes hidden biases and output coefficients.
Tables~\ref{tab:numerical-experiment-protocol} and~\ref{tab:density-family} give the base settings and pair-training rules.

The entropy objective uses the integration-by-parts identity
\begin{equation}\label{eq:jcp-ibp-correction}
C_\rho(T):=\int_\Omega\rho\bigl(\operatorname{div}v+\nabla\log\rho\cdot v\bigr)\,dx=0,
\qquad v=T-\mathrm{id},
\end{equation}
for a smooth positive source $\rho$, with $v\cdot n=0$ or a periodic lift. Its quadrature estimate is added with coefficient $+1$ in training and validation, using the current source's score. This changes finite-quadrature values and gradients, but not the continuum objective.

\input{generated/execution_protocol.tex}

\begin{table}[H]
\centering\scriptsize
\setlength{\tabcolsep}{3pt}
\resizebox{\linewidth}{!}{\selectedtable{generated/experiment_protocol_table.tex}{1,2,4,5,6,8,9,11,12}}
\caption{Executed settings. $N$ excludes conditioned and growth-polishing stages. Certificate map widths are $8,16,32,64,128$; a missing density width denotes a raw pushforward.}
\label{tab:numerical-experiment-protocol}
\end{table}

\paragraph{Data and references.}
Gaussian sources are $\mathcal N(0,0.45^2I_8)$ for entropy flow and $\mathcal N(0,I_2)$ for FP2D, restricted to their boxes. For FP2D, $\mu=(1/3,1/3)$ and $\Sigma=\frac18\left(\begin{smallmatrix}5&-3\\-3&5\end{smallmatrix}\right)$.
Projection/error evaluation uses $46^2/90^2$ Gauss--Legendre nodes for FP2D and $4096/32768$ importance draws for entropy 8D; 2D relaxation uses $91^2$ projection nodes.

The certificate uses $\rho_0=0.75+0.35e^{-8(x+0.18)^2}$, $V=5x^2+0.25x$, 512 training nodes (also used in the small-time pair tests) and a 65-node certificate grid. Relaxation references use 600 cells in 1D and $128^2$ in 2D, with $\Delta t=10^{-4}$. The interaction source is $\rho_0=\tfrac14[1+0.38\cos(\pi(x_1-0.30))+0.22\sin(\pi(x_2+0.15))]$; its reference includes potential and interaction drift.

The one-step PDE diagnostic uses 129 initial-fit nodes, 257 pair nodes, 1024 projection nodes and 601 uniform evaluation points, checked against 1201 points.

\paragraph{Refinement and propagation checks.}
Held-out rules check descent, density fitting and map admissibility; the one-step growth-stationarity tolerances are $10^{-4}$ in relative RMS and $10^{-3}$ in maximum residual. Table~\ref{tab:trajectory-refinement} reports independently optimized time refinements at fixed horizons.

\begin{table}[H]
\centering\scriptsize
\resizebox{\linewidth}{!}{\input{generated/trajectory_refinement_table.tex}}
\caption{Fixed-horizon time refinement. Interaction errors and residuals use the refined reference grid and the same periodic operator for both trajectories.}
\label{tab:trajectory-refinement}
\end{table}

Table~\ref{tab:jcp-projection-work} reports held-out fit errors relative to the unprojected endpoint and optimizer work in closure evaluations, not wall-clock time. All six stored trajectories satisfy $E_{k+1}-E_k\le10^{-7}(1+|E_k|)$ for the propagated states; these checks do not bound accumulated projection error.

\begin{table}[H]
\centering\small
\setlength{\tabcolsep}{3pt}
\input{generated/jcp_projection_work_table.tex}
\caption{Propagation and work diagnostics. Fit and mass errors are maxima over steps; work columns are mean objective-closure evaluations per step, including pair stages and density fitting. Pair and fit closures have different costs.}
\label{tab:jcp-projection-work}
\end{table}

Interaction residuals use the same periodic finite-difference operator
for learned and reference states, normalized by the sum of time-increment
and PDE-activity norms. Because the reference is advanced using this
spatial operator, its residual primarily reflects temporal discretization.
Applying the operator to the learned continuum states also introduces
spatial truncation error. At $T=0.3$, for the implicit state on the
$128^2$ grid, diffusion and drift each have $L^2$ norm about $2.6$,
whereas their sum has norm about $0.03$; the residual numerator has
$L^2$ norm about $0.14\%$ of that of either individual transport term.
The nearest-image kernel has derivative kinks, so smooth-kernel
conclusions are not invoked.
\begin{table}[H]
\centering\small
\input{generated/jcp_residual_grid_table.tex}
\caption{Interaction strong residuals at $T=0.3$, $\tau=0.0025$. All grids evaluate the same final two learned states with the full-periodic operator, without retraining.}
\label{tab:jcp-residual-grid}
\end{table}
On the $64^2$ grid the same operator gives a residual of $0.048$ for
the implicit state, so the residual first decreases and then increases
under grid refinement. This behavior is consistent with $O(h^2)$
truncation on coarse grids and with small pointwise evaluation errors
amplified by second differences in proportion to $h^{-2}$ on fine grids;
no grid-converged strong-form residual is established.
The $64^2$ reference differs from the $128^2$ reference at quarter
time step by $6.957\times10^{-4}$ in relative $L^2$ on the coarse grid,
which limits the density comparison at this level.

\FloatBarrier

%% file: generated/execution_protocol.tex
Trainable networks use \texttt{mps} \texttt{torch.float32}; quadrature and linear solves use CPU float64. The seed is 1234.

%% file: generated/trajectory_refinement_table.tex
\begin{tabular}{@{}lcccccc@{}}
\toprule
Experiment & $\tau$ & attained $T$ & rel. $L^2$ & rel. mass error & flow residual & reference residual \\
\midrule
Entropy $8$d & 1.250e-03 & 0.150 & 0.025 & 0.022 & -- & -- \\
Entropy $8$d & 6.250e-04 & 0.150 & 8.043e-03 & 0.012 & -- & -- \\
Implicit interaction & 2.500e-03 & 0.300 & 5.212e-04 & -- & 0.021 & 4.787e-03 \\
Frozen interaction & 2.500e-03 & 0.300 & 4.211e-04 & -- & 0.021 & 4.787e-03 \\
Implicit interaction & 5.000e-03 & 0.300 & 1.051e-03 & -- & 0.021 & 9.533e-03 \\
Frozen interaction & 5.000e-03 & 0.300 & 9.422e-04 & -- & 0.020 & 9.533e-03 \\
\bottomrule
\end{tabular}

%% file: generated/jcp_projection_work_table.tex
\begin{tabular}{@{}lrrrrr@{}}
\toprule
Case & steps & fit rel.\ $L^2$ & mass error & pair evals./step & fit evals./step \\
\midrule
Relaxation 1D & 1000 & 1.72e-05 & 5.29e-06 & 381.5 & 2.1 \\
Relaxation 2D & 480 & 4.78e-05 & 7.79e-06 & 693.3 & 2.8 \\
Entropy 8D & 240 & 1.92e-04 & 3.91e-05 & 499.5 & 5.0 \\
FP2D & 960 & 1.89e-04 & 1.22e-04 & 635.9 & 7.2 \\
Implicit interaction & 120 & 1.26e-05 & 7.12e-07 & 800.0 & 2.0 \\
Frozen interaction & 120 & 1.25e-05 & 5.96e-07 & 823.7 & 2.0 \\
\bottomrule
\end{tabular}

%% file: generated/jcp_residual_grid_table.tex
\begin{tabular}{@{}lrrr@{}}
\toprule
Treatment & $128^2$ & $256^2$ & $512^2$ \\
\midrule
Implicit & 0.02069 & 0.02370 & 0.06650 \\
Frozen & 0.02123 & 0.02446 & 0.06700 \\
\bottomrule
\end{tabular}

%% file: main.bbl
\begin{thebibliography}{31}
\providecommand{\natexlab}[1]{#1}
\providecommand{\url}[1]{\texttt{#1}}
\expandafter\ifx\csname urlstyle\endcsname\relax
  \providecommand{\doi}[1]{doi: #1}\else
  \providecommand{\doi}{doi: \begingroup \urlstyle{rm}\Url}\fi

\bibitem[Jordan et~al.(1998)Jordan, Kinderlehrer, and Otto]{JKO98}
R.~Jordan, D.~Kinderlehrer, and F.~Otto.
\newblock {The variational formulation of the Fokker--Planck equation}.
\newblock \emph{SIAM J. Math. Anal.}, 29:\penalty0 1--17, 1998.

\bibitem[Ambrosio et~al.(2008)Ambrosio, Gigli, and Savar\'e]{AGS08}
L.~Ambrosio, N.~Gigli, and G.~Savar\'e.
\newblock \emph{{Gradient Flows in Metric Spaces and in the Space of
  Probability Measures}}.
\newblock Birkh\"auser, Basel, 2nd edition, 2008.

\bibitem[Kondratyev et~al.(2016)Kondratyev, Monsaingeon, and Vorotnikov]{KMV16}
S.~Kondratyev, L.~Monsaingeon, and D.~Vorotnikov.
\newblock {A new optimal transport distance on the space of finite Radon
  measures}.
\newblock \emph{Adv. Differential Equations}, 21:\penalty0 1117--1164, 2016.

\bibitem[Chizat et~al.(2018{\natexlab{a}})Chizat, Peyr\'e, Schmitzer, and
  Vialard]{CPSV2018}
L.~Chizat, G.~Peyr\'e, B.~Schmitzer, and F.-X. Vialard.
\newblock {Unbalanced optimal transport: dynamic and Kantorovich formulations}.
\newblock \emph{J. Funct. Anal.}, 274:\penalty0 3090--3123, 2018{\natexlab{a}}.

\bibitem[Chizat et~al.(2018{\natexlab{b}})Chizat, Peyr\'e, Schmitzer, and
  Vialard]{CPSV18}
L.~Chizat, G.~Peyr\'e, B.~Schmitzer, and F.-X. Vialard.
\newblock {An interpolating distance between optimal transport and Fisher--Rao
  metrics}.
\newblock \emph{Found. Comput. Math.}, 18:\penalty0 1--44, 2018{\natexlab{b}}.

\bibitem[Liero et~al.(2016)Liero, Mielke, and Savar\'e]{LMS16}
M.~Liero, A.~Mielke, and G.~Savar\'e.
\newblock {Optimal transport in competition with reaction: the
  Hellinger--Kantorovich distance and geodesic curves}.
\newblock \emph{SIAM J. Math. Anal.}, 48:\penalty0 2869--2911, 2016.

\bibitem[Liero et~al.(2018)Liero, Mielke, and Savar\'e]{LMS18}
M.~Liero, A.~Mielke, and G.~Savar\'e.
\newblock {Optimal entropy--transport problems and a new Hellinger--Kantorovich
  distance between positive measures}.
\newblock \emph{Invent. Math.}, 211:\penalty0 969--1117, 2018.

\bibitem[Gallou\"et and Monsaingeon(2017)]{GM17}
T.~O. Gallou\"et and L.~Monsaingeon.
\newblock {A JKO splitting scheme for Kantorovich--Fisher--Rao gradient flows}.
\newblock \emph{SIAM J. Math. Anal.}, 49:\penalty0 1100--1130, 2017.

\bibitem[Flei{\ss}ner(2021)]{Fleissner21}
F.~Flei{\ss}ner.
\newblock {A minimizing movement approach to a class of scalar
  reaction--diffusion equations}.
\newblock \emph{ESAIM Control Optim. Calc. Var.}, 27:\penalty0 18, 2021.
\newblock \doi{10.1051/cocv/2020090}.

\bibitem[Di~Marino and Chizat(2020)]{DiMarinoChizat20}
S.~Di~Marino and L.~Chizat.
\newblock {A tumor growth model of Hele--Shaw type as a gradient flow}.
\newblock \emph{ESAIM Control Optim. Calc. Var.}, 26:\penalty0 103, 2020.
\newblock \doi{10.1051/cocv/2020019}.

\bibitem[Liero et~al.(2023)Liero, Mielke, and Savar\'e]{LMS23}
M.~Liero, A.~Mielke, and G.~Savar\'e.
\newblock {Fine properties of geodesics and geodesic $\lambda$-convexity for
  the Hellinger--Kantorovich distance}.
\newblock \emph{Arch. Ration. Mech. Anal.}, 247:\penalty0 112, 2023.
\newblock \doi{10.1007/s00205-023-01941-1}.

\bibitem[Gallou\"et et~al.(2025)Gallou\"et, Ghezzi, and Vialard]{GGV25}
T.~Gallou\"et, R.~Ghezzi, and F.-X. Vialard.
\newblock {Regularity theory and geometry of unbalanced optimal transport}.
\newblock \emph{J. Funct. Anal.}, 289\penalty0 (7):\penalty0 111042, 2025.
\newblock \doi{10.1016/j.jfa.2025.111042}.

\bibitem[Laschos and Mielke(2026)]{LM26}
V.~Laschos and A.~Mielke.
\newblock {Evolutionary variational inequalities on the Hellinger--Kantorovich
  and spherical Hellinger--Kantorovich spaces}.
\newblock \emph{Commun. Partial Differential Equations}, 51\penalty0
  (2--3):\penalty0 193--235, 2026.
\newblock \doi{10.1080/03605302.2026.2650715}.

\bibitem[Mokrov et~al.(2021)Mokrov, Korotin, Li, Genevay, Solomon, and
  Burnaev]{Mokrov21}
P.~Mokrov, A.~Korotin, L.~Li, A.~Genevay, J.~M. Solomon, and E.~Burnaev.
\newblock {Large-scale Wasserstein gradient flows}.
\newblock In \emph{Advances in Neural Information Processing Systems 34}, 2021.

\bibitem[Park et~al.(2023)Park, Kim, Son, and Hwang]{PKSH23}
M.~S. Park, C.~Kim, H.~Son, and H.~J. Hwang.
\newblock {The deep minimizing movement scheme}.
\newblock \emph{J. Comput. Phys.}, 494:\penalty0 112518, 2023.
\newblock \doi{10.1016/j.jcp.2023.112518}.

\bibitem[Chizat et~al.(2018{\natexlab{c}})Chizat, Peyr\'e, Schmitzer, and
  Vialard]{ChizatScaling18}
L.~Chizat, G.~Peyr\'e, B.~Schmitzer, and F.-X. Vialard.
\newblock {Scaling algorithms for unbalanced optimal transport problems}.
\newblock \emph{Math. Comp.}, 87:\penalty0 2563--2609, 2018{\natexlab{c}}.
\newblock \doi{10.1090/mcom/3303}.

\bibitem[Carrillo et~al.(2022)Carrillo, Craig, Wang, and Wei]{CarrilloCraig22}
J.~A. Carrillo, K.~Craig, L.~Wang, and C.~Wei.
\newblock {Primal dual methods for Wasserstein gradient flows}.
\newblock \emph{Found. Comput. Math.}, 22:\penalty0 389--443, 2022.
\newblock \doi{10.1007/s10208-021-09503-1}.

\bibitem[Ponnoprat et~al.(2026)Ponnoprat, Isobe, and Imaizumi]{PII26}
D.~Ponnoprat, N.~Isobe, and M.~Imaizumi.
\newblock {Minimax optimal estimation of transport-growth pairs in unbalanced
  optimal transport}.
\newblock arXiv:2605.08705, 2026.
\newblock \doi{10.48550/arXiv.2605.08705}.

\bibitem[Fan et~al.(2022)Fan, Zhang, Taghvaei, and Chen]{Fan22}
J.~Fan, Q.~Zhang, A.~Taghvaei, and Y.~Chen.
\newblock {Variational Wasserstein gradient flow}.
\newblock In \emph{Proceedings of the 39th International Conference on Machine
  Learning}, volume 162 of \emph{PMLR}, pages 6185--6215, 2022.

\bibitem[Xu et~al.(2023)Xu, Cheng, and Xie]{Xu23}
C.~Xu, X.~Cheng, and Y.~Xie.
\newblock {Normalizing flow neural networks by JKO scheme}.
\newblock In \emph{Advances in Neural Information Processing Systems 36}, pages
  47379--47405, 2023.

\bibitem[Lee et~al.(2024)Lee, Wang, and Li]{Lee24}
W.~Lee, L.~Wang, and W.~Li.
\newblock {Deep JKO: Time-implicit particle methods for general nonlinear
  gradient flows}.
\newblock \emph{J. Comput. Phys.}, 514:\penalty0 113187, 2024.
\newblock \doi{10.1016/j.jcp.2024.113187}.

\bibitem[Yang and Uhler(2019)]{YangUhler19}
K.~D. Yang and C.~Uhler.
\newblock {Scalable unbalanced optimal transport using generative adversarial
  networks}.
\newblock In \emph{International Conference on Learning Representations}, 2019.

\bibitem[L\"ubeck et~al.(2022)L\"ubeck, Bunne, Gut, {Sarabia del Castillo},
  Pelkmans, and Alvarez-Melis]{Lubeck22}
F.~L\"ubeck, C.~Bunne, G.~Gut, J.~{Sarabia del Castillo}, L.~Pelkmans, and
  D.~Alvarez-Melis.
\newblock {Neural unbalanced optimal transport via cycle-consistent
  semi-couplings}.
\newblock arXiv:2209.15621, 2022.
\newblock \doi{10.48550/arXiv.2209.15621}.

\bibitem[Eyring et~al.(2024)Eyring, Klein, Uscidda, Palla, Kilbertus, Akata,
  and Theis]{Eyring24}
L.~Eyring, D.~Klein, T.~Uscidda, G.~Palla, N.~Kilbertus, Z.~Akata, and
  F.~Theis.
\newblock {Unbalancedness in neural Monge maps improves unpaired domain
  translation}.
\newblock In \emph{International Conference on Learning Representations}, 2024.

\bibitem[Sha et~al.(2024)Sha, Qiu, Zhou, and Nie]{TIGON24}
Y.~Sha, Y.~Qiu, P.~Zhou, and Q.~Nie.
\newblock {Reconstructing growth and dynamic trajectories from single-cell
  transcriptomics data}.
\newblock \emph{Nat. Mach. Intell.}, 6:\penalty0 25--39, 2024.
\newblock \doi{10.1038/s42256-023-00763-w}.

\bibitem[Choi et~al.(2024)Choi, Choi, and Kang]{CCK24}
J.~Choi, J.~Choi, and M.~Kang.
\newblock {Scalable Wasserstein gradient flow for generative modeling through
  unbalanced optimal transport}.
\newblock In \emph{Proceedings of the 41st International Conference on Machine
  Learning}, volume 235 of \emph{PMLR}, pages 8629--8650, 2024.

\bibitem[Neklyudov et~al.(2024)Neklyudov, Brekelmans, Tong, Atanackovic, Liu,
  and Makhzani]{Neklyudov24}
K.~Neklyudov, R.~Brekelmans, A.~Tong, L.~Atanackovic, Q.~Liu, and A.~Makhzani.
\newblock {A computational framework for solving Wasserstein Lagrangian flows}.
\newblock In \emph{Proceedings of the 41st International Conference on Machine
  Learning}, volume 235 of \emph{PMLR}, pages 37461--37485, 2024.

\bibitem[Hertrich and Gruhlke(2025)]{Hertrich25}
J.~Hertrich and R.~Gruhlke.
\newblock {Importance corrected neural JKO sampling}.
\newblock In \emph{Proceedings of the 42nd International Conference on Machine
  Learning}, volume 267 of \emph{PMLR}, pages 23083--23119, 2025.

\bibitem[Chen et~al.(2018)Chen, Rubanova, Bettencourt, and Duvenaud]{Chen18}
R.~T.~Q. Chen, Y.~Rubanova, J.~Bettencourt, and D.~Duvenaud.
\newblock {Neural ordinary differential equations}.
\newblock In \emph{Advances in Neural Information Processing Systems 31}, 2018.

\bibitem[Grathwohl et~al.(2019)Grathwohl, Chen, Bettencourt, Sutskever, and
  Duvenaud]{Grathwohl19}
W.~Grathwohl, R.~T.~Q. Chen, J.~Bettencourt, I.~Sutskever, and D.~Duvenaud.
\newblock {FFJORD: Free-form continuous dynamics for scalable reversible
  generative models}.
\newblock In \emph{International Conference on Learning Representations}, 2019.

\bibitem[Leshno et~al.(1993)Leshno, Lin, Pinkus, and Schocken]{Leshno93}
M.~Leshno, V.~Ya. Lin, A.~Pinkus, and S.~Schocken.
\newblock {Multilayer feedforward networks with a nonpolynomial activation
  function can approximate any function}.
\newblock \emph{Neural Networks}, 6:\penalty0 861--867, 1993.
\newblock \doi{10.1016/S0893-6080(05)80131-5}.

\end{thebibliography}
